\setlist[enumerate,1]{leftmargin=1cm,label=(\roman*), ref=(\roman*)}
\setlist[itemize,1]{leftmargin=1cm}
\theoremstyle{plain}
\newtheorem{theorem}{Theorem}%[section]
\newtheorem{proposition}[theorem]{Proposition}
\newtheorem{corollary}[theorem]{Corollary}
\newtheorem{lemma}[theorem]{Lemma}
\theoremstyle{definition}
\newtheorem{definition}[theorem]{Definition}
\theoremstyle{remark}
\newtheorem{remark}[theorem]{Remark}
 \DeclareRobustCommand{\checkarg}{\@ifnextchar[{\@witharg}{}}
 \DeclareRobustCommand{\@witharg}[1][]{\ensuremath{\left(#1\right)}}
 \DeclareRobustCommand{\scaleGen}[1]{\@ifnextchar[{\@scalewithargs{#1}}{\odot^{}_{#1}}}
 \def\@scalewithargs#1[#2][#3]{#2 \odot^{}_{#1} #3}
\def\IPspace{\mathcal{I}}
\def\dI{d_{\IPspace}}
\def\cS{\mathcal{S}}
\def\cT{\mathcal{T}}
\def\Concat{ \mathop{ \raisebox{-2pt}{\Huge$\star$} } }
\def\ConcatIL{ \mbox{\huge $\star$} }
\def\concat{\star}
\newcommand{\IPLT}{\mathscr{D}}
\newcommand{\wh}[1]{\widehat{#1}}
\def\BR{\mathbb{R}}				% reals
\def\to{\rightarrow}
\def\cf{\mathbf{1}}				% characteristic fctn
\def\cF{\mathcal{F}}			% sigma-alg
\def\cB{\mathcal{B}}	% Borel sigma-alg
\def\distribfont#1{\texttt{\upshape #1}}%\textnormal{#1}}}
\def\ExpDist{\distribfont{Exponential}\checkarg}
\def\GammaDist{\distribfont{Gamma}\checkarg}
\def\InvGammaDist{\distribfont{InverseGamma}\checkarg}
\def\BetaDist{\distribfont{Beta}\checkarg}
\def\PoiDir{\distribfont{PD}\checkarg}
\newcommand{\besq}{\distribfont{BESQ}}
\def\BESQ{\distribfont{BESQ}\checkarg}
\def\PDIP{\distribfont{PDIP}\checkarg}
\def\cadlag{c\`adl\`ag}
\let\save@mathaccent\mathaccent
\newcommand*\if@single[3]{%
  \setbox0\hbox{${\mathaccent"0362{#1}}^H$}%
  \setbox2\hbox{${\mathaccent"0362{\kern0pt#1}}^H$}%
  \ifdim\ht0=\ht2 #3\else #2\fi
  }
\newcommand*\rel@kern[1]{\kern#1\dimexpr\macc@kerna}
\newcommand{\widebar}{}% initialize
\DeclareRobustCommand*\widebar[1]{\@ifnextchar^{\wide@bar{#1}{0}}{\wide@bar{#1}{1}}}
\newcommand*\wide@bar[2]{\if@single{#1}{\wide@bar@{#1}{#2}{1}}{\wide@bar@{#1}{#2}{2}}}
\newcommand*\wide@bar@[3]{%
  \begingroup
  \def\mathaccent##1##2{%
%Enable nesting of accents:
    \let\mathaccent\save@mathaccent
%If there's more than a single symbol, use the first character instead (see below):
    \if#32 \let\macc@nucleus\first@char \fi
%Determine the italic correction:
    \setbox\z@\hbox{$\macc@style{\macc@nucleus}_{}$}%
    \setbox\tw@\hbox{$\macc@style{\macc@nucleus}{}_{}$}%
    \dimen@\wd\tw@
    \advance\dimen@-\wd\z@
%Now \dimen@ is the italic correction of the symbol.
    \divide\dimen@ 3
    \@tempdima\wd\tw@
    \advance\@tempdima-\scriptspace
%Now \@tempdima is the width of the symbol.
    \divide\@tempdima 10
    \advance\dimen@-\@tempdima
%Now \dimen@ = (italic correction / 3) - (Breite / 10)
    \ifdim\dimen@>\z@ \dimen@0pt\fi
%The bar will be shortened in the case \dimen@<0 !
    \rel@kern{0.6}\kern-\dimen@
    \if#31
      \overline{\rel@kern{-0.6}\kern\dimen@\macc@nucleus\rel@kern{0.4}\kern\dimen@}%
      \advance\dimen@0.4\dimexpr\macc@kerna
%Place the combined final kern (-\dimen@) if it is >0 or if a superscript follows:
      \let\final@kern#2%
      \ifdim\dimen@<\z@ \let\final@kern1\fi
      \if\final@kern1 \kern-\dimen@\fi
    \else
      \overline{\rel@kern{-0.6}\kern\dimen@#1}%
    \fi
  }%
  \macc@depth\@ne
  \let\math@bgroup\@empty \let\math@egroup\macc@set@skewchar
  \mathsurround\z@ \frozen@everymath{\mathgroup\macc@group\relax}%
  \macc@set@skewchar\relax
  \let\mathaccentV\macc@nested@a
%The following initialises \macc@kerna and calls \mathaccent:
  \if#31
    \macc@nested@a\relax111{#1}%
  \else
%If the argument consists of more than one symbol, and if the first token is
%a letter, use that letter for the computations:
    \def\gobble@till@marker##1\endmarker{}%
    \futurelet\first@char\gobble@till@marker#1\endmarker
    \ifcat\noexpand\first@char A\else
      \def\first@char{}%
    \fi
    \macc@nested@a\relax111{\first@char}%
  \fi
  \endgroup
}
\newcommand{\parent}[1]{\accentset{\leftarrow}{#1}}
\newcommand{\longparent}[1]{\accentset{\longleftarrow}{#1}}
\DeclareRobustCommand{\Ast}[1]{\accentset{*}{#1}}
\newcommand{\wi}{{\tilde{\textit{\i}}}}
\newcommand{\wj}{{\tilde{\textit{\j}}}}
\newcommand{\ft}{\mathbf{t}}
\newcommand{\ff}{\mathbf{f}}
\newcommand{\fm}{\mathbf{m}}
\newcommand{\bN}{\mathbb{N}}
\newcommand{\fT}{\mathbf{T}}
\newcommand{\cR}{\mathcal{R}}
\newcommand{\cI}{\mathcal{I}}
\newcommand{\cX}{\mathcal{X}}
\newcommand{\cU}{\mathcal{U}}
\newcommand{\cV}{\mathcal{V}}
\newcommand{\cW}{\mathcal{W}}
\newcommand{\bP}{\mathbb{P}}
\newcommand{\bR}{\mathbb{R}}
\newcommand{\bE}{\mathbb{E}}
\newcommand{\bT}{\mathbb{T}}
\newcommand{\bX}{\mathbb{X}}
\newcommand{\bY}{\mathbb{Y}}
\newcommand{\TInt}{\bT}%^{\textnormal{int}}}
\newcommand{\TShape}{\bT^{\textnormal{shape}}}
\newcommand{\bTInt}{\widebar{\bT}}%^{\textnormal{int}}}
\newcommand{\tdTInt}{\widetilde{\bT}}%^{\textnormal{int}}}
\newcommand{\block}{\textsc{block}}
\newcommand{\wbD}{\widebar{D}}
\newcommand{\wbT}{\widebar{\cT}}
\newcommand{\bTMarkk}{\Ast\bTInt_k}
\newcommand{\tdTMarkk}{\Ast\tdTInt_k}
\newcommand{\TMarkk}{\Ast\bT_{k}}%{\textnormal{mark}}_{[k]}}
\newcommand{\wt}[1]{\widetilde{#1}}
\begin{document}

\title[A projective system of $k$-tree evolutions]{Aldous diffusion I: %construction via 
	\\ a projective system of continuum $k$-tree evolutions}

\author{N\MakeLowercase{\sc oah} F\MakeLowercase{\sc orman,} S\MakeLowercase{\sc oumik} P\MakeLowercase{\sc al,} D\MakeLowercase{\sc ouglas} R\MakeLowercase{\sc izzolo, and}  M\MakeLowercase{\sc atthias} W\MakeLowercase{\sc inkel}}

\address{\hspace{-0.42cm}N.~Forman\\ Department of Mathematics\\ University of Washington\\ Seattle WA 98195\\ USA\\ Email: noah.forman@gmail.com}             

\address{\hspace{-0.42cm}S.~Pal\\ Department of Mathematics\\ University of Washington\\ Seattle WA 98195\\ USA\\ Email: soumikpal@gmail.com}

\address{\hspace{-0.42cm}D.~Rizzolo\\ 531 Ewing Hall\\ Department of Mathematical Sciences\\ University of Delaware\\ Newark DE 19716\\ USA\\ Email: drizzolo@udel.edu}

\address{\hspace{-0.42cm}M.~Winkel\\ Department of Statistics\\ University of Oxford\\ 24--29 St Giles'\\ Oxford OX1 3LB\\ UK\\ Email: winkel@stats.ox.ac.uk}             

\keywords{Brownian CRT, reduced tree, interval partition, stable process, squared Bessel processes, excursion, de-Poissonisation}

\subjclass[2010]{60J80}

\date{\today}

\thanks{This research is partially supported by NSF grants DMS-1204840, DMS-1444084, DMS-1612483, EPSRC grant EP/K029797/1, and the University of Delaware Research Foundation}

%\author{Noah Forman \and Soumik Pal \and Douglas Rizzolo \and Matthias Winkel}             

\begin{abstract} The Aldous diffusion is a conjectured Markov process on the space of real trees that is the continuum analogue of discrete Markov chains on binary trees. We construct this conjectured process via a consistent system of stationary evolutions of binary trees with $k$ labeled leaves and edges decorated with diffusions on a space of interval partitions constructed in previous work by the same authors. This pathwise construction allows us to study and compute path properties of the Aldous diffusion including evolutions of projected masses and distances between branch points. A key part of proving the consistency of the projective system is Rogers and Pitman's notion of intertwining. 
\end{abstract}

\maketitle

\vspace{-0.5cm}

\section{Introduction}

The Aldous chain \cite{Aldous00,Schweinsberg02} is a Markov chain on the space of (rooted) binary trees with $n$ labeled leaves. Each transition of the chain, called a down-up move, has two stages. First, a uniform random leaf is deleted and its parent branch point is contracted away. Next, a uniform random edge is selected, we insert a new branch point into the middle of that edge, and we extend a new leaf-edge out from that branch point. This is illustrated in Figure \ref{fig:AC_move} where $n=6$ and the leaf labeled $3$ is deleted and re-inserted. The stationary distribution of this chain is the uniform distribution on rooted binary trees with $n$ labeled leaves. 
\begin{figure}[bth]
 \centering
 \scalebox{0.9}{\input{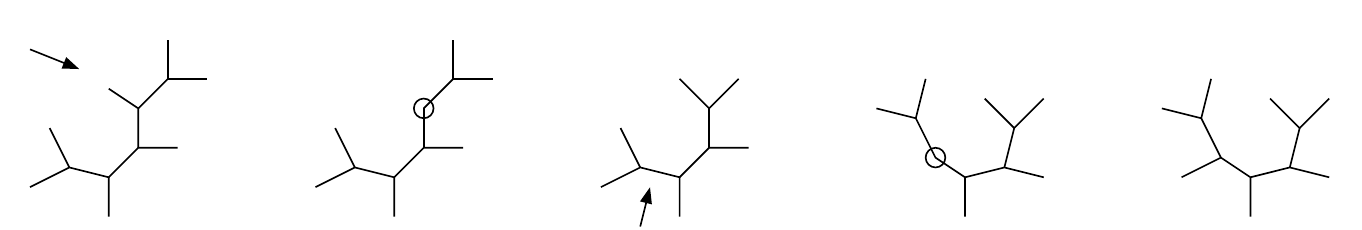_t}}
 \caption{From left to right, one Aldous down-up move.\label{fig:AC_move}}
\end{figure}

Aldous observed that if $b$ branch points are selected in the initial tree and the leaves are partitioned into sets based on whether or not they are in the same component of the tree minus these branch points then, up until one of the selected branch points disappears, the scaling limit of the evolution of the sizes of these sets under the Aldous chain is a Wright--Fisher diffusion, running at quadruple speed, with some positive parameters and some negative ones.  He conjectured that these diffusions were recording aspects of a diffusion on continuum trees and posed the problem of constructing this process \cite{ADProb2, AldousDiffusionProblem}.  This process would be stationary under the law of the Brownian continuum random tree (BCRT) because this is the scaling limit of the stationary distribution of the Aldous chain \cite{AldousCRT1, AldousCRT2, AldousCRT3}. Building on \cite{Paper0,Paper1,Paper2,Paper3}, the present paper resolves this conjecture via a pathwise construction of a \textit{consistent finite-dimensional projected} system of $k$-tree evolutions that are described below.  In a follow-up paper \cite{Paper5} we establish some important properties of this process.  The idea of constructing a BCRT as a projective system of finite random binary trees goes back to the original construction in \cite{AldousCRT1}. The critical extension here is that the projective consistency now has to hold for the entire stochastic process.  

The concept of a $k$-tree is best understood by generating a random $k$-tree from the BCRT. Consider a BCRT $(\cT,d,\rho,\mu)$. Roughly, $\left(\cT, d\right)$ is a metric space that is tree-shaped and has a distinguished element $\rho$ called the root and an associated Borel probability measure $\mu$ called the leaf mass measure. Of course, these quantities are all random. Given an instance of this tree, let $\Sigma_n$, $n\!\ge\!1$, denote a sequence of leaves sampled conditionally i.i.d.\ with law $\mu$. Denote by $\cR_k$ the subtree of $\cT$ spanned by $\rho,\Sigma_1,\ldots,\Sigma_k$, and by $\cR_k^\circ$ the subtree of $\cR_k$ spanned by the set ${\rm Br}(\cR_k)$ of branch points and the root $\rho$ of $\cR_k$. Almost surely, $\cR_k$ is a binary tree.  

Let $[k]\!:=\!\{1,\ldots,k\}$ and suppose that $k\geq 2$. The \emph{Brownian reduced $k$-tree}, denoted by $\cT_k\!=\!(\ft_k,(X_j^{(k)}\!,j\!\in\![k]),(\beta_E^{(k)}\!,E\!\in\!\ft_k))$, is defined as follows.
\begin{itemize}
  \item The combinatorial \emph{tree shape} $\ft_k$ is in one-to-one correspondence with ${\rm Br}(\cR_k)$ or with the branches of $\cR_k^\circ$. Specifically, for each \emph{branch}, i.e.\ each connected component $\cB\subseteq\cR_k^\circ\setminus{\rm Br}(\cR_k)$, the corresponding element of $\ft_k$ is the set $E$ of all $i\in[k]$ for which leaf $\Sigma_i$ is ``above $\cB$,'' i.e.\ is in the component of $\cR_k\setminus\cB$ not containing the root. We denote by $\cB_E$ the branch corresponding to $E$. Each element of $\ft_k$ is a set containing at least two leaf labels; we call the elements of $\ft_k$ the \emph{internal edges}. %\vspace{-0.15cm}
  \item For $j\in[k]$, the \emph{top mass} $X_j^{(k)}$ is the $\mu$-mass of the component of $\cT\setminus\cR_k^\circ$ containing $\Sigma_j$.
  \item Each edge $E\in\ft_k$ is assigned an interval partition $\beta_E^{(k)}$ as in \cite{GnedPitm05,PitmWink09}. Consider the connected components of $\cT\setminus\cR^\circ_k$ that attach to $\cR_k^\circ$ along the interior of $\cB_E$. These are totally (but not sequentially) ordered by decreasing distance between their attachment points on $\cB_E$ and the root. Then $\beta_E^{(k)}$ is the interval partition whose \textit{block} lengths are the $\mu$-masses of components in this total order.
\end{itemize}
See Figure \ref{fig:B_k_tree_proj} for an illustration of such a $k$-tree.

\begin{figure}[t!]\centering
  \input{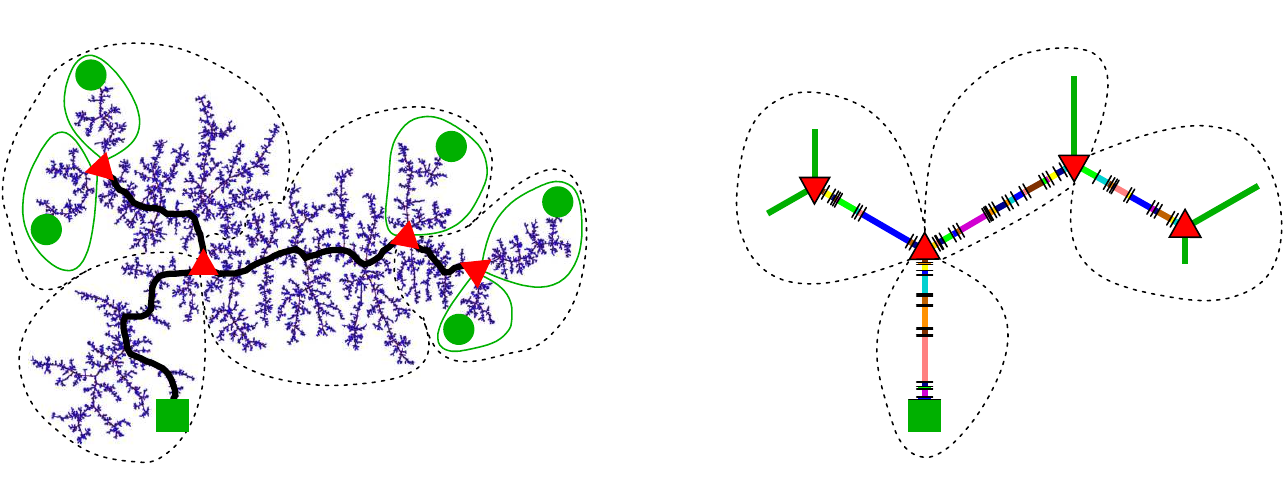_t}
  \caption{Left: Simulation of a BCRT (courtesy of Igor Kortchemski) %here with root $\rho$, and $k=5$ leaves $\Sigma_1,\ldots,\Sigma_5$, decomposition $\cR_5^\circ$ with subtrees; associated Brownian reduced $k$-tree $(\ft_5,(X_1^{(5)},\ldots,X_5^{(5)}),(\beta_{[5]}^{(5)},\beta_{\{1,2,4\}}^{(5)},\beta_{\{1,4\}}^{(5)},\beta_{\{3,5\}}^{(5)}))$.\label{fig:B_k_tree_proj}}
  with root $\rho$, and $k=5$ leaves $\Sigma_1,\ldots,\Sigma_5$. The bold lines and triangles are the branches and vertices of $\cR_5^\circ$.  Right: The associated Brownian reduced $k$-tree.\label{fig:B_k_tree_proj}}%180917
\end{figure}

We later provide more precise definitions of sets of $k$-trees that support the laws of the Brownian reduced $k$-trees. There is a natural projection map $\pi_{k}$ from $(k+1)$-trees to $k$-trees for every $k\ge 2$ such that $\pi_k(\cT_{k+1})=\cT_k$. There is also a natural map $S_k$ taking $k$-trees to rooted metric measure trees such that, almost surely, $S_k(\cT_k)$ is the rooted metric measure tree that results from projecting the leaf mass measure of $\cT$ onto $\cR_k^\circ$.  To see this, observe that since the combinatorial tree shape of $\cR_k^\circ$ is $\ft_k$ and atoms of the projected measure are given by the top masses and intervals of the interval partitions, one only needs to be able to recover the metric structure of $\cR_k^\circ$ from $\cT_k$.  This is accomplished using the diversity of the interval partitions on the edges as in \cite{PitmWink09}, see also Equation \eqref{eq diversitydef} below for a precise definition of diversity.  Given any system of $k$-trees $\mathbf{T}=(T_k, k\geq 2)$, we define $S(\mathbf{T}) = \lim_{k\to\infty} S_k(T_k)$, where the limit is taken with respect to the rooted Gromov--Hausdorff--Prokhorov metric \cite{M09}, provided this limit exists, and $S(\mathbf{T})$ is the trivial one-point tree if the limit does not exist.  From the definition of $\cR^\circ_k$, it is clear that $S((\cT_k,k\geq 1)) = \cT$ almost surely, see e.g. \cite{PitmWink09} for a similar argument in the context of bead-crushing constructions.

The following collection of results summarizes the main contribution of this paper.

\begin{theorem}\label{thm:intromain}
 There is a projective system $(\widebar\cT^u, u\geq 0)=((\widebar\cT^u_{\!\!k},k\geq 2), u\geq 0)$ such that the following hold.
 \begin{enumerate}[itemsep=.1cm]
  \item \label{main markov} For each $k$, $(\widebar{\cT}_{\!\!k}^u,u\geq 0)$ is a $k$-tree-valued Markovian evolution. %\vspace{.1cm}
  \item \label{main cons} These processes are consistent in the sense that $(\widebar\cT^u_{\!\!k},u\geq 0) = (\pi_k(\widebar\cT^u_{\!\!k+1}),u\geq 0)$. %\vspace{.1cm}
  \item \label{main stat} The law of the consistent family of Brownian reduced $k$-trees $(\widebar\cT_{\!\!k}, k\geq 2)$ is a stationary law for the process $(\widebar\cT^u, u\geq 0)$.
  \item \label{main Wright--Fisher} Let $\widebar\cT^u_{\!\!k} = (\ft^u_k,(X^u_j\!,j\!\in\![k]),(\beta^u_E,E\!\in\!\ft^u_k))$, let $\|\beta^u_E\|$ be the mass of the interval partition $\beta^u_E$, i.e.\ the sum of the lengths of the intervals in the partition, and let $\tau$ be the first time either a top mass or the mass of an interval partition is $0$.    For $u<\tau$ we have $\ft^u_k=\ft^0_k$.  The process $((( X^{u/4}_j\!,j\!\in\![k]),(\|\beta^{u/4}_E\|,E\!\in\!\ft^0_k)), 0\leq u <\tau)$ is a Wright--Fisher diffusion, killed when one of the coordinates vanishes, with the parameters proposed by Aldous, which are $-1/2$ for coordinates corresponding to top masses and $1/2$ for coordinates corresponding to masses of interval partitions.
  \item  \label{main GHPAldous} If $((\widebar\cT^u_{\!\!k},k\geq 2), u\geq 0)$ is running in stationarity, then $(S(\widebar\cT^u), u\geq 0)$ is a stationary stochastic process of rooted metric measure trees whose stationary distribution is the BCRT.
 \end{enumerate}
\end{theorem}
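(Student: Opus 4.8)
The plan is to build the projective system in three phases: first, for each fixed $k$, construct the $k$-tree-valued Markov process $(\widebar\cT^u_{\!\!k},u\geq 0)$ directly from squared Bessel processes and the interval-partition diffusions of \cite{Paper1,Paper2,Paper3}, and identify the Brownian reduced $k$-tree as a stationary law; second, and this is the crux, glue the levels into one process by establishing a Rogers--Pitman intertwining between consecutive levels through the projection $\pi_k$, which yields \ref{main markov}, \ref{main cons}, and (started in stationarity) \ref{main stat}; third, read \ref{main Wright--Fisher} and \ref{main GHPAldous} off the construction.

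\emph{Phase 1: the fixed-$k$ evolution and its stationary law.} On a fixed combinatorial shape $\ft$ I would prescribe the dynamics directly: the top masses $(X_j)_{j\in[k]}$ run as independent \BESQ[-1] processes and each edge decoration $\beta_E$ as a self-similar interval-partition diffusion from \cite{Paper1,Paper2,Paper3}, its total mass $\|\beta_E\|$ being a \BESQ[1], all driven independently, until the first time $\tau$ that one of these masses vanishes (the total mass being kept equal to $1$ via the de-Poissonisation built into those papers). The vanishing coordinate is either a top mass, in which case a leaf-edge is contracted away, or a partition mass, in which case a branch point is absorbed; at $\tau$ a resampling move --- the continuum analogue of an Aldous down-up move --- reattaches a leaf at a location sampled from the appropriate pseudo-stationary kernel, possibly changing $\ft$, after which the evolution restarts, and concatenating the excursions between successive such times produces a \cadlag\ strong Markov process. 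The inputs needed (that $\tau<\infty$ a.s., that the post-$\tau$ state is again a bona fide $k$-tree, and Markovianity of the concatenation) come from \cite{Paper1,Paper2,Paper3} together with elementary \besq\ excursion theory; that the Brownian reduced $k$-tree law is invariant is checked by combining invariance of the Dirichlet law of the masses and the \PDIP-type conditional laws of the edge partitions under the component diffusions (from \cite{Paper1,Paper3}) with invariance of that law under the resampling move, the continuum counterpart of stationarity of the uniform binary tree for the Aldous chain.

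\emph{Phase 2: projective consistency --- the main obstacle.} Fix $k\geq 2$ and let $\Lambda_k$ be a regular version of the conditional law of $\widebar\cT_{\!\!k+1}$ given $\pi_k(\widebar\cT_{\!\!k+1})$ under the Brownian reduced law --- explicitly, the Gnedin--Pitman bead-splitting recipe for adjoining the leaf $\Sigma_{k+1}$, cf.\ \cite{GnedPitm05,PitmWink09}. By construction $\Lambda_k(T_k,\cdot)$ is supported on $\pi_k^{-1}(\{T_k\})$, so $\Lambda_k$ is a section of $\pi_k$ in the sense of Rogers and Pitman, and writing $P^{(k)}_t,P^{(k+1)}_t$ for the semigroups of Phase 1 the task is the intertwining
\[
 \Lambda_k P^{(k+1)}_t \;=\; P^{(k)}_t \Lambda_k, \qquad t\geq 0,
\]
read as an identity of operators on bounded measurable functions of $(k+1)$-trees in the Rogers--Pitman sense. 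Granting it, their theorem gives that the $\pi_k$-projection of a $(k+1)$-tree evolution started from $\Lambda_k(T_k,\cdot)$ is a Markov process with semigroup $P^{(k)}_t$ from $T_k$, with the ``extra'' randomness regenerated by $\Lambda_k$ at every time; running this down the tower of levels and passing to the projective limit produces $(\widebar\cT^u,u\geq 0)$ with \ref{main markov} at every level and \ref{main cons}, and, since fixed-$k$ stationarity holds at all levels and $\pi_k$ commutes with the dynamics, the projective-limit Brownian reduced law is stationary, giving \ref{main stat}. To prove the intertwining I would decompose a $(k+1)$-tree relative to its $\pi_k$-image according to where $\Sigma_{k+1}$ sits --- inside a top-mass component; inside a block of the interval partition $\beta^{(k)}_E$ of an edge, so that $\Sigma_{k+1}$ subdivides $E$ with a new branch point and $\beta^{(k)}_E$ is split by a bead-splitting operation into a pair $(\beta^{(k+1)}_{E'},\beta^{(k+1)}_{E''})$; or directly at an existing branch point --- and verify, case by case, that ``evolve for time $t$, then project'' agrees in law with ``project, evolve for time $t$, then re-sample the extra structure.'' The delicate points are the times at which the component carrying $\Sigma_{k+1}$ loses its mass (the extra branch point must be absorbed consistently with $\Lambda_k$ applied to the projected $k$-tree), the times at which a \emph{shared} mass vanishes (both trees resample and the two moves must be coupled correctly), and the commutation of the interval-partition dynamics with bead-splitting and bead-merging --- which rests on the additivity of \besq\ and the scattering and pseudo-stationarity properties of the evolutions of \cite{Paper2,Paper3}. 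I expect these last two points to be by far the hardest, requiring a careful excursion-theoretic decomposition and entrance-law identifications; this is the technical heart of the paper.

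\emph{Phase 3: Wright--Fisher and the Gromov--Hausdorff--Prokhorov process.} For $u<\tau$ the shape $\ft^0_k$ is frozen, total mass is $1$, and $(X^u_j,\|\beta^u_E\|)$ evolves as the de-Poissonisation of a vector of independent \BESQ[-1] (top masses) and \BESQ[1] (partition masses) processes, stopped when a coordinate hits $0$. Invoking the identity that the de-Poissonisation of independent \BESQ[\delta_i] processes is a Wright--Fisher diffusion with parameters $\delta_i/2$, and tracking the speed factor (namely $4$, the quadruple speed predicted by Aldous), gives that $((X^{u/4}_j,j\in[k]),(\|\beta^{u/4}_E\|,E\in\ft^0_k))$ is a Wright--Fisher diffusion with parameters $-1/2$ on the top-mass coordinates and $1/2$ on the partition-mass coordinates, killed at the first coordinate hitting $0$, and the stopping matches $\tau$ since the underlying time change is strictly increasing on $[0,\tau)$; this is \ref{main Wright--Fisher}. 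Finally, in stationarity \ref{main cons} and Phase 1 make $(\widebar\cT_{\!\!k},k\geq 2)$ a stationary initial condition, so $(\widebar\cT^u,u\geq 0)$ is a stationary process on the projective-limit space; applying the map $S$, which is an almost sure limit in the rooted Gromov--Hausdorff--Prokhorov topology \cite{M09} by the reduced-tree approximation of \cite{AldousCRT1,PitmWink09} (using convergence of the edge-partition diversities), and using $S((\cT_k,k\geq 2))=\cT$ a.s.\ noted in the introduction, shows $S(\widebar\cT^0)$ is a BCRT, hence by stationarity of the driving process $(S(\widebar\cT^u),u\geq 0)$ is stationary with BCRT marginal, which is \ref{main GHPAldous}.
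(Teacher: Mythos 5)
Your high-level road map (build each fixed-$k$ level from the interval-partition diffusions, splice via Rogers--Pitman intertwining through $\pi_k$, then de-Poissonize and invoke \besq\ additivity for the Wright--Fisher part) is in the right spirit, and Phase~3 is essentially correct. But Phases~1 and~2 each contain a gap that is not a matter of detail.

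\textbf{Phase 1: when resampling actually occurs.} You take $\tau$ --- the first time a single top mass $X_j$ or a single partition mass $\|\beta_E\|$ vanishes --- as the resampling time, writing that at $\tau$ ``a leaf-edge is contracted away'' or ``a branch point is absorbed'' and the leaf is then reinserted. That is not the right event. In the paper's construction, each type-1 or type-2 edge carries a coupled \emph{type-1 or type-2 evolution} $(m^y,\alpha^y)$ or $(m_1^y,m_2^y,\alpha^y)$, and when a top mass $m^y$ hits zero while the partition $\alpha^y$ still has mass, the evolution does not degenerate: the leftmost block of $\alpha^y$ is pulled out to serve as a new top block, and the $k$-tree continues without dropping a label. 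Resampling is triggered only at the genuine degeneration time $D_1$, when an entire compound dies ($m^y+\|\alpha^y\|=0$ for type-1, or $m_i^y+\|\alpha^y\|=0$ for type-2). Indeed the paper explicitly notes $\tau \le D_1$, and this inequality is generically strict. Running your process with resampling at $\tau$ produces a different Markov process whose excursion structure between resamplings does not match, and there is no reason it would share the Brownian reduced $k$-tree as a stationary law, nor project consistently.

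\textbf{Phase 2: missing machinery.} You propose the single intertwining $\Lambda_k P^{(k+1)}_t = P^{(k)}_t\Lambda_k$ with $\Lambda_k$ the bead-splitting/resampling kernel. The paper does not attempt this directly. It introduces an intermediate \emph{marked} $k$-tree process $\Ast\cT_k^y = \phi_2(\cT^y_{k+1})$ recording which block carries the hidden leaf $k\!+\!1$, and then proceeds in two steps: (a) intertwine $\Ast\cT_k$ below $\cT_{k+1}$ via the much simpler insertion kernel $\Ast\Lambda_k$ (which inserts $k\!+\!1$ into the marked block), and (b) show that the map $\phi_1$ from marked $k$-trees to $k$-trees satisfies Dynkin's criterion, not a second intertwining. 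Step (a) itself is nontrivial because the $(k\!+\!1)$-tree evolution jumps away from its degenerate boundary states; the paper supplies a new general lemma (Lemma~\ref{lem:intertwin_jump}) which reduces the full-process intertwining to intertwining of the killed process (Proposition~\ref{prop:inter:killed}) together with intertwining of the skeleton chain of post-jump states (Proposition~\ref{prop:inter:skel}). Your outline does not address the jump structure at all. Moreover, you never mention the \emph{swap-and-reduce} mechanism: in the paper, when a label $i$ degenerates it swaps with $J(\ft,i)=\max\{i,a,b\}$ (least labels of sibling and uncle) before being dropped, and this label-swapping is precisely what makes the Dynkin step (Proposition~\ref{prop:Dynkin:killed}) and hence the projective consistency go through. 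Without it, naive label-dropping fails to commute with $\pi_k$ at degeneration times --- this was already the key difficulty identified in the discrete case in \cite{Paper2}. Your sentence acknowledging that ``the times at which a shared mass vanishes'' are delicate is pointing at the right problem, but the specific mechanism that resolves it is absent, and a direct intertwining of the form you write down would be very hard to verify without the intermediate marked process and without the relabelling rule that stabilizes the post-degeneration state.

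In short: the overall architecture (fixed-$k$ Markov processes, intertwining through $\pi_k$, de-Poissonization, \besq-to-Wright--Fisher) agrees in outline with the paper, but your construction resamples at the wrong random time and your consistency argument omits the marked intermediate process, the jump-adapted intertwining lemma, the Dynkin step, and the swap-and-reduce map --- the last two being exactly what the paper flags as essential and as the place where the naive approach breaks down.
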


For fixed $k$, we give a pathwise definition of $(\widebar{\cT}_{\!\!k}^u,u\geq 0)$ in Definition \ref{def:resamp_1}.  Properties \ref{main markov} and \ref{main stat} are proved in Theorem \ref{thm:dePoi}.  Property \ref{main cons} follows from Theorem \ref{thm:consistency} and the Kolmogorov consistency theorem.  Property \ref{main Wright--Fisher} is proved in Corollary \ref{cor:WF}.  Property \ref{main GHPAldous} is an immediate consequence of Property \ref{main stat} and the discussion immediately following the definition of $S$ above.

\begin{definition}\label{def:Aldousdiff}
The \textit{Aldous diffusion} is the process $(S(\cT^y), y\geq 0)$ defined in Theorem \ref{thm:intromain}\ref{main GHPAldous}.
\end{definition}

In the follow-up paper \cite{Paper5} we show that the Aldous diffusion has the Markov property and a continuous modification.  The techniques used to show these properties are quite different from those used in the current paper.

The current paper and its companion \cite{Paper5} are the culmination of several ideas that we have developed in our previous joint work. Although it is not necessary to read all the previous papers to follow the mathematics here, in the interest of the ``big picture'', Table \ref{tbl:project_outline} outlines their dependence structure. The $k$-tree evolutions described here require Markovian evolutions on spaces of interval partitions. Their existence and properties have been worked out in \cite{Paper1}. As mentioned above, the leaf masses of the continuum-tree-valued process are captured by the interval lengths of the evolving interval partitions. What is not obvious is that the evolution of the metric structure of the continuum-tree-valued process is also captured by the so-called \textit{diversity} of the interval-partition-valued process. This has been dealt with in \cite{Paper0}. The $2$-tree-valued process, which is an important building block in the current work, has been separately constructed in \cite{Paper3}. The consistency of the $k$-tree-valued process is subtle and requires a non-trivial labeling of the $k$-tree shapes and a \textit{resampling} mechanism when the randomly evolving tree shape drops a leaf label. In \cite{Paper2}, a similar labeling and resampling scheme has been worked out for the Aldous Markov chain where it has been proved to lead to consistent projections. The proofs of consistency in \cite{Paper2} and the current work are related in the sense that they both employ a combination of two well-known criteria: intertwining and Dynkin--Kemeny--Snell. However, the proof of \cite{Paper2} cannot be directly generalized in the continuum and it necessitates new arguments that are developed here tying together the threads of \cite{Paper1, Paper3, Paper0}.

\begin{table}
 \centering
 \begin{tabular}{cc}
  Uniform control of local times of stable processes \cite{Paper0} & \\
  $\downarrow$ & \\
  %Interval partition evolutions \cite{Paper1} & \\
  Interval-partition-valued diffusions \cite{Paper1} & \\
  $\downarrow$ %& \\%Consistent projections \\
%  2-tree evolution under Aldous diffusion \cite{Paper3} & of the Aldous chain \cite{Paper2}
  & Consistent projections \\
  Interval partition evolutions with emigration \cite{Paper3} & of the Aldous chain \cite{Paper2}
 \end{tabular}
 
 $\underbrace{\hphantom{Interval partition evolutions with emigration [20] of the Aldous chain [20] fillerfil}}$\\
 $\downarrow$\\
 Projective system of continuum $k$-tree evolutions {[this paper]}\\
 $\downarrow$\\
 Properties of the continuum-tree-valued process \cite{Paper5}
 
 \vphantom{text}
 
 \caption{Outline of the present authors' construction of Aldous the diffusion.\label{tbl:project_outline}} \vspace{-.7cm}
\end{table}

%In Section \ref{sec:dePoi}
%We conclude in Section \ref{sec:down_from_infty} with 

\subsection{Related work of L\"ohr, Mytnik, and Winter}

Recently L\"ohr, Mytnik, and Winter \cite{LohrMytnWint18} used a martingale problem to independently prove the existence of a process on a new space of trees, which they call algebraic measure trees.  They named their process the \textit{Aldous diffusion on binary algebraic non-atomic measure trees}, which they abbreviated to \textit{Aldous diffusion}, but for clarity, we will abbreviate as the \textit{algebraic Aldous diffusion}.  Algebraic measure trees can be thought of as the structures that remain when one forgets the metric on a metric measure tree but retains knowledge of the branch points; cf.\ mass-structural equivalence in \cite{IPTrees}. Equivalence classes of algebraic trees form the state space for the algebraic Aldous diffusion.  They showed that, on this state space with a new topology that they introduce, the Aldous chain with time rescaled by $N^2$ converges to the algebraic Aldous diffusion as the number of leaves tends to infinity.  In explaining the move to a new state space that does not include the metric structure of the trees they say that, although it is traditionally useful to include the metric structure and this was part of the original problem, the metric might not evolve naturally under the Aldous chain because the quadratic variation of some functions of the distance scale differently than one would anticipate from time scaling of $N^2$ predicted by Aldous and further supported by Pal \cite{Pal13}.  We believe, however, that this is not the case.  Our results show that the evolution of the metric can be described using the local time of a L\'evy process.  One does not expect this to be a semimartingale and we believe this is the cause of the difficulty identified in \cite{LohrMytnWint18}. 

Changing to a new state space and topology allows the authors of \cite{LohrMytnWint18} to use standard martingale problem techniques to establish the existence of the algebraic Aldous diffusion, however it also results in the loss of some information that we believe formed an important aspect of Aldous's conjecture.  In particular, in order to remove information about the metric, it seems that one also must sacrifice some detailed information about the distribution of leaves.  Indeed, from Pitman and Winkel \cite{PitmWink09}, a detailed knowledge of the distribution of leaves allows you to reconstruct the metric.  This forms a central part of our construction here.  The new state space and topology also seem insufficient for recovering the mixed Wright--Fisher diffusions that Aldous conjectured were recording aspects of the underlying continuum-tree-valued diffusion.  Rather, in \cite{LohrMytnWint18} the authors recover the annealed behavior of how the mass split around a typical branch point behaves, thus recovering some aspects of the negative Wright--Fisher diffusion in an averaged sense (the methods they use may be able to recover the behavior of the mass split among multiple branch points in the same averaged sense, but they have not done this).  In contrast, our approach shows clearly how the mixed Wright--Fisher diffusions observed by Aldous embed into the process we construct on continuum trees. More generally, our pathwise construction grants access to sample path properties of the diffusion, which may be difficult to study from the martingale problem approach.

There is a natural conjecture relating the processes we call the Aldous diffusion and the algebraic Aldous diffusion.  In particular, in this paper in Theorem \ref{thm:intromain} we construct a consistent system of $k$-tree evolutions $((\cT^y_k)_{y\geq 0})_{k\geq 0}$ that captures, in a consistent way, the mixed Wright--Fisher diffusions observed by Aldous.  In Definition \ref{def:Aldousdiff} we define the Aldous diffusion by mapping $\cT^y_k$ to a metric measure tree using the diversities and block sizes in the interval partitions to determine branch lengths and masses of atoms, then taking a projective limit as $k\to \infty$ in the Gromov--Hausdorff--Prokhorov metric.  If instead of mapping $\cT^y_k$ to a metric measure tree we map it to an algebraic measure tree and take the limit in the topology of \cite{LohrMytnWint18} (which is easily seen to exist), we conjecture that the resulting process is the algebraic Aldous diffusion.

\subsection{Structure of the paper}

In Section \ref{sec:type012} we briefly recall the main objects and some properties from \cite{Paper1,Paper3}. In Section \ref{sec:def_thm} we formally introduce the space of $k$-trees, define various $k$-tree-valued Markov processes, and state our main results. In particular, we begin by defining processes in which the total mass of the tree fluctuates and eventually the tree dies out; we obtain processes with constant mass by ``de-Poissonization.'' We prove some properties of these processes for fixed $k$, including ``pseudo-stationarity'' results for the processes with fluctuating mass, in Section \ref{sec:k_fixed}. We apply these pseudo-stationarity results in Section \ref{sec:consistency} to prove that these Markov processes are projectively consistent under an additional hypothesis that we remove in Section \ref{sec:non_acc}. Section \ref{sec:const:other} then includes proofs of additional projective consistency results.

%\vspace{-.1cm}
\section{Preliminaries on type-0, type-1, and type-2 evolutions}\label{sec:type012}

Type-0, type-1, and type-2 evolutions are Markov processes introduced with pathwise constructions in \cite{Paper1,Paper3}. In \cite[Section 1.1]{Paper3}, we argue via a connection to ordered Chinese restaurant processes that the type-2 evolution is a continuum analogue of a certain 2-tree projection of the discrete Aldous chain discussed in \cite[Appendix A]{Paper2}. %For a related model, see \cite{Petrov09}.
By the same argument, the $k$-tree projection of the Aldous chain discussed in \cite[Appendix A]{Paper2} can be decomposed into parts whose evolutions are analogous to type-0/1/2 evolutions. In Figure \ref{fig:B_k_tree_proj}, the dashed lines separate parts of the $k$-tree that evolve as type-0/1/2 evolutions; see Definition \ref{def:killed_ktree}. %180917
%The relationships between type-0 and type-1 evolutions and the Aldous chain, which motivate their roles in the forthcoming Definition \ref{def:killed_ktree} of $k$-tree evolutions, can be inferred from this same argument in \cite{Paper3}. 

The aforementioned pathwise construction brings a lot of symmetry to light, and it makes many calculations accessible. In this paper, we will not delve into this construction, and in fact, only a few key properties of these processes are needed. For the sake of completeness, we include enough information to fully specify the distributions of these processes.

An \emph{interval partition} (IP) is a set $\alpha$ of disjoint open intervals that cover some compact interval $[0,M]$ up to a Lebesgue-null set. $M$ is called the mass of $\alpha$ and denoted by $\|\alpha\|$. We call the elements $(a,b)\in\alpha$ the \emph{blocks} of the partition. A block $(a,b)$ has \emph{mass} $b-a$. We say that $\alpha$ has the \emph{$\frac12$-diversity property} if the following limit exists for every $t\ge0$:
\vspace{ -.1cm}
\begin{equation}\label{eq diversitydef}
 \IPLT_{\alpha}(t) := \sqrt{\pi}\lim_{h\to 0}\sqrt{h}\#\{(a,b)\in\alpha\colon |b-a|>h, b < t\}.
 \vspace{ -.1cm}
\end{equation}

Let $\cI$ denote the space of interval partitions with the $\frac12$-diversity property. We denote the left-to-right concatenation of interval partitions by the operator $\concat$ or $\ConcatIL$, as in $\alpha\concat \beta$ or $\ConcatIL_{n\ge 1}\alpha_n$. Then $\cI$ is closed under pairwise concatenation.

This space is equipped with a metric $d_{\cI}$ \cite[Definition 2.1]{Paper1}. %under which two partitions $\alpha$ and $\beta$ are close if there is a one-to-one correspondence between the ``large'' blocks of the two partitions that respects left-to-right order, and under which: (i) corresponding blocks $(a,b)\leftrightarrow (a',b')$ are close in mass, $|b-a|\approx |b'-a'|$; (ii) the diversities to the left of corresponding blocks are close, $\IPLT_\alpha(a)\approx \IPLT_\beta(a')$; and (iii) the aggregate mass of the blocks left out of the correspondences is small. Thus, in 
In a continuous process $(\alpha^y,y\ge0)$ in this metric topology, large blocks of the partition persist over time, with continuously fluctuating endpoints $(a^y,b^y)$, while smaller blocks may be born or die out (enter and exit continuously at mass 0) in such a way that the diversity $\IPLT_{\alpha^y}(t)$ up to any point $t$ in the partition evolves continuously in $y$ as well.
%a manner that tends to maintain the diversity of the partition.

We follow the convention of \cite{Paper3} that type-$i$ evolutions, for $i=0,1,2$, are valued in $[0,\infty)^i\times \cI$ (we emphasized a different, $\cI$-valued, representation of type-1 evolutions in \cite{Paper1}). We refer to the real-valued first coordinates of type-1 and type-2 evolutions as \emph{top blocks} or \emph{top masses}. Each block in a type-$i$ evolution, including these top blocks, has mass that fluctuates as a squared Bessel diffusion \BESQ[-1]. For $\theta\in\BR$ and $m\ge0$, the $\besq_m(\theta)$ diffusion satisfies
\vspace{ -.1cm}
\begin{equation}
 Z_t = m + \theta t + \int_{u=0}^t 2\sqrt{|Z_u|}dB_u \qquad \text{for }t\le \inf\{u\ge0\colon Z_u = 0\}.
 \vspace{ -.1cm}
\end{equation}

See \cite{RevuzYor}. We take the convention that for $\theta\le0$, $\besq(\theta)$ diffusions are absorbed at zero.

Informally, when a top block of a type-1 or type-2 evolution hits mass zero, the leftmost blocks of the interval partition component of the evolution are successively (informally speaking, as the blocks are not well-ordered) pulled out of the interval partition to serve as new top blocks, until their \BESQ[-1] masses are absorbed at zero. %Type-0/1 evolutions are specified by the following, along with Proposition \ref{prop:012:pred}.

\begin{proposition}[Propositions 4.30, 5.4, 5.16 of \cite{Paper1}]\label{prop:012:transn}
 Fix a time $y>0$ and a state $\beta\in\cI$. For each block $(a,b)\in\beta$,
 \begin{itemize}
  \item let $\chi_{(a,b)}\sim \distribfont{Bernoulli}(1-e^{-w/2y})$, where $w = b-a$,
  \item let $L_{(a,b)}$ be a $(0,\infty)$-valued random variable with probability density function
  \begin{equation}
   \bP\{L_{(a,b)}\in du\} = \frac{1}{\sqrt{2\pi}}\frac{\sqrt{y}}{u^{3/2}}\frac{e^{-u/2y}}{e^{w/2y}-1}\left( 1 - \cosh\!\left(\frac{\sqrt{wu}}{y}\right) + \frac{\sqrt{wu}}{y} \sinh\!\left(\frac{\sqrt{wu}}{y}\right) \right)du,
  \end{equation}
  \item let $S_{(a,b)}\sim\ExpDist[(2y)^{-1/2}]$, and
  \item let $R_{(a,b)}$ be a subordinator with Laplace exponent 
  %\begin{equation}
   $\lambda\mapsto \left(\lambda+\frac{1}{2y}\right)^{1/2}-\left(\frac{1}{2y}\right)^{1/2}$.
  %\end{equation}
 \end{itemize}
 We take these objects to all be jointly independent. Let $(R_0,S_0)$ denote an additional independent pair, with this same law as that given for $(R_{(a,b)},S_{(a,b)})$. For the purpose of the following, we take
 $$\textsc{ip}(R,S) := \big\{(R(t-),R(t))\colon t < S,\ R(t-)<R(t)\big\}.$$
 
 The map that send an initial state $\beta$ and time $y$ to the law of 
 %If $(\alpha^z,z\ge0)$ is a type-0 evolution with initial state $\beta$, then
 \begin{equation}\label{eq:type1:kernel}
  \Concat_{U\in \beta\colon \chi_U=1}\big((0,L_U)\concat \textsc{ip}(R_U,S_U)\big)
 \end{equation}
 gives a transition semigroup on $\cI$. The same holds for the map sending $\beta$ and $y$ to the law of 
 %If $((m^z,\alpha^z),z\ge0)$ is instead a type-1 evolution with initial state $(x,\beta)$, then
 \begin{equation}\label{eq:type0:kernel}
  \textsc{ip}(R_0,S_0)\concat \Concat_{U\in \beta\colon \chi_U=1}\big((0,L_U)\concat \textsc{ip}(R_U,S_U)\big).
 \end{equation}
\end{proposition}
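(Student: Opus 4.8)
The plan is to obtain the kernels in \eqref{eq:type1:kernel} and \eqref{eq:type0:kernel} directly from the pathwise ``scaffolding-and-spindles'' construction of the type-$0$ and type-$1$ evolutions of \cite{Paper1}, rather than to verify the Chapman--Kolmogorov relations analytically: with that construction available, membership in $\cI$ follows from the uniform local-time bounds of \cite{Paper0}, the explicit densities follow from a single-clade marginal computation, and the semigroup property is inherited from the Markov property of the evolution.

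First I would set up the pathwise picture. Each of \eqref{eq:type1:kernel} and \eqref{eq:type0:kernel} should be identified as the law, at time $y$, of the interval partition obtained by growing an independent \emph{clade} --- a spindle-decorated excursion of a spectrally positive $3/2$-stable L\'evy process (the \emph{scaffolding}), with spindle masses fluctuating as \BESQ[-1] --- off each block of $\beta$, concatenating these left to right, and applying the \skewer\ map at level $y$; in \eqref{eq:type0:kernel} one additionally grows an immigration clade at the left edge, attached to no initial block. Before anything else I would confirm that these concatenations are well defined and lie in $\cI$. Since $\bP\{\chi_U=1\}=1-e^{-(b-a)/2y}\le (b-a)/2y$ and $\sum_{U\in\beta}(b-a)=\|\beta\|<\infty$, for fixed $y$ only finitely many initial blocks survive to level $y$ almost surely, so \eqref{eq:type1:kernel} is a.s.\ a finite concatenation; the $\frac12$-diversity property of the outcome then follows from the uniform control of the scaffolding's local times established in \cite{Paper0}.

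The heart of the matter is the time-$y$ marginal of a single clade grown from a block of mass $w$, which I would assemble from three ingredients. (a) Excursion theory for the scaffolding: conditionally on the clade's \skewer\ at level $y$ being nonempty, it splits as one distinguished leftmost block --- the \skewer\ of the ancestral spindle --- followed by the interval partition formed by the jumps of the subordinator that encodes the scaffolding excursion's crossings of level $y$, an exponentially tilted $\frac12$-stable subordinator run for the independent time $S_U\sim\ExpDist[(2y)^{-1/2}]$ it spends above that level; this yields the pair $(R_U,S_U)$, with $R_U$ the subordinator of Laplace exponent $\lambda\mapsto(\lambda+\frac1{2y})^{1/2}-(\frac1{2y})^{1/2}$. (b) A first-passage/absorption computation for \BESQ[-1], giving the survival probability $1-e^{-w/2y}$ of the clade to level $y$ and hence the law of $\chi_U$. (c) A Doob $h$-transform of \BESQ[-1] conditioned not to be absorbed at $0$: its fixed-time marginal, after the Poissonian reweighting intrinsic to being the ancestral spindle, is precisely the displayed density of $L_U$, the factor $1-\cosh(\sqrt{wu}/y)+(\sqrt{wu}/y)\sinh(\sqrt{wu}/y)$ being the signature of that $h$-transform. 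The asserted independence of $\chi_U$, $L_U$, $(R_U,S_U)$, and of these triples across $U$, is inherited from independence of the clades and from the branching structure within a clade; concatenating over $\beta$, and prepending the immigration contribution $\textsc{ip}(R_0,S_0)$ in the type-$0$ case, then gives \eqref{eq:type1:kernel} and \eqref{eq:type0:kernel}. The semigroup property is now immediate, as these are one-dimensional transition laws of a process already known to be Markov, so $P_{y_1}P_{y_2}=P_{y_1+y_2}$ needs no separate convolution argument.

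I expect steps (a) and (c) to be the main obstacle: one must cleanly isolate the ancestral leftmost spindle, condition it on survival using the correct $h$-transform, and identify its fixed-time density, while simultaneously showing the leftover offspring partition is genuinely independent of $L_U$ and has exactly the claimed exponentially tilted $\frac12$-stable law --- i.e.\ that the scaffolding excursion ``along level $y$'' really is governed by $R_U$ run for an independent $\ExpDist[(2y)^{-1/2}]$ length. The bookkeeping between the L\'evy--It\^o decomposition of the scaffolding excursion at level $y$ and the \BESQ[-1] spindle laws is where the substantive work lies; the remaining points --- finiteness of the surviving blocks, the diversity property via \cite{Paper0}, and passing from the single-clade marginal to the full concatenation --- are comparatively routine.
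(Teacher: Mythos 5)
This proposition is not proved in the paper at all: it is imported verbatim from \cite{Paper1} (Propositions 4.30, 5.4, 5.16), and the present paper explicitly declines to revisit the pathwise construction (``In this paper, we will not delve into this construction''). So there is no in-paper argument to compare yours against. That said, your outline is a faithful reconstruction of the route taken in the cited reference: the clade/scaffolding-and-spindles construction, the \skewer\ map at level $y$, the Borel--Cantelli finiteness of surviving blocks (which the paper itself records just after Definition \ref{def:type01}), the survival probability $1-e^{-w/2y}$, the tilted $\frac12$-stable subordinator for the level-$y$ crossings, and the diversity property via the local-time control of \cite{Paper0} are all the right ingredients.

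One caveat worth stating: your final step --- ``the semigroup property is now immediate, as these are one-dimensional transition laws of a process already known to be Markov'' --- quietly assumes the hardest part of \cite{Paper1}. In the logical order of the present paper the proposition comes first and the type-0/1 evolutions are then \emph{defined} as Markov processes with these kernels (Definition \ref{def:type01}), so one cannot appeal to a pre-existing Markov property without first proving it for the pathwise construction; that Markov property (a simple-Markov/branching property of the skewer of a Poissonian cloud of clades, uniformly in the level) is the substantive content of \cite{Paper1} and is not a one-line consequence of the construction. If you intend your sketch as a genuine proof rather than a pointer to the reference, that step needs to be carried out, not cited.
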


\begin{definition}\label{def:type01}
 \emph{Type-0 evolutions} are right-continuous Markov processes on $\cI$ with transition kernel \eqref{eq:type0:kernel}. \emph{IP-valued type-1 evolutions} are right-continuous Markov processes $((\gamma^y),y\ge0)$ on $\cI$ with transition kernel \eqref{eq:type1:kernel}. Such an IP-valued type-1 evolution is associated with a \emph{(pair-valued) type-1 evolution} $((m^y,\alpha^y),y\ge0)$ where $m^y$ is the mass of the leftmost block of $\gamma^y$, or zero if there is no leftmost block, while $\alpha^y$ comprises the rest of the partition, so that $(0,m^y)\concat\alpha^y = \gamma^y$.
 
 A type-1 evolution is said to \emph{degenerate} when it is absorbed at $(0,\emptyset)$. A type-0 evolution is never absorbed and is said to have degeneration time $\infty$.
\end{definition}

It is helpful to bear in mind that, for any interval partition $\beta$ and $y>0$, for $\chi_{(a,b)}$ as defined in Proposition \ref{prop:012:transn}, the sum $\sum_{U\in\beta}\chi_U$ is a.s.\ finite; this is easily proved using the Borel--Cantelli lemma. In other words, for a type-0/1 evolution with initial state $\beta$ a.s.\ only finitely many of $(0,L_U)\concat \textsc{ip}(R_U,S_U)$ contribute to the state of the type-0/1 evolution at the later time. From this it is seen that the IP-valued type-1 evolution a.s.\ has a leftmost block at any given time, unless it has degenerated before that time.

\begin{definition}\label{def:type2}
 Let $(x_1,x_2,\beta)\in [0,\infty)^2\times\cI$ with $x_1+x_2>0$. A \emph{type-2 evolution} starting from $(x_1,x_2,\beta)$ is a process of the form 
$((m_1^y,m_2^y,\alpha^y),y\ge 0)$, with $(m_1^0,m_2^0,\alpha^0)=(x_1,x_2,\beta)$. Its law is specified by the following construction.
 
 Let $\left(\fm^{(0)}, \gamma^{(0)} \right)$ be a type-1 evolution starting from $( x_2, \beta)$ independent of $\ff^{(0)}\sim \besq_{x_1}(-1)$. Let $Y_0:=0$ and $Y_1=\zeta(\ff^{(0)})$. For $0\le y \le Y_1$, define the type-2 evolution as
 \[
 \left( m_1^y, m_2^y, \alpha^y  \right):=\left(\ff^{(0)}(y), \fm^{(0)}(y), \gamma^{(0)}(y)  \right), \quad 0\le y \le Y_1.
 \]
 Now proceed inductively. Suppose, for some $n\ge 1$, these processes have been constructed until time $Y_n$ with $m_1^{Y_n}+m_2^{Y_n}>0$. Conditionally given this history, consider a type-1 evolution $(\fm^{(n)}, \gamma^{(n)})$ with initial condition $(0,\alpha^{Y_n}) = (0,\gamma^{(n-1)}(Y_n-Y_{n-1}))$ that is independent of $\ff^{(n)}$, a $\besq(-1)$ diffusion with initial value $\fm^{(n-1)}(Y_n-Y_{n-1})$. The latter equals $m_2^{Y_n}$ if $n$ is odd or $m_1^{Y_n}$ if $n$ is even.
 %$\ff^{(n)}\sim \besq_{\fm^{(n-1)}(Y_n-Y_{n-1})}(-1)$. Note that...
 Set $Y_{n+1}=Y_n+\zeta(\ff^{(n)})$. For $y\in(0,Y_{n+1}-Y_n]$, define
 \begin{align*}
  &\left(m_1^{Y_n+y},m_2^{Y_n+y},\alpha^{Y_n+y}\right):= \left\{\begin{array}{ll}
  (\fm^{(n)}(y),\ff^{(n)}(y),\gamma^{(n)}(y)), &\mbox{if $n$ is odd},\\[3pt]
  %\mbox{or}\quad&\left(m_1^{Y_n+y},m_2^{Y_n+y},\alpha^{Y_n+y}\right):=
  (\ff^{(n)}(y),\fm^{(n)}(y),\gamma^{(n)}(y)), &\mbox{if $n$ is even.}
  \end{array}\right.
 \end{align*}
 If, for some $n\ge 1$, $m_1^{Y_n}+m_2^{Y_n}=0$, set $(m_1^y,m_2^y,\alpha^y):=(0,0,\emptyset)$ for all $y>Y_n$ and $Y_{n+1} := \infty$. This time $Y_n$ is the \emph{lifetime} of the type-2 evolution. We also define the earlier time $D\in [Y_{n-1},Y_n)$ that corresponds to the degeneration of the type-1 evolution $(\fm^{(n)},\gamma^{(n)})$ at time $D-Y_{n-1}$. Equivalently, $D$ is the time at which either $m_1^y+\|\alpha^y\|$ or $m_2^y+\|\alpha^y\|$ hits zero and is absorbed. This is the \emph{degeneration time} of the type-2 evolution. 
\end{definition}

Though this fact is obscure in the above definition, the roles of the two top masses in a type-2 evolution are symmetric.

\begin{lemma}[Lemma 19 of \cite{Paper3}]\label{lem:type2:symm}
 If $\big(\big(m_1^y,m_2^y,\alpha^y\big),y\ge0\big)$ is a type-2 evolution then so is $\big(\big(m_2^y,m_1^y,\alpha^y\big),y\ge0\big)$. In particular, if $Y := \inf\big\{y>0\colon m_2^y = 0\big\}$ then given $\big(m_2^0,m_1^0,\alpha^0\big)$, the process $\big(m_2^y,y\in [0,Y]\big)$ is a $\BESQ_{m_2^0}(-1)$ stopped when it hits zero, while $\big(\big(m_1^y,\alpha^y\big),y\in [0,Y]\big)$ is conditionally distributed as an independent type-1 evolution stopped at time $Y$.
\end{lemma}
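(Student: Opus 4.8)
\emph{Proof strategy.} The plan is to first dispatch the ``in particular'' clause as a formal consequence of the displayed symmetry, and then to prove the symmetry itself by lifting to the pathwise construction of type-2 evolutions underlying \cite{Paper1,Paper3}. For the first point: granting the symmetry, $\big(\big(m_2^y,m_1^y,\alpha^y\big),y\ge0\big)$ is a type-2 evolution started from $\big(m_2^0,m_1^0,\alpha^0\big)$, and applying stage $0$ of the construction in Definition \ref{def:type2} to this process presents its first coordinate, run up to its first zero $Y$, as a $\besq_{m_2^0}(-1)$ diffusion (which is absorbed at $0$), run independently of a type-1 evolution started from $\big(m_1^0,\alpha^0\big)$ and stopped at $Y$; this is exactly the asserted joint description of $\big(m_2^y,y\in[0,Y]\big)$ and $\big(\big(m_1^y,\alpha^y\big),y\in[0,Y]\big)$. (If that type-1 evolution degenerates before $Y$ it rests at $(0,\emptyset)$, and the type-2 evolution then dies at $Y$, consistently with the stated convention.)

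For the symmetry, I would use the representation of a type-2 evolution started from $(x_1,x_2,\beta)$ as the \skewer\ process of a decorated scaffolding — a spectrally positive stable L\'evy process whose jumps carry $\besq(-1)$ ``spindles'' — in which $x_1$ and $x_2$ are the initial masses of two distinguished parts of the decoration while $\beta$ is carried by the remaining spindles. The key step is to produce a measure-preserving involution of the relevant space of such scaffoldings that (i) interchanges the two distinguished parts, hence interchanges $m_1^y\leftrightarrow m_2^y$ at every level $y$, and (ii) acts as the identity on every spindle feeding the interval-partition coordinate, hence fixes $\alpha^y$ at every level $y$. The image of a type-2 scaffolding from $(x_1,x_2,\beta)$ under this involution is a type-2 scaffolding from $(x_2,x_1,\beta)$ whose \skewer\ process is $\big(\big(m_2^y,m_1^y,\alpha^y\big),y\ge0\big)$; since the \skewer\ map carries the involution to the coordinate swap, and the stage times $Y_0<Y_1<\dots$, the degeneration time and the lifetime are all read off the scaffolding in an involution-invariant way, that process is a type-2 evolution started from $(x_2,x_1,\beta)$, which is the assertion.

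The hard part will be locating the correct involution, and this is genuine content rather than bookkeeping. In Definition \ref{def:type2} the two top masses do not enter as independent ingredients: they alternate between being the active $\besq(-1)$ and being the top block of the passive type-1 evolution, swapping roles each time the active mass is absorbed, so the symmetry is invisible to any relabeling of the stage construction. Indeed, unrolling that construction only shows that the symmetry is \emph{equivalent} to the statement that the law of a type-2 evolution does not depend on which coordinate is assigned the ``initially active'' role — the reindexed construction then being out of phase by one stage — a reformulation of the same depth rather than a simplification. Absent an independent characterization of the type-2 semigroup, the pathwise representation, in which the reflection symmetry of the scaffolding-and-spindles picture becomes transparent, seems to be the essential extra input; once the involution is available, the remaining verifications (equivariance of \skewer, invariance of the degeneration and lifetime times, compatibility with the absorbing-state convention) are routine.
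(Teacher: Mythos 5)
This lemma is not proved in the present paper at all: it is imported verbatim as Lemma 19 of \cite{Paper3}, so there is no in-paper argument to compare against. Your strategy is nonetheless the right one and is essentially what the cited source does: the symmetry is invisible in the alternating, stagewise Definition \ref{def:type2} (as the paper itself remarks), and the honest proof goes through the pathwise scaffolding-and-spindles construction (\cite[Definition 17]{Paper3}), in which the two top masses enter as two distinguished spindles playing manifestly exchangeable roles, and the interval-partition coordinate and all the stopping times ($Y_n$, degeneration, lifetime) are read off in a swap-invariant way. Your deduction of the ``in particular'' clause from the symmetry plus stage $0$ of Definition \ref{def:type2} is also correct.

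The one caveat is that, as written, your argument is a strategy rather than a proof: you correctly identify that the entire content lies in exhibiting the measure-preserving involution on the space of decorated scaffoldings, and then you do not exhibit it. Within the toolkit actually available in this paper (Definition \ref{def:type2} together with the type-0/1 transition kernels of Proposition \ref{prop:012:transn}), the symmetry cannot be reached, so either the involution must be constructed explicitly from the clade construction of \cite{Paper1,Paper3}, or the lemma must simply be cited --- which is what the paper does.
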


%Though they are not clear from the description above, the following propositions are easy consequences of the results cited below and the pathwise constructions of the type-0/1/2 evolutions.

\begin{proposition}[Theorem 1.4 of \cite{Paper1}, Theorem 2, Corollary 16 of \cite{Paper3}]\label{prop:012:pred}
 \begin{enumerate}
  \item Type-0/1/2 evolutions and IP-valued type-1 evolutions are Borel right Markov processes.\label{item:pred:Markov}
  \item Type-0 evolutions and IP-valued type-1 evolutions are path-continuous.\label{item:pred:0}
  %\item %If $((m^y,\alpha^y),y\ge0)$ is a type-1 evolution then $((0,m^y)\concat\alpha^y,y\ge0)$ is a diffusion. Also, 
  	%$m^y$ can only equal zero at times when $\alpha^y$ has no leftmost block.\label{item:pred:1}
  \item If $\big(\big(m_1^y,m_2^y,\alpha^y\big),y\ge0\big)$ is a type-2 evolution and $I(y) :\equiv \max\{n\ge 0\colon Y_n\le y\}\text{ mod }2$ is $\{1,2\}$-valued, where $(Y_n,n\ge0)$ is as in Definition \ref{def:type2}, then the \emph{IP-valued type-2 evolution} $\big(\big(0,m_{3-I(y)}^y\big)\concat \big(0,m_{I(y)}^y\big)\concat\alpha^y,y\ge0\big)$ is a diffusion. Also, %if $\alpha^y$ has a leftmost block for some $y\ge0$, then $m_1^y,m_2^y>0$. 
  	each of $m_1^y$ and $m_2^y$ can only equal zero when $\alpha^y$ has no leftmost block, and they can only both equal zero if $\alpha^y = \emptyset$.\label{item:pred:2}
 \end{enumerate}
 \noindent In particular type-0/1/2 evolutions are predictable Markov processes: the value of a type-0/1/2 evolution at any stopping time $Y$ is a (deterministic) function of its left limit at that time.
\end{proposition}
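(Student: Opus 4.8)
The plan is to obtain all three items from the pathwise constructions of type-$0/1/2$ evolutions in \cite{Paper1,Paper3}, using the explicit transition kernels of Proposition~\ref{prop:012:transn} for the Markov and measurability statements and the uniform local-time estimates of \cite{Paper0} for the continuity statements; I expect the only genuinely hard point to be the joint continuity of the diversity functional underlying item~\ref{item:pred:0}, with everything else being bookkeeping. For item~\ref{item:pred:0}, recall that in the pathwise construction the interval partition at time $y$ is a superposition of finitely many ``clades'' --- families of $\besq(-1)$ spindles carried by stable-$(1/2)$ scaffolding excursions --- only finitely many of which are active at once, by Borel--Cantelli. Continuity in $d_\cI$ then splits into two parts: (a) no block makes a macroscopic jump, which follows from path-continuity of $\besq(-1)$ together with the fact that clades enter and leave continuously at total mass $0$; and (b) the diversity $\IPLT_{\gamma^y}(t)$ of \eqref{eq diversitydef} is jointly continuous in $(y,t)$. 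Part (b) is the crux: the diversity is a rescaled local time of the scaffolding, and as $y$ varies the scaffolding excursions deform in a controlled way, so that (b) reduces precisely to the uniform-in-parameters continuity of local times of stable processes proved in \cite{Paper0}.

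For item~\ref{item:pred:Markov}, the simple Markov property of type-$0/1/2$ evolutions is built into their definitions, which proceed by renewal and concatenation, and the transition kernels are Borel --- explicitly so for type-$0$ and type-$1$ by Proposition~\ref{prop:012:transn}, and for type-$2$ through the inductive recipe of Definition~\ref{def:type2}. To upgrade to Borel right processes I would verify the standard hypotheses: the state spaces $\cI$ and $[0,\infty)^i\times\cI$ are Lusin (established in \cite{Paper1}); the paths are right-continuous --- by item~\ref{item:pred:0} for type-$0/1$, and by the argument for item~\ref{item:pred:2} below for type-$2$; and $\beta\mapsto P_yf(\beta)$ is suitably regular on bounded continuous $f$, via a coupling realising the construction continuously in the initial state. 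The strong Markov property and the Borel-right structure then follow from the standard criteria. This also yields the closing ``predictable'' assertion: for type-$0/1$ the paths are continuous, so the value at any stopping time equals the left limit; for type-$2$ the coordinate processes may jump, but every jump is to a state that is a deterministic function of the left limit (a vanishing block being removed, or a newly-born leftmost block taking over at mass $0$ with the former one demoted into the partition).

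For the structural claims in item~\ref{item:pred:2}, by the symmetry of Lemma~\ref{lem:type2:symm} it suffices to show $m_1^y=0$ forces $\alpha^y$ to have no leftmost block. From Definition~\ref{def:type2}, $m_1^y=0$ arises only either during a window $(Y_n,Y_{n+1})$ on which $m_1$ equals the leftmost-block mass $\fm^{(n)}$ of a pair-valued type-$1$ evolution whose interval-partition part is $\alpha^y$ --- where, by the definition of pair-valued type-$1$ evolutions, $\fm^{(n)}=0$ exactly when that partition has no leftmost block --- or at a crossover $Y_n$ at which $\ff^{(n-1)}=m_1$ is absorbed, in which case $\alpha^{Y_n}$ is the interval-partition part of a pair-valued type-$1$ evolution evaluated at the independent time $\zeta(\ff^{(n-1)})$: an IP-valued type-$1$ evolution a.s.\ has a leftmost block at any fixed time unless it has degenerated (\cite{Paper1}), and removing that block --- the only one not accumulating at $0$ --- leaves no leftmost block, while in the degenerate case $\alpha^{Y_n}=\emptyset$. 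The same bookkeeping shows that $m_1^y=m_2^y=0$ only once the construction has reached its absorbing state $(0,0,\emptyset)$, so $\alpha^y=\emptyset$ there.

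Granting these, path-continuity of the IP-valued type-$2$ evolution follows interval by interval: on each $(Y_n,Y_{n+1})$ the process $\big(0,m_{3-I(y)}^y\big)\concat\big(0,m_{I(y)}^y\big)\concat\alpha^y$ is a continuous functional of an IP-valued type-$1$ evolution and an independent $\besq(-1)$ path, hence continuous by item~\ref{item:pred:0}; and across a crossover $Y_n$ the absorbed mass $\ff^{(n-1)}$ tends to $0$, so both one-sided limits of the concatenation collapse to $(0,m)\concat\alpha$ with $m$ the surviving mass, continuity following from continuity at time $0$ of the freshly started type-$1$ evolution together with the structural claim, which ensures $\alpha^{Y_n}$ has no leftmost block so that this type-$1$ evolution emanates cleanly from it; continuity into $\emptyset$ is immediate. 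Finally, the IP-valued type-$2$ evolution is Markov because forgetting the labelling of its two top blocks loses no Markovian information --- precisely where Lemma~\ref{lem:type2:symm} enters, the two top blocks playing symmetric roles --- and, being a path-continuous strong Markov process, it is a diffusion. The one substantial step in all of this is the continuity of the diversity in item~\ref{item:pred:0}; the remainder is organisational.
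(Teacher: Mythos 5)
The paper offers no proof of this proposition: it is imported wholesale, the bracketed attribution to Theorem 1.4 of \cite{Paper1} and Theorem 2 and Corollary 16 of \cite{Paper3} being the entire justification. Your sketch is therefore doing work the paper deliberately outsources, and as a reconstruction of how those papers argue it is essentially faithful: the Borel-right structure does come from the Lusin state space plus right-continuity plus regularity of the explicit kernels; the genuinely hard point is indeed the joint continuity of the diversity, which in \cite{Paper1} is reduced to the uniform control of local times of stable processes in \cite{Paper0} exactly as you say (this is also how the authors describe the dependency in their Table~1); and the structural claims of item (iii) do follow from Definition \ref{def:type2} together with the observation that, in the transition kernel \eqref{eq:type1:kernel}, what follows the leftmost block $(0,L_U)$ is the range of a driftless subordinator with infinite L\'evy measure, hence has blocks accumulating at its left end. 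Two places where your sketch compresses real content from the cited papers: the Markov property of the \emph{IP-valued} type-2 evolution is not just ``forgetting labels loses nothing'' --- the ordering $\big(0,m_{3-I(y)}^y\big)\concat\big(0,m_{I(y)}^y\big)\concat\alpha^y$ encodes which top mass most recently emerged from the partition, and showing the unordered state is Markov is precisely where the symmetry of Lemma \ref{lem:type2:symm} must be combined with the structural claim (the two top masses are distinguishable from the state only when one of them is zero, which the structural claim ties to the partition having no leftmost block); and the continuity of $\beta\mapsto P_y f(\beta)$ needed for the Borel-right verification is itself nontrivial and occupies a substantial part of \cite{Paper1}. At the level of a sketch, though, your route is the route of the cited papers, and nothing in it would fail.
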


\begin{proposition}[Concatenation properties; Proposition 9 of \cite{Paper3}]\label{prop:012:concat}
 Let $((m^y,\alpha^y),y\ge0)$ denote a type-1 evolution.
 \begin{enumerate}[label=(\roman*),ref=(\roman*)]
  \item \label{item:012concat:BESQ+0}
  	Let $\zeta$ denote the first time that $m^y$ hits zero. Then $(m^y\cf\{y\le\zeta\},y\ge0)$ is a \BESQ[-1] and $(\alpha^y,y\in [0,\zeta])$ distributed as an independent type-0 evolution stopped at $\zeta$.
  \item \label{item:012concat:0+1}
  	If $(\widetilde\alpha^y,y\ge0)$ is an independent type-0 evolution then $(\widetilde\alpha^y\concat(0,m^y)\concat\alpha^y,y\ge0)$ is a type-0 evolution.
  \item \label{item:012concat:1+1}
  	Suppose instead that $((\widetilde m^y,\widetilde\alpha^y),y\ge0)$ is an independent type-1 evolution and let $\widetilde D$ denote its degeneration time. Then the following process is a type-1 evolution.
  \begin{equation}\label{eq:012concat:1+1}
   \left\{\begin{array}{ll}
    (\widetilde m^y,\widetilde\alpha^y\concat(0,m^y)\concat\alpha^y)	& \text{for }y\in [0,\widetilde D),\\
    (m^y,\alpha^y)	& \text{for }y\ge \widetilde D.
   \end{array}\right.
  \end{equation}
  \item \label{item:012concat:2+1}
  	Suppose instead that $((\widetilde m_1^y,\widetilde m_2^y,\widetilde\alpha^y),y\ge0)$ is an independent type-2 evolution. Let $\widetilde D$ denote its degeneration time. Let $(\widehat x_1,\widehat x_2)$ equal $(\widetilde m_1^{\widetilde D},m^{\widetilde D})$ if $m_2^{\widetilde D} = 0$ (i.e.\ if label 2 is the label that degenerates at time $\widetilde{D}$), or equal $(m^{\widetilde D},\widetilde m_2^{\widetilde D})$ otherwise (if label 1 degenerates). Let $((\widehat m_1^y,\widehat m_2^y,\widehat\alpha^y),y\ge0)$ be a type-2 evolution with initial state $(\widehat x_1,\widehat x_2, \alpha^{\widetilde D})$, conditionally independent of the other processes given its initial state. The following is a type-2 evolution:
  \begin{equation}\label{eq:012concat:2+1}
   \left\{\begin{array}{ll}
    (\widetilde m_1^y,\widetilde m_2^y,\widetilde\alpha^y\concat(0,m^y)\concat\alpha^y)	& \text{for }y\in [0,\widetilde D),\\
    (\widehat m_1^{y-\widetilde D},\widehat m_2^{y-\widetilde D},\widehat\alpha^{y-\widetilde D})	& \text{for }y\ge \widetilde D.
   \end{array}\right.
  \end{equation}
  %Then $((\widetilde m_1^y,\widetilde m_2^y,\widetilde\alpha^y\concat(0,m^y)\concat\alpha^y),y\in [0,\widetilde D))$ is a stopped type-2 evolution.
  %Then the process that equals $(\widetilde m^y,\widetilde\alpha^y\concat(0,m^y)\concat\alpha^y)$ up to time $\widetilde D$ and subsequently equals $(m^y,\alpha^y)$ is a type-1 evolution.
 \end{enumerate}
 Moreover, the concatenated evolutions constructed in \ref{item:012concat:0+1}, \ref{item:012concat:1+1}, and \ref{item:012concat:2+1} each possess the strong Markov property in the larger filtrations generated by their constituent parts.
\end{proposition}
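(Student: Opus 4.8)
The plan is to prove all parts within the scaffolding-and-spindles framework of \cite{Paper1,Paper3} that underlies Definitions \ref{def:type01}--\ref{def:type2}. Recall the guiding principle: each type-$0$/$1$/$2$ evolution is realised as the level-$y$ \skewer\ of a point measure of \BESQ[-1] spindles carried along a spectrally positive stable L\'evy process (the \emph{scaffolding} of \cite{Paper1}). A type-$1$ evolution is built from an initial spindle $\ff^{(0)}$, a \BESQ[-1] diffusion started from the top mass, whose cross-sections give the leftmost block $(0,m^y)$, together with an independent clade point measure encoding the starting interval partition; a type-$0$ evolution is built from a clade point measure together with the additional left-edge mechanism recorded by the $\textsc{ip}(R_0,S_0)$ term in \eqref{eq:type0:kernel}; and a type-$2$ evolution stacks two type-$1$-like structures, glued through the degeneration/pull-out rule of Definitions \ref{def:type01}--\ref{def:type2}. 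The single fact that does the work is \emph{additivity of the skewer}: applying the level-$y$ \skewer\ to a left-to-right concatenation of spindle data yields the $\concat$-concatenation of the results of applying it to the individual pieces, and both the degeneration rule and the ``pull a leftmost block out to become the new top mass'' rule depend only on the left edge of the data. Each assertion then reduces to exhibiting the stated splitting or grafting at the level of spindle data and recognising that the outcome has the law of data of the claimed type.

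For \ref{item:012concat:BESQ+0}: up to the absorption time $\zeta$ of $\ff^{(0)}$ the leftmost block of the type-$1$ evolution is exactly the cross-section of $\ff^{(0)}$, so $(m^y\cf\{y\le\zeta\},y\ge0)$ \emph{is} the \BESQ[-1] diffusion $\ff^{(0)}$ stopped at $0$; and $(\alpha^y,y\in[0,\zeta])$ is the \skewer\ of the clade point measure alone, which by construction is a type-$0$ evolution, and, being a function of data independent of $\ff^{(0)}$, is independent of it. Part \ref{item:012concat:0+1} is then immediate from additivity: prepending the clade-plus-left-edge data of the independent type-$0$ evolution to the initial-spindle-plus-clades data of the type-$1$ evolution produces data of the first kind---the former initial spindle being absorbed as one more interior clade---whose \skewer\ is a type-$0$ evolution and equals $\widetilde\alpha^y\concat(0,m^y)\concat\alpha^y$ for every $y$.

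For \ref{item:012concat:1+1} and \ref{item:012concat:2+1}, additivity again handles $y<\widetilde D$: there the displayed process is the \skewer\ of the spindle data of the outer type-$1$ (resp.\ type-$2$) evolution with the type-$1$ data $\big((0,m^y)\concat\alpha^y\big)$ grafted as an interior clade into its interval-partition part, and one checks that this concatenated data has the law of type-$1$ (resp.\ type-$2$) data up to the degeneration level $\widetilde D$ of the outer part. At $\widetilde D$ one invokes the predictability recorded in Proposition \ref{prop:012:pred}: the outer type-$1$ is absorbed at $(0,\emptyset)$, the concatenated state becomes $\big(0,(0,m^{\widetilde D})\concat\alpha^{\widetilde D}\big)$, the type-$1$ pull-out rule promotes the leftmost block $(0,m^{\widetilde D})$ to the top, giving $(m^{\widetilde D},\alpha^{\widetilde D})$, and the process continues as the given $(m^y,\alpha^y)_{y\ge\widetilde D}$, matching \eqref{eq:012concat:1+1}. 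In \ref{item:012concat:2+1} only one of the two top masses of the outer type-$2$ evolution has vanished at $\widetilde D$; the survivor, together with the reservoir partition $\alpha^{\widetilde D}$, forms the initial state $(\widehat x_1,\widehat x_2,\alpha^{\widetilde D})$ of a fresh type-$2$ evolution---Lemma \ref{lem:type2:symm} reconciling the two possible labellings of the survivor---and additivity, applied to the two stretches of scaffolding meeting at level $\widetilde D$, yields \eqref{eq:012concat:2+1}. I expect the splice at $\widetilde D$ to be the main obstacle: one must verify that re-reading the spindle data after excising the degenerated left portion reproduces exactly the emigration/pull-out dynamics of Definitions \ref{def:type01}--\ref{def:type2} with nothing lost across the join, which is where the strong Markov property of the scaffolding at the level-stopping time $\widetilde D$ and the conditional independence built into Definition \ref{def:type2} enter.

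The final (``moreover'') claim follows as in Proposition \ref{prop:012:pred}\ref{item:pred:Markov}: the filtration generated by the constituent parts is generated by the scaffolding, a L\'evy process, together with an independent Poisson point measure of clades, so at a stopping time $T$ of that filtration the spindle data strictly beyond $T$ is conditionally independent of the past given the current cross-section; since the concatenated evolution on $[T,\infty)$ is a fixed measurable function of that cross-section and of the post-$T$ data, it is strong Markov in the enlarged filtration.
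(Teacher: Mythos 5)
The paper gives no proof of this proposition; it is imported wholesale as Proposition 9 of \cite{Paper3}, so the only comparison available is with the strategy of that reference. Your outline — realise each evolution as the skewer of scaffolding-and-spindle data, exploit additivity of the skewer under left-to-right concatenation of that data, and handle the splice at $\widetilde D$ via the strong Markov property of the scaffolding at level-crossing times together with Lemma \ref{lem:type2:symm} — is exactly the framework in which the cited result is proved, and your observation that the ``moreover'' clause about larger filtrations genuinely needs the pathwise construction (not just the semigroups) is correct. It is worth noting that for the distributional identities in \ref{item:012concat:BESQ+0} and \ref{item:012concat:0+1} a shorter route is available entirely within the present paper: the type-0 kernel \eqref{eq:type0:kernel} applied to $\widetilde\beta\concat(0,x)\concat\beta$ visibly factors as the $\concat$-product of the type-0 kernel at $\widetilde\beta$ with the type-1 kernel \eqref{eq:type1:kernel} at $(0,x)\concat\beta$, since the block-indexed summands are independent and $(R_0,S_0)\stackrel{d}{=}(R_U,S_U)$; one-dimensional marginals plus the Markov property then yield the finite-dimensional distributions.

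The one place your sketch is genuinely too quick is part \ref{item:012concat:BESQ+0}. You assert that $(\alpha^y,y\in[0,\zeta])$ is ``the skewer of the clade point measure alone'' and hence a function of data independent of $\ff^{(0)}$. But the type-0 kernel requires the left-edge immigration term $\textsc{ip}(R_0,S_0)$, and those blocks do not come from the clades attached to the blocks of $\beta$: they come from the portion of the \emph{leftmost} clade lying to the right of the initial spindle, whose time horizon is tied to $\zeta(\ff^{(0)})$. Verifying that this portion reproduces exactly the type-0 entrance mechanism, and that the resulting process of $\alpha^y$ is independent of the stopped \BESQ[-1], is precisely the content of the claim; as written, your argument for (i) assumes what it must prove. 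The same care is needed at the points in (ii)--(iv) where you assert that the grafted data ``has the law of type-1 (resp.\ type-2) data,'' which is why the paper defers to \cite{Paper3} rather than re-deriving these facts.
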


\begin{remark}\label{rmk:decomp_ker}
 Note that in assertion \ref{item:012concat:2+1}, we may consider the conditional joint distribution of $((m^y,\alpha^y),y\in [0,\wt D))$ and $((\widetilde m_1^y,\widetilde m_2^y,\widetilde\alpha^y),y\in [0,\wt D))$ given the path of the concatenated process of display \eqref{eq:012concat:2+1} prior to time $\wt D$. Such a regular conditional distribution exists because $(\cI,d_{\cI})$ is Lusin \cite[Theorem 2.7]{Paper1} and these evolutions have c\`adl\`ag paths. We can do the same for the assertions \ref{item:012concat:0+1} and \ref{item:012concat:1+1} of Proposition \ref{prop:012:concat}. In each case, the independence of the two constituent evolutions is lost under this conditioning; it is only recovered after integrating over the law of the concatenated type-$i$ evolution.
\end{remark}
%As mentioned above, the type-0/1/2 evolutions are given pathwise constructions in \cite{Paper1,Paper3}. For the purpose of this paper, we denote by  $(\cN_i,\Sigma(\cN_i),\bP^i_\mu)$, $i=0,1,2$, the canonical probability space of \emph{type-$i$ data}, on which a type-$i$ evolution $(\Gamma^y,y\ge0)$ with $\Gamma^0\sim\mu$ can be constructed. This space comes equipped with a filtration, which we will denote by $(\cF^y_i,y\ge0)$, so that $(\Gamma^y,y\ge0)$ possess the strong Markov property in this filtration. The particulars of these spaces and the type-$i$ data objects are unimportant here; details of this formalism can be read from Section 1.3 and Definition 17 in \cite{Paper3} and Definitions 3.36, 4.16, and 4.20 in \cite{Paper1}. What is important is that on these spaces, we can carry out the decompositions noted in Proposition \ref{prop:012:concat}, decomposing about any block in the evolution, starting from any time.

Recall from \cite{Paper1} that a Poisson--Dirichlet interval partition with parameters $\big(\frac12,\frac12\big)$, called \PDIP[\frac12,\frac12], is an interval partition whose ranked block sizes have law $\PoiDir[\frac12,\frac12]$, with the blocks exchangeably ordered from left to right. Let $A\sim \BetaDist[\frac12,\frac12]$, $(A_1,A_2,A_3)\sim\distribfont{Dirichlet}\big(\frac12,\frac12,\frac12\big)$, and $\bar\beta\sim\PDIP[\frac12,\frac12]$ independent of each other. A probability distribution on $\cI$ is said to be a \emph{pseudo-stationary law for the type-0 evolution} if it is the law of $M\bar\beta$, i.e.\ $\bar\beta$ scaled by $M$, for an independent random mass $M>0$. Likewise a law on $[0,\infty)\times\cI$,
respectively $[0,\infty)^2\times\cI$, is a \emph{pseudo-stationary law for the type-1, {\rm resp.} type-2, evolution} if it is the law of any independent multiple of $(A,(1-A)\bar\beta)$, resp.\ $(A_1,A_2,A_3\bar\beta)$. This language is in reference to the following proposition.

\begin{proposition}[Theorem 6.1 of \cite{Paper1}, Proposition 33 of \cite{Paper3}]\label{prop:012:pseudo}
 For $i=0,1,2$, if a type-$i$ evolution has a pseudo-stationary initial distribution then, given that it does not degenerate prior to time $y$, its conditional law at time $y$ is also pseudo-stationary. In the special case that its initial mass has law $\GammaDist[\frac{1+i}{2},\gamma]$, then its mass at time $y$ has conditional law $\GammaDist[\frac{1+i}{2},\gamma/(2\gamma y+1)]$.
\end{proposition}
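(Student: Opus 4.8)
The plan is to prove the three cases inductively in $i$, bootstrapping from $i=0$ via the concatenation structure of Proposition~\ref{prop:012:concat}, and in each case to separate two phenomena: (a) the \emph{total mass}, conditioned on non-degeneration before time $y$, stays in the gamma family $\GammaDist[\frac{1+i}{2},\,\cdot\,]$ and moves its rate parameter from $\gamma$ to $\gamma/(2\gamma y+1)$; and (b) the \emph{normalized configuration} remains distributed as the fixed Poisson--Dirichlet/beta/Dirichlet object ($\bar\beta$, resp.\ $(A,(1-A)\bar\beta)$, resp.\ $(A_1,A_2,A_3\bar\beta)$) and is independent of the total mass. The two algebraic engines are the beta--gamma (Dirichlet--gamma) calculus---independent $\GammaDist[a_j,\gamma]$ variables sum to a $\GammaDist[\sum_j a_j,\gamma]$ variable whose normalized vector is $\DirDist[a_1,a_2,\ldots]$-distributed and independent of the sum---and standard squared-Bessel facts: the additivity of $\besq$ diffusions, the explicit transition Laplace transform of $\besq(\delta)$, and the absolute continuity between $\besq(\delta)$ and $\besq(4-\delta)$ up to the first hitting time of $0$. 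Path-continuity and the strong Markov property (Proposition~\ref{prop:012:pred}) make conditioning on $\{\text{degeneration time}>y\}$ well-posed throughout.

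\emph{Base case $i=0$.} Since a type-$0$ evolution never degenerates, only (a) and (b) are at issue. I would compute the image of the law of $M\bar\beta$, $\bar\beta\sim\PDIP[\frac12,\frac12]$ and $M\sim\GammaDist[\frac12,\gamma]$, under the explicit semigroup~\eqref{eq:type0:kernel}: a Laplace-functional computation on $\cI$ should match the thinning by the $\chi_U$, the insertions $(0,L_U)$ and $\textsc{ip}(R_U,S_U)$, and the leading immigration $\textsc{ip}(R_0,S_0)$ against the jump structure of a stable-$(\frac12)$ subordinator run until an independent exponential time, which is exactly a $\PDIP[\frac12,\frac12]$ partition rescaled by a fresh independent $\GammaDist[\frac12,\gamma/(2\gamma y+1)]$ mass. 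Equivalently, one can invoke the pathwise (scaffolding-and-spindles) construction of \cite{Paper1}, in which a type-$0$ evolution is the skewer of a point process of $\besq(-1)$ excursions over a spectrally positive stable-$(\frac32)$ process; pseudo-stationarity then follows from the self-similarity of that scaffolding, the $\PDIP[\frac12,\frac12]$ skewer marginal being the one identified in \cite{PitmWink09}. Either route shows the total mass evolves as a $\besq(1)$ with $\GammaDist[\frac12,\,\cdot\,]$ as its entrance family, which is (a)+(b) for $i=0$.

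\emph{Step $i=1$.} By Proposition~\ref{prop:012:concat}\ref{item:012concat:BESQ+0}, until the first zero $\zeta$ of the top mass a type-$1$ evolution is a $\besq(-1)$ top mass together with an independent type-$0$ evolution on the interval-partition coordinate; across $\zeta$, by Definition~\ref{def:type01} the leftmost blocks are successively promoted to the top, and degeneration of the type-$1$ evolution is exactly the first zero of the total mass, which by $\besq$-additivity applied epoch by epoch is a $\besq(0)$. From a pseudo-stationary start the total mass is $\GammaDist[1,\gamma]$, and a $\besq(0)$ started from $\GammaDist[1,\gamma]$ and conditioned not to hit $0$ before time $y$ is---via the $\besq(0)$--$\besq(4)$ absolute continuity and the explicit $\besq(4)$ Laplace transform---$\GammaDist[1,\gamma/(2\gamma y+1)]$, with survival probability $(1+2\gamma y)^{-1}$; this gives (a). For (b), at a fixed time the total mass splits between the top block and the interval partition as $\GammaDist[\frac12,\gamma]+\GammaDist[\frac12,\gamma]$, so the split is $\BetaDist[\frac12,\frac12]$ and independent of the mass, matching $(A,(1-A)\bar\beta)$; using the $i=0$ conclusion and compatibility of this representation with the promotion mechanism, one checks it is preserved both by the evolution and by the conditioning.

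\emph{Step $i=2$, and the main obstacle.} The essential new difficulty is that degeneration of a type-$2$ evolution is \emph{not} the vanishing of its total mass: at the degeneration time one has $\alpha^y=\emptyset$ and one of $m_1^y,m_2^y$ equal to $0$, while the other may be positive, so conditioning on non-degeneration is strictly finer than conditioning on positive total mass. I would therefore run the alternating construction of Definition~\ref{def:type2}: on each epoch $[Y_n,Y_{n+1}]$, Lemma~\ref{lem:type2:symm} (applied conditionally, using the strong Markov property at $Y_n$ from Proposition~\ref{prop:012:concat}) presents the evolution as one label running as an independent $\besq(-1)$ until its first zero and the other label together with the interval partition running as an independent type-$1$ evolution; non-degeneration of the type-$2$ evolution on that epoch is precisely non-degeneration of the embedded type-$1$ subsystem, for which Step $i=1$ already supplies a conditional pseudo-stationary description, while the dormant label---conditioned, via the $\besq(-1)$--$\besq(5)$ absolute continuity, to be positive at time $y$---contributes an independent mass in the same gamma family. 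The hard part will be the bookkeeping that ties the epochs together: one must show that the $\DirDist[\frac12,\frac12,\frac12]$-with-$\GammaDist[\frac32,\,\cdot\,]$-mass representation is reproduced after each promotion of a leftmost block into the temporarily empty top, that the rate parameters of the gamma constituents stay synchronized so that the beta--gamma calculus continues to apply, that the conditioning events across successive epochs compose correctly, and that the independence of the final total mass from the normalized configuration survives the infinite concatenation. I expect this to reduce to an induction on $n$ whose hypothesis is exactly the conditional pseudo-stationarity of the triple (active label, dormant label, interval partition), glued by the beta--gamma calculus, with the symmetry of Lemma~\ref{lem:type2:symm} making the two labels interchangeable from one epoch to the next; reading off the total mass through this induction and applying the $\besq$ Laplace transform once more then yields the conditional law $\GammaDist[\frac32,\gamma/(2\gamma y+1)]$.
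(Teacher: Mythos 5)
First, note that this proposition is not proved in the present paper at all: it is imported verbatim from Theorem 6.1 of \cite{Paper1} and Proposition 33 of \cite{Paper3}, so there is no in-paper argument to compare against. Your high-level roadmap --- verify invariance of the pseudo-stationary family under the explicit kernels for $i=0$, then bootstrap to $i=1,2$ through the concatenation and symmetry structure of Propositions \ref{prop:012:concat} and Lemma \ref{lem:type2:symm}, separating the total-mass law from the normalized configuration via beta--gamma calculus --- is indeed the strategy of the cited proofs.

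The genuine gap is that the one step carrying all the mathematical content is asserted rather than carried out. For $i=0$ you say a ``Laplace-functional computation on $\cI$ should match'' the kernel of Proposition \ref{prop:012:transn} against a rescaled $\PDIP[\frac12,\frac12]$; that computation (matching the Bernoulli thinning $\chi_U$, the densities of $L_U$, and the killed subordinators $\textsc{ip}(R_U,S_U)$ against the jump structure of a stable-$\frac12$ subordinator run to an independent exponential time) is the substance of the proof in \cite{Paper1}, and nothing in your outline guarantees it closes. For $i=1$, part (b) is circular: ``one checks it is preserved both by the evolution and by the conditioning'' is precisely the theorem, and the independent-$\besq(-1)$-plus-type-0 decomposition you lean on holds only up to the first zero $\zeta$ of the top mass, after which the promotion mechanism couples the two coordinates, so the $\BetaDist[\frac12,\frac12]$ split at time $y$ cannot be read off from the time-$0$ split without redoing the kernel computation. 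For $i=2$ you correctly isolate the difficulty (degeneration is strictly finer than mass extinction) and then defer exactly that difficulty (``I expect this to reduce to an induction''); moreover the dormant label is not ``conditioned to be positive at time $y$'' --- its hitting zero merely starts a new epoch and is not part of the non-degeneration event --- so the proposed decomposition of the conditioning over epochs is not yet correctly set up. In short: right roadmap, but the destination is reached only by the computations you have postponed.
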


\begin{proposition}[Theorem 1.5 of \cite{Paper1}, Theorem 3 of \cite{Paper3}]\label{prop:012:mass}
 For $i=0,1,2$, the total mass process for a type-$i$ evolution is a \BESQ[1-i]. In particular, this total mass process is Markovian in the filtration of the type-$i$ evolution.
\end{proposition}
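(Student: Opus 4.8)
I would prove all three cases by reducing to the additivity (branching) property of squared Bessel processes: for independent $\besq_{x_1}(\delta_1)$ and $\besq_{x_2}(\delta_2)$, their sum is a $\besq_{x_1+x_2}(\delta_1+\delta_2)$, with the usual absorption convention when a dimension is $\le 0$. For $i=0$ and $i=1$ I would read the total-mass transition directly off the explicit kernels of Proposition \ref{prop:012:transn} and then invoke a criterion for Markov functions (Dynkin; cf.\ the intertwining of Rogers--Pitman); for $i=2$ I would combine the type-$1$ result with the concatenation construction of Definition \ref{def:type2}.

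\textbf{Types $0$ and $1$.} Fix $y>0$ and $\beta\in\cI$. By Proposition \ref{prop:012:transn}, with $\chi_U$, $L_U$, $(R_U,S_U)$, $(R_0,S_0)$ as there (jointly independent; the subordinators are driftless, so $R(S-)=R(S)=\|\textsc{ip}(R,S)\|$ a.s.; and $\sum_U\chi_U<\infty$ a.s.), the total mass at time $y$ of a type-$1$ evolution from $\beta$ is $\sum_{U\in\beta}\chi_U\big(L_U+R_U(S_U)\big)$, and of a type-$0$ evolution it is this plus the independent immigration mass $R_0(S_0)$. A short computation with the given Laplace exponent and $S_0\sim\ExpDist[(2y)^{-1/2}]$ gives $\bE[e^{-\lambda R_0(S_0)}]=(1+2\lambda y)^{-1/2}$, i.e.\ the immigration mass is distributed as $\besq_0(1)$ at time $y$. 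For a single block of mass $w$, using independence of $L_U$ from $(R_U,S_U)$, its contribution has Laplace transform $e^{-w/2y}+\big(1-e^{-w/2y}\big)\,\bE[e^{-\lambda L_U}](1+2\lambda y)^{-1/2}$; I would verify, by integrating the explicit density of $L_U$ against $e^{-\lambda u}$, that this equals $\exp\!\big(-w\lambda/(1+2\lambda y)\big)$, i.e.\ the block contribution is distributed as $\besq_w(0)$ at time $y$ (note the matching extinction probabilities $e^{-w/2y}$). Summing over blocks and using additivity, the total mass at time $y$ has the law of $\besq_{\|\beta\|}(0)$ (type $1$), resp.\ $\besq_{\|\beta\|}(1)$ (type $0$), at time $y$. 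Since this depends on $\beta$ only through $\|\beta\|$ and type-$0/1$ evolutions are Markov (Proposition \ref{prop:012:pred}\ref{item:pred:Markov}), the Markov-mapping criterion promotes this one-step identity to the assertion that the total-mass process is a $\besq(1-i)$ in the filtration of the type-$i$ evolution.

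\textbf{Type $2$.} Here I would use Definition \ref{def:type2}. On each interval $[Y_n,Y_{n+1})$ the total mass equals $\ff^{(n)}+\fm^{(n)}+\|\gamma^{(n)}\|$ (time shifted by $Y_n$), the sum of the independent top block $\ff^{(n)}\sim\besq(-1)$ and the total mass $\fm^{(n)}+\|\gamma^{(n)}\|$ of the type-$1$ evolution $(\fm^{(n)},\gamma^{(n)})$, which by the previous step is a $\besq(0)$. Using additivity at the level of the defining SDEs --- if $dZ_j=\theta_j\,dy+2\sqrt{Z_j}\,dB_j$ for $j=1,2$ with $B_1\perp B_2$, then $2\sqrt{Z_1}\,dB_1+2\sqrt{Z_2}\,dB_2=2\sqrt{Z_1+Z_2}\,dW$ for a Brownian motion $W$ by L\'evy's characterisation, so $Z_1+Z_2$ solves the $\besq(\theta_1+\theta_2)$ SDE --- the total mass solves the $\besq(-1)$ SDE on $[Y_n,Y_{n+1})$. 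At $Y_{n+1}$ the mass $\fm^{(n)}+\|\gamma^{(n)}\|$ remaining at the end of the phase is merely redistributed between the new top block and the new type-$1$ partition, so the path is continuous there, and a little bookkeeping shows the total mass reaches $0$ exactly at the lifetime of the type-$2$ evolution, consistently with $\besq(-1)$ absorption at $0$. Pasting the driving Brownian motions across consecutive phases (which are stopping times in the filtration generated by the constituent parts, where the strong Markov property holds by Proposition \ref{prop:012:concat}, cf.\ \eqref{eq:012concat:2+1}) produces a single Brownian motion $W$ for which the total mass solves the $\besq(-1)$ SDE on all of $[0,\infty)$; hence it is a $\besq(-1)$, and, being adapted together with its driving $W$ to the type-$2$ evolution's filtration, it is Markovian there.

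\textbf{Main obstacle.} With additivity available the conceptual content is modest; the genuine work is the single-block Laplace-transform identity $\bE[e^{-\lambda L_U}]=(1+2\lambda y)^{1/2}\big(\exp(-w\lambda/(1+2\lambda y))-e^{-w/2y}\big)\big/\big(1-e^{-w/2y}\big)$ obtained from the explicit density in Proposition \ref{prop:012:transn}. Expanding the $\cosh$ and $\sinh$, this reduces to integrals of the form $\int_0^\infty u^{-3/2}e^{-au}\cosh(b\sqrt u)\,du$ and $\int_0^\infty u^{-1/2}e^{-au}\sinh(b\sqrt u)\,du$, evaluated by analytic continuation and differentiation in the auxiliary parameters of the elementary identity $\int_0^\infty u^{-3/2}(1-e^{-su})\,du=2\sqrt{\pi s}$; it is routine but must be done with care. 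A secondary, purely combinatorial, obstacle is reconciling absorption of the $\besq(-1)$ total mass in the type-$2$ case with the phase/degeneration/lifetime structure of Definition \ref{def:type2}, i.e.\ tracking precisely when each $\besq(0)$ and $\besq(-1)$ constituent hits $0$. (The original proofs in \cite{Paper1,Paper3} instead obtain these identities from the scaffolding-and-spindles construction via a Ray--Knight-type argument, which we do not pursue here.)
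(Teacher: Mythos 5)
Your proposal is correct in outline and, as you yourself note at the end, takes a genuinely different route from the cited proofs: the paper itself offers no argument (it cites Theorem 1.5 of \cite{Paper1} and Theorem 3 of \cite{Paper3}), and those references obtain the total-mass law from the pathwise scaffolding-and-spindles construction, whereas you read it directly off the explicit transition kernels of Proposition \ref{prop:012:transn} and then invoke $\besq$ additivity and a Dynkin/Rogers--Pitman step. The single-block Laplace identity you isolate as the main obstacle does check out: with $a=\lambda+1/(2y)$ and $b=\sqrt w/y$, expanding $\cosh$ and $\sinh$ in power series and reindexing collapses the two integrals to
\[
I_1+I_2=\frac{\sqrt\pi}{2}\sum_{m\ge 1}\frac{b^{2m}}{4^{m-1}m!\,a^{m-1/2}}=2\sqrt{\pi a}\,\bigl(e^{b^2/4a}-1\bigr),
\]
which, since $b^2/4a=w/\bigl(2y(1+2\lambda y)\bigr)$ and $\sqrt a=\sqrt{(1+2\lambda y)/2y}$, simplifies exactly to the transform you wrote; together with $\bE[e^{-\lambda R_0(S_0)}]=(1+2\lambda y)^{-1/2}$ this gives the $\besq_w(0)$ and $\besq_0(1)$ marginals block by block, and additivity finishes types $0$ and $1$. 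What your route buys is a self-contained argument requiring only the stated transition kernels and elementary Laplace calculus; what the pathwise route buys is that it identifies the total mass with a $\besq(1-i)$ as a process from the start, so there is no passage from a one-time marginal identity to a process-level statement, and Markovianity in the larger filtrations is immediate. Two points worth tightening if you wrote this out: for type $2$, pasting the driving Brownian motions across the $Y_n$ should appeal explicitly to the strong Markov property at those stopping times (cf.\ Proposition \ref{prop:012:concat}) and to pathwise uniqueness for the $\besq(-1)$ SDE, in order to conclude Markovianity of $\|\cdot\|$ in the full type-$2$ filtration and not merely that $\|\cdot\|$ solves the SDE; and for types $0$ and $1$ the Dynkin step should be phrased as the conditional law of $\|\cdot\|^y$ given the initial state factoring through the initial total mass, which is precisely what the block-by-block Laplace computation delivers.
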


%\section{Tree shapes, $k$-trees, and evolutions}\label{sec:def_thm}
\section{Definitions of $k$-tree evolutions and statements of main results}\label{sec:def_thm}

\subsection{State spaces and killed $k$-tree evolutions}

A \emph{tree shape} is a rooted binary combinatorial tree with leaves labelled by a non-empty finite set $A\subset\bN$, with the convention that the root vertex has degree one. We refer to the root as the ancestor of the other vertices, and the edge incident to the root as the ancestor of other edges. 
We will use genealogical language, such as ``child/parent/sibling/uncle,'' to describe relations between vertices or between edges of the tree shape. %, while also refering to the ultimate ancestor of all other vertices as ``root,'' and the childless descendants as ``leaves.'' NOTE: possibly come back to this 
Think of each edge of a tree shape (and the branch point at the end of the edge farthest from the root) as being labeled by the set of labels of leaves that are separated from the root by that edge. This collection of edge labels specifies the tree shape. I.e.\ a tree shape can be represented as a collection $\mathcal{H}$ of subsets of $A$ with certain properties, including: $A\in\mathcal{H}$, representing the edge incident to the root, and $\{i\}\in\mathcal{H}$ for each $i\in A$, representing edges incident to the leaves. 
%
%A common representation of a tree shape is as a binary hierarchy on $A$, i.e.\ a collection of subsets of $A$, including $A$ itself, all singleton subsets of $A$ and for every non-singleton member $B$ the two parts of a unique binary partition of $B$. 
%
We use the related representation that omits the singletons. %In other words, we label every branch point (and its unique edge towards the root) by the set of labels that, when the branch point (edge) is removed, are in components other than the root component. Our terminology sees this as a set of \emph{edges} labelled by non-singletons, including the label $A$ for the edge from the root to the first branch point.
We denote the set of such (representations of) tree shapes by $\TShape_A$. %In this way, the formalism is such that
For example, 
$$\bT_{[3]}^{\rm shape}=\{\{[3],\{2,3\}\},\ \{[3],\{1,3\}\},\ \{[3],\{1,2\}\}\},\qquad\mbox{edge sets of }\ 
	\parbox{0.9cm}{\includegraphics[height=1cm]{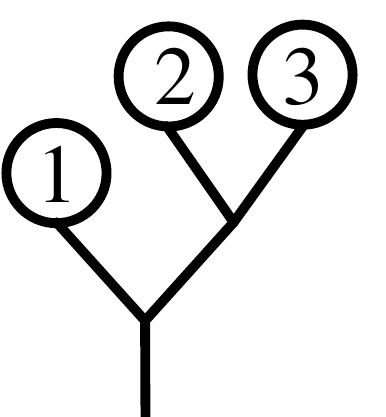}}\;,\;\;%
    \parbox{0.9cm}{\includegraphics[height=1cm]{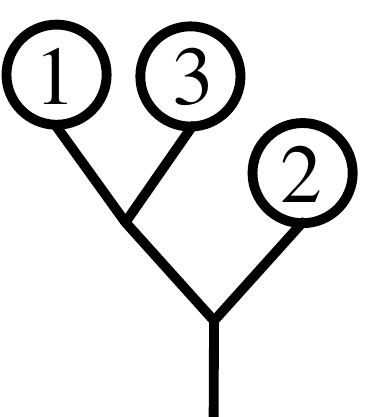}}\;,\;\;%
    \parbox{0.9cm}{\includegraphics[height=1cm]{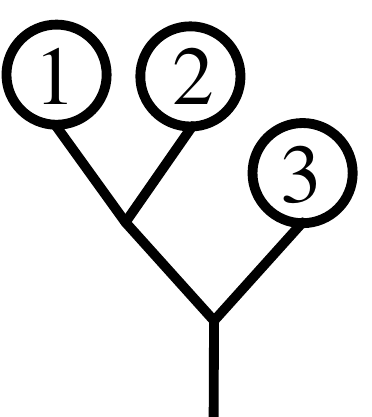}}.$$

Given a tree shape $\ft\in\TShape_A$, for $E\in \ft\cup \{\{i\}\colon i\in A\}$, $E\neq A$, we write $\parent{E}$ to denote the parent edge,
$$\parent{E} = \bigcap_{F\in \ft\colon E\subsetneq F}F.$$ 
We say an internal edge $E\in\ft$ is of type 0, 1, or 2, if 0, 1, or 2 of its children are leaf edges, respectively. For example, in the tree shape $\{[3],\{1,3\}\}$, the edge $\{1,3\}$ is of type 2 with leaves 1 and 3 as its children, while $[3]$ is of type 1 with leaf 2 as its child.

For a finite, non-empty set $A\subset\bN$, an \emph{$A$-tree} is a tree shape $\ft\in\bT_{A}^{\rm shape}$ equipped with non-negative weights on leaf edges and interval partitions marking the internal edges:
\begin{equation}
 \bTInt_{A}=\bigcup_{\ft\in\bT^{\rm shape}_{A}}\{\ft\}\times[0,\infty)^A\times\cI^\ft.
\end{equation}
For $k\ge1$ we refer to elements of $\bTInt_{k} := \bTInt_{[k]}$ as \emph{$k$-trees}. Consider $T=(\ft,(x_i,i\in A),(\beta_E,E\in\ft))$ $\in \bTInt_{A}$. We write $\|T\|=\sum_{i\in A}x_i+\sum_{E\in\ft}\|\beta_E\|$. Think of this representation in connection with Figure \ref{fig:B_k_tree_proj} and the description of the $k$-tree projection of a Brownian CRT in the introduction. The $x_i$ represent masses of subtrees corresponding to leaves of the tree represented by $\ft$, while the $\beta_E$ represent totally ordered collections of subtree masses. % along a path in the CRT corresponding to an internal edge of $\ft$. 
%The $x_i$ represent weights on the leaf-edges of (the tree shape represented by) $\ft$. Each interval partition $\beta_E$ represents a totally ordered collection of subtree masses branching off along a path within a rooted, weighted $\BR$-tree. See Figure \ref{fig:k_tree_proj}. 
In this interpretation, the intervals in $\beta_E$ that are closer to $0$ represent subtrees that are farther from the root of the CRT.

We refer to each top mass $x_i$, $i\in A$, and each interval in each of the partitions $\beta_E$, $E\in\ft$, as a \emph{block} of $T$. Formally, we denote the set of blocks by
\begin{equation}
 \block(\ft,(x_i,i\in A),(\beta_E,E\in\ft)) := A\cup\{(E,a,b)\colon E\in\ft,\,(a,b)\in \beta_E\}.
\end{equation}
We will write $\|\ell\|$ for the \emph{mass} of $\ell\in \block(T)$; i.e.\ for the top masses $\|\ell\|:=x_\ell$, $\ell\in A$, and for the other blocks $\|\ell\|=\|(E,a,b)\|:=b-a$. %, $\ell\in\{(E,a,b)\colon (a,b)\in\beta_E,E\in\ft\}$. 
Then $\sum_{\ell\in \block(T)}\|\ell\|=\|T\|$.

For each label set $A$ and each $\ft\in\TShape_A$, we topologize the set of $A$-trees with shape $\ft$ by the product over the topologies in the components. This can be metrized by setting
\begin{equation}\label{eq:ktree:metric_1}
 d_{\bT}(T,T') = \sum_{i\in A} |x_i-x'_i| + \sum_{E\in\ft}\dI(\beta_E,\beta'_E)
\end{equation}
for $T,T'\in\bTInt_A$ with shapes $\ft=\ft'$. Within the set of trees with a given label set $A$ and shape, there is a single $A$-tree of zero total mass; we topologize the space of all $A$-trees, for all finite label sets $A$, by identifying all of these trees of zero mass, thereby gluing these spaces together. This is metrized by
\begin{equation}\label{eq:ktree:metric_2}
 d_{\bT}(T,T') = \sum_{i\in A} x_i + \sum_{i\in A'}x'_i + \sum_{E\in\ft}\dI(\beta_E,\emptyset) + \sum_{E\in\ft'}\dI(\beta'_E,\emptyset)
\end{equation}
for $T\in\bTInt_A$, $T'\in\bTInt_{A'}$ with differing tree shapes. We note that $\dI(\beta,\emptyset) = \max\{\|\beta\|,\IPLT_\beta(\infty)\}$ for any $\beta\in\cI$.
%NOTE: Matthias says minimize this discussion & the following proposition, perhaps fold it into pf of Borel right Markov.

\begin{proposition}\label{prop:Lusin}
 $\big(\big(\bigcup_A \bTInt_A \big)\big/ \big\{T\in \bigcup_A \bTInt_A\colon \|T\|=0\big\},d_{\bT}\big)$ is a Lusin space.
\end{proposition}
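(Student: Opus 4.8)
The plan is to show that the quotient space is a countable disjoint union (glued at a single point) of spaces each of which is Lusin, and then invoke the standard fact that such a construction is again Lusin. First I would observe that $\TShape_A$ is a finite set for each finite $A\subset\bN$, and that there are only countably many finite subsets $A\subset\bN$; hence $\bigcup_A \bTInt_A = \bigcup_A \bigcup_{\ft\in\TShape_A}\{\ft\}\times[0,\infty)^A\times\cI^\ft$ is a countable union of pieces indexed by pairs $(A,\ft)$. For each such piece, $[0,\infty)^A$ is Polish (a closed subset of a Euclidean space), and $\cI$ is Lusin by \cite[Theorem 2.7]{Paper1}; a finite product of Lusin spaces is Lusin, so each piece $\{\ft\}\times[0,\infty)^A\times\cI^\ft$ is Lusin.

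Next I would handle the gluing. Inside each piece $\{\ft\}\times[0,\infty)^A\times\cI^\ft$ there is a single point of total mass zero, namely $(\ft,(0,\dots,0),(\emptyset,\dots,\emptyset))$; call the set of all these points (over all $(A,\ft)$) the set $Z$. The quotient identifies all points of $Z$ to a single point $z_0$. I would argue that the complement of $\{z_0\}$ in the quotient, with its subspace topology, is the disjoint-sum topology on $\bigsqcup_{(A,\ft)}\big(\{\ft\}\times[0,\infty)^A\times\cI^\ft \setminus Z\big)$: indeed, by the formulas \eqref{eq:ktree:metric_1}–\eqref{eq:ktree:metric_2}, within a fixed shape the metric is the product metric, while two trees of differing shape are at distance bounded below by a positive quantity unless both have small mass, so away from $z_0$ the pieces are topologically separated. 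A countable disjoint union of Lusin spaces is Lusin (it embeds as a Borel subset of the product of one of the spaces with $\bN$, or directly: it is a countable union of open Lusin subspaces), so the punctured quotient is Lusin. Finally, adding back the single point $z_0$: the quotient is the one-point "compactification-like" extension obtained by adjoining $z_0$ as a limit point, and it is metrized by $d_{\bT}$, which makes it a separable metric space in which the punctured space is an open Borel subset with Lusin topology; since $\{z_0\}$ is closed and trivially Lusin, and the quotient is the union of these two Borel pieces, it is Lusin. (Equivalently: a separable metrizable space that is a countable union of Lusin subspaces is Lusin.)

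The main obstacle I expect is purely bookkeeping about the topology induced by the glued metric \eqref{eq:ktree:metric_2} near the identified zero-mass point: one must check that the topology on the quotient generated by $d_{\bT}$ really is the quotient topology from the identification (equivalently, that a sequence converges to $z_0$ in $d_{\bT}$ iff the total masses and the diversities $\IPLT_{\beta_E}(\infty)$ all tend to $0$, using $\dI(\beta,\emptyset)=\max\{\|\beta\|,\IPLT_\beta(\infty)\}$), and that away from $z_0$ distinct shape-pieces do not interfere. Once the topology is correctly identified, the Lusin conclusion is a soft consequence of the stability of the Lusin property under finite products, countable disjoint unions, and countable unions of Borel subspaces, together with the fact that $[0,\infty)$ is Polish and $\cI$ is Lusin. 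I would keep the written proof short: state the decomposition, cite \cite[Theorem 2.7]{Paper1} for $\cI$, cite the standard permanence properties of Lusin spaces (e.g.\ from Bourbaki or Schwartz), and spend one sentence confirming the topology near $z_0$.
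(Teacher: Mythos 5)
Your argument is correct and is essentially the paper's own proof, just written out in full detail: the paper simply cites \cite[Theorem 2.7]{Paper1} for $(\cI,\dI)$ being Lusin and then notes that the gluing of countably many products of this space with Euclidean spaces is again Lusin. Your additional care about the topology near the identified zero-mass point is a reasonable elaboration of what the paper leaves implicit.
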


\begin{proof}
 From \cite[Theorem 2.7]{Paper1}, $(\cI,\dI)$ is Lusin. Thus, so is the above gluing of countably many product topologies of this with the Euclidean topology.
\end{proof}

We are interested in $k$-tree-valued Markov processes that avoid certain degenerate states. For example, states with multiple zero top masses will be inaccessible by our evolutions. We also exclude states having a zero top mass with an empty partition on its parent edge. These latter states will arise as left limits but force jumps ``away from the boundary.'' Specifically,
\begin{align}
  \tdTInt_A &:= \left\{ T = (\ft,(x_i,i\in A),(\beta_E,E\in\ft))\in\bTInt_A\, \middle|
    \begin{array}{l}
      x_i+x_j>0\mbox{ for all }E=\{i,j\}\in\ft\\
      \mbox{and }x_i\!+\!\big\|\beta_{\parent{\{i\}}}\big\| \!=\! 0\mbox{ for at most one }i\!\in\! A
    \end{array}\!\!\right\}\!\notag\\ \label{eq:k_tree_spaces}
  \TInt_{A} &:= \left\{ T = \left(\ft,(x_i,i\in A),(\beta_E,E\in\ft)\right)\in\tdTInt_A \,\middle|\, 
  		x_i+\big\|\beta_{\parent{\{i\}}}\big\| > 0\mbox{ for all }i\in A.\right\}.
\end{align}
Let $I\colon\tdTInt_A\rightarrow A\cup\{\infty\}$ record $I(T)=i$ if $x_i+\|\beta_{\parent{\{i\}}}\| = 0$ and set $I(T)=\infty$ if $T\in\TInt_A$. In the former case, we say that \emph{label $i$ is degenerate in $T$}.

Because we will only ever consider single leaf trees in the case where the leaf has label 1, we take the convention that $\tdTInt_{1} = [0,\infty)$ and $\TInt_{1} = (0,\infty)$, with this real number representing the mass on the leaf 1 component, which is then the total mass of the tree. We define $\TInt_\emptyset = \{0\}$.

As noted above Proposition \ref{prop:Lusin}, we identify all trees of zero mass. We take the convention of writing $0$ to denote such a tree. 
Furthermore, let $\partial\notin \bigcup_A\bTInt_A$ denote an isolated cemetary state.

\begin{definition}[Killed $A$-tree evolution]\label{def:killed_ktree}
 Consider an $A$-tree $T = (\ft,(m^0_i,i\in A),(\alpha^0_E,E\in\ft))$ $\in\TInt_A$ for some finite $A\subset\bN$ with $\#A\ge 2$.  
 \begin{itemize}
  \item For each type-2 edge $E = \{i,j\}\in \ft$, let $((m_i^y,m_j^y,\alpha_E^y),y\ge0)$ denote a type-2 evolution from initial state $(m_i^0,m_j^0,\alpha_E^0)$, and let $D_E$ denote its degeneration time.
  \item For each type-1 edge $E = \parent{\{i\}}\in \ft$, let $((m_i^y,\alpha_E^y),y\ge0)$ denote a type-1 evolution from initial state $(m_i^0,\alpha_E^0)$, and let $D_E$ denote its absorption time in $(0,\emptyset)$.
  \item For each type-0 edge $E\in\ft$, let $(\alpha_E^y,y\ge0)$ denote a type-0 evolution from initial state $\alpha^0_E$ and define $D_E=\infty$.
 \end{itemize}
 We take these evolutions to be jointly independent. Let $D = \min_{E\in\ft}D_E$. Define $\cT^y = (\ft,(m_i^y,i\in A),(\alpha_E^y,E\in\ft))$ for $y\in [0,D)$ and $\cT^y = \partial$ for $y\ge D$. This is the \emph{killed $A$-tree evolution from initial state $T$}. We call $D$ the \emph{degeneration time} of the evolution.
 
For $\#A=1$, define $(\cT^y)$ to be a \BESQ[-1] starting from $T$, killed upon hitting zero.
\end{definition}

In light of this construction, in a $k$-tree, we refer to each type-2 edge partition together with its two top masses, $(x_i,x_j,\beta_{\{i,j\}})$, as a \emph{type-2 compound}. Likewise, for a type-1 edge $E = \parent{\{i\}}$, we call $(x_i,\beta_E)$ a \emph{type-1 compound}, and for each type-0 edge $F$, the partition $\beta_F$ is a \emph{type-0 compound}. In Figure \ref{fig:B_k_tree_proj}, $\beta_{[5]}^{(5)}$ is a type-0 compound, $\big(X_2^{(5)},\beta_{\{1,2,4\}}^{(5)}\big)$ is a type-1 compound, and $\big(X_3^{(5)},X_5^{(5)},\beta_{\{3,5\}}^{(5)}\big)$ and $\big(X_1^{(5)},X_4^{(5)},\beta_{\{1,4\}}^{(5)}\big)$ are type-2 compounds.%180917

%\texttt{NOTE: need alternative language for ``components'' -- sometimes use this term to refer to individual top masses or spinal partitions; sometimes to refer to the collection of top-masses and spinal partitions that collectively evolve as a type-$i$ evolution between degen times}

\subsection{Label swapping and non-resampling $k$-tree evolution}\label{sec:non_resamp_def}

In the theory of Borel right Markov processes, \emph{branch states} are states that are not visited by the right-continuous Markov process but may be attained as a left limit, triggering an instantaneous jump. We will now define non-resampling $k$-tree evolutions with branch states in $\tdTInt_A\setminus\TInt_A$. When a type-1 or type-2 compound in an $A$-tree degenerates, we project this compound down and the evolution proceeds with one fewer leaf label.

%In the theory of Borel right Markov processes, \emph{branch states} are states that are not visited by the right-continuous Markov process but may be attained as a left limit, triggering an instantaneous jump. We will now define non-resampling and resampling $k$-tree evolutions with branch states in $\tdTInt_A\setminus\TInt_A$. In the non-resampling evolution, when a component of the tree degenerates, we apply the swap-and-reduce map $\varrho$ to drop the degenerate component, and the evolution proceeds with one less leaf label. In the resampling evolution at degeneration times we first apply $\varrho$ and then jump into a random state according to the resampling kernel $\Lambda$, which reinserts the label lost in degeneration, so that the evolution always retains all $k$ labels.

However, in \cite{Paper2} we found that in the discrete regime, in order to construct a family of projectively consistent Markov processes, it was necessary to have degenerate labels sometimes swap places with other nearby, higher labels before dropping the degenerate component and its label with it. The following two definitions %describe this mechanism.
lead to an analogous construction in the present setting. The role of this mechanism in preserving consistency will be evident in the proof of Proposition \ref{prop:Dynkin:killed}.

\begin{figure}
 \centering
 \input{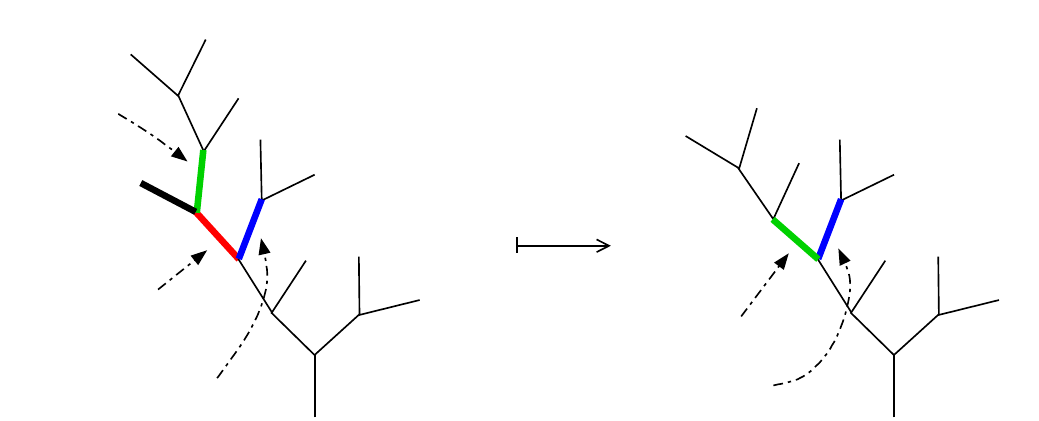_t}
 \caption{Example of the swap-and-reduce map on a tree shape. Least labels in the two subtrees descended from sibling and uncle of leaf edge $\{i\}$ are shown in bold.\label{fig:label_swap}}
\end{figure}
\medskip

\textbf{Swap-and-reduce map for tree shapes. }
%\begin{definition}[Swap-and-reduce map for tree shapes]\label{def:shape:swapred}
 Consider a tree shape $\ft\in\TShape_A$ on some label set with $\#A\ge2$ and label $i\in A$. Let $J(\ft,i) = \max\{i,a,b\}$ where $a$ and $b$ are, respectively, the smallest labels of sibling and uncle of leaf edge $\{i\}$ in $\ft$. In the special case that its parent $\parent{\{i\}}$ is $A$, in which case $\{i\}$ has no uncle, we define $b=0$. In the example in Figure \ref{fig:label_swap},
 $$\ft = \big\{\, [9],\, \{5,7\},\, \{1,2,3,4,6,8,9\},\, \{1,2,4,6,8,9\},\, \{1,6\},\, \{2,4,8,9\},\, \{4,8,9\},\, \{4,8\}  \,\}.$$
 Leaf edge $\{2\}$ has sibling $\{4,8,9\}$ and uncle $\{1,6\}$, so $a=4$, $b=1$, $J(\ft,2) = \max\{2,4,1\} = 4$.
 
 We define a \emph{swap-and-reduce map on tree shapes}, $\widetilde\varrho\colon \TShape_A\times A \to \bigcup_{a\in A}\TShape_{A\setminus\{a\}}$ mapping $(\ft,i)$ to the tree shape $\ft'$ obtained from $\ft$ by first swapping labels $i$ and $j=J(\ft,i)$, then deleting the leaf subsequently labeled $j$ and contracting away its parent branch point. Formally, $\ft'$ is the image of $\ft\setminus\big\{\parent{\{i\}}\big\}$ under the map $\phi_{\ft,i}$ that modifies label sets $E\in\ft$ by first deleting label $i$ from the sets, and then replacing label $j$ by $i$. 
 %In general, $\phi$ is a two-to-one map on the power set of $A$, where $\phi(B) = \phi(B\cup\{i\})$ for any $B\subseteq A$. This means that for $\ft\in\TShape_A$, the parent and sibling of $\{i\}$ have the same image, but the restriction of $\phi$ to $\ft\setminus\big\{\parent{\{i\}}\big\}$ is injective. 
 %So, for example, if $i=2$ and $j = 4$, then
 %$$\{4,8,9\} \mapsto \{2,8,9\},\quad  \{2,3\}\mapsto \{3\} ,\quad \{2,4,6,9\}\mapsto \{2,6,9\} ,\quad \{1,6\}\mapsto \{1,6\}.$$
 In the example in Figure \ref{fig:label_swap}, $\phi_{\ft,2}$ is
 \newcommand{\mapsdown}{\rotatebox[origin=c]{-90}{\ensuremath{\mapsto}}}
 \newcommand{\pcom}{\hphantom{,}}
 \definecolor{darkblue}{rgb}{0.25,0.25,0.7}
 \definecolor{mygreen}{rgb}{0,0.75,0}
 \definecolor{gold}{rgb}{.8,0.7,0}
 $$
  \begin{array}{r@{}c@{}c@{}c@{}c@{}c@{}c@{}c@{}c@{}l}
   \ft = \big\{ & \pcom[9], & \pcom\{5,7\}, & \pcom\{1,2,3,4,6,8,9\}, & \pcom\{1,2,4,6,8,9\}, & \pcom\textcolor{blue}{\{1,6\}}, & \pcom\textcolor{red}{\{2,4,8,9\}}, & \pcom\textcolor{mygreen}{\{4,8,9\}}, & \pcom\{4,8\}\pcom&\big\}\\
   	& \mapsdown & \mapsdown & \mapsdown & \mapsdown & \mapsdown &  & \mapsdown & \mapsdown & \\
   \ft' = \big\{& \pcom[9]\!\setminus\!\{4\}, & \pcom\{5,7\}, & \pcom\{1,2,3,6,8,9\}, & \pcom\{1,2,6,8,9\}, & \pcom\textcolor{blue}{\{1,6\}}, & & \pcom\textcolor{mygreen}{\{2,8,9\}}, & \pcom\{2,8\}\pcom &\big\}.
  \end{array}
 $$
%\end{definition}

Note that in the preceding definition, $\phi_{\ft,i}(E_1) = \phi_{\ft,i}(E_2)$ if and only if $E_1\setminus\{i\} = E_2\setminus\{i\}$. But the only distinct edges $E_1\neq E_2$ in $\ft$ with this relationship are the sibling and parent of leaf edge $\{i\}$. Thus, by excluding $\parent{\{i\}}$ from its domain, we render $\phi_{\ft,i}$ injective and ensure that the range of this map is an element of $\TShape_{A\setminus\{J(\ft,i)\}}$.

The swap-and-reduce map on tree shapes induces a corresponding map for degenerate $A$-trees, where labels are swapped and the degenerate component is projected away, but everything else remains unchanged.
\medskip

\textbf{Swap-and-reduce map for $A$-trees. }
%\begin{definition}[Swap-and-reduce map for $A$-trees]\label{def:ktree:swapred}
 Consider $T = (\ft,(x_a,a\in A),(\beta_E,E\in\ft))\in\tdTInt_A\setminus\TInt_A$. Recall that for such an $A$-tree, $I(T)$ denotes the unique index $i\in A$ for which $x_i+\big\|\beta_{\parent{\{i\}}}\big\| = 0$. We define $J\colon\tdTInt_A\setminus\TInt_A\rightarrow A$ by  
 $J(T) = J(\ft,I(T))$, as defined above. The \emph{swap-and-reduce map on $A$-trees} is the map
 $$\varrho\colon \tdTInt_A\setminus \TInt_A \rightarrow \bigcup_{j\in A}\TInt_{A\setminus\{j\}}$$
 that sends $T$ to 
 $(\widetilde\varrho(T),(x'_a, a\in A\setminus\{J(T)\}),(\beta'_E, E\in \widetilde\varrho(T)))$ where: (1) $x'_a = x_a$ for $a\neq I(T)$, (2) $x'_{I(T)} = x_{J(T)}$ if $I(T)\neq J(T)$, and (3) $\beta'_{E} = \beta_{\phi_{\ft,I(T)}^{-1}(E)}$ for each $E\in \widetilde\varrho(T)$, where $\phi_{\ft,I(T)}$ is the injective map defined above.%in Definition \ref{def:shape:swapred}. %Following \cite{Paper2}, 
%\end{definition}

\begin{definition}[Non-resampling $k$-tree evolution]\label{def:nonresamp_1}
 Set $A_1 = [k]$ and fix some $\cT^0_{(1)} =T\in \TInt_A$. Inductively for $1\le n<k$, let $(\cT^y_{(n)},y\in [0,\Delta_n))$ denote a killed $A_{n}$-tree evolution from initial state $\cT_{(n)}^0$, run until its degeneration time $\Delta_n$, conditionally independent of $(\cT_{(j)},j < n)$ given its initial state. If $n<k-1$, we then set $A_{n+1} = A_n\setminus \{J(\cT_{(n)}^{\Delta_n-})\}$ and let $\cT_{(n+1)}^0 = \varrho(\cT_{(n)}^{\Delta_n-})$.
 
 For $1\le n\le k$ we define $D_n = \sum_{j=1}^n\Delta_j$ and set $D_0=0$. For $y\in [D_{n-1},D_{n})$ we define $\cT^y = \cT_{(n)}^{y-D_{n-1}}$. For $y\ge D_k$ we set $\cT^y = 0 \in \TInt_{\emptyset}$. Then $(\cT^y,y\ge0)$ is a \emph{non-resampling $k$-tree evolution from initial state $T$}. We say that at each time $D_n$, label $I(\cT^{D_n-})$ has \emph{caused degeneration} and label $J(\cT^{D_n-})$ is \emph{dropped in degeneration}.
\end{definition}

\subsection{Resampling $k$-tree evolutions and de-Poissonization}\label{sec:resamp_def}

We now define a resampling $k$-tree evolution in which at degeneration times we first apply $\varrho$ and then jump into a random state according to a resampling kernel, which reinserts the label lost in degeneration, so that the evolution always retains all $k$ labels.
\medskip

\textbf{Label insertion operator $\oplus$. }%\label{def:insertion}
 \emph{For tree shapes.} Consider $\ft\in\TShape_A$. Given an edge $F\in\ft\cup\{\{a\}\colon a\in A\}$, we define $\ft\oplus (F,j)$ to be the tree shape with labels $A\cup\{j\}$ formed by replacing edge $F$ by a path of length 2, and inserting label $j$ as a child of the new branch point in the middle of the path. Formally, for each $E\in\ft$ we define (a) $\phi(E)=E\cup\{j\}$ if $F\subsetneq E$ and (b) $\phi(E)=E$ otherwise. Then $\ft\oplus (F,j)$ equals $\phi(\ft)\cup \{F\cup\{j\}\}$.
 
 \emph{For $A$-trees.} Consider an $A$-tree $T = (\ft,(x_m,m\in A),(\beta_E,E\in\ft))$, a label $i\in A$, and a 2-tree 
 %$U = (y_i,y_j,\gamma)$ with $\|U\|=1$. We define $T\oplus (i,j,U)$ to be the $A\cup\{j\}$ tree formed by replacing the leaf block $i$ with weight $x_i$ by the rescaled 2-tree $x_i\odot U$, thereby introducing label $j$. Formally, 
 $U = (y_1,y_2,\gamma)\in \TInt_{2}$ with $\|U\| = 1$, where we have dropped the tree shape because all elements of $\TInt_{2}$ have the same shape. We define $T\oplus (i,j,U)$ to be the $(A\cup\{j\})$-tree formed by replacing the leaf block $i$ and its weight $x_i$ by the rescaled 2-tree in which label $i$ gets weight $x_iy_1$, a new label $j$ gets weight $x_iy_2$, and their new parent edge bears partition $x_i\gamma$. 
 %$x_i\odot U$, thereby introducing label $j$. 
 Formally, 
  $T\oplus (i,j,U) = (\ft\oplus(\{i\},j),(x'_m,m\in A\cup\{j\}),(\beta'_E,E\in\ft\oplus(\{i\},j)))$, where: (i) $(x'_i,x'_j,\beta'_{\{i,j\}}) = x_i U$, (ii) $x'_m=x_m$ for $m\notin \{i,j\}$, and (iii) $\beta'_E = \beta_{\phi^{-1}(E)}$ for $E\neq \{i,j\}$, where $\phi$ is as above.
 
 Now consider a block $\ell = (F,a,b)\in \block(T)$. This block splits $\beta_F$ into $\beta_{F,0}\concat (0,b-a)\concat \beta_{F,1}$. We define $T\oplus (\ell,j,U)$ to be the $A\cup\{j\}$ tree formed by inserting label $j$ into block $\ell$. In this definition, $U$ is redundant. Formally, $T\oplus (\ell,j,U) = (\ft\oplus (F,j),(x'_m,m\in A\cup\{j\}),(\beta'_E,E\in\ft\oplus(F,j)))$, where: (i) $x'_m=x_m$ for $m\neq j$, (ii) $\beta'_E = \beta_{\phi^{-1}(E)}$ for $E\notin \{F,F\cup\{j\}\}$, and (iii) $(\beta'_F,x'_j,\beta'_{F\cup\{j\}}) = (\beta_{F,0},b-a,\beta_{F,1})$.
\medskip

\textbf{Resampling kernel for $A$-trees. }%\label{def:ktree:resamp}
 For finite non-empty $A\subset\bN$ and $j\in\bN\setminus A$, we define the \emph{resampling kernel} as the distribution of the tree obtained by inserting label $j$ into a block chosen at random according to the masses of blocks and, if the chosen block is a top mass $x_i$, then replacing the block by a rescaled Brownian reduced $2$-tree. More formally, we define a kernel $\Lambda_{j,A}$ from $\TInt_A$ to $\TInt_{A\cup\{j\}}$ by
 \begin{equation}
  \int_{T^\prime\in\bT_{A\cup\{j\}}^{\rm int}}\varphi(T^\prime)\Lambda_{j,A}(T,dT^\prime)=\sum_{\ell\in \block(T)}\frac{\|\ell\|}{\|T\|}\int_{U\in\TInt_{2}}\varphi(T\oplus(\ell,j,U))Q(dU),
 \end{equation}
 where $Q$ denotes the distribution of a Brownian reduced $2$-tree with leaf labels $\{1,2\}$, as defined in the introduction.
%\end{definition}

In \eqref{eq:B_ktree_resamp}, we describe how these resampling kernels generate a Brownian reduced $k$-tree.

\begin{definition}[Resampling $k$-tree evolution]\label{def:resamp_1}
 Fix some $\cT^0_{(1)} =T\in \TInt_{k}$. Inductively for $n\ge1$, let $(\cT^y_{(n)},y\in [0,\Delta_n))$ denote a killed $k$-tree evolution from initial state $\cT_{(n)}^0$, run until its degeneration time $\Delta_n$, conditionally independent of $(\cT_{(j)},j< n)$ given its initial state. We define $\cT_{(n+1)}^0$ to have conditional distribution $\Lambda_{J_n,[k]\setminus\{J_n\}}\big(\varrho(\cT_{(n)}^{\Delta_n-}),\cdot\,\big)$ given $(\cT_{(j)},j\le n)$, where $J_n = J(\cT_{(n)}^{\Delta_n-})$.
 
 We define $D_n = \sum_{j=1}^n\Delta_j$ and set $D_0=0$. For $y\in [D_{n-1},D_{n})$ we define $\cT^y = \cT_{(n)}^{y-D_{n-1}}$. For $y\!\ge\! D_\infty:=\sup_{n\ge 0}D_n$ we set $\cT^y\! =\! 0\! \in\! \TInt_{\emptyset}$. Then $(\cT^y,y\!\ge\!0)$ is a \emph{resampling $k$-tree evolution with initial state $T$}.
\end{definition}

\begin{theorem}\label{thm:Markov}
 Killed $A$-tree evolutions and non-resampling and resampling $k$-tree evolutions are Borel right Markov processes but are not quasi-left-continuous; thus they are not Hunt processes.
\end{theorem}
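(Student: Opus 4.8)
The plan is to obtain the Borel right property by building these processes out of the type-$0/1/2$ evolutions — a finite product, then a killing, then a piecing-together — and to witness the failure of quasi-left-continuity through a predictable jump, from which non-Hunt-ness is automatic. Throughout I use that $(\cI,\dI)$ is Lusin \cite[Theorem 2.7]{Paper1}, hence so are the finite products in sight and the glued space of Proposition \ref{prop:Lusin}, and that type-$0/1/2$ evolutions are Borel right Markov (Proposition \ref{prop:012:pred}\ref{item:pred:Markov}). For a killed $A$-tree evolution with $\#A\ge 2$, Definition \ref{def:killed_ktree} exhibits it as the product of the independent type-$i_E$ evolutions over the edges $E$ of the fixed shape $\ft$ (writing $i_E\in\{0,1,2\}$ for the type of $E$), on the Lusin space $\{\ft\}\times[0,\infty)^A\times\cI^\ft$, killed at $D=\min_{E\in\ft}D_E$ and absorbed at $\partial$. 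A finite product of Borel right processes is Borel right, and, by the standard theory of killing, killing a Borel right process at a terminal time again yields one; so I only need that $D$ is a terminal time. Each $D_E$ is the first hitting time of a set that the $E$-compound cannot leave before its own lifetime — the absorbing point $(0,\emptyset)$ for a type-$1$ edge, and, for a type-$2$ edge, the set where one of the two ``top mass plus partition mass'' sums vanishes, in which the compound stays thereafter by Definition \ref{def:type2} and Proposition \ref{prop:012:pred}\ref{item:pred:2} — hence $D_E$, and so $D=\min_E D_E$, is a terminal time; the strong Markov property at a stopping time $\sigma$ comes from that of the constituents together with $D=\sigma+D\circ\theta_\sigma$ on $\{\sigma<D\}$. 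For $\#A=1$ it is a killed \BESQ[-1], trivially Borel right.

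The non-resampling and resampling $k$-tree evolutions are obtained by piecing the killed pieces together (``revival'', à la Ikeda--Nagasawa--Watanabe, or Meyer's recollement): at the degeneration time $\Delta_n$ of the $n$-th piece one restarts from the image of the left limit $\cT_{(n)}^{\Delta_n-}$ under the map $\varrho$, respectively under the kernel $\Lambda_{J_n,[k]\setminus\{J_n\}}(\varrho(\cdot),\cdot\,)$. I would verify the revival theorem's hypotheses: (i) each killed piece a.s.\ has a left limit at $\Delta_n$ lying in the branch set $\tdTInt_A\setminus\TInt_A$ with \emph{exactly one} degenerate label — using path-continuity of type-$0$ and IP-valued type-$1/2$ evolutions (Proposition \ref{prop:012:pred}\ref{item:pred:0},\ref{item:pred:2}), independence of the compounds together with diffuseness of the degeneration-time laws (so a.s.\ the compounds degenerate one at a time), and, for a type-$2$ compound, that its total mass stays positive at degeneration; (ii) $\varrho$ is Borel from its coordinatewise formula, and $\Lambda_{j,A}$ is a Borel stochastic kernel, built from the block masses of its argument and the fixed law $Q$. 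For the non-resampling evolution there are at most $k-1$ pieces, so the revival theorem applies directly, giving a Borel right process on the space of Proposition \ref{prop:Lusin}. For the resampling evolution I would also control $D_\infty:=\sup_{n\ge0}D_n$: each $E$-compound has \BESQ[1-i_E] total mass (Proposition \ref{prop:012:mass}), these are independent, and the identity $\sum_{E\in\ft}(1-i_E)=-1$ (valid for every rooted binary tree shape with at least two leaves), the additivity of squared Bessel processes \cite{RevuzYor}, the mass-preservation of $\varrho$ and $\Lambda$, and pasting of the strong Markov \BESQ[-1] at the $\Delta_n$, together show that the total mass of the resampling $k$-tree evolution is a single continuous \BESQ[-1], which reaches $0$ in a.s.-finite time. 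Since $\cT^y$ has strictly positive mass for $y<D_\infty$, one gets that $D_\infty$ is exactly the hitting time of $0$ by this mass, that $\cT^y\to0$ as $y\uparrow D_\infty$, and that the prescription $\cT^y=0$ afterwards is consistent with $0$ being absorbing; non-accumulation of the $\Delta_n$ before that time uses that a fresh, non-degenerate $2$-tree is inserted at each resampling (a Borel--Cantelli bound on the $\Delta_n$ near any positive-mass cluster point).

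For the failure of quasi-left-continuity, consider the first degeneration time $\Delta_1=\min_{E\in\ft}D_E$. It is a.s.\ finite (the total mass of any type-$1$ or type-$2$ compound is a \BESQ[0] or \BESQ[-1], so hits $0$ a.s.) and predictable: each $D_E$ is the hitting time of $0$ by a continuous process (the compound's total mass for a type-$1$ edge, the minimum of the two ``top mass plus partition mass'' sums for a type-$2$ edge), so $\Delta_1$ is announced by $\sigma_m:=\min_{E\in\ft}\inf\{y\colon\text{that process is}<1/m\text{ at time }y\}\uparrow\Delta_1$. By path-continuity of the compounds the left limit $\cT^{\Delta_1-}$ exists and lies in $\tdTInt_{[k]}\setminus\TInt_{[k]}$, a genuine (non-cemetery) branch state with one degenerate label, whereas $\cT^{\Delta_1}$ equals $\partial$ for the killed evolution, $\varrho(\cT^{\Delta_1-})$ — a tree on fewer labels, or the zero-mass tree when $k=2$ — for the non-resampling evolution, and a fully non-degenerate state of $\TInt_{[k]}$ for the resampling evolution. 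In every case $\cT^{\Delta_1}\ne\cT^{\Delta_1-}=\lim_m\cT^{\sigma_m}$ on the positive-probability event $\{\Delta_1<\infty\}$, which for the resampling evolution and for the non-resampling evolution with $k\ge3$ lies in $\{\Delta_1<\zeta\}$ since the evolution continues past $\Delta_1$ (for the killed evolution and $k=2$ non-resampling the jump is at the lifetime itself, whose left limit is nevertheless a genuine state and does not creep to $\partial$ along $(\sigma_m)$). So the process is not left-continuous over the announcing sequence $(\sigma_m)$, hence not quasi-left-continuous; as a Borel right process that is quasi-left-continuous is a Hunt process, none of these processes is Hunt.

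The main obstacle is the revival step, especially for the resampling evolution: establishing that the left limit at each degeneration time a.s.\ falls in the single-degenerate-label branch set $\tdTInt_A\setminus\TInt_A$ — so that $\varrho$, and then $\Lambda$, can be applied in a measurable way — and that the $\Delta_n$ do not accumulate before the total mass reaches $0$, which is what makes the pieced-together process genuinely right-continuous with a well-defined absorption at $0$.
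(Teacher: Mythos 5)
Your proof follows essentially the same route as the paper's: the killed $A$-tree evolution is a tuple of independent Borel right type-$0/1/2$ evolutions killed at the minimum of their degeneration times; the non-resampling and resampling evolutions are obtained by Meyer-style revival (the paper cites exactly Th\'eor\`eme 1 and the Remarque on p.\ 474 of Meyer), with the left limit a.s.\ landing in $\tdTInt_A\setminus\TInt_A$ because the compounds are independent and their degeneration-time laws are diffuse; and quasi-left-continuity fails because the first degeneration time is announced by the stopping times $S_n=\inf\{y\colon\min_i(m_i^y+\|\beta^y_{\parent{\{i\}}}\|)<1/n\}$, across which the process is discontinuous.

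One caveat. For the resampling evolution you insert an extra step — identifying $D_\infty=\sup_nD_n$ with the hitting time of $0$ by the total mass and asserting non-accumulation of the $\Delta_n$ before that time via ``a Borel--Cantelli bound.'' That identification is precisely Proposition \ref{prop:non_accumulation} of the paper, which is \emph{not} easy: the paper proves it only in Section \ref{sec:non_acc} and Appendix \ref{sec:non_acc_2} (Lemma \ref{lem:degen_diff}), using the projective-consistency machinery of Section \ref{sec:consistency}, which itself builds on Theorem \ref{thm:Markov}. So if your argument genuinely needed that fact here, you would have both a gap and a circularity. Fortunately it does not: Meyer's revival theorem yields a Borel right process even when the revival times accumulate, provided one specifies the state after the accumulation time (here, absorption at the zero tree, as in Definition \ref{def:resamp_1}). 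You should simply drop the claim about $D_\infty$ from this proof, as the paper does, and defer it to the total-mass theorem.
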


\begin{proposition}\label{prop:non_accumulation}
 For resampling $k$-tree evolutions with degeneration times $D_n$, $n\ge 1$, the limit $D_\infty = \lim_{n\rightarrow\infty}D_n$ equals $\inf\{y\ge0\colon \|\cT^{y-}\|=0\}$, and this is a.s.\ finite.
\end{proposition}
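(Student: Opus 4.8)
The plan is to show that the total mass process $\big(\|\cT^y\|,\ 0\le y<D_\infty\big)$ of a resampling $k$-tree evolution is a squared Bessel diffusion $\besq_{\|T\|}(-1)$, run until it is absorbed at $0$, and then to read off both halves of the proposition from standard properties of $\besq(-1)$. The first ingredient is a combinatorial identity: for any tree shape $\ft\in\TShape_{[k]}$ with $k\ge2$, if $n_0,n_1,n_2$ count the internal edges of types $0,1,2$, then $n_0+n_1+n_2=k-1$ (the number of internal edges, equivalently of branch points, of a rooted binary tree with $k$ leaves and a root of degree one) and $n_1+2n_2=k$ (each leaf edge is the child of exactly one internal edge), whence $\sum_{E\in\ft}(1-i_E)=n_0-n_2=-1$. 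By Definition \ref{def:killed_ktree} a killed $k$-tree evolution consists of independent type-$i_E$ evolutions on its edges, so its total mass is the sum of the total masses of these compounds, which by Proposition \ref{prop:012:mass} are independent $\besq(1-i_E)$ diffusions. On the time interval strictly before the degeneration time no compound mass has yet been absorbed, so the total mass is strictly positive there and the additivity of squared Bessel processes applies (via the L\'evy characterisation of the driving Brownian motion): the total mass of a killed $k$-tree evolution, run strictly before degeneration, is a $\besq(-1)$ in its own filtration.

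Next I would glue the pieces together. Both the swap-and-reduce map $\varrho$ and the resampling kernels $\Lambda_{J_n,[k]\setminus\{J_n\}}$ preserve total mass: $\varrho$ removes a block of mass $0$, while $\Lambda$ either splits a single block of a type-$0$/$1$/$2$ compound or replaces a top mass $x_i$ by the rescaled $2$-tree $x_i U$ with $\|U\|=1$. Moreover the left limit $\cT_{(n)}^{\Delta_n-}$ lies in $\tdTInt_{[k]}\setminus\TInt_{[k]}$: exactly one label is degenerate (the one carried by the degenerating edge) while all other type-$1$/type-$2$ compounds still carry strictly positive mass; since the unique mass-zero tree is excluded from every $\tdTInt_A$, this forces $\|\cT^{D_n-}\|>0$, so $\|\cT^\cdot\|$ is continuous at each $D_n$. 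Each $\Delta_n$ is a stopping time of the $n$-th killed evolution, so by iterating the strong Markov property of $\besq(-1)$ the process $\big(\|\cT^y\|,\ 0\le y<D_n\big)$ is, for every $n$, a $\besq_{\|T\|}(-1)$ stopped at $D_n$; letting $D_n\uparrow D_\infty$ gives that $\big(\|\cT^y\|,\ 0\le y<D_\infty\big)$ is a $\besq_{\|T\|}(-1)$ stopped at $D_\infty$. Finally, $\|\cT^y\|>0$ for every $y<D_\infty$: inside the $n$-th piece, for $y<\Delta_n$ the compound mass of any type-$1$ or type-$2$ edge has not yet reached $0$, hence is strictly positive, and such an edge exists because $n_1+2n_2=k\ge2$.

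To conclude, a $\besq_x(-1)$ started from $x\ge0$ reaches $0$ at an a.s.\ finite time $\sigma$: optional stopping applied to $Z_t=x-t+2\int_0^t\sqrt{Z_s}\,dB_s$ gives $\mathbf E[t\wedge\sigma]\le x$ for all $t$, hence $\mathbf E[\sigma]\le x<\infty$. Since $\|\cT^y\|$ agrees with a $\besq_{\|T\|}(-1)$ on $[0,D_\infty)$ and is strictly positive there, the Bessel diffusion cannot have been absorbed before $D_\infty$, so $D_\infty\le\sigma$ and $\mathbf E[D_\infty]\le\|T\|<\infty$; in particular $D_\infty$ is a.s.\ finite. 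For the remaining identity, $\|\cT^{y-}\|$ is strictly positive for $0<y<D_\infty$ (continuity of $\|\cT^\cdot\|$) and equals $0$ for $y>D_\infty$ (there $\cT^y=0$), so $\{y\ge0\colon\|\cT^{y-}\|=0\}$ equals $[D_\infty,\infty)$ or $(D_\infty,\infty)$ according as $\lim_{z\uparrow D_\infty}\|\cT^z\|$ is $0$ or not, and in either case its infimum is $D_\infty$.

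The step I expect to be the main obstacle is the gluing argument: the point is not merely that each killed piece has $\besq(-1)$ total mass, but that the resampling operations are \emph{mass-preserving} and that the degeneration left limits are \emph{non-degenerate} (positive mass), so that a single $\besq(-1)$ genuinely continues across the degeneration times rather than restarting from a fresh, possibly different, value; verifying the non-degeneracy requires a careful identification of which coordinates vanish at a type-$1$ or type-$2$ degeneration.
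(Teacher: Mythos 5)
There is a genuine gap, and it is exactly at the point your last paragraph tries to wave away. Your first two paragraphs correctly reprove Proposition \ref{prop:total_mass_0} (the combinatorial identity $n_0-n_2=-1$, additivity of squared Bessel processes, mass-preservation of $\varrho$ and of the resampling kernel, continuity and positivity of the mass at each $D_n$): the total mass is a $\BESQ_{\|T\|}(-1)$ \emph{killed at} $D_\infty$, and $D_\infty\le\sigma<\infty$ where $\sigma$ is the absorption time of that $\BESQ(-1)$. But the substance of the proposition is the reverse inequality: that the degeneration times cannot accumulate while the total mass is still bounded away from zero, i.e.\ that $\lim_{z\uparrow D_\infty}\|\cT^z\|=0$. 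Your closing sentence explicitly concedes that this limit might be positive and argues that the infimum of $\{y\colon\|\cT^{y-}\|=0\}$ is $D_\infty$ ``in either case'' because of the convention $\cT^y=0$ for $y\ge D_\infty$. That reading renders the statement vacuous and, more importantly, useless for its purpose in the paper: Theorem \ref{thm:total_mass} needs the total mass to be a full (unkilled) $\BESQ(-1)$, and Theorem \ref{thm:consistency}\ref{item:cnst:resamp} needs consistency beyond the a priori killing time $D_\infty$; both require precisely that $\|\cT^{D_\infty-}\|=0$, which you never establish.

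The missing argument is the hard part of the paper's proof. One must show that for every $\epsilon>0$ the hitting time $H_\epsilon=\inf\{y\colon 0<\|\cT^y\|\le\epsilon\}$ is a.s.\ finite, which is done by proving a uniform estimate of the form $\bP(D_{j+2}-D_j>\delta\mid H_\epsilon>D_j)>\delta$ and applying Borel--Cantelli together with $\bE[D_\infty]<\infty$. The uniform estimate is Lemma \ref{lem:degen_diff}: whenever the mass exceeds $\epsilon$, two consecutive resamplings of a fixed label take time at least $\delta(k,\epsilon)$ with probability at least $\delta(k,\epsilon)$, uniformly over the state. Its proof (Appendix \ref{sec:non_acc_2}) requires a three-case analysis according to whether the resampled label lands in a large leaf block, a large block of an edge partition, or an edge partition with only small blocks, using pseudo-stationarity, the scaling property, and the partial consistency result of Proposition \ref{prop:consistency_0} in an induction on $k$. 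None of this can be bypassed by the $\BESQ(-1)$ computation alone, because the killing time $D_\infty$ is not measurable in the filtration of the total mass process and nothing in the mass process itself forbids an accumulation of degenerations at a time when the mass is still macroscopic.
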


\begin{theorem}\label{thm:total_mass}
 For a non-resampling or resampling $k$-tree evolution $(\cT^y,y\ge0)$ with initial state with mass $\|\cT^0\| = m$, the total mass process $(\|\cT^y\|,y\ge0)$ has law $\BESQ_m(-1)$.
\end{theorem}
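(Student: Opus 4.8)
Throughout write $M^y:=\|\cT^y\|$. The plan is to prove the result in three movements: within a single killed $A$-tree evolution the total mass is a $\BESQ(-1)$ run up to degeneration; the maps applied at degeneration times preserve total mass, so $M$ is continuous; and the resulting $\BESQ(-1)$ pieces concatenate into a single $\BESQ_{\|\cT^0\|}(-1)$ whose absorption time at $0$ is the supremum of the degeneration times.

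First I would establish the per-segment statement. For $\ft\in\TShape_A$ with $\#A=\ell\ge2$, let $n_i$ be the number of type-$i$ internal edges. Counting internal edges gives $n_0+n_1+n_2=\ell-1$, and counting leaves (each the child of exactly one internal edge) gives $2n_2+n_1=\ell$, so $n_2-n_0=1$. By Definition \ref{def:killed_ktree} a killed $A$-tree evolution is assembled from jointly independent type-$0$, type-$1$ and type-$2$ evolutions on the edges, and by Proposition \ref{prop:012:mass} the total mass of a type-$i$ compound is a $\BESQ(1-i)$. On $[0,D)$ one has $M^y=\sum_E\|(\textnormal{compound }E)^y\|>0$ with no compound yet absorbed at $0$ (a type-$2$ evolution degenerates strictly before its own lifetime, and at least one type-$2$ edge is present since $n_2=n_0+1\ge1$), so applying It\^o's formula to the sum and the L\'evy characterisation of Brownian motion shows that on $[0,D)$, $M$ solves the $\BESQ(-1)$ SDE started from $\|\cT^0\|$ with drift $\sum_E(1-i_E)=n_0-n_2=-1$; in particular $M$ is continuous there and $M^{D-}>0$ a.s. This $\BESQ(-1)$, stopped at $D$, is a $\BESQ(-1)$ stopped at a stopping time of the joint filtration of the constituent evolutions. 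For $\#A=1$ the killed evolution is a $\BESQ(-1)$ absorbed at $0$ by definition.

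Next I would check that the maps applied at degeneration times preserve total mass: unwinding the definitions of $\oplus$ and $\varrho$, replacing a top block of mass $x_i$ by a rescaled unit-mass $2$-tree $x_iU$ contributes mass $x_i\|U\|=x_i$, splitting an interval-partition block of mass $b-a$ preserves mass, and $\varrho$ only discards $x_{I(T)}$ together with $\beta_{\parent{\{I(T)\}}}$, both of mass $0$. Hence $\|\varrho(\cT^{D_n-})\|=\|\cT^{D_n-}\|$, and every tree in the support of the resampling kernel $\Lambda_{J_n,\,\cdot\,}\big(\varrho(\cT^{D_n-}),\,\cdot\,\big)$ has that same total mass; so $M$ is continuous at each $D_n$, hence continuous and strictly positive on $[0,D_\infty)$, where $D_\infty:=\sup_nD_n$. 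Now I would prove, by induction on $n$, that there is a genuine $\BESQ_{\|\cT^0\|}(-1)$ process $Z^{(n)}$ coupled to the evolution with $Z^{(n)}=M$ on $[0,D_n]$. For $n=1$ one glues the stopped $\BESQ(-1)$ of the previous step to an independent fresh $\BESQ_{M^{D_1}}(-1)$ issued from its endpoint; this is a $\BESQ_{\|\cT^0\|}(-1)$ by the regeneration (strong Markov) property of $\BESQ(-1)$ at the stopping time $D_1$. For the inductive step one replaces, inside $Z^{(n)}$, the fresh continuation after $D_n$ by the mass path of segment $n+1$ on $[D_n,D_{n+1}]$ followed by a fresh $\BESQ_{M^{D_{n+1}}}(-1)$; this has the same conditional law $\BESQ_{M^{D_n}}(-1)$ given $\cF_{D_n}$, by the first step applied to segment $n+1$, which by Definitions \ref{def:nonresamp_1}/\ref{def:resamp_1} is conditionally independent of the past given its post-resampling initial state, whose only relevant feature is its total mass $M^{D_n}$ — so $Z^{(n+1)}\overset{d}{=}Z^{(n)}$. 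Finally I would let $n\to\infty$ using Proposition \ref{prop:non_accumulation}: there $D_n\uparrow D_\infty<\infty$ a.s.\ and $M^{y}\to0$ as $y\uparrow D_\infty$, so for each $\epsilon>0$ the last time $\sigma_\epsilon$ that $M\ge\epsilon$ strictly precedes $D_\infty$ and hence lies in $[0,D_n]$ with probability tending to $1$; comparing $M$ with $Z^{(n)}$ on that event identifies $\mathrm{Law}(M^{\,\cdot\,\wedge\sigma_\epsilon})$ with that of a $\BESQ_{\|\cT^0\|}(-1)$ stopped at its own last visit to $[\epsilon,\infty)$, and letting $\epsilon\downarrow0$ (using $M>0$ on $[0,D_\infty)$ and $M\equiv0$ afterwards, so $M^{\,\cdot\,\wedge\sigma_\epsilon}\to M$ locally uniformly) gives $\mathrm{Law}(M)=\BESQ_{\|\cT^0\|}(-1)$. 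The non-resampling case is the same, with finitely many segments and the final ($1$-tree) segment run all the way to absorption at $0$.

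I expect the concatenation to be the real obstacle. The degeneration times $D_n$ are stopping times of the $k$-tree filtration but not of the filtration generated by $M$ alone; the resampling kernel injects fresh randomness at each $D_n$; and in the resampling case infinitely many $D_n$ accumulate at $D_\infty$. The accumulation is what forces the $\epsilon$-truncation device in place of a naive pointwise limit, and the key point — that conditional independence of a segment from the past \emph{given its post-resampling initial state}, together with the fact that the relevant $\BESQ(-1)$ law depends on that state only through its total mass, is exactly what is needed to splice the pieces — is technically delicate and runs closely parallel to the concatenation assertions of Proposition \ref{prop:012:concat}.
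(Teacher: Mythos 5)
Your proof is correct and takes essentially the same route as the paper: the same edge-type count $n_2=n_0+1$, the same identification of each compound's total mass as a $\BESQ(1-i)$ via Proposition \ref{prop:012:mass}, the same observation that $\varrho$ and the resampling kernel preserve total mass so the strong Markov property lets the $\BESQ(-1)$ segments be spliced at the $D_n$, and the same reduction of the resampling case to Proposition \ref{prop:non_accumulation}. The only cosmetic differences are that you derive the drift by hand via It\^o and L\'evy's characterisation where the paper cites the additivity of squared Bessel processes, and that you spell out the splicing with the coupled processes $Z^{(n)}$ and the $\epsilon$-truncation where the paper compresses this into a one-line appeal to the strong Markov property (see Proposition \ref{prop:total_mass_0} and the remark following it).
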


We prove Theorem \ref{thm:Markov} and a partial form of Theorem \ref{thm:total_mass} at the start of Section \ref{sec:k_fixed}. In particular, we prove the assertion of Theorem \ref{thm:total_mass} for non-resampling evolutions, and we reduce the resampling case to Proposition \ref{prop:non_accumulation}, which we then prove in Section \ref{sec:non_acc} and Appendix \ref{sec:non_acc_2}.

\begin{proposition}\label{prop:pseudo:resamp}
 Let $(\cT^y,y\ge 0)$ be a resampling $k$-tree evolution starting from a scaled Brownian reduced $k$-tree of total mass $M$, and let $B\sim\besq_M(-1)$. Then at any fixed time $y\ge0$, $\cT^y$ has the distribution of a scaled Brownian reduced $k$-tree of mass $B(y)$.  
\end{proposition}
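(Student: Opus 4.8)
The organising principle is \emph{de-Poissonisation}. Write $c_k = k-\tfrac12$, let $\mathrm{BR}^{(k)}_b$ denote the law of a scaled Brownian reduced $k$-tree of total mass $b\ge0$ (with $\mathrm{BR}^{(k)}_0=\delta_0$ the zero tree) and $\mathrm{BR}^{(k)}_\rho:=\int\mathrm{BR}^{(k)}_b\,\rho(db)$. I first prove the \emph{Poissonised} statement: if $\cT^0\sim\mathrm{BR}^{(k)}_{\GammaDist[c_k,\gamma]}$, then for every fixed $y\ge0$ and every $\gamma>0$ the law of $\cT^y$ is $\mathrm{BR}^{(k)}_{\nu^\gamma_y}$, where $\nu^\gamma_y$ is the time-$y$ marginal of a $\besq(-1)$ started from $\GammaDist[c_k,\gamma]$. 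Granting this for all $\gamma$, de-Poissonisation follows in the usual way: for each bounded continuous $\varphi$ on $k$-trees, the maps $m\mapsto\mathbb{E}[\varphi(\cT^y)\mid\|\cT^0\|=m]$ and $m\mapsto\mathrm{BR}^{(k)}_{\mathrm{Law}(B^m(y))}(\varphi)$, with $B^m\sim\besq_m(-1)$, have the same integral against $m^{c_k-1}e^{-\gamma m}\,dm$ for all $\gamma>0$, so by injectivity of the Laplace transform they agree for Lebesgue-a.e.\ $m$; a routine regularity argument (weak continuity of $m\mapsto\mathrm{BR}^{(k)}_m$, or approximation of $\delta_m$ by absolutely continuous laws) upgrades this to all $m$, hence to arbitrary random $M$ by conditioning.

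The advantage of a Gamma initial mass is that it decouples the edge compounds: $\mathrm{BR}^{(k)}_{\GammaDist[c_k,\gamma]}$ is obtained by sampling the (uniform, labelled) binary shape $\ft_k$ and then, conditionally on $\ft_k$, decorating each internal edge $E$ of type $i_E\in\{0,1,2\}$ \emph{independently} with a pseudo-stationary type-$i_E$ configuration of mass $\GammaDist[\tfrac{1+i_E}{2},\gamma]$; since $\sum_E\tfrac{1+i_E}{2}=c_k$, the edge masses are then jointly independent Gammas summing to $\GammaDist[c_k,\gamma]$, and conditionally on the sum their normalisation is Dirichlet with all parameters $\tfrac12$, matching the BCRT description (this also follows from the resampling-kernel generation \eqref{eq:B_ktree_resamp}). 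By Definition~\ref{def:killed_ktree}, up to the first degeneration time the evolution moves these compounds as jointly independent type-$0/1/2$ evolutions; hence on the product event $\{D>y\}=\bigcap_E\{\Delta_E>y\}$ the Gamma clause of Proposition~\ref{prop:012:pseudo} applies to each factor, the compounds remain jointly independent, and compound $E$ at time $y$ is a pseudo-stationary type-$i_E$ configuration of mass $\GammaDist[\tfrac{1+i_E}{2},\gamma']$ with $\gamma'=\gamma/(2\gamma y+1)$. Thus the $\{D>y\}$-part of $\cT^y$ is an explicitly (shape-)weighted copy of $\mathrm{BR}^{(k)}_{\GammaDist[c_k,\gamma']}$.

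It remains to handle the degeneration times. At the first one, $D=D_1$, the resampling evolution jumps to a state distributed as $\Lambda_{J_1,[k]\setminus\{J_1\}}(\varrho(\cT^{D_1-}),\cdot)$. Two facts close the loop: (i) conditionally on $\|\cT^{D_1-}\|=v$ and on which label degenerates, $\varrho(\cT^{D_1-})$ is a scaled Brownian reduced $(k-1)$-tree of mass $v$; and (ii) $\Lambda_{j,A}$ maps a scaled Brownian reduced $A$-tree of mass $v$ to a scaled Brownian reduced $(A\cup\{j\})$-tree of mass $v$. Fact (ii) is precisely the projective generation of Brownian reduced $k$-trees from a Brownian reduced $2$-tree recorded in \eqref{eq:B_ktree_resamp}, together with leaf-label exchangeability of Brownian reduced trees and the evident scale-covariance of $\Lambda$ (blocks are chosen proportionally to mass and $Q$ has unit mass). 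Fact (i) is the substantive new ingredient: at a degeneration a dying type-1 or type-2 compound is fused with its parent and sibling compounds, and one must check that the fused object is again pseudo-stationary of the appropriate type, which is what the concatenation calculus of Proposition~\ref{prop:012:concat} and the pseudo-stationarity transfer of Proposition~\ref{prop:012:pseudo} are designed to give, in the spirit of the $2$-tree analysis of \cite{Paper3}. Granting (i)--(ii), the strong Markov property (Theorem~\ref{thm:Markov}) restarts the process at time $D_1$ as a resampling $k$-tree evolution from $\mathrm{BR}^{(k)}_{\|\cT^{D_1}\|}$ with $\|\cT^{D_1}\|=\|\cT^{D_1-}\|$, and we iterate. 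By Proposition~\ref{prop:non_accumulation} only finitely many degenerations precede $y$ on $\{D_\infty>y\}$, while on $\{D_\infty\le y\}$ the tree is absorbed, $\cT^y=0\sim\mathrm{BR}^{(k)}_0$; so the iteration terminates. Since the successive restart masses are the successive values of a single $\besq_{M_0}(-1)$ total-mass path (Theorem~\ref{thm:total_mass}), summing the contributions of $\{D_1>y\}$, $\{D_1\le y<D_2\}$, $\{D_2\le y<D_3\},\dots$ — using the joint law of $(D_1,\|\cT^{D_1-}\|)$ supplied by the compound decomposition, $D_1$ not being a stopping time of the mass path — reassembles the law of $\cT^y$ as $\mathrm{BR}^{(k)}_{\nu^\gamma_y}$, proving the Poissonised claim.

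\textbf{Main obstacle.} The hard step is fact (i): identifying, for a killed $k$-tree evolution, the conditional law of $\varrho(\cT^{D-})$ at its degeneration time, given the total mass, as a scaled Brownian reduced $(k-1)$-tree; this forces a careful analysis, via the concatenation properties of type-$0/1/2$ evolutions, of how a degenerating compound merges with its neighbours. A secondary, bookkeeping-heavy difficulty is that the clean ``independent Gamma compounds'' structure is available only at time $0$: after the first degeneration the restart masses are no longer Gamma, so the distributional identity must be propagated through the a.s.\ infinite, $D_\infty$-accumulating sequence of degenerations, which is why Proposition~\ref{prop:non_accumulation} and the absorption of the evolution at the zero tree past $D_\infty$ are essential.
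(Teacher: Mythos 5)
Your overall architecture matches the paper's: Gamma-distributed initial mass to decouple the edge compounds, per-compound pseudo-stationarity from Proposition~\ref{prop:012:pseudo} on $\{D_1>y\}$ (this is exactly Proposition~\ref{prop:pseudo:pre_D}), a renewal argument at degeneration times via the strong Markov property, reassembly over the events $\{D_n\le y<D_{n+1}\}$, and Laplace inversion to pass from Gamma to fixed initial mass. Your fact (ii) is indeed just \eqref{eq:B_ktree_resamp} plus label exchangeability and scale covariance of $\Lambda_{j,A}$, as in the paper.

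The genuine gap is your fact (i), which you correctly flag as the substantive ingredient but do not prove, and for which the tools you point to are the wrong ones. In the paper this is Proposition~\ref{prop:pseudo:degen}\ref{item:pseudoD:swapred}--\ref{item:pseudoD:indep}, and its proof does \emph{not} go through the concatenation calculus of Proposition~\ref{prop:012:concat}: at a degeneration time the dying compound has total mass zero, so nothing is ``fused'' -- the map $\varrho$ simply deletes a zero-mass compound and relabels. What actually has to be checked is (a) that under Gamma masses the label $I$ causing degeneration is uniform on $[k]$ and independent of the tree shape and of $D$ (this uses the explicit non-degeneration probabilities $(2y\gamma+1)^{-1}$ and $(2y\gamma+1)^{-2}$ for type-1 and type-2 compounds, and \cite[Proposition 39]{Paper3} for which of the two labels in a type-2 compound degenerates); (b) the purely combinatorial fact that after the swap-and-reduce map the tree shape is conditionally \emph{uniform} on $\TShape_{[k]\setminus\{j\}}$ given $\{J=j\}$, which is \cite[Lemma 4, Corollary 5]{Paper2} and is precisely why the label swap is built into $\varrho$; and (c) that each surviving compound, conditioned on non-degeneration up to the independent time $D$, is pseudo-stationary, with the surviving top mass of the degenerating type-2 compound again $\GammaDist[\frac12,\gamma/(2y\gamma+1)]$ by \cite[Proposition 39]{Paper3}. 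Without (b) in particular your claim that $\varrho(\cT^{D_1-})$ is a scaled Brownian reduced $(k-1)$-tree \emph{conditionally on which label is dropped} is unsupported, and it is exactly the statement that would fail if one dropped label $I$ instead of $J$. A secondary point: you invoke Proposition~\ref{prop:non_accumulation} to terminate the iteration, but in the paper's logical order that result is proved later (via the consistency machinery); in the pseudo-stationary setting the paper instead gives a two-line direct argument -- the rescaled inter-degeneration times $(D_{n+1}-D_n)/\|\cT^{D_n}\|$ are i.i.d.\ and the total mass is a \BESQ[-1] up to $D_\infty$, forcing $\|\cT^{D_n}\|\to 0$ -- which you should use to avoid any risk of circularity.
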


In light of this result, we refer to the laws of independently scaled Brownian reduced $k$-trees as the \emph{pseudo-stationary laws for resampling $k$-tree evolutions.} We prove this proposition in Section \ref{sec:pseudo}.

%The resampling case follows from the same argument and Proposition \ref{prop:non_accumulation}, which we prove in Section \ref{sec:non_acc}.

\subsection{De-Poissonized $k$-tree evolutions}

Given a \cadlag\ path $\fT = (T^y,y\!\ge\!0)$ in $\bigcup_A\tdTInt_{A}$, let
\begin{equation}\label{eq:dePoi:time_change}
 \rho(\fT)\colon[0,\infty)\rightarrow[0,\infty],\qquad\rho_u(\fT) := \inf\left\{y\ge 0\colon\int_0^y\|T^z\|^{-1}dz>u\right\},\quad u\ge 0.
\end{equation}
This process is continuous and strictly increasing until it is possibly absorbed at $\infty$. If the total mass process $(\|T^y\|,y\ge0)$ evolves as a $\besq(-1)$, as in Theorem \ref{thm:total_mass}, then $\rho(\fT)$ is bijective onto $[0,\zeta)$, where $\zeta = \inf\{y\ge0\colon \|T^y\| = 0\}$ is a.s.\ finite; see e.g.\ \cite[p.\ 314-5]{GoinYor03}. 

Let $\TInt_{k,1} := \big\{T\in\TInt_{k}\colon \|T\| = 1\big\}$.

\begin{definition}\label{def:dePoi}
 Let $\fT = (\cT^y,y\ge0)$ denote a resampling (respectively, non-resampling) $k$-tree evolution from initial state $T\in\TInt_{k,1}$. Then
 $$\widebar\cT^u := \big\|\cT^{\rho_u(\fT)}\big\|^{-1} \cT^{\rho_u(\fT)},\quad u\ge0$$
 is a \emph{de-Poissonized resampling} (resp.\ \emph{non-resampling}) \emph{$k$-tree evolution from initial state $T$}.
 %
 %For any law $\widebar\mu$ on $\TInt_{k,1}$, let $\widebar{\bP}_{\widebar{\mu}}^{\,k,+}$ denote the law on $\bD([0,\infty),\TInt_{k,1})$ of a de-Poissonized resampling $k$-tree evolution with initial distribution $\widebar\mu$. 
\end{definition}

\begin{theorem}\label{thm:dePoi}
 De-Poissonized resampling and de-Poissonized non-resampling $k$-tree evolutions are Borel right Markov processes. The former are stationary with the laws of the Brownian reduced $k$-trees. The latter are eventually absorbed at the state $1\in\TInt_{1,1}$ of the tree whose only leaf is $1$, carrying unit weight.
\end{theorem}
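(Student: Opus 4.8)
The plan is to derive everything from the facts already in hand for the (non-)resampling $k$-tree evolutions $\fT=(\cT^y,y\ge0)$: that they are Borel right processes (Theorem~\ref{thm:Markov}) whose total mass is an autonomous $\BESQ(-1)$ (Theorem~\ref{thm:total_mass}), together with one extra ingredient, \emph{scale covariance}. First I would check that if $(\cT^y,y\ge0)$ is a (non-)resampling $k$-tree evolution from $T$ then $(c\,\cT^{y/c},y\ge0)$ is one from $cT$ for every $c>0$: each block of each type-$i$ edge evolution is a $\BESQ(-1)$, which has the Brownian scaling $Z\mapsto(cZ_{\cdot/c})$; the interval-partition evolutions inherit the analogous scaling from \cite{Paper1,Paper3}; and the reduction maps $\varrho$ and the resampling kernels $\Lambda_{j,A}$ are scale-equivariant, since the block-selection probabilities $\|\ell\|/\|T\|$ and the mass-$1$ kernel $Q$ are unaffected by a global rescaling. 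The key point is that the de-Poissonization map $\fT\mapsto\widebar\fT$ is then \emph{invariant} under $\fT\mapsto(c\,\cT^{\cdot/c})$: from \eqref{eq:dePoi:time_change}, $\rho_u(c\,\cT^{\cdot/c})=c\,\rho_u(\fT)$, whence $\|c\cT^{\rho_u}\|^{-1}(c\cT^{\rho_u})=\|\cT^{\rho_u}\|^{-1}\cT^{\rho_u}$. Since $(\|\cT^y\|,y\ge0)$ is a $\BESQ(-1)$, the result of G\"oing-Jaroszewska and Yor \cite{GoinYor03} quoted after \eqref{eq:dePoi:time_change} makes $\rho(\fT)$ a continuous strictly increasing bijection of $[0,\infty)$ onto $[0,\zeta)$ with $\zeta<\infty$ a.s., so $u\mapsto\cT^{\rho_u}$ is c\`adl\`ag with strictly positive mass; composing with the normalization $T\mapsto\|T\|^{-1}T$, continuous on $\{\|T\|>0\}$, gives c\`adl\`ag paths in the Lusin space $\TInt_{k,1}$. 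For fixed $u$, $\rho_u$ is a stopping time of $\fT$ (first passage of the continuous additive functional $A_y=\int_0^y\|\cT^z\|^{-1}dz$), and $A_{\rho_{u+v}}-A_{\rho_u}=v$, so the post-$\rho_u$ path --- a $k$-tree evolution from $\cT^{\rho_u}$ by the strong Markov property of $\fT$ --- has de-Poissonization equal to $(\widebar\cT^{u+v},v\ge0)$; by scale covariance this depends on $\cT^{\rho_u}=\|\cT^{\rho_u}\|\,\widebar\cT^u$ only through $\widebar\cT^u$. This yields the Markov property with transition kernel ``de-Poissonize the $k$-tree evolution'', and repeating the argument at a stopping time $\tau$ of $\widebar\cT$ (using $\mathcal F^{\widebar\cT}_\tau\subseteq\mathcal F^{\fT}_{\rho_\tau}$ and that $\rho_\tau$ is then an $\fT$-stopping time) upgrades it to the strong Markov property; with c\`adl\`ag paths on a Lusin space this is the Borel right property.

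For stationarity of the de-Poissonized \emph{resampling} evolution, let $\bar T_k$ be a Brownian reduced $k$-tree of mass $1$. By scale covariance the de-Poissonized process from $\bar T_k$ has the same law as that from $M\bar T_k$ for any independent $M>0$, so I would run the resampling $k$-tree evolution $\fT$ from $M\bar T_k$. Its total mass $B=(\|\cT^y\|,y\ge0)\sim\BESQ_M(-1)$ is Markov, hence autonomous, in the filtration of $\fT$, and $\rho_u(\fT)$ is a measurable functional of $B$. Proposition~\ref{prop:pseudo:resamp} gives, for each fixed $y$, that $\|\cT^y\|^{-1}\cT^y$ is a Brownian reduced $k$-tree; the crux is to upgrade this to the \emph{conditional} statement that, given the whole path $B$, the tree $\|\cT^y\|^{-1}\cT^y$ is a Brownian reduced $k$-tree for Lebesgue-a.e.\ $y$, a.s. This should come out of the proof of Proposition~\ref{prop:pseudo:resamp} together with the autonomy and Markov property of $B$: the normalized tree at time $y$ is (by the pseudo-stationarity argument) independent of the mass path up to $y$, and the post-$y$ mass path depends on the past only through $\|\cT^y\|$, so the normalized tree at time $y$ is independent of $B$. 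Granting this and using that $\rho_u$ is $\sigma(B)$-measurable, a Fubini argument yields $\widebar\cT^u=\|\cT^{\rho_u}\|^{-1}\cT^{\rho_u}\sim$ Brownian reduced $k$-tree. Hence the one-dimensional marginal of $\widebar\cT$ started from this law equals this law for every $u$; since $\widebar\cT$ is a time-homogeneous Markov process, this forces all finite-dimensional marginals to be shift-invariant, i.e.\ $\widebar\cT$ is stationary, and Theorem~\ref{thm:intromain}\ref{main GHPAldous} and Definition~\ref{def:Aldousdiff} follow as indicated in the text.

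For the de-Poissonized \emph{non-resampling} evolution, $\fT$ runs through trees on label sets of sizes $k,k-1,\dots$, and after finitely many reductions it is a single-leaf tree --- a single $\BESQ(-1)$ block --- from some degeneration time $D_{k-1}$ onward; this block is strictly positive on $[D_{k-1},\zeta)$ and reaches $0$ only at $\zeta$. Because the top mass surviving at the last reduction is a.s.\ positive (Proposition~\ref{prop:012:pred}\ref{item:pred:2}), the total mass $\|\cT^{D_{k-1}}\|$ is positive while $\|\cT^y\|\to0$ only as $y\uparrow\zeta$, so $D_{k-1}<\zeta$ a.s.; since $\rho$ is a bijection onto $[0,\zeta)$, the finite random time $u_0:=\int_0^{D_{k-1}}\|\cT^z\|^{-1}dz$ has $\rho_u\ge D_{k-1}$ for all $u\ge u_0$, and then $\widebar\cT^u=\|\cT^{\rho_u}\|^{-1}\cT^{\rho_u}$ is the unit-mass single-leaf tree $1\in\TInt_{1,1}$, which is absorbing for the de-Poissonized non-resampling evolution (the underlying killed single-leaf evolution stays a single-leaf tree, and normalization keeps it at mass $1$). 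I expect the one genuinely delicate point to be the conditional strengthening of Proposition~\ref{prop:pseudo:resamp} --- in effect, the independence of the normalized $k$-tree from the total-mass path at fixed times --- whereas the Borel-right verification, though it needs some care with the time change near $\zeta$ and with measurability of the resulting semigroup, is routine given Theorems~\ref{thm:Markov} and \ref{thm:total_mass}.
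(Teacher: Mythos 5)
Your proposal is correct and follows essentially the same route as the paper, which simply invokes the de-Poissonization machinery of \cite[Proposition 6.7--Theorem 6.9]{Paper1} with Theorems \ref{thm:Markov} and \ref{thm:total_mass} and Proposition \ref{prop:pseudo:resamp} supplying the needed Markov, total-mass and pseudo-stationarity inputs; what you have written out (scale covariance of the evolutions and invariance of the de-Poissonization map under rescaling, the strong Markov property at the time-change $\rho_u$, the conditional strengthening of pseudo-stationarity given the total-mass path, and absorption after the last degeneration in the non-resampling case) is precisely the content of that cited adaptation, and the point you flag as delicate is indeed the one the cited argument handles.
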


Analogous de-Poissonization results have been given for type-0/1/2 evolutions in \cite[Theorem 1.6]{Paper1} and \cite[Theorem 4]{Paper3}. The results stated in Section \ref{sec:resamp_def} suffice to prove this Theorem by the same method.

\begin{proof}
 The right-continuity of sample paths is preserved by both the time change and the normalization. The rest of the proof of \cite[Theorem 1.6]{Paper1}, including the auxiliary results, from Proposition 6.7 to Theorem 6.9 of that paper, are easily adapted, with the Markov property, total mass, and pseudostationarity results of Theorems \ref{thm:Markov} and \ref{thm:total_mass} and Proposition \ref{prop:pseudo:resamp} serving in place of Proposition 5.20 and Theorems 1.5 and 6.1 in \cite{Paper1}.
\end{proof}

We also obtain the following result for de-Poissonised resampling $k$-tree evolutions.

\begin{corollary}\label{cor:WF}
Let $\widebar \fT = (\widebar \cT^u,u\!\ge\!0)= ((\widebar \ft^u_k,(\widebar X^u_j ,j\!\in\![k]),(\widebar\beta^u_E,E\!\in\!\widebar\ft^u_k)), u\!\geq\! 0)$ denote a de-Poissonised resampling $k$-tree evolution from initial state $T= (\ft_k,(X_j,j\!\in\![k]),(\beta_E,E\!\in\!\ft_k))$ $\in\TInt_{k,1}$, and let $\tau$ be the first time either a top mass or an interval partition has mass $0$.  Observe that $\tau \leq \widebar D_1$, where $\widebar D_1$ is the first time $\widebar \fT$ resamples, so for $u<\tau$, $\widebar \ft^u_k=\ft_k$.  Then $((\widebar X^{u/4}_j,j\!\in\![k]),(\|\widebar\beta^{u/4}_E\|,E\!\in\!\ft_k)), 0\leq u <\tau)$ is a Wright--Fisher diffusion, killed when one of the coordinates vanishes, with parameters $-1/2$ for coordinates corresponding to top masses and $1/2$ for coordinates corresponding to masses of interval partitions.
\end{corollary}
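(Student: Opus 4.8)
The plan is to identify the vector of block masses of the \emph{underlying killed $k$-tree evolution}, up to the first time a coordinate vanishes, as a vector of independent squared-Bessel diffusions with the claimed parameters, and then to invoke the classical fact that de-Poissonizing a family of independent $\besq$ diffusions produces a Wright--Fisher diffusion. Concretely: since $\tau\le\widebar D_1$, as noted in the statement, on the interval $[0,\tau)$ the de-Poissonized resampling evolution is the image, under the total-mass time change $\rho$ and the total-mass normalization of Definition~\ref{def:dePoi}, of a killed $k$-tree evolution $\fT=(\cT^y,y\ge0)$ with tree shape $\ft_k$ and no resampling jump involved; write $\cT^y=(\ft_k,(x^y_i,i\in[k]),(\beta^y_E,E\in\ft_k))$ and let $\sigma:=\inf\{y\ge0\colon x^y_i=0\text{ for some }i,\text{ or }\|\beta^y_E\|=0\text{ for some }E\}$, the $\rho$-preimage of $\tau$. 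For $y<\sigma$ the total mass is $\|\cT^y\|=\sum_i x^y_i+\sum_E\|\beta^y_E\|$, so $\widebar X^u_j=x^{\rho_u}_j/\|\cT^{\rho_u}\|$ and $\|\widebar\beta^u_E\|=\|\beta^{\rho_u}_E\|/\|\cT^{\rho_u}\|$ for $u<\tau$, with $\rho$ the de-Poissonization time change driven by the total-mass process. It therefore suffices to determine the joint law of $\big((x^y_i,i\in[k]),(\|\beta^y_E\|,E\in\ft_k)\big)$ on $[0,\sigma)$.

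\emph{The $\besq$ decomposition.} In the killed $k$-tree evolution the type-0/1/2 evolutions attached to the distinct edges of $\ft_k$ are jointly independent (Definition~\ref{def:killed_ktree}), so each compound can be analysed separately, up to the first time one of its own block masses hits zero --- a time that always dominates $\sigma$. For a type-0 edge $E$, $\|\beta^y_E\|=\|\alpha^y_E\|$ is a $\besq(1)$ by Proposition~\ref{prop:012:mass}. For a type-1 edge $E=\parent{\{i\}}$, Proposition~\ref{prop:012:concat}\ref{item:012concat:BESQ+0} gives that up to the first time $x^y_i$ hits zero, $(x^y_i)$ is a $\besq(-1)$ and $\|\beta^y_E\|=\|\alpha^y_E\|$ is an independent $\besq(1)$, being the total mass of an independent type-0 evolution. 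For a type-2 edge $E=\{i,j\}$, Lemma~\ref{lem:type2:symm} gives that up to the first time $x^y_j$ hits zero, $(x^y_j)$ is a $\besq(-1)$ and the pair process $(x^y_i,\alpha^y_E)$ is a conditionally independent type-1 evolution; applying Proposition~\ref{prop:012:concat}\ref{item:012concat:BESQ+0} to the latter, up to the first time either $x^y_i$ or $x^y_j$ hits zero, $(x^y_i),(x^y_j)$ are independent $\besq(-1)$'s and $\|\beta^y_E\|$ an independent $\besq(1)$. Assembling over all edges yields that, on $[0,\sigma)$, $\big((x^y_i,i\in[k]),(\|\beta^y_E\|,E\in\ft_k)\big)$ has the law of a vector of $2k-1$ jointly independent $\besq$ diffusions, stopped at the first time a coordinate hits zero, with parameter $-1$ for the $k$ top masses and $+1$ for the $k-1$ interval-partition masses; as a check, these parameters sum to $-1$, in agreement with Theorem~\ref{thm:total_mass}. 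To upgrade the cited lemmas, which are stated only up to compound-specific hitting times, to genuine distributional identities on all of $[0,\infty)$, one may continue each coordinate independently as a $\besq$ past its threshold, which does not change the process stopped at $\sigma$.

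\emph{De-Poissonization and conclusion.} It is classical (see e.g.\ \cite{GoinYor03, Pal13}; this is the finite-block instance of the de-Poissonization of \cite[Theorem~1.6]{Paper1}) that if $W_1,\dots,W_d$ are independent $\besq(\theta_1),\dots,\besq(\theta_d)$ with $\sum_i W^0_i=1$, $S=\sum_i W_i$, and $\rho_u=\inf\{y\colon\int_0^y S(z)^{-1}dz>u\}$, then $\big(W_i(\rho_u)/S(\rho_u),i\in[d]\big)$, killed when a coordinate vanishes, is a Wright--Fisher diffusion on the unit simplex, killed at the boundary, with parameters $\theta_i/2$ once one applies the time substitution $u\mapsto u/4$ that reconciles the $\besq(\theta)$ normalization $dZ=\theta\,dy+2\sqrt{|Z|}\,dB$ with the standard Wright--Fisher one. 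Since normalization by the positive total mass preserves the set of vanishing coordinates, the time $\tau$ corresponds exactly to the first time a Wright--Fisher coordinate vanishes. Applying this with $d=2k-1$, $\theta_i=-1$ for top-mass coordinates and $\theta_i=+1$ for interval-partition-mass coordinates, together with the $\besq$ identification above and Definition~\ref{def:dePoi}, yields the corollary.

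\emph{Expected main obstacle.} The one substantive point is the $\besq$-decomposition step: one must splice the nested decompositions --- Lemma~\ref{lem:type2:symm} feeding into Proposition~\ref{prop:012:concat}\ref{item:012concat:BESQ+0}, independently over all edges of $\ft_k$ --- into a single statement about the full $(2k-1)$-dimensional block-mass vector up to the common stopping time $\sigma$, carefully tracking the conditional independences involved (cf.\ Remark~\ref{rmk:decomp_ker}) and using that each ingredient is valid up to a compound-specific hitting time that is never smaller than $\sigma$. Everything downstream is a routine appeal to $\besq$-to-Wright--Fisher de-Poissonization.
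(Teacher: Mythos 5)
Your proposal is correct and follows essentially the same route as the paper: identify the block masses before $\tau$ as independent $\besq(-1)$ (top masses) and $\besq(1)$ (interval-partition masses) via the independence of compounds together with Proposition \ref{prop:012:concat}\ref{item:012concat:BESQ+0} and Lemma \ref{lem:type2:symm}, then invoke the standard $\besq$-to-Wright--Fisher de-Poissonization with the factor-of-$4$ time change. The paper's proof is just a terser version of this same argument, citing Propositions \ref{prop:012:pred} and \ref{prop:012:concat} and the de-Poissonization of Pal.
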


\begin{proof}
 Let $\fT$ be a resampling $k$-tree evolution started from $T$.  By Propositions \ref{prop:012:pred} and \ref{prop:012:concat}, up until the first time a top mass or the mass of an interval partition is zero, the top masses evolve as \BESQ[-1] processes and the masses of internal interval partitions evolve as \BESQ[1] processes, and all of these are independent.  The effect of de-Poissonisation procedure in Definition \ref{def:dePoi} on these evolutions is identical to the de-Poissonisation procedure \cite{Pal11, Pal13} used to construct Wright--Fisher diffusions, and the result follows.\qedhere
\end{proof}

%After we have established some of the other properties of $k$-tree evolutions, the above theorem can be proved in the same manner as the analogous result for type-0/1 evolutions, \cite[Theorem 1.6]{Paper1}. We discuss this analogy in detail in Section \ref{sec:dePoi}.

\subsection{Projective consistency results}

\begin{definition}[Projection maps for $A$-trees]\label{def:proj}
 For $j\in\bN$ and $A\subseteq\bN$ with $\#(A\setminus\{j\})\in [1,\infty)$, we define a projection map
 $$\pi_{-j}\colon \TInt_A\to\TInt_{A\setminus\{j\}}$$
 %\bigcup_{A\subseteq\bN\colon 1\le \#A\setminus\{j\}<\infty}\TInt_A \to \bigcup_{A\subseteq\bN\setminus\{j\} \colon 1\le \#A<\infty}\TInt_A$$
 to remove label $j$ from an $A$-tree, as follows. Let $T = (\ft,(x_i,i\in A),(\beta_E,E\in\ft))\in \TInt_A$. If $j\notin A$ then $\pi_{-j}(T) = T$. Otherwise, let $\phi$ denote the map $E\mapsto E\setminus\{j\}$ for $E\in\ft\setminus\big\{\parent{\{j\}}\big\}$. As noted for a similar map in Section \ref{sec:non_resamp_def}, this map is injective. Then $\pi_{-j}(T) := (\ft',(x'_i,i\in A\setminus\{j\}),(\beta'_E,E\in\ft'))$, where
 \begin{enumerate}[label=(\roman*),ref=(\roman*)]
  \item $\displaystyle\ft' = \phi(\ft) = \big\{E\setminus\{j\}\colon E\in \ft\setminus\big\{\parent{\{j\}}\big\}\big\}$,
  \item if $E = \parent{\{j\}}$ is a type-1 edge in $\ft$ then $\beta'_{E\setminus\{j\}} = \beta_{E\setminus\{j\}}\concat (0,x_j)\concat\beta_E$,\label{item:proj:merge}
  \item if $\parent{\{j\}} = \{a,j\}$ is a type-2 edge in $\ft$ then $x_a' = x_a+x_j+\|\beta_{\{a,j\}}\|$, \label{item:proj:add}
  \item if $i\in A\setminus\{j\}$ is \emph{not} the sibling of $\{j\}$ in $\ft$, then $x'_i = x_i$, and
  \item if $E\in \ft'$ is \emph{not} the sibling of $\{j\}$ in $\ft$, then $\beta'_E = \beta_{\phi^{-1}(E)}$.
 \end{enumerate}
 
 For $k\ge1$ and any finite $A\subseteq\bN$ with $A\cap[k]\neq\emptyset$, we define $\pi_k\colon \TInt_A\to\TInt_{A\cap [k]}$ to be the composition $\pi_{-(k+1)}\circ\pi_{-(k+2)}\circ\cdots\circ\pi_{-\max(A)}$. It is straightforward to check that this composition of projection maps commutes.
\end{definition}

These projections are illustrated in Figure \ref{fig:k_tree_proj}. Note how, in that example, in passing from $T$ to $\pi_5(T)$, the condition of item \ref{item:proj:merge} of the above definition applies, whereas in passing from $\pi_5(T)$ to $\pi_4(T)$, item \ref{item:proj:add} applies.

\begin{figure}
 \centering
 \scalebox{.95}{\input{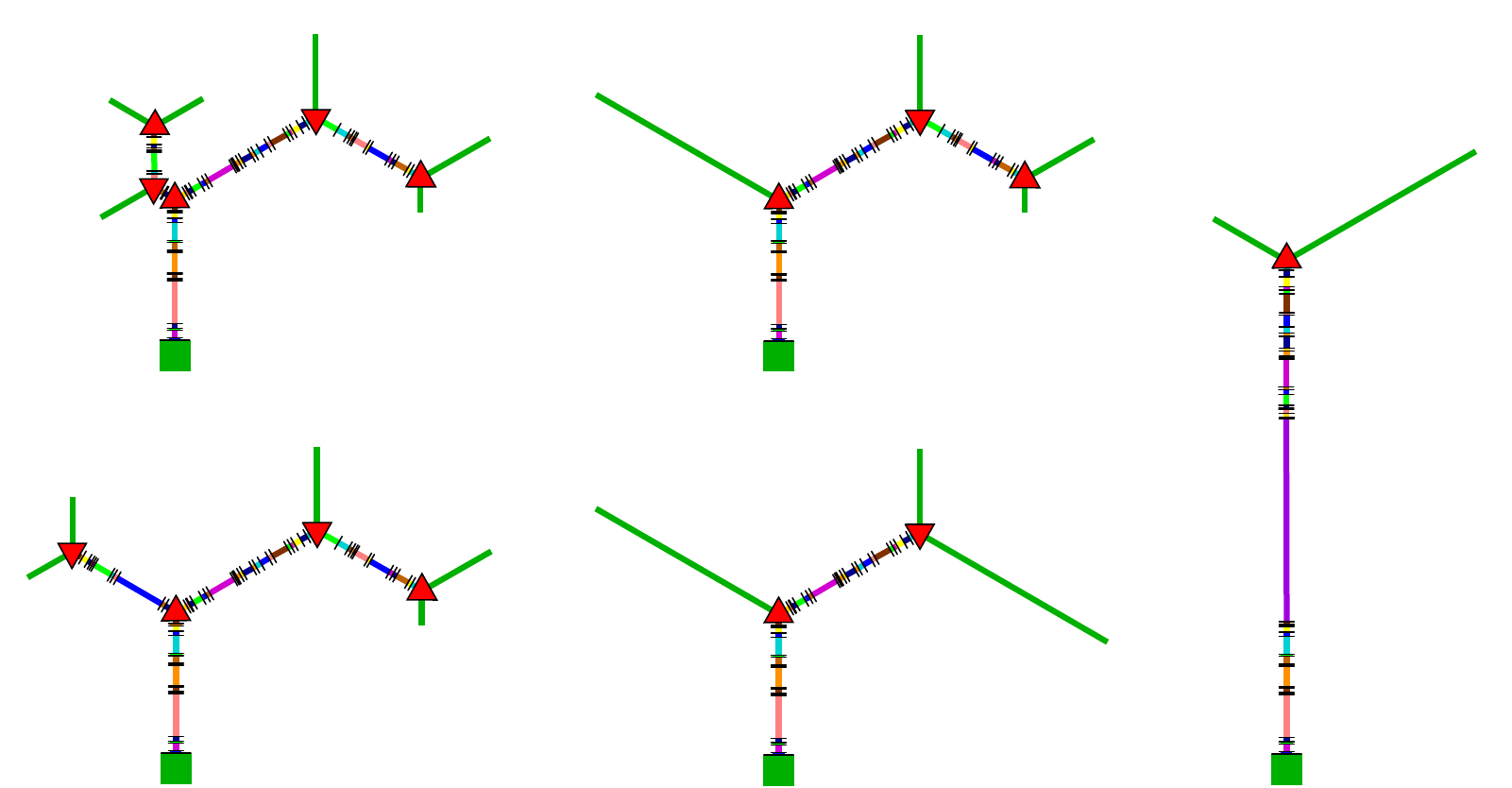_t}}
 \caption{Projections of a $6$-tree.\label{fig:k_tree_proj}}
\end{figure}

\begin{theorem}\label{thm:consistency}
 \begin{enumerate}[label=(\roman*),ref=(\roman*)]
  \item \label{item:cnst:nonresamp}
    Let $2\le j<k$. For $(\cT^y,y\ge0)$ any non-resampling $k$-tree evolution, $(\pi_j(\cT^y),y\ge0)$ is a non-resampling $j$-tree evolution.
  \item \label{item:cnst:resamp}
    If $(\cT^y,y\ge0)$ is a resampling $k$-tree evolution and $\cT^0$ satisfies
   \begin{equation}\label{eq:cnst:init}
    \bE [\varphi(\cT^0)] = \int_{(T_i)_{i=j+1}^k \in \TInt_{j+1}\times\cdots\times \TInt_{k}}\Lambda_{j+1,[j]}(T_j,dT_{j+1})\cdots\Lambda_{k,[k-1]}(T_{k-1},dT_k)\varphi(T_k)
   \end{equation}
   for some $T_j\in \TInt_{j}$, then $(\pi_j(\cT^y),y\ge0)$ is a resampling $j$-tree evolution.
  \item \label{item:cnst:dePoi} These same results hold for de-Poissonized versions of these processes.
 \end{enumerate}
\end{theorem}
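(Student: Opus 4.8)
The plan is to reduce everything to the removal of a single label and then to treat the three cases in increasing order of difficulty. Since the maps $\pi_{-j}$ commute (Definition \ref{def:proj}) and $\pi_j$ restricted to $k$-trees equals $\pi_{-(j+1)}\circ\cdots\circ\pi_{-k}$, and since a $(k-1)$-tree evolution is just a $k$-tree evolution with $k$ lowered by one, an induction lets me assume $j=k-1$, i.e.\ I only need to show that removing the top label $k$ sends an evolution of each of the three kinds to the corresponding evolution on $[k-1]$. The basic building block is the effect of $\pi_{-k}$ on a \emph{killed} $k$-tree evolution: by items (ii)--(iii) of Definition \ref{def:proj}, removing $k$ collapses the type-1 compound on $\parent{\{k\}}$ into the compound on the sibling edge $F$ of $\{k\}$, leaving every other compound and its conditional independence intact. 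The concatenation identities of Proposition \ref{prop:012:concat}, parts \ref{item:012concat:0+1}, \ref{item:012concat:1+1}, \ref{item:012concat:2+1} according as $F$ has type $0$, $1$ or $2$, show exactly that $\beta^y_F\concat(0,x^y_k)\concat\beta^y_{\parent{\{k\}}}$ is a type-$0$, type-$1$ or type-$2$ evolution respectively, for \emph{all} $y\ge0$, including after the inner type-1 compound $(x_k,\beta_{\parent{\{k\}}})$ has degenerated; it is this ``for all $y$'' form that automatically glues the projected path across a degeneration time of the $k$-tree caused by the $\parent{\{k\}}$-compound, and the strong Markov assertion of Proposition \ref{prop:012:concat} supplies the independence needed to see the resulting object as a genuine killed $(k-1)$-tree evolution. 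A Dynkin--Kemeny--Snell lumping argument for killed evolutions (Proposition \ref{prop:Dynkin:killed}) upgrades this pathwise picture to the Markov statement.

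For part \ref{item:cnst:nonresamp}, a non-resampling $k$-tree evolution is a concatenation of killed segments glued by the swap-and-reduce map $\varrho$ (Definition \ref{def:nonresamp_1}). I would first record the purely combinatorial lemma that the rule $J(\ft,i)=\max\{i,a,b\}$ (smallest labels $a,b$ of sibling and uncle of $\{i\}$) is tailored so that $\pi_{-k}\circ\varrho=\varrho\circ\pi_{-k}$ on the relevant degenerate states---in particular $\varrho$ acts as plain leaf-deletion when the degenerate label equals $k$, which matches the action of $\pi_{-k}$ there. Applying the killed-evolution step segment by segment and invoking this lemma shows that $(\pi_{-k}(\cT^y))$ is itself a concatenation of killed $(k-1)$-tree segments glued by $\varrho$: dropping a label below $k$ happens simultaneously upstairs and downstairs, while the dropping of $k$ itself is invisible after projection. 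Hence $(\pi_{-k}(\cT^y))$ is a non-resampling $(k-1)$-tree evolution.

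For part \ref{item:cnst:resamp} I would use Rogers and Pitman's intertwining criterion, with intertwining kernel $\Lambda:=\Lambda_{j+1,[j]}\cdots\Lambda_{k,[k-1]}$ from $j$-trees to $k$-trees. The hypothesis \eqref{eq:cnst:init} says precisely that $\cT^0$ has law $\delta_{T_j}\Lambda$; moreover a direct check on Definition \ref{def:proj} and the label-insertion operators shows that $\pi_j$ pushes $\Lambda(T_j,\cdot)$ forward to the point mass at $T_j$, so $\Lambda$ inverts $\pi_j$ in the required sense. The heart of the matter is the semigroup intertwining $\Lambda P^{(k)}_t=P^{(j)}_t\Lambda$, where $P^{(k)}$, $P^{(j)}$ denote the resampling semigroups on $k$- and $j$-trees. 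To prove it, start a resampling $k$-tree evolution from $\Lambda(T_j,\cdot)$: between successive resampling epochs it is a killed $k$-tree evolution whose $\pi_j$-image, by the killed-evolution analysis above (iterated, removing labels $j+1,\dots,k$), evolves as a killed $(k-1)$-tree, hence $j$-tree, evolution; at a resampling epoch the kernel $\Lambda_{J_n,[k]\setminus\{J_n\}}$ was designed so that, after averaging, applying it upstairs and then projecting either does nothing downstairs (when the resampled block is invisible to $\pi_j$) or reproduces the corresponding downstairs resampling. That the \emph{laws} match at resampling epochs is exactly the pseudo-stationarity of independently-scaled Brownian reduced trees under the evolution (Proposition \ref{prop:pseudo:resamp}, together with the type-$0/1/2$ analogues of Section \ref{sec:type012}), so that the family produced by $\Lambda$ is preserved rather than deformed; Proposition \ref{prop:non_accumulation} ensures only finitely many resampling epochs precede the vanishing of the total mass, so the bootstrap over epochs terminates. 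Rogers--Pitman then gives that $(\pi_j(\cT^y))$ is Markov with semigroup $P^{(j)}_t$ and the consistent initial law, i.e.\ a resampling $j$-tree evolution.

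Finally, part \ref{item:cnst:dePoi} is almost formal once one observes that $\pi_j$ preserves total mass, $\|\pi_j(T)\|=\|T\|$, which is immediate from Definition \ref{def:proj}. Consequently the de-Poissonisation time change $\rho(\fT)$ of \eqref{eq:dePoi:time_change} depends only on the total-mass path and so coincides with $\rho(\pi_j(\fT))$, while the normalisation $T\mapsto\|T\|^{-1}T$ commutes with $\pi_j$; hence the de-Poissonised projection equals the projection of the de-Poissonised process (Definition \ref{def:dePoi}), and \ref{item:cnst:dePoi} follows from \ref{item:cnst:nonresamp}, \ref{item:cnst:resamp} and Theorem \ref{thm:dePoi}. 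The step I expect to be the genuine obstacle is the semigroup intertwining in part \ref{item:cnst:resamp}: one must carefully classify, at every resampling epoch, which resamplings are seen by $\pi_j$ and which are hidden, and check that the hidden ones do not corrupt consistency---this is where input beyond the discrete argument of \cite{Paper2} is needed, namely the pseudo-stationarity of the Brownian reduced trees derived from the type-$0/1/2$ theory of \cite{Paper0,Paper1,Paper3}. A secondary but still delicate point, underlying both \ref{item:cnst:nonresamp} and \ref{item:cnst:resamp}, is the combinatorial verification that the specific label-swapping rule $J$ makes $\varrho$ and the resampling kernels commute with projection.
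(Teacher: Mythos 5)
Your plan identifies the right tools (intertwining + a Dynkin--Kemeny--Snell step, pseudo-stationarity, the commutativity of the swap-and-reduce map with projection, and the observation that $\pi_j$ preserves total mass so de-Poissonisation commutes), and your treatment of part \ref{item:cnst:nonresamp} and part \ref{item:cnst:dePoi} matches the paper's Section \ref{sec:const:other} closely. The heart of the matter, as you correctly flag, is the intertwining in part \ref{item:cnst:resamp}, and this is where the proposal has a real gap.

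You propose to intertwine the $k$-tree and $j$-tree evolutions directly via $\Lambda=\Lambda_{j+1,[j]}\cdots\Lambda_{k,[k-1]}$ and then to verify $\Lambda P^{(k)}_t=P^{(j)}_t\Lambda$ by arguing epoch-by-epoch, invoking Proposition \ref{prop:Dynkin:killed} for the between-epoch evolution and pseudo-stationarity at the epochs. But the paper never proves a direct intertwining of this form; it inserts a third object, the \emph{marked $k$-tree} process $\Ast\cT_k^y$ (Definition \ref{def:mark}), precisely because the projection $\pi_j$ discards the one piece of information---which block of the $j$-tree ``holds'' the hidden label---that the conditional distribution calculation turns on. Proposition \ref{prop:inter:killed} establishes intertwining from $(k+1)$-trees down to marked $k$-trees, where the mark lets one cleanly split the analysis into the leaf-marked case (a pseudo-stationary type-$2$ compound evolving inside a visible leaf block) and the internal-block case (a deterministic relationship); Proposition \ref{prop:inter:skel} then handles the jump chain of visible degenerations via the rather elaborate event decomposition $A_{j,1},\dots,A_{j,5}$; these two pieces are glued by the general Lemma \ref{lem:intertwin_jump}. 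Only then is the $k$-tree obtained from the marked process by a Dynkin projection (Proposition \ref{prop:Dynkin:killed} and Lemma \ref{lem:preswap}), proved by an explicit coupling construction, not by a formal lumping criterion applied to an already-established semigroup. Your statement that ``the kernel was designed so that applying it upstairs and then projecting either does nothing downstairs or reproduces the downstairs resampling'' is exactly the content of these propositions and is not a consequence of Proposition \ref{prop:pseudo:resamp} (which concerns fixed-time marginals), but of the more refined Proposition \ref{prop:pseudo:degen} applied at degeneration times together with the case analysis just described. Without the mark---or an equivalent device that conditions on where the hidden label sits---the r.c.d.\ argument at invisible degeneration times does not go through as sketched.

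A secondary but important structural issue is the bootstrap. You cite Proposition \ref{prop:non_accumulation} to terminate the induction over epochs, but in the paper the proof of Proposition \ref{prop:non_accumulation} itself uses the partial consistency result Proposition \ref{prop:consistency_0} (consistency of $\pi_k(\cT^y_{k+1})$ up to time $D_\infty$), together with the technical Lemma \ref{lem:degen_diff}. The paper resolves this carefully: first prove consistency up to $D_\infty$, then use that to identify $D_\infty$ with the mass-extinction time, and only then conclude the full consistency statement. A proof of part \ref{item:cnst:resamp} that invokes Proposition \ref{prop:non_accumulation} as a black box is circular unless this order of quantifiers is respected; your proposal does not acknowledge the issue.
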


For $j\ge1$, we say that $(T_k,k\ge j)$ is a consistent family of $k$-trees if $\pi_{k-1}(T_k)=T_{k-1}$ for all $k > j$. We say that a family of $k$-tree evolutions $(\cT_k^y,y\ge 0)$, $k\ge j$, is consistent if $(\cT_k^y,k\ge j)$ is consistent for each $y\ge 0$. This next result follows from Theorem \ref{thm:consistency} by the Kolmogorov consistency theorem.

\begin{corollary}\label{cor:consistent_fam}
 \begin{enumerate}[label=(\roman*), ref=(\roman*)]
    \item For every consistent family $T_k\in\TInt_{k}$, $k\ge 1$, there are consistent families of non-resampling $k$-tree evolutions $(\cT_k^{y},y\ge 0)$, $k\ge 1$, with $\cT_k^0 = T_k$ for each $k$.\label{item:cnstfam:nonresamp}
    \item For any fixed $j\ge 1$ and $T\in\TInt_{j}$, there exists a consistent family of resampling $k$-tree evolutions $(\cT_k^y,y\ge0)$, $k\ge j$, $\cT_j^0 = T$.\label{item:cnstfam:resamp}
    \item Assertions \ref{item:cnstfam:nonresamp} and \ref{item:cnst:resamp} hold for de-Poissonized versions of these processes; in particular, there is a consistent family of stationary de-Poissonised resampling $k$-tree evolutions, $(\widebar{\cT}_k^{u},u\ge 0)$, $k\ge 1$.\label{item:cnstfam:dePoi}
  \end{enumerate}
\end{corollary}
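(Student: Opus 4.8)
The plan is to deduce all three parts from Theorem \ref{thm:consistency} by invoking the Kolmogorov extension theorem for projective systems of probability measures on a single path space. Write $\cX$ for the glued $k$-tree state space of Proposition \ref{prop:Lusin} with the cemetery $\partial$ adjoined, and put $\bD:=D([0,\infty),\cX)$; since $\cX$ is Lusin, $\bD$ is Lusin, hence standard Borel. The pathwise projections $\Pi_j\colon(\omega^y,y\ge0)\mapsto(\pi_j(\omega^y),y\ge0)$ — with the conventions $\pi_j(0)=0$, $\pi_j(\partial)=\partial$, and $\pi_j(T)=0$ when the label set of $T$ misses $[j]$ — are Borel measurable because $\pi_j$ is. (Note also that label $1$ is never dropped in a $k$-tree evolution, since $J(\ft,i)\ge\max\{i,a\}$ for a sibling-label $a\ge1$, so $\Pi_1$ always acts on paths whose label set contains $1$.)

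For \ref{item:cnstfam:nonresamp} I would fix the consistent family $(T_k)_{k\ge1}$ and let $\mu_k$ denote the distribution on $\bD$ of a non-resampling $k$-tree evolution from $T_k$. For $k\ge3$ the relation $(\Pi_{k-1})_*\mu_k=\mu_{k-1}$ is Theorem \ref{thm:consistency}\ref{item:cnst:nonresamp} with $j=k-1$; for $k=2$ it holds because each one-label projection $\pi_{-m}$ preserves total mass, so $\Pi_1$ carries a path to its total-mass process, which by Theorem \ref{thm:total_mass} is a \BESQ[-1] started from $\|T_2\|=\|T_1\|$, i.e.\ a non-resampling $1$-tree evolution from $T_1$. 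Thus $(\mu_k)_{k\ge1}$ is a projective system over $\mathbb N$ with measurable projections between standard Borel spaces; disintegrating $\mu_k$ over $\mu_{k-1}$ through $\Pi_{k-1}$ produces stochastic kernels supported on the fibres of $\Pi_{k-1}$, and their countable product gives a probability measure on $\bD^{\mathbb N}$ carried by $\{(\omega_k)_k\colon\Pi_{k-1}(\omega_k)=\omega_{k-1}\text{ for all }k\ge2\}$. Under this measure the coordinate processes $(\cT_k^y,y\ge0)$ have laws $\mu_k$ and satisfy $\pi_{k-1}(\cT_k^y)=\cT_{k-1}^y$ for all $y\ge0$; choosing a version with $\cT_k^0=T_k$ identically gives the asserted consistent family.

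For \ref{item:cnstfam:resamp} the only extra point is to choose the initial laws so that Theorem \ref{thm:consistency}\ref{item:cnst:resamp} applies level by level. Fixing $j\ge1$ and $T\in\TInt_j$, I would let $\nu_k$ ($k\ge j$) be the law obtained from $\delta_T$ by successively pushing through $\Lambda_{j+1,[j]},\dots,\Lambda_{k,[k-1]}$, and let $\mu_k$ be the law on $\bD$ of a resampling $k$-tree evolution with initial distribution $\nu_k$. Since inserting a label via $\oplus$ and then deleting it returns the original tree (using $\|U\|=1$ in the resampling kernel), $\pi_{-k}\circ\Lambda_{k,[k-1]}=\mathrm{id}$, whence $(\pi_{-k})_*\nu_k=\nu_{k-1}$ and the $(\nu_k)$ are consistent. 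Conditioning $\nu_k$ on its level-$(k-1)$ marginal and applying Theorem \ref{thm:consistency}\ref{item:cnst:resamp} fibrewise — for deterministic $T_{k-1}$, the one-step kernel $\Lambda_{k,[k-1]}(T_{k-1},\cdot)$ is exactly the $j=k-1$ instance of hypothesis \eqref{eq:cnst:init} — then mixing over $T_{k-1}\sim\nu_{k-1}$ yields $(\Pi_{k-1})_*\mu_k=\mu_{k-1}$ (the projection to level $1$, when $j=1$, again reduces to Theorem \ref{thm:total_mass}), so the Kolmogorov construction of the previous paragraph delivers a consistent family with $\cT_j^0=T$. Part \ref{item:cnstfam:dePoi} is the same with the de-Poissonized laws and Theorem \ref{thm:consistency}\ref{item:cnst:dePoi}, since the time change and normalization preserve right-continuity of paths, leaving $\bD$ and the $\Pi_j$ unchanged; for the stationary family I would run the \ref{item:cnstfam:resamp} construction with $j=2$ and $\nu_2$ the law of a Brownian reduced $2$-tree (a non-Dirac $\nu_j$ is harmless, as only consistency of $(\nu_k)$ and compatibility with mixing were used), so that by \eqref{eq:B_ktree_resamp} the $\nu_k$ are the laws of the Brownian reduced $k$-trees and Theorem \ref{thm:dePoi} makes each member of the resulting consistent family stationary.

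The main obstacle is not analytic but the measure-theoretic bookkeeping of the projective limit: one must confirm that $\bD$ is standard Borel so that \emph{proper} disintegrations exist, as this is what guarantees the kernels are genuinely supported on the fibres $\Pi_{k-1}^{-1}(\cdot)$ and hence that the countable product measure is carried by the projective limit and not merely by $\bD^{\mathbb N}$; a secondary point is bridging from the ``deterministic-start'' phrasing of \eqref{eq:cnst:init} to consecutive-level consistency via conditioning and mixing, and handling the level-$1$ edge case through Theorem \ref{thm:total_mass}. Everything else is a direct transcription of Theorem \ref{thm:consistency}.
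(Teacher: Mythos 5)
Your argument is correct and is exactly the route the paper takes: the paper proves this corollary only by the remark that it "follows from Theorem \ref{thm:consistency} by the Kolmogorov consistency theorem," and your proposal simply fills in that Kolmogorov/disintegration bookkeeping (including the harmless level-$1$ and consecutive-level mixing details) in the standard way. No gaps.
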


We note one more consistency result. For the following result, we require notation $\pi_A$ generalising Definition \ref{def:proj} of $\pi_k$ to a general finite non-empty label set $A\subset \bN$. This is defined in the obvious, analogous manner.

\begin{proposition}\label{prop:resamp_to_non}
 Suppose $(\cT^y,y\ge0)$ is a resampling $k$-tree evolution. Then there exists a process $((A_y,B_y,\sigma_y),y\ge0)$ that is constant between degeneration times, such that $A_y$ and $B_y$ are subsets of $[k]$ and $\sigma_y$ is a bijection between them, and such that $\sigma_y\circ\pi_{A_y}(\cT^y)\in\TInt_{B^y}$, $y\ge0$, is a non-resampling $k$-tree evolution.
\end{proposition}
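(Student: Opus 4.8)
The plan is to build $((A_y,B_y,\sigma_y),y\ge 0)$ by recursion over the degeneration times $D_1<D_2<\cdots$ of the resampling $k$-tree evolution $(\cT^y)$, keeping $(A_y,B_y,\sigma_y)$ constant on each interval $[D_{n-1},D_n)$, and to check by induction that there the relabelled projection $\sigma_y\circ\pi_{A_y}(\cT^y)$ evolves as a killed $|A_y|$-tree evolution on label set $B_y$, while at each $D_n$ it performs exactly one of the transitions of Definition \ref{def:nonresamp_1} or nothing at all. Set $A_0=B_0=[k]$ and $\sigma_0=\mathrm{id}$; on $[0,D_1)$ the resampling and non-resampling evolutions agree verbatim. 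For the recursive step assume $(A,B,\sigma)$ is the (constant) value on $(D_{n-1},D_n)$, with $\sigma\colon A\to B$ a bijection and $\sigma\circ\pi_A(\cT^y)$ a killed $|A|$-tree evolution there, and let $I_n=I(\cT^{D_n-})$, $J_n=J(\cT^{D_n-})$ be as in Definition \ref{def:resamp_1}.

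I would split into two cases according to whether $\pi_A(\cT^{D_n-})$ is a branch state in $\tdTInt_A\setminus\TInt_A$ (the \emph{visible} case) or already lies in $\TInt_A$ (the \emph{invisible} case). Since $\pi_A$ also contracts away the labels in $[k]\setminus A$, the degenerate compound of $\cT^{D_n-}$ may vanish or become non-degenerate under $\pi_A$, so visibility is \emph{not} simply ``$I_n\in A$''; this is the point where the bookkeeping is delicate. In the invisible case the tracked tree does not degenerate: using the identity $\pi_{-j}\big(T'\oplus(\ell,j,U)\big)=T'$ (immediate from Definitions \ref{def:resamp_1} and \ref{def:proj}) the $\Lambda_{J_n,\cdot}$-reinsertion of $J_n$ is undone by the projection, and the compatibility of the swap-and-reduce map $\varrho$ with $\pi$ (the continuum analogue of the discrete matching in \cite{Paper2}, and the reason the map $J(\ft,i)=\max\{i,a,b\}$ and the exclusion of $\parent{\{i\}}$ from the domain of $\phi_{\ft,i}$ are chosen as they are) makes the $\varrho$-step transparent too; one then defines $A_{D_n}$ by exchanging $J_n$ for $I_n$ in $A$ when $J_n\in A$ and $A_{D_n}=A$ otherwise, keeps $J_n\notin A_{D_n}$, and adjusts $\sigma$ to $\sigma_{D_n}$ only by this relabelling, so that $\sigma_{D_n}\circ\pi_{A_{D_n}}(\cT^{D_n})=\sigma\circ\pi_A(\cT^{D_n-})$ and the tracked tree is continuous across $D_n$. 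In the visible case one checks instead that $\pi_A$ intertwines with $\varrho$: the tracked tree performs precisely one non-resampling step, $B_{D_n}=B\setminus\{\widehat J\}$ with $\widehat J$ the label dropped by $\varrho$ applied to $\sigma\circ\pi_A(\cT^{D_n-})$, and one picks $A_{D_n}\subset[k]$ of size $|A|-1$ and $\sigma_{D_n}\colon A_{D_n}\to B_{D_n}$ accordingly (the $\Lambda$-reinsertion again being undone by $\pi_{A_{D_n}}$), so that $\sigma_{D_n}\circ\pi_{A_{D_n}}(\cT^{D_n})=\varrho\big(\sigma\circ\pi_A(\cT^{D_n-})\big)$.

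The crux is then to show that, between two consecutive \emph{visible} degeneration times, the tracked process $\sigma_y\circ\pi_{A_y}(\cT^y)$ — which in the ambient picture is $\pi_A$ of a chain of killed $k$-tree evolutions glued by invisible $\varrho$-then-$\Lambda$ steps — is genuinely a single killed $|A|$-tree evolution. This reduces to a compound-by-compound statement. Decompose each ambient killed $k$-tree evolution into its independent type-$0/1/2$ evolutions; when $\pi_A$ removes a leaf of a type-2 compound it replaces that compound by its total mass, which is a \BESQ[-1] by Proposition \ref{prop:012:mass}, i.e.\ exactly a top mass for the smaller tree. The concatenation/decomposition identities of Proposition \ref{prop:012:concat}, read in the direction ``a type-$1$, resp.\ type-$2$, evolution on its degeneration continues as the concatenated/absorbed process'', identify each invisible reinsertion with the ``restart'' that a killed $|A|$-tree evolution would perform on its own — the switch in \ref{item:012concat:2+1}, resp.\ the absorptions in \ref{item:012concat:BESQ+0} and \ref{item:012concat:1+1} — so the pieces fit together into one killed $|A|$-tree evolution; the strong Markov property recorded at the end of Proposition \ref{prop:012:concat}, together with Theorem \ref{thm:Markov} and the regular-conditional-distribution argument of Remark \ref{rmk:decomp_ker}, supplies the conditional independence of the successive pieces. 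Granting this, $\sigma_y\circ\pi_{A_y}(\cT^y)$ is a concatenation of conditionally independent killed $|A_y|$-tree evolutions spliced by $\varrho$-steps, hence a non-resampling $k$-tree evolution in the sense of Definition \ref{def:nonresamp_1}. I expect the main technical burden to be exactly this gluing step: verifying that $\pi$ commutes with $\varrho$ for every tree shape, and then applying Proposition \ref{prop:012:concat} \emph{inside} the compounds to absorb the invisible reinsertions — a single ambient killed-evolution interval does not correspond to a single killed-$|A|$-tree-evolution interval, and it is precisely the resampling mechanism (the same one used in Proposition \ref{prop:Dynkin:killed}) that repairs the mismatch.
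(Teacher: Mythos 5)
Your proposal is correct and follows essentially the same route as the paper: a recursive definition of $(A_y,B_y,\sigma_y)$ at degeneration times, a case analysis according to whether the degeneration is visible under $\pi_{A}$ (the paper spells this out in five sub-cases, separating the situations where the sibling subtree, the uncle subtree, or the degenerate label itself is erased by the projection), and a gluing argument for the invisible jumps that rests on exactly the ingredients you name — the concatenation properties of Proposition \ref{prop:012:concat} and the coupling of Proposition \ref{prop:Dynkin:killed} — with the paper citing Theorem \ref{thm:consistency}\ref{item:cnst:nonresamp} for the between-degeneration intervals rather than redoing the compound-by-compound argument. The only slight imprecision is your claim that the tracked tree is continuous across every invisible degeneration: in the case where the uncle subtree projects to the leftmost block of the parent edge partition, the projected top mass hits zero and that block is extracted, so the tracked tree attains the value dictated by its left limit as in Proposition \ref{prop:012:pred} rather than being literally unchanged — which your appeal to the absorption mechanisms of Proposition \ref{prop:012:concat} already accommodates.
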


In Section \ref{sec:consistency}, we prove that the consistency of Theorem \ref{thm:consistency}\ref{item:cnst:resamp} holds until time $D_\infty$. We apply this result in Section \ref{sec:non_acc} to prove Proposition \ref{prop:non_accumulation}, which in turn completes the proof of Theorem \ref{thm:consistency}\ref{item:cnst:resamp}. The remaining results of Theorem \ref{thm:consistency} and Proposition \ref{prop:resamp_to_non} are proved in Section \ref{sec:const:other}.

\section{Proofs of results for fixed $k$}\label{sec:k_fixed}

\subsection{Markov property and total mass}\label{sec:Markov_mass}

\begin{proof}[Proof of Theorem \ref{thm:Markov}]
 Killed $A$-tree evolutions are Borel right Markov processes as they are effectively tuples of independent type-0/1/2 evolutions, which are themselves Borel right Markov processes as noted in Proposition \ref{prop:012:pred}\ref{item:pred:Markov}, killed at a stopping time. Note that because these evolutions in the various type-0/1/2 compounds in the tree are independent and their degeneration times are continuous random variables, almost surely one of them degenerates before all of the others.  Since each type-$i$ compound has $i$ positive top masses and positive interval partition mass at almost all times before its degeneration time, $\cT^{D_1-}\in\tdTInt_{k}$ a.s.. Therefore, the non-resampling and resampling $k$-tree evolutions are well-defined. Moreover, the type of construction undertaken in Definitions \ref{def:nonresamp_1} and \ref{def:resamp_1} of non-resampling and resampling $k$-tree evolutions is well-studied; it yields a Borel right Markov process by Th\'eor\`eme 1 and the Remarque on p.\ 474 of Meyer \cite{Mey75}.
 
 Finally, we use stopping times $S_n := \inf\big\{y\ge0\colon \min_{i\in A}(m_i^y+\|\beta_{\parent{\{i\}}}^y\|) < 1/n\big\}$, $n\ge1$, to show that these processes fail to be quasi-left-continuous.  
 These times are eventually strictly increasing (as soon as they exceed 0) and they converge to the (first) degeneration time. Thus, the first degeneration time is an increasing limit of stopping times, so it is visible in the left-continuous filtration and is a time at which the three processes are discontinuous.
\end{proof}

\begin{proposition}\label{prop:total_mass_0}
 The total mass process of a non-resampling $A$-tree evolution is a \BESQ[-1]. The total mass process of a resampling $k$-tree evolution is also a \BESQ[-1], killed at the random time $D_\infty := \sup_nD_n$.
\end{proposition}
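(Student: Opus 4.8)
The plan is to reduce the statement to the additivity property of squared Bessel processes, applied first within each ``phase'' between successive degeneration times and then patched across phases. I would begin with a single killed $A$-tree evolution $(\cT^y,y\in[0,D))$. By Definition~\ref{def:killed_ktree} this is, at every time, an independent family of type-$i$ evolutions indexed by the edges $E\in\ft$ of its fixed tree shape, with $i$ the type of $E$; by Proposition~\ref{prop:012:mass} the total mass of the compound on $E$ is a $\BESQ[1-\mathrm{type}(E)]$, and each such compound may be extended past its own degeneration time as a diffusion absorbed at $0$ without changing its law. Hence $\|\cT^y\|=\sum_{E\in\ft}\|\textrm{compound on }E\textrm{ at time }y\|$ is a sum of independent squared Bessel processes whose dimension parameters add up to $\sum_{E\in\ft}\bigl(1-\mathrm{type}(E)\bigr)$. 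Now each leaf edge $\{i\}$ has a unique internal parent $\parent{\{i\}}\in\ft$ and $\mathrm{type}(E)=\#\{i\in A:\parent{\{i\}}=E\}$, so $\sum_{E\in\ft}\mathrm{type}(E)=\#A$, while $\sum_{E\in\ft}1=\#\ft=\#A-1$; the parameters therefore sum to $-1$. By the additivity property of $\BESQ$ laws (Shiga--Watanabe; see \cite{RevuzYor}), $(\|\cT^y\|,y\ge0)$ is a $\BESQ_m(-1)$, and in particular so is its restriction to $[0,D)$.

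Next I would record that the maps applied at degeneration times preserve total mass. For the swap-and-reduce map, $\|T\|-\|\varrho(T)\|=x_{I(T)}+\bigl\|\beta_{\parent{\{I(T)\}}}\bigr\|=0$ by the very definition of $I(T)$, the remaining operations (swapping a label, relabelling partitions by $\phi_{\ft,I(T)}$) being mass-neutral. For the resampling kernel $\Lambda_{j,A}$, the operation $T\oplus(\ell,j,U)$ either splits an interval block into two blocks with the same total length, or replaces a top mass $x_i$ by the triple $(x_iy_1,x_iy_2,x_i\gamma)$ with $y_1+y_2+\|\gamma\|=\|U\|=1$; both conserve total mass. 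Consequently, in a non-resampling or resampling $k$-tree evolution, the total mass process has a left limit at each degeneration time $D_n$ and is continuous there, $\|\cT^{D_n}\|=\|\cT^{D_n-}\|$.

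Then I would patch the phases together via the martingale problem for $\BESQ[-1]$. On $[D_{n-1},D_n)$, conditionally on the history at $D_{n-1}$, the total mass is that of a killed $A_n$-tree evolution from $\cT^{D_{n-1}}$, hence a $\BESQ_{\|\cT^{D_{n-1}}\|}(-1)$ run up to the stopping time $\Delta_n=D_n-D_{n-1}$; together with the continuity at each $D_n$ this shows that for every $f\in C_c^\infty[0,\infty)$ the process $f(\|\cT^y\|)-\int_0^y\bigl(2\|\cT^w\|f''(\|\cT^w\|)-f'(\|\cT^w\|)\bigr)\,dw$ is a local martingale, in the filtration of $(\cT^y)$, on $[0,D_\infty)$, where $D_\infty=\sup_nD_n$. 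Well-posedness of this martingale problem gives that $(\|\cT^y\|,y\in[0,D_\infty))$ has the law of a $\BESQ_m(-1)$ killed at $D_\infty$, which is the asserted statement in the resampling case. In the non-resampling case $D_\infty=D_k<\infty$: the total mass is positive on $[0,D_k)$ (at each $D_n$ with $n<k$ the left limit lies in $\tdTInt_{A_n}$, which with $\#A_n\ge2$ forces the total mass to be positive), and phase $k$ is a single-leaf $\BESQ[-1]$ absorbed at $0$, so $\|\cT^{D_k}\|=0$ and $\cT^y=0$ for $y\ge D_k$; thus the total mass is a genuine $\BESQ_m(-1)$ on $[0,\infty)$. (Later, Proposition~\ref{prop:non_accumulation} will identify $D_\infty$ in the resampling case with the hitting time of $0$, upgrading this to Theorem~\ref{thm:total_mass}.)

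The step I expect to be the main obstacle is the concatenation in the resampling case. The degeneration times $\Delta_n$ are not stopping times of the total mass process alone, so one must work in the filtration of the full $k$-tree evolution and verify that the $\BESQ[-1]$ appearing in each phase is driven by a Brownian motion adapted to that larger filtration; and one must ensure that the infinitely many restarts assemble into a single $\BESQ[-1]$ on $[0,D_\infty)$ rather than a process that is only ``locally'' of that form. Casting the conclusion as a martingale problem, as above, is precisely what removes these difficulties. The other ingredients --- mass and continuity preservation under $\varrho$ and $\Lambda$, and the edge-count identity $\sum_{E\in\ft}(1-\mathrm{type}(E))=-1$ --- are elementary.
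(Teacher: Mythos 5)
Your argument is correct and follows the same strategy as the paper: decompose the total mass within each phase as a sum of independent squared Bessel processes via the additivity property, compute the net parameter as $-1$ by counting edges and leaves, and then patch the phases together across degeneration times. The paper reaches the conclusion more tersely, invoking the strong Markov property of the $k$-tree evolution at the stopping times $D_n$ in place of your martingale-problem formulation, and leaving the mass-preservation of $\varrho$ and $\Lambda_{j,A}$ implicit; your explicit treatment of those two points spells out what the paper leaves to the reader.
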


This is a partial form of Theorem \ref{thm:total_mass}. In Section \ref{sec:non_acc}, we prove Proposition \ref{prop:non_accumulation}, that $D_\infty = \inf\{y\ge0\colon \|\cT^{y-}\|=0\}$, completing the proof of Theorem \ref{thm:total_mass}.

\begin{proof}
 Let $(\cT^y,y\ge0)$ denote a non-resampling $k$-tree evolution. Up until its first degeneration, its total mass $\|\cT^y\|$ is the sum of the total masses of $k-1$ type-0/1/2 evolutions -- one compound for each internal edge $E\in\ft$ of the tree shape. In particular, the sum of the ``type numbers'' of these compounds is $k$: if we let $n_i$ denote the number of type-$i$ compounds, $i\in\{0,1,2\}$, then
 $$k-1 = n_0+n_1+n_2 \qquad\text{and} \qquad k = 0\times n_0 + 1\times n_1 + 2\times n_2 = n_1 + 2n_2.$$ 
 This gives $n_2 = n_0+1$. By Proposition \ref{prop:012:mass}, %From \cite[Theorem 1.5]{Paper1} and \cite[Theorem 3]{Paper3}, 
 the total mass process of a type-$i$ evolution is a \BESQ[1-i]. Then, by the additivity of squared Bessel processes \cite[Proposition 1]{PitmWink18}, the sum of these total masses, $(\|\cT^y\|,y\in [0,D_1])$, evolves as a squared Bessel process with parameter $n_0\times 1 + 0\times n_1 - 1\times n_2 = -1$, stopped at a stopping time in a filtration to which the squared Bessel process is adapted. Moreover, the same argument and the strong Markov property show that the total mass continues to evolve as a \BESQ[-1] between the first and second degeneration times, and so on. Thus, the process evolves as a \BESQ[-1] until its absorption at $0$. The same argument proves the above assertion for the resampling $k$-tree evolution.
\end{proof}

\subsection{The Brownian reduced $k$-tree}\label{sec:B_ktree}

For a tree shape $\ft\in \TShape_{[k]}$, let $\phi_{\ft}\colon \ft\to\{k+1,\ldots,2k-1\}$ be a bijection (e.g.\ one that preserves lexigraphic order).

\begin{proposition}[Section 3.3 of \cite{PW13}]\label{prop:B_ktree}
 Fix $k\ge1$.
 \begin{itemize}[topsep=3pt]
  \item Let $\tau$ be a uniform random element of $\TShape_{[k]}$.
  \item Independently, let $(M_i,1\le i\le 2k-1)\sim \distribfont{Dirichlet}\big(\frac12,\ldots,\frac12\big)$.
  \item Independently, let $(\beta_i,k+1\le i\le 2k-1)$ be a sequence of i.i.d.\ \PDIP[\frac12,\frac12].
 \end{itemize}
 Then $(\tau,(M_i,i\in [k]),(M_{\phi_{\tau}(E)}\beta_{\phi_{\tau}(E)},E\in\ft))$ is a Brownian reduced $k$-tree, as described in the introduction. In particular, its distribution is invariant under the permutation of labels.
\end{proposition}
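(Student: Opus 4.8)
The plan is to recognize, one at a time, the three constituents of a Brownian reduced $k$-tree $\cT_k = (\ft_k, (X_j^{(k)}, j\in[k]), (\beta_E^{(k)}, E\in\ft_k))$: the combinatorial shape $\ft_k$; the joint law of the $2k-1$ total masses of the components of $\cT\setminus\cR_k^\circ$, namely the $k$ top masses $X_j^{(k)}$ and the $k-1$ branch masses $\|\beta_E^{(k)}\|$; and, for each internal branch $\cB_E$, the unit-mass interval partition $\beta_E^{(k)}/\|\beta_E^{(k)}\|$. One then checks that these are jointly independent in the manner claimed. Since this is exactly the description of the BCRT spanned by $k$ i.i.d.\ $\mu$-leaves worked out in Section~3.3 of \cite{PW13}, the task is mostly one of assembly and translation; I record the three ingredients for completeness.

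First I would argue that $\ft_k$ is uniform on $\TShape_{[k]}$: since $\Sigma_1,\ldots,\Sigma_k$ are i.i.d.\ $\mu$ and a.s.\ distinct, the shape of $\cR_k$ is exchangeable in the $k$ labels, and the classical finite-dimensional description of the BCRT by line-breaking \cite{AldousCRT1} identifies the unordered discrete shape spanned by $k$ exchangeable leaves with the uniform law on $\TShape_{[k]}$ (equivalently, adding the $(n+1)$-st leaf attaches it to a uniformly chosen edge of the current reduced tree, the growth rule that produces a uniform binary shape). Second, I would cut $\cT$ at every branch point of $\cR_k^\circ$, exposing the $2k-1$ components with masses $(X_j^{(k)}, j\in[k])$ and $(\|\beta_E^{(k)}\|, E\in\ft_k)$; by the self-similar structure of the BCRT --- concretely, the split at the branch point nearest the root is $\distribfont{Dirichlet}\big(\frac12, n_B-\frac12, n_C-\frac12\big)$, where $n_B, n_C$ count the sampled leaves on the two sides away from the root, after which one recurses into each side and the pieces reassemble by the aggregation property of Dirichlet laws --- conditionally on $\ft_k$ this vector of masses is $\distribfont{Dirichlet}\big(\frac12,\ldots,\frac12\big)$ and independent of $\ft_k$, which gives $(M_i, 1\le i\le 2k-1)$. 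Third, I would invoke the bead-crushing / spinal decomposition of the BCRT from \cite{PitmWink09}: the subtrees attaching to the interior of a fixed branch $\cB_E$, listed by decreasing distance of their attachment points from the root, have masses forming an interval partition which, rescaled to unit mass, is a $\PDIP[\frac12,\frac12]$; since distinct branches $\cB_E, \cB_{E'}$ lie in disjoint sub-CRTs of $\cT$, the family of rescaled partitions $(\beta_E^{(k)}/\|\beta_E^{(k)}\|, E\in\ft_k)$ is i.i.d.\ $\PDIP[\frac12,\frac12]$ and jointly independent of $\ft_k$ and of all the masses, which is the family $(\beta_i, k+1\le i\le 2k-1)$. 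Combining the three gives the stated joint law (matching $M_{\phi_\tau(E)}$ with $\|\beta_E^{(k)}\|$ and $\beta_{\phi_\tau(E)}$ with the renormalized partition on $\cB_E$), and permutation-invariance is immediate from the exchangeability of $\Sigma_1,\ldots,\Sigma_k$.

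The hardest part will be the third step --- establishing \emph{simultaneously} that each rescaled branch-partition is $\PDIP[\frac12,\frac12]$, that these are independent across the $k-1$ branches, and that this independence persists under conditioning on the $\distribfont{Dirichlet}$ masses. A self-contained alternative to citing \cite{PitmWink09,PW13} would be induction on $k$: the base case $k=2$ is the defining description of the law $Q$ of the Brownian reduced $2$-tree (cf.\ \cite{Paper3}), and the inductive step uses that sampling one more i.i.d.\ $\mu$-leaf and re-reducing acts on $\cT_k$ precisely by the resampling kernel $\Lambda_{k+1,[k]}$ of \eqref{eq:B_ktree_resamp}; one then checks that the product law --- uniform shape, $\distribfont{Dirichlet}\big(\frac12,\ldots,\frac12\big)$ masses, i.i.d.\ $\PDIP[\frac12,\frac12]$ branch-partitions --- is a fixed point of $\Lambda_{k+1,[k]}$. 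That verification comes down to two standard facts: a size-biased pick from a $\distribfont{Dirichlet}\big(\frac12,\ldots,\frac12\big)$ vector, after renormalization, returns a Dirichlet vector together with an independent size-split; and the $\alpha=\theta=\frac12$ Chinese-restaurant identity, under which inserting a new sample into a block of a $\PDIP[\frac12,\frac12]$ leaves the renormalized sequence a $\PDIP[\frac12,\frac12]$ and does not disturb the branch-partitions on the other edges.
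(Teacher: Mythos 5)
The paper gives no proof of this proposition at all: it is imported verbatim from \cite[Section 3.3]{PW13}, and the surrounding text simply points the reader there (likewise for \eqref{eq:B_ktree_resamp}). So there is nothing to compare your argument against except that citation, and your reconstruction is the standard derivation one finds in the cited source: uniform shape from exchangeability plus the line-breaking/leaf-insertion growth rule, the $\distribfont{Dirichlet}\big(\frac12,\ldots,\frac12\big)$ split over the $2k-1$ components from the recursive Dirichlet decomposition at branch points, and $\PDIP[\frac12,\frac12]$ branch partitions from the spinal decomposition. The parameter bookkeeping (e.g.\ the $\distribfont{Dirichlet}\big(\frac12,n_B-\frac12,n_C-\frac12\big)$ split, obtained by aggregating $\frac12$ over the $2n_B-1$ and $2n_C-1$ edges of the two subtrees) is correct. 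Two small cautions. First, in your third step, "distinct branches lie in disjoint sub-CRTs" is not by itself a proof of independence; what you actually need is the recursive self-similarity you already invoked in step two — conditionally on the Dirichlet split, the rescaled components attached along distinct branches are independent Brownian objects — so the independence of the normalized partitions from each other, from the shape, and from the masses should be routed through that conditional decomposition rather than through mere disjointness. Second, your alternative inductive route is only non-circular if you independently establish that appending a $(k\!+\!1)$-st $\mu$-sampled leaf acts on the law of $\cT_k$ by $\Lambda_{k+1,[k]}$; in this paper that sampling-consistency statement \eqref{eq:B_ktree_resamp} is itself justified only by the same citation to \cite{PW13}, so as written the induction would assume something of essentially the same depth as the proposition. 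With those caveats, your outline is sound.
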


Denote by $Q_{z,A}(dU)$ the distribution on $\TInt_A$ of the $A$-tree obtained from the distribution $Q_{z,[k]}(dU)$ of a Brownian reduced $k$-tree scaled to have total mass $z$, with leaves then relabeled by the increasing bijection $[k]\rightarrow A$, for $k=\#A$. The resampling kernel $\Lambda_{j,A}$ of Definition \ref{def:resamp_1} satisfies
\begin{equation}\label{eq:B_ktree_resamp}
 \int_{T\in\TInt_{k}}Q_{z,[k]}(dT)f(T) = \int_{(T_i,i\in [2,k])}\Lambda_{2,[1]}(z,dT_2)\cdots\Lambda_{k,[k-1]}(T_{k-1},dT_k)f(T_k),
\end{equation}
where $z\in\TInt_{1}$ denotes the $1$-tree with weight $z$ on its sole component, leaf 1. This formula indicates that the Markov chain that begins with $z$ and at each step, adds a successive label via the resampling kernel, has as its path a consistent system of Brownian reduced $k$-trees, $k\ge1$, each scaled to have total mass $z$. Like the above proposition, this formula follows from the development in \cite[Section 3.3]{PW13}.
%is related to these laws.

\subsection{Pseudo-stationarity}\label{sec:pseudo}

In this section, we will prove Proposition \ref{prop:pseudo:resamp}. Recall that type-0 evolutions do not degenerate (and are reflected when reaching zero total mass), while we say that type-1 evolutions degenerate when they reach the (absorbing) state of zero total mass and type-2 evolutions degenerate when they reach a single-top-mass state on an empty interval partition. In particular, total mass evolutions conditioned on no degeneration up to time $y$ are unaffected by the conditioning for type-0 evolutions as we are conditioning on an event of probability 1, while they are conditioned to be positive for type-1 evolutions and conditioned on an event that depends on the underlying type-2 evolution for type 2.

\begin{proposition}\label{prop:pseudo:pre_D}
 Let $(\cT^y,y\ge0)$ be a killed/non-resampling/resampling $k$-tree evolution starting from a Brownian reduced $k$-tree scaled by an independent random initial mass. Then for $y\ge0$, given $\{D_1>y\}$, the tree $\cT^y$ is again conditionally a Brownian reduced $k$-tree scaled by an independent random mass. In the special case that $\|\cT^0\|\sim\GammaDist[k-\frac12,\gamma]$, given $\{D_1>y\}$, $\|\cT^y\|$ has conditional law $\GammaDist[k-\frac12,\gamma/(2\gamma y+1)]$. 
\end{proposition}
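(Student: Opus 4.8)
The plan is to reduce the whole statement to the compound-wise pseudo-stationarity of type-$0/1/2$ evolutions (Proposition~\ref{prop:012:pseudo}), gluing the pieces back together with Dirichlet--Gamma conjugacy. Since all three evolutions agree on $[0,D_1)$ with a tuple of independent type-$0/1/2$ evolutions indexed by the internal edges of the (fixed) initial tree shape, one argument covers all three cases. \emph{Step 1: decompose the initial tree.} Condition on the initial shape $\tau=\ft\in\TShape_{[k]}$. By Proposition~\ref{prop:B_ktree} and the aggregation property of the Dirichlet distribution, a Brownian reduced $k$-tree of shape $\ft$, scaled by an independent mass $M$, can be described thus: to each internal edge $E\in\ft$, of type $i_E\in\{0,1,2\}$, attach an independent pseudo-stationary type-$i_E$ state of total mass $1$ scaled by a mass $M_E$, where the vector of proportions $(M_E/\sum_F M_F)_{E\in\ft}$ has law $\distribfont{Dirichlet}\big((\tfrac{i_E+1}{2})_{E\in\ft}\big)$, independent of $M=\sum_F M_F$; here $\sum_{E\in\ft}i_E=k$ and $\sum_{E\in\ft}\tfrac{i_E+1}{2}=k-\tfrac12$. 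In the special case $M\sim\GammaDist[k-\frac12,\gamma]$, the Gamma--Dirichlet relation makes the $M_E$ jointly independent with $M_E\sim\GammaDist[\frac{i_E+1}{2},\gamma]$.

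\emph{Step 2: evolve and condition compound by compound.} On $[0,D_1)$ the compounds run as independent type-$0/1/2$ evolutions, and $\{D_1>y\}=\bigcap_{E\in\ft}\{D_E>y\}$; given the initial compounds, the events $\{D_E>y\}$ and the time-$y$ compounds are mutually independent over $E$. Conditioning each compound on its own non-degeneration and invoking Proposition~\ref{prop:012:pseudo}, the time-$y$ compounds given $\{D_1>y,\ \tau=\ft\}$ are again independent pseudo-stationary type-$i_E$ states of mass $1$, independently rescaled; in the Gamma case the rescaling mass of the $E$-compound has law $\GammaDist[\frac{i_E+1}{2},\gamma/(2\gamma y+1)]$. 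Re-assembling via the Dirichlet/Gamma relations of Step~1 in reverse shows that, given $\{D_1>y,\ \tau=\ft\}$, $\cT^y$ is a Brownian reduced $k$-tree of shape $\ft$ scaled by an independent mass whose law does not depend on $\ft$ --- equal to $\GammaDist[k-\frac12,\gamma/(2\gamma y+1)]$ in the Gamma case. An arbitrary independent initial mass is handled by the same argument after additionally conditioning on the compound masses, which are then deterministic and hence pseudo-stationary in the sense of Proposition~\ref{prop:012:pseudo}, and mixing back.

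\emph{Step 3: the conditioning preserves the uniform shape.} It remains to check that $\Pr\{\tau=\ft\mid D_1>y\}$ does not depend on $\ft$, for then integrating Step~2 over $\ft$ restores the uniform shape and yields the claim. Since $\tau$ is uniform and constant on $[0,D_1)$, and the compounds degenerate independently,
\[
  \Pr\{\tau=\ft\mid D_1>y\}\ \propto\ \Pr\{D_1>y\mid \tau=\ft\}\ =\ \prod_{E\in\ft}\Pr\{D_E>y\}\ =\ \prod_{E\in\ft} p_{i_E}(y),
\]
where $p_i(y)$ is the probability that a type-$i$ evolution from a $\GammaDist[\frac{i+1}{2},\gamma]$-scaled pseudo-stationary state does not degenerate before time $y$. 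One has $p_0(y)\equiv1$, and $p_1(y)=(2\gamma y+1)^{-1}$: the type-$1$ total mass is a $\BESQ[0]$, whose first hitting time $T_0$ of $0$ satisfies $\Pr_m\{T_0>y\}=1-e^{-m/2y}$, and averaging over $m\sim\GammaDist[1,\gamma]$ gives $(2\gamma y+1)^{-1}$. Granting moreover that $p_2(y)=(2\gamma y+1)^{-2}$, we obtain $\prod_{E\in\ft}p_{i_E}(y)=(2\gamma y+1)^{-\sum_E i_E}=(2\gamma y+1)^{-k}$, independent of $\ft$, which finishes the proof.

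\emph{Main obstacle.} The one genuinely nontrivial input is the identity $p_2(y)=(2\gamma y+1)^{-2}$. It cannot be read off the total-mass process, since the degeneration time $D$ of a type-$2$ evolution is strictly smaller than the first hitting time of $0$ by its $\BESQ[-1]$ total mass; one must instead analyse the type-$2$ degeneration mechanism directly via Definition~\ref{def:type2} and Lemma~\ref{lem:type2:symm}, or record the non-degeneration probability from the type-$2$ analysis of \cite{Paper3}. Everything else is bookkeeping with Dirichlet--Gamma conjugacy and the conditional independence of the compounds before $D_1$.
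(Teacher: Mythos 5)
Your Steps 1--2 and the Step-3 calculation are exactly the decomposition the paper uses (Dirichlet--Gamma aggregation over the compounds of a fixed shape, compound-wise Proposition~\ref{prop:012:pseudo}, and $\prod_E p_{i_E}(y)=(2\gamma y+1)^{-k}$ independent of $\ft$), so the Gamma case is fine. Your honest flag that $p_2(y)=(2\gamma y+1)^{-2}$ cannot be read off the $\BESQ[-1]$ total-mass process and must be imported from the type-2 analysis of \cite{Paper3} (the paper's inline citation of Proposition~\ref{prop:012:pseudo} for this is a little loose; \cite[Proposition~38]{Paper3} is the right reference, as the paper itself uses in the proof of Proposition~\ref{prop:pseudo:degen}) is also correct.

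The gap is in the last sentence of Step~2. You claim that an arbitrary independent initial mass is handled "by the same argument after additionally conditioning on the compound masses ... and mixing back." This does not work as stated. If you condition on the vector of compound masses $(M_E)_E$ --- say, deterministically, with $\sum_E M_E = m$ --- then at time $y$, given $\{D_1>y\}$, the compounds are still independent pseudo-stationary type-$i_E$ states, but their masses $(M_E^y)_E$ are independent random variables whose \emph{proportions are not Dirichlet}. (The Dirichlet structure at time $y$ in the Gamma case relied on the $M_E$ being independent Gammas with a common rate, which makes the $M_E^y$ again independent Gammas with a common rate; once you condition on $(M_E)_E$, or even just on $\sum_E M_E = m$, this breaks.) So the conditional law of $\cT^y$ given $\{D_1>y\}$ and $(M_E)_E$ is \emph{not} a Brownian reduced $k$-tree scaled by an independent mass, and mixing a family of non-BR-$k$-trees does not produce a BR-$k$-tree. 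The paper closes this gap differently: having established the Gamma case for all $\gamma>0$, it appeals to Laplace-transform uniqueness to deduce the statement for Lebesgue-a.e.\ deterministic initial mass $m$, upgrades to all $m$ by the scaling property (Lemma~\ref{lem:scaling}), and then integrates over an arbitrary independent initial mass. You need this (or some equivalent device) rather than conditioning on the compound masses.
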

\begin{proof}
 First, suppose $M\sim \GammaDist[k-\frac12,\gamma]$. Note that
 \begin{equation}
  \bP(D_1>y) = (2y\gamma+1)^{-k},
 \end{equation}
 since each type-1 compound contributes $(2y\gamma+1)^{-1}$ by \cite[Equation (6.3)]{Paper1} and each type-2 compound contributes $(2y\gamma+1)^{-2}$, by Proposition \ref{prop:012:pseudo}, all independently, with $k$ top masses altogether. Conditioning on non-degeneration means conditioning each independent type-$i$ evolution, $i=1,2$, not to degenerate; thus, this conditioning does not break the independence of these evolutions. By Proposition \ref{prop:012:pseudo}, the conditional distribution of each edge partition and top mass at time $y$ is the same as the initial distribution, but with each mass and partition scaled up by a factor of $2\gamma y+1$, as claimed. 
 
 The result for deterministic initial total mass follows by Laplace inversion, and for general random mass by integration. We leave the details to the reader and refer to \cite[Proposition 6.4 and the proof of Theorem 6.1]{Paper1} or \cite[Propositions 32, 34]{Paper3} for similar arguments.
\end{proof}

\begin{lemma}\label{lem:scaling}
 If $(\cT^y,y\ge0)$ is a killed/non-resampling/resampling $k$-tree evolution, then for any $c>0$, so is $(c\cT^{y/c},y\ge0)$. 
\end{lemma}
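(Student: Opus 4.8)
The plan is to reduce the statement to the corresponding scaling invariance of the type-0/1/2 building blocks, and then to check that this invariance survives the killing, swap-and-reduce, and resampling operations used to assemble $k$-tree evolutions. First I would record the scaling property at the level of type-$i$ evolutions: for $c>0$, if $(\gamma^y,y\ge0)$ is a type-$i$ evolution then $(c\gamma^{y/c},y\ge0)$ is a type-$i$ evolution started from $c\gamma^0$. For $i=0,1$ this is a direct check on the transition kernels of Proposition~\ref{prop:012:transn}: comparing the kernel at mass $cw$ run for time $cy$ with the kernel at mass $w$ run for time $y$ and then dilated by $c$, one finds that $\chi_{(a,b)}$ is unchanged in law, that $L_{(a,b)}$ is multiplied by $c$ (change variables $u=cv$ in its density), and that the pair $(R_{(a,b)},S_{(a,b)})$ feeds the same interval partition $\textsc{ip}(\cdot,\cdot)$ into the concatenation after spatial dilation by $c$; the extra front factor in \eqref{eq:type0:kernel} is handled identically. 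Reparametrising, this says $\kappa_y(c\beta,\cdot)=(\text{dilation by }c)_*\,\kappa_{y/c}(\beta,\cdot)$. For $i=2$ it follows from the pathwise construction in Definition~\ref{def:type2} together with the Brownian scaling of \besq diffusions, $cZ(\cdot/c)\sim\besq_{cm}(-1)$ when $Z\sim\besq_m(-1)$ (same drift parameter, absorption time multiplied by $c$), which rescales the lifetimes $Y_n$ by $c$ consistently.

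Next I would lift this to killed $A$-tree evolutions. By Definition~\ref{def:killed_ktree}, such an evolution is a tuple of jointly independent type-0/1/2 evolutions indexed by the internal edges, frozen at the first degeneration time $D=\min_E D_E$. The deterministic operation $\cT^\bullet\mapsto(c\cT^{\bullet/c})$ commutes with forming this tuple and preserves independence; by the previous step each edge compound becomes a type-0/1/2 evolution of the same type from its $c$-dilated initial state; and since $c>0$ does not change which top mass or partition mass reaches zero, the operation multiplies each $D_E$, hence $D$, by $c$. Thus $(c\cT^{y/c},y\ge0)$ is again a killed $A$-tree evolution, from $c\cT^0$; the case $\#A=1$ is the \BESQ[-1] scaling already noted. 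For the non-resampling and resampling $k$-tree evolutions of Definitions~\ref{def:nonresamp_1} and~\ref{def:resamp_1}, I would then induct over the inter-degeneration segments, using that every map gluing consecutive segments is equivariant under dilation: $I(cT)=I(T)$ and $J(cT)=J(T)$ (both depend only on the tree shape and on which label is degenerate), $\varrho(cT)=c\,\varrho(T)$, the label-insertion operator $\oplus$ is equivariant under dilation, and hence $\Lambda_{j,A}(cT,\cdot)$ is the pushforward of $\Lambda_{j,A}(T,\cdot)$ under dilation by $c$ (the selection weights $\|\ell\|/\|T\|$ and the law $Q(dU)$ being scale-invariant). Each of these is immediate from the relevant definition, and left limits behave well under the continuous increasing time change $y\mapsto y/c$, so segment by segment the killed case plus these identities yield the claim for all three families, and for the zero-mass absorbing state since $c\cdot 0 = 0$.

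The only step requiring genuine care is the type-1 kernel computation, and within it the $\textsc{ip}(R,S)$ term: the exponential clock $S_{(a,b)}\sim\ExpDist[(2y)^{-1/2}]$ does not on its own scale to the parameter appropriate for mass $cw$ run for time $y$, so one must observe that a subordinator with Laplace exponent $\lambda\mapsto\sqrt{c}\,\big[(\lambda+\tfrac1{2y})^{1/2}-(\tfrac1{2y})^{1/2}\big]$ run up to an $\ExpDist[\sqrt c\,(2y)^{-1/2}]$ time produces the same interval partition as the time-$y$ subordinator run up to an $\ExpDist[(2y)^{-1/2}]$ time, because speeding a subordinator by $\sqrt c$ and stopping it at $S$ agrees with running it at unit rate and stopping at $\sqrt c\,S$. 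Everything else is bookkeeping; indeed, if the scaling invariance of type-0/1/2 evolutions is quoted directly from \cite{Paper1,Paper3}, the whole proof reduces to the equivariance of $\varrho$ and $\Lambda_{j,A}$ under dilation.
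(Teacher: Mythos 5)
Your proposal is correct and follows essentially the same route as the paper, which simply cites the scaling properties of type-0/1/2 evolutions (\cite[Lemma 6.3]{Paper1}, \cite[Lemma 35]{Paper3}) and leaves the lifting to $k$-trees implicit. Your extra work — verifying the type-0/1 kernel scaling directly from Proposition \ref{prop:012:transn} (including the compensation between the subordinator's speed and the exponential clock) and checking the dilation-equivariance of $\varrho$, $\oplus$, and $\Lambda_{j,A}$ — is sound and just makes explicit what the paper takes for granted.
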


\begin{proof}
 This follows from the corresponding scaling properties for type-0/1/2 evolutions, noted in \cite[Lemma 6.3]{Paper1} and \cite[Lemma 35]{Paper3}.
\end{proof}

%{\tt \cite[Theorem 1]{Problem16again} extends this to independent random times.}

\begin{proposition}\label{prop:pseudo:degen}
 Let $(\cT^y,y\ge 0)$ be a killed/non-resampling/resampling $k$-tree evolution starting from a Brownian reduced $k$-tree, scaled by any independent initial mass $M$, and let $y\ge 0$. Then the following hold.
 \begin{enumerate}[label=(\roman*), ref=(\roman*)]
  \item \label{item:pseudoD:J}
  	The label $J = J(\cT^{D-})$ dropped at the first degeneration time $D = D_1$ has law $\bP(J=2)=2/k(2k-3)$ and $\bP(J=j)=(4j-5)/k(2k-3)$, $j\in\{3,\ldots,k\}$.
  \item \label{item:pseudoD:swapred}
  	Conditionally given $J\!=\!j$, the normalized tree $\varrho\big(\cT^{D-}\big) \big/ \big\|\cT^{D-}\big\|$, which is simply $\cT^D / \big\|\cT^D\big\|$ in the non-resampling case, is a Brownian reduced $([k]\!\setminus\!\{j\})$-tree.
  \item \label{item:pseudoD:indep}
  	The pair $\big(J\big(\cT^{D-}\big),\varrho\big(\cT^{D-}\big) \big/ \big\|\cT^{D-}\big\|\big)$ is independent of $\big(M,D,\big\|\cT^{D-}\big\|\big)$.
  \item \label{item:pseudoD:gamma_mass}
  	In the special case that $M\sim\GammaDist[k-\frac12,\gamma]$, conditionally given $D=y$,
  	$$\|\cT^y\|\sim\GammaDist[k-\textstyle\frac32,\gamma/(2y\gamma+1)].$$
  \item \label{item:pseudoD:resamp}
  	In the resampling case, properties \ref{item:pseudoD:J}, \ref{item:pseudoD:swapred}, and \ref{item:pseudoD:indep} also hold at each subsequent degeneration time $D=D_n$, $n\ge1$. Moreover, $\cT^{D_n}/\big\|\cT^{D_n}\big\|$ is a Brownian reduced $k$-tree.
  	%with $\widetilde{X}_y$ distributed like the sum of total masses of the type-0/1/2 edge evolutions, each conditioned on non-degeneration, with the exception of the degenerate edge, which contributes nothing if type 1 and the mass conditioned on degeneration at $y$ if type 2.   
 \end{enumerate}
\end{proposition}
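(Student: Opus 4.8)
The plan is to first reduce to the case where the initial mass $M$ has law $\GammaDist[k-\frac12,\gamma]$, and then exploit the fully independent structure that a Brownian reduced $k$-tree acquires in that case. By Proposition \ref{prop:B_ktree} a Brownian reduced $k$-tree consists of a uniform tree shape $\tau$ carrying $\distribfont{Dirichlet}\big(\frac12,\ldots,\frac12\big)$ masses on its $2k-1$ edges and independent $\PDIP[\frac12,\frac12]$ partitions on its internal edges; since $\sum\frac12=k-\frac12$, scaling by an independent $\GammaDist[k-\frac12,\gamma]$ mass makes these $2k-1$ blocks jointly independent $\GammaDist[\frac12,\gamma]$, equivalently makes each type-$i$ compound pseudo-stationary (in the sense preceding Proposition \ref{prop:012:pseudo}) with total mass $\GammaDist[\frac{1+i}{2},\gamma]$. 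Under the killed $k$-tree evolution these compounds evolve independently and $D=D_1$ is the minimum of their degeneration times, so the analysis reduces to the one-compound statements, which are essentially Propositions 32 and 34 of \cite{Paper3} (the $k=1,2$ cases of the present proposition).

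In the Gamma case I would proceed as follows. First, conditioning on $\tau=\ft$ and using that a type-1 compound survives past $y$ with probability $(2\gamma y+1)^{-1}$ and a type-2 compound with probability $(2\gamma y+1)^{-2}$ (as in the proof of Proposition \ref{prop:pseudo:pre_D}), a short computation gives that the event that compound $E$ degenerates first with $D_1\in dy$ has probability $c_E\gamma(2\gamma y+1)^{-(k+1)}\,dy$, with $c_E=2$ if $E$ is type-1 and $c_E=4$ if $E$ is type-2, independently of the choice of $E$ and of $\ft$; summing the weights $c_E$ over the $k-1$ internal edges gives $2k$, so $D_1$ is independent of $(\tau,E)$ and, by Lemma \ref{lem:type2:symm}, the label $I=I(\cT^{D-})$ causing degeneration is uniform on $[k]$ and independent of $(\tau,D_1)$. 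Second, the combinatorics of $J=J(\tau,I)$: for $\tau$ uniform and $I$ uniform and independent, $J$ has the law asserted in \ref{item:pseudoD:J}, and conditionally on $\{J=j\}$ the reduced shape $\widetilde\varrho(\tau,I)$ is uniform on $\TShape_{[k]\setminus\{j\}}$; this is a statement about uniform binary trees under the swap-and-reduce relabelling, proved in the discrete setting, with an identical statement, in \cite{Paper2}. Third, conditioning on $\{D_1=y\}$ and on the identity of the degenerating compound, the remaining compounds are pseudo-stationary conditioned not to degenerate before $y$, hence at time $y$ pseudo-stationary with total mass $\GammaDist[\frac{1+i}{2},\gamma']$ where $\gamma'=\gamma/(2\gamma y+1)$ (Proposition \ref{prop:012:pseudo}), while the degenerating compound leaves nothing (type-1) or a single top mass $\sim\GammaDist[\frac12,\gamma']$ (type-2, by \cite{Paper3}), all mutually independent. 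Reassembling through $\varrho$, the $2k-3$ edge masses of the reduced tree are i.i.d.\ $\GammaDist[\frac12,\gamma']$, the surviving internal partitions are, given their masses, i.i.d.\ $\PDIP[\frac12,\frac12]$, and the shape is $\widetilde\varrho(\tau,I)$; this gives \ref{item:pseudoD:swapred}, together with $\|\cT^{D-}\|\sim\GammaDist[k-\frac32,\gamma']$, which is \ref{item:pseudoD:gamma_mass}, and the Gamma--Dirichlet independence then yields \ref{item:pseudoD:indep} in the Gamma case.

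To remove the Gamma hypothesis I would condition on $M=m$ and invoke the scaling of Lemma \ref{lem:scaling}: the killed $k$-tree evolution from a Brownian reduced $k$-tree of mass $m$ is a deterministic rescaling of the one from mass $1$, so the laws of $J$ and of $\varrho(\cT^{D-})/\|\cT^{D-}\|$, and the independence of $\big(J,\varrho(\cT^{D-})/\|\cT^{D-}\|\big)$ from $(D,\|\cT^{D-}\|)$, do not depend on $m$; holding for $\GammaDist[k-\frac12,\gamma]$-a.e.\ $m$, they hold for all $m>0$, and integrating over $M$ gives \ref{item:pseudoD:J}--\ref{item:pseudoD:indep} for arbitrary independent $M$. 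For the resampling case \ref{item:pseudoD:resamp}, at time $D_1$ we have $\cT^{D_1}\sim\Lambda_{J_1,[k]\setminus\{J_1\}}\big(\varrho(\cT^{D_1-}),\cdot\big)$ with $\varrho(\cT^{D_1-})/\|\cT^{D_1-}\|$ a Brownian reduced $([k]\setminus\{J_1\})$-tree independent of $(J_1,D_1,\|\cT^{D_1-}\|)$; since $\Lambda$ preserves total mass and is scale-equivariant, and since one step of the resampling kernel turns a Brownian reduced $(k-1)$-tree into a Brownian reduced $k$-tree (by \eqref{eq:B_ktree_resamp} and the permutation-invariance in Proposition \ref{prop:B_ktree}), $\cT^{D_1}$ is again a Brownian reduced $k$-tree scaled by an independent mass. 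One then restarts at $D_1$ using the strong Markov property of the resampling $k$-tree evolution (Theorem \ref{thm:Markov}) and concludes \ref{item:pseudoD:resamp} at every $D_n$ by induction on $n$.

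I expect the main obstacle to be the careful bookkeeping behind the independence assertions \ref{item:pseudoD:indep} and \ref{item:pseudoD:resamp}: one must verify that conditioning on both the identity and the time of the first degeneration keeps the non-degenerating compounds pseudo-stationary and mutually independent (which uses that the relevant conditioning events involve disjoint parts of the driving noise, together with Proposition \ref{prop:012:pseudo} and the type-2 degeneration result of \cite{Paper3}), and that the normalized reduced tree is independent not just of $\|\cT^{D-}\|$ but of the original mass $M$. The combinatorial step involving $\widetilde\varrho$ is delicate but should be quotable from \cite{Paper2} up to a change of notation.
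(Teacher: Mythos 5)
Your proposal is correct and follows essentially the same route as the paper: reduce to the $\GammaDist[k-\frac12,\gamma]$ case where the compounds become independent and pseudo-stationary, quote \cite{Paper2} for the law of $J$ and the uniformity of the reduced shape, reassemble the conditional law of the normalized tree, then remove the Gamma hypothesis by disintegration/Laplace inversion plus the scaling of Lemma \ref{lem:scaling}, and finally handle \ref{item:pseudoD:resamp} via \eqref{eq:B_ktree_resamp}, exchangeability, and the strong Markov property at degeneration times. Your explicit density computation $c_E\gamma(2\gamma y+1)^{-(k+1)}$ for the first-degenerating compound is a slightly more concrete (and correct) version of the paper's survival-probability argument, and deriving \ref{item:pseudoD:resamp} last rather than first is immaterial.
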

\begin{proof}
 First, we derive \ref{item:pseudoD:resamp} as a consequence of the other assertions. Equation \eqref{eq:B_ktree_resamp}, along with exchangeability of labels in the Brownian reduced $k$-tree, implies that taking a Brownian reduced $([k]\!\setminus\!\{j\})$-tree and inserting label $j$ via the resampling kernel results in a Brownian reduced $k$-tree. Thus, \ref{item:pseudoD:swapred} gives us $\cT^{D_1}/\big\|\cT^{D_1}\big\| \stackrel{d}{=} \cT^0/\big\|\cT^0\big\|$. Assertion \ref{item:pseudoD:resamp} for subsequent degeneration times then follows by induction and the strong Markov property of resampling $k$-tree evolutions at degeneration times.
 
 It remains to prove \ref{item:pseudoD:J}, \ref{item:pseudoD:swapred}, \ref{item:pseudoD:indep}, and \ref{item:pseudoD:gamma_mass}. We begin with the special case $M\!\sim\!\GammaDist[k\!-\!\frac12,\gamma]$. In this case, by Proposition \ref{prop:B_ktree}, each type-$i$ compound has initial mass $\GammaDist[(i+1)/2,\gamma]$, with all initial masses being independent. For $y>0$, each type-1 compound avoids degeneration prior to time $y$ with probability $(2y\gamma+1)^{-1}$ by \cite[equation (6.3)]{Paper1}. For type-2 the corresponding probability is $(2y\gamma+1)^{-2}$ by \cite[Proposition 38]{Paper3}. Moreover, when a type-2 compound degenerates, each of the two labels is equally likely to be the one to cause degeneration \cite[Proposition 39]{Paper3}. Thus, the first label $I$ to cause degeneration is uniformly random in $[k]$ and is jointly independent with the initial tree shape $\tau_k$ and the time of degeneration $D$. But recall that this does not necessarily mean that label $I$ is dropped at degeneration; we must account for the swapping part of the swap--and-reduce map $\varrho$. 
 
 % our study of projections of the discrete Markov chain
 This places us in the setting of our study of a modified Aldous chain in \cite{Paper2}, where we begin with a uniform random tree shape and select a uniform random leaf for removal, with the same label-swapping dynamics as in the definition of $\varrho$ in Section \ref{sec:non_resamp_def}. In particular, \cite[Corollary 5]{Paper2} gives $p_1:=\bP(J=1)=0$, $p_2:=\bP(J=2)=2/k(2k-3)$ and $p_j:=\bP(J=j)=(4j-5)/k(2k-3)$ for $j\in\{3,\ldots,k\}$; and \cite[Lemma 4]{Paper2} says that given $\{J=j\}$, the tree shape $\tau_{k-1}$ after swapping and reduction is conditionally uniformly distributed on $\TShape_{[k]\setminus\{j\}}$. 
 
 Since $D$ is independent of $(\tau_k,I)$ and since $\tau_{k-1} = \varrho(\tau_k,J(\tau_k,I))$, if we additionally condition on $D$ then the above conclusion still holds: given $\{D=y,J=j\}$, the resulting tree shape $\tau_{k-1}$ is still conditionally uniform on $\TShape_{[k]\setminus\{j\}}$. Moreover, by the independence of the evolutions of the type-0/1/2 compounds in the tree (prior to conditioning), each type-1 compound that does not degenerate is conditionally distributed as a type-1 evolution in pseudo-stationarity, conditioned not to die up to time $y$, and correspondingly for type-2 and type-0 compounds. As noted in Proposition \ref{prop:012:pseudo}, the law at time $y$ is the same as the initial distribution, but with total mass scaled up by a factor of $2y\gamma+1$, meaning that each top mass $m_a^y$ in these compounds is conditionally independent with law $\GammaDist[\frac12,\gamma/(2y\gamma+1)]$, and each internal edge partition $\alpha_E^y$ is a conditionally independent \PDIP[\frac12,\frac12] scaled by a $\GammaDist[\frac12,\gamma/(2y\gamma+1)]$ mass. Similarly, if the degeneration occurs in a type-2 compound in $\tau_k$, then the remaining top mass in that compound also has conditional law $\GammaDist[\frac12,\gamma/(2y\gamma+1)]$ \cite[Proposition 39]{Paper3}. Thus, $\varrho\big(\cT^{D-}\big)/\big\|\cT^{D-}\big\|$ is conditionally a Brownian reduced $([k]\!\setminus\!\{j\})$-tree, as claimed, and is conditionally independent of $\|\cT^D\| \sim \GammaDist[k-\frac32,\gamma/(2y\gamma+1)]$.
 
  This completes the proof of \ref{item:pseudoD:gamma_mass} as well as of \ref{item:pseudoD:J} and \ref{item:pseudoD:swapred} in the special case when the initial total mass is $M\sim\GammaDist[k-\frac12,\gamma]$. Moreover, since the above conditional law of the normalized tree does not depend on the particular value $D$, we find that the pair in \ref{item:pseudoD:indep} is independent of $\big(D,\big\|\cT^D\big\|\big)$ in this case; it remains to show independence from $M$.
 
 Now consider a $k$-tree evolution $(\wbT^{y},y\ge 0)$ starting from a unit-mass Brownian reduced $k$-tree, with degeneration time $\wbD$. Let $M$ be independent of this evolution with law $\GammaDist[k-\frac12,\gamma]$. By the scaling property of Lemma \ref{lem:scaling}, $\cT^y = M\wbT^{y/M}$, $y\ge0$, is a $k$-tree evolution with initial mass $M$, as studied above. In particular,
 \begin{equation*}
  \left(J\big(\cT^{D-}\big),\frac{\varrho\big(\cT^{D-}\big)}{\big\|\cT^{D-}\big\|}\right) = \left(J\big(\wbT^{\wbD-}\big),\frac{\varrho\big(\wbT^{\wbD-}\big)}{\big\|\wbT^{\wbD-}\big\|}\right) \quad\text{and}\quad  \big(D,\big\|\cT^{D-}\big\|\big) = \big(M\wbD,M\big\|\wbT^{\wbD-}\big\|\big).
 \end{equation*}
 %This immediately shows that if instead of conditioning on $\wbD = y$ we condition on $D = M_k\wbD = y$, then the joint conditional distribution of
 %\begin{equation}\label{eq:pseudo:degen_pair}
 % \left( J\big(\wbT^{\wbD-}\big) , \frac{\varrho\big(\wbT^{\wbD-}\big)}{\big\|\varrho\big(\wbT^{\wbD-}\big)\big\|} \right)
 % = \left( J\big(\cT^{D-}\big) , \frac{\varrho\big(\cT^{D-}\big)}{\big\|\varrho\big(\cT^{D-}\big)\big\|} \right)
 %\end{equation}
 %is as claimed in \ref{item:pseudoD:J} and \ref{item:pseudoD:swapred}, with both objects being conditionally independent of $\big\|\varrho\big(\cT^{D-}\big)\big\| = M_k\big\|\varrho\big(\wbT^{\wbD-}\big)\big\|$. But the pair in \eqref{eq:pseudo:degen_pair} is a function of $(\widebar\cT^y,y\ge0)$ and is thus independent of $M_k$, so its distribution given $\wbD = y$ is also as claimed, and is conditionally independent of $\|\varrho\big(\cT^{D-}\big)\|$.
 %
 %By integration, \ref{item:pseudoD:J}, \ref{item:pseudoD:swapred}, and \ref{item:pseudoD:resamp} extend to any independent random initial mass.
 %
 We showed that 
 \begin{equation*}
 \begin{split}
  &\int 
  \bE \left[ f\left( J\big(\wbT^{\wbD-}\big) , \frac{\varrho\big(\wbT^{\wbD-}\big)}{\big\|\wbT^{\wbD-}\big\|}\right)
  		g\left(x\left\|\wbT^{\wbD-}\right\|,x\wbD\right)\right] \frac{\gamma^{k-\frac12}}{\Gamma(k-\frac12)}x^{k-\frac32}e^{-\gamma x} dx\\
  	&\ \ = \bE [ f(J^*,\wbT^*) ] \int \bE\left[g\left(x\left\|\wbT^{\wbD-}\right\|,x\wbD\right)\right] \frac{\gamma^{k-\frac12}}{\Gamma(k-\frac12)}x^{k-\frac32}e^{-\gamma x} dx,
 \end{split}
 \end{equation*}
 where $J^*$ and $\wbT^*$ have the laws described in \ref{item:pseudoD:J} and \ref{item:pseudoD:swapred} for the dropped label and the normalized tree. If we cancel out the constant factors of $\gamma^{k-\frac12}/\Gamma(k-\frac12)$ on each side, appeal to Laplace inversion, and then cancel out factors of $x^{k-\frac32}$, then we find
 \begin{equation*}
  \bE \left[ f\left( J\big(\wbT^{\wbD-}\big) , \frac{\varrho\big(\wbT^{\wbD-}\big)}{\big\|\wbT^{\wbD-}\big\|}\right)
  		g\left(x\left\|\wbT^{\wbD-}\right\|,x\wbD\right)\right] = \bE [ f(J^*,\wbT^*) ] \bE\left[g\left(x\left\|\wbT^{\wbD-}\right\|,x\wbD\right)\right]
 \end{equation*}
 for Lebesgue-a.e.\ $x>0$. By Lemma \ref{lem:scaling}, it follows that this holds for every $x>0$, thus proving \ref{item:pseudoD:J} and \ref{item:pseudoD:swapred} for fixed initial mass, or for any independent random initial mass by integration. This formula also demonstrates the independence of the dropped label and normalized tree from the degeneration time and mass at degeneration. Since the laws that we find for the dropped index and the normalized tree do not depend on the initial mass $x$, this also proves \ref{item:pseudoD:indep}.
\end{proof}

This means that for any scaled Brownian reduced $k$-tree, the $k$-tree evolution without resampling runs through independent multiples of trees $\widebar{\cT}_k^{*},\widebar{\cT}_{k-1}^{*},\ldots,\widebar{\cT}_2^{*},\widebar{\cT}_1^{*},0$, where each of the $m$-trees for $m<k$ has as its distribution the appropriate mixture of Brownian reduced trees with label sets of size $m$. We now combine the previous results to establish the laws of independently scaled Brownian reduced $k$-trees as pseudo-stationary laws for resampling $k$-tree evolutions.

\begin{proof}[Proof of Proposition \ref{prop:pseudo:resamp}]
 By Proposition \ref{prop:pseudo:degen} and \eqref{eq:B_ktree_resamp}, conditional on $D_1 = z>0$, the tree $\cT^{z}$ is distributed as a Brownian reduced $k$-tree scaled by an independent random mass. By the strong Markov property at degeneration times and induction, the same holds conditional on $D_n=z>0$, for any $n\ge1$.
 
 While we have not yet proved Proposition \ref{prop:non_accumulation}, that $D_\infty$ is the hitting time at zero for the total mass process, it is easier to prove it in this special setting. Indeed, the rescaled inter-degeneration times $(D_{n+1}-D_n)/\|\cT^{D_n}\|$ are independent and identically distributed. As noted at the end of Section \ref{sec:Markov_mass}, the total mass $\|\cT^y\|$ evolves as a \BESQ[-1] up until time $D_\infty$, so this time must be a.s.\ finite so as to not exceed the time of absorption for the \BESQ. From this we conclude that the masses $\|\cT^{D_n}\|$ must tend to zero.
 
 Let $(\cT^x_n,x\ge0)$ denote a resampling $k$-tree evolution with $\cT^0_n=\cT^{D_n}$, conditionally independent of $(\cT^y,y\ge0)$ given $\cT^{D_n}$. Now, we condition on $D_n = z\le y < D_{n+1}$. Then by the strong Markov property at time $D_n$, the tree $\cT^y$ is conditionally distributed according to the conditional law of $\cT^{y-z}_n$, given that $(\cT^x_n)$ does not degenerate prior to this time. By Proposition \ref{prop:pseudo:pre_D}, this too is a Brownian reduced $k$-tree scaled by an independent random mass. Integrating out this conditioning preserves the property of $\cT^y$ being a Brownian reduced $k$-tree scaled by an independent mass.
\end{proof}

%\section{Consistency proofs}\label{sec:consistency}
\section{Proof of consistency results for resampling $k$-tree evolutions}\label{sec:consistency}

%This section is dedicated to proving projective consistency results: Theorem \ref{thm:consistency} and Proposition \ref{prop:resamp_to_non}. First, we will prove the following partial result towards Theorem \ref{thm:consistency}\ref{item:cnst:resamp}.

In this section, we prove the following partial result towards Theorem \ref{thm:consistency}\ref{item:cnst:resamp}.

\begin{proposition}\label{prop:consistency_0}
 Fix $T\in\TInt_{k}$. If $(\cT^y_{k+1},y\ge0)$ is a resampling $(k\!+\!1)$-tree evolution with $\cT^0_{k+1}\sim \Lambda_{k+1,[k]}(T,\cdot\,)$ then $\big(\pi_k\big(\cT^y_{k+1}\big),y\ge0\big)$ is a resampling $k$-tree evolution killed at the limit $D_\infty$ of degeneration times in $\big(\cT_{k+1}^y\big)$.
\end{proposition}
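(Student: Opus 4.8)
The plan is to follow the scheme announced in the introduction: to establish an intertwining, in the sense of Rogers and Pitman, between the transition semigroup of the resampling $(k+1)$-tree evolution and that of the resampling $k$-tree evolution killed at $D_\infty$, with intertwining kernel $\Lambda_{k+1,[k]}$, and to verify the compatibility of the degeneration--swap--resample transitions with $\pi_k$ by a Dynkin--Kemeny--Snell-type argument carrying over the combinatorial computation for the modified Aldous chain from \cite{Paper2}. First I would record two elementary facts about $\Lambda_{k+1,[k]}$. \emph{(1)} The kernel $\Lambda_{k+1,[k]}$ lifts along $\pi_k$: for any $S\in\TInt_k$ and any $T'$ in the support of $\Lambda_{k+1,[k]}(S,\cdot)$, one has $\pi_k(T')=S$, so $\Lambda_{k+1,[k]}(S,\pi_k^{-1}\{S\})=1$. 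This is immediate from the definitions of $\oplus$, $\Lambda_{k+1,[k]}$ and $\pi_k$: inserting label $k+1$ into a block $\ell$ of $S$ produces a $(k+1)$-tree whose $\parent{\{k+1\}}$-compound has total mass $\|\ell\|$, and $\pi_k$ undoes this precisely --- by item~\ref{item:proj:add} of Definition~\ref{def:proj} when $\ell$ is a top mass and by item~\ref{item:proj:merge} when $\ell$ lies in a partition. \emph{(2)} If $T'\in\tdTInt_{k+1}\setminus\TInt_{k+1}$ with $I(T')=k+1$, then $\pi_k(T')=\varrho(T')$: since $k+1$ is the largest label, the swap in $\varrho$ is trivial, so $\varrho$ merely deletes the empty $\parent{\{k+1\}}$-compound and contracts its edge, which is exactly what $\pi_k$ does here, the merge (resp.\ addition) of Definition~\ref{def:proj} contributing only an empty concatenand (resp.\ a $(0,0)$ summand).

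Next I would analyse $\pi_k$ applied to a single killed $(k+1)$-tree evolution. Contracting the edge $\parent{\{k+1\}}$ fuses its compound into the adjacent compound in the tree shape: if $\parent{\{k+1\}}$ is type-$1$, its type-$1$ evolution is concatenated onto the sibling compound and Proposition~\ref{prop:012:concat}\ref{item:012concat:0+1}--\ref{item:012concat:2+1} identifies the result as a type-$0$/$1$/$2$ evolution; if $\parent{\{k+1\}}$ is a type-$2$ edge $\{a,k+1\}$, then by item~\ref{item:proj:add} leaf $a$ becomes an extra leaf of the compound on $\parent{\{a,k+1\}}$ carrying the $\BESQ[-1]$ total mass of the $\{a,k+1\}$-compound, and Proposition~\ref{prop:012:concat}\ref{item:012concat:BESQ+0} together with Lemma~\ref{lem:type2:symm} identifies the enlarged compound as a type-$(i{+}1)$ evolution up to the first zero of that $\BESQ[-1]$. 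Since the compounds of a killed $(k+1)$-tree evolution are jointly independent, and concatenation preserves both independence from the untouched compounds and the strong Markov property in the enlarged filtration (last sentence of Proposition~\ref{prop:012:concat}), $(\pi_k(\cT^y_{k+1}))$ will agree, on the first inter-degeneration interval $[0,\Delta_1)$, with a killed $k$-tree evolution started from $\pi_k(\cT^0_{k+1})=T$; and a short comparison of degeneration times --- the untouched compounds degenerate no sooner than $\Delta_1$ by minimality, the fused compound no sooner than its ``inner'' type-$1$ constituent, and each external $\BESQ[-1]$ vanishes only at or beyond the lifetime of the corresponding type-$2$ compound --- shows that this killed $k$-tree evolution has not degenerated by $\Delta_1$.

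It then remains to glue across the degeneration times $D_n=\sum_{j\le n}\Delta_j$ of $(\cT^y_{k+1})$, using the strong Markov property of resampling $(k+1)$-tree evolutions at degeneration times (Theorem~\ref{thm:Markov}) and induction on $n$. By facts \emph{(1)} and \emph{(2)}, $\pi_k(\cT^{D_n}_{k+1})$ is obtained from $\varrho(\cT^{D_n-}_{k+1})$ by re-inserting label $J_n$ via $\Lambda_{J_n,[k+1]\setminus\{J_n\}}$ and then applying $\pi_k$. When $J_n=k+1$ --- which happens precisely when $k+1$ is itself the degenerate label of $\cT^{D_n-}_{k+1}$, or when the leaf $k+1$ is the sibling or the uncle of the degenerate leaf in the tree shape --- this equals $\varrho(\cT^{D_n-}_{k+1})$, which by the concatenation analysis of the preceding step equals the left limit $\lim_{y\uparrow D_n}\pi_k(\cT^y_{k+1})$; such $D_n$ are invisible to the projected process, which continues through them as a killed $k$-tree evolution without degenerating. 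When $J_n=j\in[k]$, one shows that $\pi_k(\cT^{D_n}_{k+1})$ has conditional law $\Lambda_{j',[k]\setminus\{j'\}}\big(\varrho\big(\pi_k(\cT^{D_n-}_{k+1})\big),\cdot\big)$, where $j'$ is the label that the $k$-tree swap-and-reduce map drops from the degenerate $k$-tree $\pi_k(\cT^{D_n-}_{k+1})$ --- i.e.\ exactly the degeneration--swap--resample transition of a resampling $k$-tree evolution. Both identities reduce to a commutation of $\pi_k$ with $\varrho$ (tracking which label $J(\ft,\cdot)$ drops before versus after contracting $\parent{\{k+1\}}$) and a commutation of $\pi_k$ with the resampling kernels (from fact \emph{(1)}, applied in the relevant label set), which is the continuum form of the lumpability computation in \cite[Lemma~4, Corollary~5]{Paper2}. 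Combining the two cases and iterating over $n\ge1$ gives that $(\pi_k(\cT^y_{k+1}),\,0\le y<D_\infty)$ has the law of a resampling $k$-tree evolution from $T$, and since $\cT^y_{k+1}=0$ for $y\ge D_\infty=\lim_n D_n$, the process $(\pi_k(\cT^y_{k+1}),y\ge0)$ is a resampling $k$-tree evolution killed at $D_\infty$.

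The main obstacle lies in this last gluing step: correctly classifying which degeneration times of $(\cT^y_{k+1})$ are visible to the projected $k$-tree evolution --- this branches on whether $\parent{\{k+1\}}$ is of type $1$ or $2$ and on the position of the degenerating leaf relative to $\{k+1\}$ --- and establishing the commutation of $\varrho$, $\pi_k$, and the two resampling kernels, the continuum counterpart of the combinatorial lumpability argument of \cite{Paper2}, which here must be threaded through the type-$0$/$1$/$2$ concatenation identities so that left limits, values at degeneration, and post-degeneration evolutions all match. A secondary technical point is the passage between the pair/triple-valued and the interval-partition-valued descriptions of type-$1$ and type-$2$ evolutions: a ``leaf mass'' of a $k$-tree is the mass of a leftmost block and may jump when that block changes identity, and it is precisely this --- made explicit by the concatenation identities --- that keeps the projected process continuous along trajectories even though $\pi_k$ is discontinuous at branch states.
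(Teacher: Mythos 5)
Your first two steps are sound and match the paper: facts (1)--(2) are correct, and the identification of the projection of a \emph{killed} $(k\!+\!1)$-tree evolution as a killed $k$-tree evolution on the first inter-degeneration interval, via Proposition \ref{prop:012:concat}, Lemma \ref{lem:type2:symm} and Proposition \ref{prop:012:mass}, is essentially the paper's argument (carried out in Proposition \ref{prop:Dynkin:killed}). The gap is in the gluing step, and it is not the combinatorial commutation you describe.

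First, the intertwining you announce — between the plain $k$-tree projection and the $(k\!+\!1)$-tree evolution with kernel $\Lambda_{k+1,[k]}$ — is false. Given the projected path, the conditional location of label $k\!+\!1$ is not a size-biased block of the current $k$-tree (for instance, if $k\!+\!1$ starts in an internal block of edge $E$, it remains attached to edge $E$ until the first degeneration, whatever the projected path does). This is precisely why the paper interposes the \emph{marked} $k$-tree process $\Ast\cT^y_k=\phi_2(\cT^y_{k+1})$ and proves intertwining with the kernel $\Ast\Lambda_k$ from $\TMarkk$ to $\TInt_{k+1}$ (Propositions \ref{prop:inter:killed} and \ref{prop:inter:skel}, assembled by Lemma \ref{lem:intertwin_jump}), followed by a Dynkin argument for the projection $\phi_1$ from marked to plain $k$-trees. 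Second, and more substantively: your induction "over $n\ge1$ using the strong Markov property at $D_n$" cannot be run in the projected filtration, because the times at which label $k\!+\!1$ resamples are not stopping times of $(\pi_k(\cT^y_{k+1}))$. To conclude that the pieces concatenate into a single killed $k$-tree evolution, and that the jump at a visible degeneration time $D''_n$ has conditional law $\Lambda_{j,[k]\setminus\{j\}}(\varrho(\cdot),\cdot)$ given the \emph{projected} past, you must control the conditional law of the hidden internal structure of the compound containing $k\!+\!1$ given that past. The required statement is that this structure is conditionally a scaled Brownian reduced $2$-tree in pseudo-stationarity (Lemma \ref{lem:preswap}(i), which rests on Proposition \ref{prop:012:pseudo}, the previsibility of $D''_n$, and weak continuity of $\Ast\Lambda_k$), and the identification of the post-resampling law then needs the Brownian reduced $2$-/$3$-tree consistency \eqref{eq:B_ktree_resamp} and label exchangeability — e.g.\ when label $j$ resamples \emph{into} the compound containing $k\!+\!1$. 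None of this is a "commutation of $\pi_k$ with $\varrho$ and the resampling kernels": it is a genuinely probabilistic conditional-law computation, and it is the analytic heart of the proof that your proposal leaves unproved.
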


Once we have proved this proposition, we use it in Section \ref{sec:non_acc} to prove Proposition \ref{prop:non_accumulation}, that $D_\infty$ is a.s.\ the time at which the total mass $\|\cT^y\|$ first approaches zero. This, in turn, completes the proof of Theorem \ref{thm:consistency}\ref{item:cnst:resamp} from the above proposition.

In Section \ref{sec:const:intertwin} we introduce a process that is intermediate between a resampling $(k\!+\!1)$-tree evolution and a resampling $k$-tree evolution. We appeal to a novel lemma presented in Appendix \ref{sec:intertwining_lem} to prove that the intermediate process is intertwined below the $(k+1)$-tree evolution. In Section \ref{sec:const:Dynkin1}, we show that the projection from the intermediate process to a $k$-tree-valued process satisfies Dynkin's criterion, thus completing a two-step proof of Proposition \ref{prop:consistency_0}, similar to the approach to an analogous discrete result in \cite[proof of Theorem 2]{Paper2}. This pair of relations is illustrated in the commutative diagram in Figure \ref{fig:inter_Dynkin}.

%In Section \ref{sec:non_acc}, we apply this result to prove Proposition \ref{prop:non_accumulation}, that the accumulation point $D_\infty$ of the degeneration times is the first time that the total mass of the evolution hits zero, thereby completing our proof of Theorems \ref{thm:total_mass} and \ref{thm:consistency}\ref{item:cnst:resamp}. In Section \ref{sec:dePoi} we recall arguments from \cite{Paper1} to prove Theorem \ref{thm:dePoi}, that the de-Poissonized $k$-tree evolutions are Markovian and the resampling de-Poissonized $k$-trees are stationary starting from the laws of the Brownian reduced $k$-trees. Finally, in Section \ref{sec:const:other} we prove the remaining assertions of Theorem \ref{thm:consistency}, as well as Proposition \ref{prop:resamp_to_non}.

\begin{figure}[t]
 \centerline{
 \xymatrix@=3em{
  \cT^0_{k+1} \ar[rr]^{P^y_{k+1}} && \cT^y_{k+1}  \\
  \ensuremath{\Ast\cT^0_k} \ar[u]^{\ensuremath{\Ast\Lambda_k}}\ar[d]^{\Phi_1} \ar[rr]^{\ensuremath{\Ast P^y_k}} && \ensuremath{\Ast\cT^y_k} \ar[u]^{\ensuremath{\Ast\Lambda_k}}\ar[d]^{\Phi_1}\\
  \cT^0_k  \ar[rr]^{P^y_k} && \cT^y_k
 }
 } 
 \caption{Combining intertwining with Dynkin's criterion to show, via an intermediate process, that the $k$-tree projection of a resampling $(k+1)$-tree evolution is in turn a Markov process. Intertwining and Dynkin's criterion can be thought of as the upper and lower boxes in this diagram commuting, respectively. The kernels denoted above are introduced in Section \ref{sec:const:intertwin}.\label{fig:inter_Dynkin}}
\end{figure}

\subsection{Intermediate process intertwined below a resampling $(k+1)$-tree}\label{sec:const:intertwin}

We define marked $k$-trees as $k$-trees with one block of the tree ``marked.'' In particular, we are interested in projecting from $(k\!+\!1)$-trees and marking the block of the resulting $k$-tree into which label $k\!+\!1$ must be inserted to recover the $(k\!+\!1)$-tree from the $k$-tree. See Figure \ref{fig:mark_k_tree}.

\begin{figure}[t]
 \centering
 \scalebox{.965}{\input{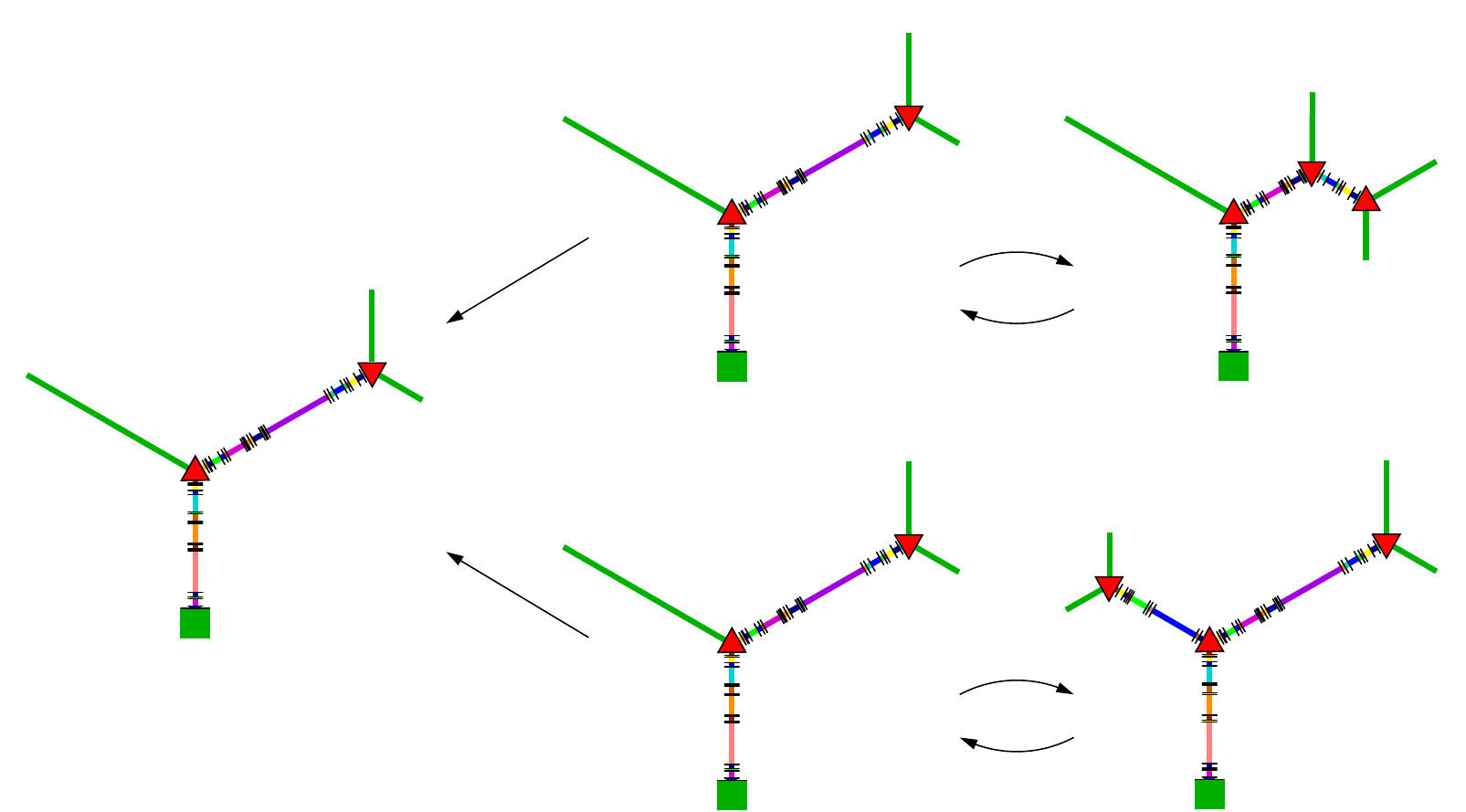_t}}
 \caption{Two marked $k$-trees based on the same $k$-tree, with marked blocks indicated with a ``$\bigstar$.'' In example (A), the marking is on an internal block. Then the kernel $\Ast\Lambda_k$ inserts label $k\!+\!1$ into the block. In (B), the marking is on a leaf block. Then $\Ast\Lambda_k$ splits the marked block into a Brownian reduced 2-tree.\label{fig:mark_k_tree}}
\end{figure}

\begin{definition}\label{def:mark}
 We define the set of \emph{marked $k$-trees}
%  \widebar\TMarkk &:= \left\{(T,\ell)\ \middle|\ T\in \TInt_{k},\ \ell\in\block(T)\cup\left\{(F,a,a)\colon F\in\ft,\,a\in [0,\|\beta_F\|]\setminus\bigcup\nolimits_{U\in\beta_F}U\right\}\right\}\\
%  	&\subset \TInt_{k}\times \big([k]\cup (2^{[k]}\times[0,\infty)^2)\big)
 \begin{equation}\label{eq:marked}
 \begin{split}
  \bTMarkk &:= \left\{\!(T,\ell)\,\middle|\, %\begin{array}{c}
  		T\!\in\! \bTInt_{k}\!\setminus\!\{0\},\ \ell \in \block(T)\cup\left\{(F,a,a)\colon F\!\in\!\ft,\,a\!\in\!\big[0,\|\beta_F\|\big]\setminus\bigcup\nolimits_{U\in\beta_F}\!U\right\}\!%\!\!\\[3pt]
  		%\text{and if }\ell=\big(\parent{\{i\}},0,0\big)\text{ or }\ell = i\text{ for some }i\in [k]\text{ then }x_i>0,
  	%\end{array}
  \right\}\cup\{0\}
 \end{split}
 \end{equation}
 where we take $\ft$, $\beta_F$, and $x_i$ to denote the tree shape and an interval partition and leaf component, respectively, of $T$. 
 We view marked $k$-trees as intermediate objects between $(k\!+\!1)$-trees and $k$-trees, via a pair of projection maps. First, $\phi_1\colon \bTMarkk\to \bTInt_{k}$ is the projection $\phi_1(T,\ell) = T$. We define $\phi_2\colon \bTInt_{k+1}\to\bTMarkk$ as follows.
 \begin{enumerate}[label=(\roman*),ref=(\roman*)]
  \item If in $T\in\bTInt_{k+1}$ we have $\longparent{\{k\!+\!1\}} = \{j,k+1\}$ for some $j\in [k]$, then $\phi_2(T) = (\pi_{k}(T),j)$.
  \item Otherwise, if $E = \longparent{\{k\!+\!1\}}$ is \emph{not} a type-2 edge, then recall part \ref{item:proj:merge} of Definition \ref{def:proj}, in which the interval partitions of $E$ and $F := E\setminus\{k\!+\!1\}$ are combined with the leaf mass $x_{k+1}$ to form the partition $\beta'_F = \beta_F\concat (0,x_{k+1})\concat\beta_{E}$ of $F$ in the projected tree. In this case we define $\phi_2(T) = \big(\pi_{k}(T),(F,\|\beta_F\|,\|\beta_F\|+x_{k+1})\big)$, where the marked block is the block in $\pi_{k}(T)$ corresponding to the leaf mass $x_{k+1}$ in $T$.
 \end{enumerate} 
 We also define a stochastic kernel from $\bTMarkk$ to $\bTInt_{k+1}$. Recall the label insertion operator, $\oplus$, of Section \ref{sec:resamp_def}. Let $\Ast\Lambda_k$ denote the kernel from $\bTMarkk$ to $\bTInt_{k+1}$ that associates with each $(T,\ell)\in \bTMarkk$ the law of $T\oplus (\ell,k\!+\!1,U)$, where $U\sim Q$ is a Brownian reduced $2$-tree of unit mass.
 
 We adopt the convention that $\phi_2(0) = \phi_1(0) = 0$ and $\Ast\Lambda_k(0,\cdot\,) = \delta_0(\,\cdot\,)$.
\end{definition}

The term in \eqref{eq:marked} in which we allow a marking $\ell$ of the form $(F,a,a)$ allows the description of a $(k\!+\!1)$-tree in which the leaf mass $x_{k+1}$ equals $0$ and sits in a type-1 compound. The case $\ell = (F,\|\beta_F\|,\|\beta_F\|)$ corresponds to a degenerate zero leaf mass above a zero interval partition; correspondingly, if $\ell = \big(\parent{\{i\}},0,0\big)$ and $x_i=0$, this corresponds to label $i$ being degenerate in the $(k\!+\!1)$-tree. We write $\TMarkk:=\phi_2(\TInt_{k+1})$ and $\tdTMarkk:=\phi_2(\tdTInt_{k+1})$ for the spaces of marked trees with respectively no degenerate labels or at most one degenerate label, which could be label $k+1$, as discussed above.

We may think of the resampling kernel $\Lambda_{k+1,[k]}$ of Section \ref{sec:resamp_def} as representing a two-step transition, in which a block is first selected at random and then, if a leaf block was chosen, it is split into a scaled Brownian reduced 2-tree. Then $\Ast\Lambda_k$ represents the second of these steps:
\begin{equation}\label{eq:resamp_vs_mark}
 \Ast\Lambda_k\left(\left(T,\ell\right),\cdot\,\right) = \Lambda_{k+1,[k]}\left(T,\cdot\ \middle|\ k\!+\!1\text{ is inserted into }\ell\right)\quad \text{for }(T,\ell)\in\TMarkk\text{ with }\|\ell\|>0.
\end{equation}

We metrize $\bTMarkk$ by
\begin{equation}\label{eq:markk:dist_def}
 d_{\Ast\bT}\left(\Ast T_{k,1},\Ast T_{k,2}\right) := \inf\left\{ d_{\bT}(T_{k+1,1},T_{k+1,2})\colon \phi_2(T_{k+1,1}) = \Ast T_{k,1},\,\phi_2(T_{k+1,2}) = \Ast T_{k,2}\right\}.
\end{equation}

We note that, if $T_{k,1}$ and $T_{k,2}$ have the same tree shape as each other and both are marked in corresponding leaf blocks $i\in [k]$, then
\begin{equation}\label{eq:markk:dist_1}
 d_{\Ast\bT}\left((T_{k,1},i),(T_{k,2},i)\right) = d_{\bT}(T_{k,1},T_{k,2}).
\end{equation}
Indeed, $d_{\bT}(T_{k,1},T_{k,2})$ is a general lower bound for distances between marked $k$-trees. In this special case, the bound can be seen to be sharp by splitting block $i$ in each of the marked $k$-trees into a very small block $k\!+\!1$, a small edge partition with little diversity, and a massive block $i$, in order to form $(k\!+\!1)$-trees that project down as desired. In the limit as block $k\!+\!1$ and the edge partition on its parent approach mass and diversity zero, the $d_{\bT}$-distance between the resulting $(k\!+\!1)$-trees converges to $d_{\bT}(T_{k,1},T_{k,2})$.

On the other hand, if two marked $k$-trees have equal tree shape but the marked blocks lie in different leaf components or internal edge partitions, then
\begin{equation}\label{eq:markk:dist_2}
 d_{\Ast\bT}\left((T_{k,1},\ell_1),(T_{k,2},\ell_2)\right) = d_{\bT}(T_{k,1},0) + d(0,T_{k,2}).
\end{equation}
If $T_{k,1}$ and $T_{k,2}$ have different tree shapes, then both \eqref{eq:markk:dist_1} and \eqref{eq:markk:dist_2} hold, as the right hand sides are then equal, by \eqref{eq:ktree:metric_2}.

This leaves only the case where the two marked $k$-trees have the same shape and the marked blocks each lie in corresponding internal edge partitions in the two trees. Then each marked $k$-tree is as in example (A) in Figure \ref{fig:mark_k_tree}: for $i=1,2$, there is a unique $(k\!+\!1)$-tree $T_{k+1,i}$ for which $\phi_2(T_{k+1,i}) = (T_{k,i},\ell_i)$. Then
%Say, for example, $\ell_1 = (E,a_1,b_1)$ and $\ell_2 = (E,a_2,b_2)$. Let's say also that block $\ell_1$ splits the partition $\beta_{1,E}$ marking edge $E$ in $T_{k,1}$ into $\beta_{1,E} = \alpha_1 \concat (a_1,b_1)\concat \gamma_1$, while $\ell_2$ splits the partition $\beta_{2,E}$ marking edge $E$ in $T_{k,2}$ into $\beta_{2,E} = \alpha_2 \concat (a_2,b_2)\concat \gamma_2$. Then, in this case,
\begin{equation}\label{eq:markk:dist_3}
%\begin{split}
 d_{\Ast\bT}\big((T_{k,1},(E,a_1,b_1)),\, (T_{k,2},(E,a_2,b_2))\big) = d_{\bT}(T_{k+1,1},T_{k+1,2}).%\\
 	%&\quad = d_{\bT}(T_1,T_2) - \dI (\beta_{1,E},\beta_{2,E}) + \dI(\alpha_1,\alpha_2) + |(b_1-a_1) - (b_2-a_2)| + \dI(\gamma_1,\gamma_2).
%\end{split}
\end{equation}

\begin{lemma}\label{lem:Lstar_cont}
 The kernel $\Ast\Lambda_k$ is weakly continuous in its first coordinate.
\end{lemma}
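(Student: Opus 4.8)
The plan is to realise $\Ast\Lambda_k((T,\ell),\cdot\,)$ as a pushforward of the single fixed law $Q$ and reduce the desired weak continuity in the first argument to a pointwise (in the auxiliary variable) continuity statement, which I would then settle by a short case analysis organised by the ``type'' of the marked block $\ell$. Write $\Psi(T,\ell,U):=T\oplus(\ell,k\!+\!1,U)$, so that $\Ast\Lambda_k((T,\ell),\cdot\,)$ is the image of $Q$ under $U\mapsto\Psi(T,\ell,U)$. Since $Q$ is supported on $\TInt_2$, the interval-partition coordinate $\gamma$ of $U$ lies $Q$-a.s.\ in $\cI$, hence has finite total diversity $\IPLT_\gamma(\infty)$. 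It therefore suffices to prove that for each such $U$ the map $(T,\ell)\mapsto\Psi(T,\ell,U)$ is continuous from $(\bTMarkk,d_{\Ast\bT})$ to $(\bTInt_{k+1},d_{\bT})$: granting this, if $(T_n,\ell_n)\to(T,\ell)$ then $\varphi(\Psi(T_n,\ell_n,U))\to\varphi(\Psi(T,\ell,U))$ for $Q$-a.e.\ $U$ and every bounded continuous $\varphi$ on $\bTInt_{k+1}$, and dominated convergence gives $\int\varphi\,d\Ast\Lambda_k((T_n,\ell_n),\cdot\,)\to\int\varphi\,d\Ast\Lambda_k((T,\ell),\cdot\,)$, which is the assertion.

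To prove the pointwise continuity I would fix $U=(y_1,y_2,\gamma)$ with $\gamma\in\cI$ and a convergent sequence $(T_n,\ell_n)\to(T,\ell)$, writing $T_n=(\ft_n,(x^{(n)}_i),(\beta^{(n)}_E))$, and split on whether $T=0$. If $T=0$: the projection $\phi_1\colon(T,\ell)\mapsto T$ is continuous (indeed Lipschitz, directly from \eqref{eq:markk:dist_1}, the triangle inequality in \eqref{eq:markk:dist_2}, and \eqref{eq:markk:dist_3} together with the Lipschitz property of interval-partition concatenation from \cite{Paper1}, which makes $\pi_k$ Lipschitz), so $T_n\to0$ in $\bTInt_k$, i.e.\ $\|T_n\|\to0$ and $\sum_E\IPLT_{\beta^{(n)}_E}(\infty)\to0$ (using $\dI(\beta,\emptyset)=\max\{\|\beta\|,\IPLT_\beta(\infty)\}$). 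Since $\oplus$ preserves total mass and each interval partition of $\Psi(T_n,\ell_n,U)$ is either a restriction of some $\beta^{(n)}_E$ (if $\ell_n$ is an internal marking) or the single rescaled partition $x^{(n)}_i\gamma$ (if $\ell_n$ is a leaf block), its masses and diversities are bounded by those of the $\beta^{(n)}_E$, plus at most one term $\sqrt{\|T_n\|}\,\IPLT_\gamma(\infty)$; summing, $d_{\bT}(\Psi(T_n,\ell_n,U),0)\to0$.

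If $T\neq0$, then $\|T\|>0$, so by \eqref{eq:ktree:metric_2} the tree shape of $T_n$ eventually equals that of $T$, and by \eqref{eq:markk:dist_2} the marked block $\ell_n$ eventually lies in the component of $T_n$ corresponding to the component of $T$ containing $\ell$. If $\ell$ is an internal marking (a genuine block of some $\beta_E$ or a gap point $(E,a,a)$), then for large $n$ so is $\ell_n$, in the corresponding edge of $T_n$, and \eqref{eq:markk:dist_3} applies: $\Psi(T_n,\ell_n,U)$ and $\Psi(T,\ell,U)$ are precisely the unique $(k\!+\!1)$-tree preimages of the two marked trees under $\phi_2$ (and are independent of $U$), so $d_{\bT}(\Psi(T_n,\ell_n,U),\Psi(T,\ell,U))=d_{\Ast\bT}((T_n,\ell_n),(T,\ell))\to0$. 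If instead $\ell=i\in[k]$ is a leaf block, then $\ell_n=i$ for large $n$, \eqref{eq:markk:dist_1} gives $d_{\bT}(T_n,T)\to0$, and since $\Psi(T_n,i,U)$ is obtained from $T_n$ by replacing the leaf block $i$ with the rescaled $2$-tree of masses $(x^{(n)}_iy_1,x^{(n)}_iy_2,x^{(n)}_i\gamma)$ and leaving everything else unchanged, continuity of the scaling map $[0,\infty)\times\cI\to\cI$ from \cite{Paper1} (applied at $\gamma\in\cI$, and in particular at scalar $0$ when $x_i=0$) gives $\Psi(T_n,i,U)\to\Psi(T,i,U)$. This exhausts the cases and proves the lemma.

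The only genuinely analytic inputs are the continuity and Lipschitz behaviour of interval-partition concatenation and scaling in the $\dI$-metric, which are established in \cite{Paper0,Paper1}; the metric $d_{\Ast\bT}$ of \eqref{eq:markk:dist_1}--\eqref{eq:markk:dist_3} was set up precisely so that, on the internal-marking part of $\bTMarkk$, $\Psi(\cdot,\cdot,U)$ sends $d_{\Ast\bT}$-convergent sequences to $d_{\bT}$-convergent sequences essentially by fiat, so the remainder is bookkeeping. The step I expect to require the most care is the tail argument excluding ``wrong-type'' markings $\ell_n$ — a leaf marking whose limit is an internal marking, or an internal marking drifting into the wrong edge — which is controlled by the lower bounds encoded in \eqref{eq:ktree:metric_2} and \eqref{eq:markk:dist_2}.
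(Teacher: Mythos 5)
Your proof is correct and follows essentially the same route as the paper's: couple all the measures through a single $U\sim Q$, split into the cases where the limit marking is the zero tree, a leaf block, or an internal block, and control each case via the formulas \eqref{eq:markk:dist_1}--\eqref{eq:markk:dist_3} together with the scaling estimates for $\dI$ from \cite{Paper1}. The only cosmetic differences are that you make the pushforward/dominated-convergence reduction explicit and phrase the leaf-block case as continuity of the scaling map, where the paper instead verifies that the coupled sequence is Cauchy using the same bound.
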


\begin{proof}
 We separately check continuity at zero, at $k$-trees with a marked leaf, and at $k$-trees with the mark in a block of an interval partition. In each case, we consider a sequence $((T_n,\ell_n),n\ge1)$ of marked $k$-trees converging to a limit $\Ast T_\infty$ of that type.
 
 Case 1: $\Ast T_\infty = 0$. Then the total mass $\|T_n\|$ and the diversities of all interval partition components of the $T_n$ must go to zero. Let $U = (m_1,m_2,\alpha)\sim Q$ denote a Brownian reduced 2-tree of unit mass. For $n\ge1$, let $\cT_n := T_n\oplus (\ell_n,U)$. Then $\cT_n$ has law $\Ast\Lambda_k((T_n,\ell_n),\cdot\,)$. We recall from \cite[Lemma 2.12]{Paper1} that scaling an interval partition by $c$, causes its diversity to scale by $\sqrt{c}$. Thus,
 $$d_{\bT}(\cT_n,0) \le d_{\bT}(T_n,0) + \sqrt{\|\ell_n\|}\IPLT_{\alpha}(\infty) \le d_{\bT}(T_n,0) + \sqrt{\|T_n\|}\IPLT_{\alpha}(\infty),$$
 which goes to zero as $n$ tends to infinity. We conclude that $\Ast\Lambda_k$ is weakly continuous at 0.
 
 Case 2: $\Ast T_\infty = (T_\infty,i)$ for some $i\in [k]$. Then by \eqref{eq:markk:dist_2}, for all sufficiently large $n$, $\ell_n = i$ and $T_n$ has the same tree shape as $T_\infty$; call this tree shape $\ft$. Let $U$ and $(\cT_n,n\ge1)$ be as in Case 1. Let $x_{n,i}$ denote the mass of block $i$ in $T_n$. By the bounds on $\dI$-distance between rescaled interval partitions in \cite[Lemma 2.12]{Paper1},
 $$d_{\bT}(T_n,T_m) \le d_{\bT}(\cT_n,\cT_m) \le d_{\bT}(T_n,T_m) + \big|\sqrt{x_{n,i}} - \sqrt{x_{m,i}}\big|\IPLT_{\alpha}(\infty).$$
 Since the sequence $(x_{n,i},n\ge1)$ is Cauchy, it follows from the above bounds that $(\cT_n,n\ge1)$ is a.s.\ Cauchy as well. Thus, we conclude that $\Ast\Lambda_k(T_n,\cdot\,)$ converges weakly.
 
 Case 3: $\ell_n = (E,a_n,b_n)$ for some $E\in \ft$, for all sufficiently large $n$. Then for each such large $n$, there is some $(k\!+\!1)$-tree $T_{k+1,n}$ such that $\Ast\Lambda_k((T_n,\ell_n),\cdot\,) = \delta_{T_{k+1,n}}(\,\cdot\,)$. By \eqref{eq:markk:dist_3}, if the marked $k$-trees $(T_n,\ell_n)$ converge then so do the $(k+1)$-trees $T_{k+1,n}$.
 
 This proves that $\Ast\Lambda_k$ is weakly continuous in its first coordinate everywhere on $\bTMarkk$.
\end{proof}

When composing stochastic kernels, we adopt the standard convention that sequential transitions are ordered from left to right:
$$\int PQ(x,dz)f(z) = \int P(x,dy)\int Q(y,dz)f(z).$$

\begin{definition}\label{def:intertwining}
 Consider a Markov process $X$ with transition kernels $(P_t,t\ge0)$ on a state space $(S,\cS)$, a measurable map $\phi\colon S\to T$, and a stochastic kernel $\Lambda\colon T\times\cS\to [0,1]$. Following \cite{RogersPitman}, the image process $Y(t) := \phi(X(t))$, $t\ge0$, is \emph{intertwined below} $X$ (with respect to $\Lambda$) if:
 \begin{enumerate}
  \item \label{item:intertwin:up} $\Lambda\Phi$ is the identity kernel on $(T,\cT)$, where $\Phi$ is the kernel $\Phi(x,\cdot\,) = \delta_{\phi(x)}(\,\cdot\,)$, and
  \item \label{item:intertwining} $\Lambda P_t = Q_t\Lambda$, $t\ge0$, where $Q_t := \Lambda P_t\Phi$.
 \end{enumerate}
\end{definition}

If $Y$ is intertwined below $X$ and $X$ additionally satisfies
\begin{enumerate}[start=3]
 \item \label{item:intertwin:init} $X(0)$ has regular conditional distribution (r.c.d.) $\Lambda(Y(0),\cdot\,)$ given $Y(0)$,
\end{enumerate}
then $(Y(t),t\ge0)$ is a Markov process \cite[Theorem 2]{RogersPitman}. %Plugging the formula for $Q_t$ into the equation in \ref{item:intertwining} leads to this reformulation:
If conditions \ref{item:intertwin:up} and \ref{item:intertwin:init} are satisfied, then \ref{item:intertwining} is equivalent to
\begin{enumerate}[start=2, label=(\roman*'), ref=(\roman*')]
 \item \label{item:intertwining_v2} $X(t)$ is conditionally independent of $Y(0)$ given $Y(t)$, with r.c.d.\ $\Lambda(Y(t),\cdot\,)$.%, i.e.
% \begin{equation*}
% \begin{split}
%  \Pr\{X(t)\in A\ |\ Y(0) = y\} &= \int_{z\in T} \bP\{X(t)\in A\ |\ Y(0) = y,\,Y(t) = z\}\bP\{Y(t) \in dz\ |\ Y(0) = y\}\\
%  	&= \int_{z\in T} \Lambda(z,A)Q_t(y,dz).
% \end{split}
% \end{equation*}
\end{enumerate}

Now, fix $(T,\ell)\in\TMarkk$. Let $(\cT_{k+1}^y,y\ge0)$ denote a resampling $(k\!+\!1)$-tree evolution with initial distribution $\Ast\Lambda_k((T,\ell),\cdot\,)$. Let $\Ast\cT_{k}^y = (\cT_{k}^y,\ell^y) = \phi_2(\cT^y_{k+1})$, $y\ge0$. We denote the components of these evolutions by
\begin{equation}\label{eq:const:component_notation}
\begin{split}
 \cT^y_{k+1} &= \big(\tau^y,(m_i^y,i\in [k+1]), (\alpha_E^y,E\in\tau^y)\big),\\
 \cT^y_k &= \big(\widetilde\tau^y,(\widetilde m_i^y,i\in [k]), (\widetilde\alpha_E^y,E\in\widetilde\tau^y)\big).
\end{split}
\end{equation}
Note that conditions \ref{item:intertwin:up} and \ref{item:intertwin:init} above are satisfied with $\phi = \phi_2$ and $\Lambda = \Ast\Lambda_k$. To conclude that $\Ast\cT_k$ is Markovian, it remains to check condition \ref{item:intertwining} or \ref{item:intertwining_v2}.

Our approach to proving intertwining goes by way of Lemma \ref{lem:intertwin_jump}, which is a novel, general result for proving intertwining between processes that jump away from some set of ``boundary'' states, which arise only as left limits of the processes at a discrete sequence of times. The lemma states that it suffices to prove that the processes killed upon first approaching the ``boundary'' are intertwined, and that the Markov chains of states that the processes jump into at those times are intertwined as well.

Let $D_1,D_2,\ldots$ denote the sequence of degeneration times of $\big(\cT^y_{k+1}\big)$. Recall the definition of the swap-and-reduce map, $\varrho$, in Section \ref{sec:non_resamp_def}. When a label $i$ in a $(k\!+\!1)$-tree degenerates, it swaps places with $\max\{i,a,b\}$, where $a$ is the least label descended from the sibling of $i$ and $b$ is the least label descended from its uncle. Since $k\!+\!1$ is the greatest label in the tree, there are three cases in which it will resample:
%\begin{enumerate}[label=Case \arabic*:, ref= \arabic*, itemindent=1.4cm, leftmargin=0pt,topsep=4pt]
\begin{enumerate}[label=(D\arabic*), ref= (D\arabic*), topsep=4pt, itemsep=3pt]
 \item\label{case:degen:type2} $k\!+\!1$ belongs to a type-2 compound, and either it (in the case $k\!+\!1=i$) or its sibling (in the case $k\!+\!1=a$) causes degeneration;
 \item\label{case:degen:self} $k\!+\!1$ belongs to a type-1 compound and causes degeneration, so $k\!+\!1=i$; or
 \item\label{case:degen:nephew} $k\!+\!1=b$, as $k\!+\!1$ belongs to a type-1 compound and its sibling in the tree shape is an internal edge that belongs to a type-1 or type-2 compound that degenerates.
\end{enumerate}
In case \ref{case:degen:type2}, the resampling kernel $\Lambda_{k+1,[k]}(\varrho(\cT_{k+1}^{D_n-}),\cdot\,)$ may select the block of the former sibling of $k\!+\!1$ as the point of reinsertion. In this case, the marked $k$-tree remains continuous at this degeneration time, $\Ast\cT^{D_n}_k = \Ast\cT^{D_n-}_k$, and the degeneration time is not a stopping time in the filtration generated by the marked $k$-tree process $\big(\Ast\cT_k^y\big)$. Then, we say that $D_n$ is an \emph{invisible} degeneration time. Otherwise, the degeneration time is said to be \emph{visible}. In particular, in cases \ref{case:degen:self} and \ref{case:degen:nephew} the degeneration is always visible.

Consider the subsequence $D'_1,D'_2,\ldots$ of visible degeneration times. We define $D_0 = D'_0 := 0$. Let $\Phi_2$ denote the stochastic kernel from $\TInt_{k+1}$ to $\TMarkk$ given by $\Phi_2(T,\cdot\,) = \delta_{\phi_2(T)}(\,\cdot\,)$.  Let $P^{\circ,k+1}_y$, $y\ge0$, denote the transition kernels for a resampling $(k\!+\!1)$-tree evolution that jumps and is absorbed at the zero tree at time $D'_1$.

\begin{proposition}\label{prop:inter:killed}
 $\Ast\cT^{\circ,y}_{k} := \Ast\cT^y_{k}\cf\{y<D'_1\}$, $y\ge 0$, is a Markov process intertwined below $\cT^{\circ,y}_{k+1} := \cT^y_{k+1}\cf\{y<D'_1\}$, $y \ge0$, with kernel $\Ast\Lambda_k$.
% \begin{equation}\label{eq:inter:killed}
%  \Ast\Lambda_k P^{\circ,k+1}_y = \Ast P^{\circ,k}_y \Ast\Lambda_k, \quad \text{where} \quad \Ast P^{\circ,k}_y := \Ast\Lambda_k P^{\circ,k+1}_y \Phi_2.
% \end{equation}
\end{proposition}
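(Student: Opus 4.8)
The plan is to verify the two conditions in Definition \ref{def:intertwining} for $X = \cT^{\circ,y}_{k+1}$, $\phi = \phi_2$, and $\Lambda = \Ast\Lambda_k$; condition \ref{item:intertwin:up} has already been recorded (it is just the observation that $\phi_2$ applied after $\Ast\Lambda_k$ returns the identity on marked $k$-trees, which is immediate from the definitions of $\oplus$ and $\phi_2$, tracking separately the leaf-block and internal-block cases). So the content is condition \ref{item:intertwining}, or equivalently \ref{item:intertwining_v2}: for each fixed $y\ge 0$, given $\Ast\cT^{\circ,y}_{k}$, the tree $\cT^{\circ,y}_{k+1}$ is conditionally independent of $\Ast\cT^{\circ,0}_{k}$ with regular conditional distribution $\Ast\Lambda_k(\Ast\cT^{\circ,y}_k,\cdot\,)$. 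Before the killing time $D'_1$, no visible degeneration has occurred, so the only way $\cT^{\circ,y}_{k+1}$ can fail to be the ``undisturbed'' $(k+1)$-tree evolution started from $\Ast\Lambda_k((T,\ell),\cdot)$ is through invisible degeneration times of type \ref{case:degen:type2}, at which $k+1$ is reinserted into (the image of) its former sibling's block. The heart of the proof is therefore to show that this reinsertion exactly reconstitutes the $\Ast\Lambda_k$-conditional law.

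I would organize this around the concatenation structure of type-$0/1/2$ evolutions from Proposition \ref{prop:012:concat} together with the regular-conditional-distribution remark in Remark \ref{rmk:decomp_ker}. The key point is local: fix the internal edge $E$ of the $(k+1)$-tree to which $k+1$ is attached (as a type-2 sibling, or as the top mass of a type-1 compound). If $k+1$ sits in a type-2 compound $(m^y_{k+1},m^y_s,\alpha^y_E)$ with sibling label $s$, then by Lemma \ref{lem:type2:symm} the roles of the two labels are symmetric, and projecting $k+1$ away (applying $\pi_k$, i.e. merging via item \ref{item:proj:add} of Definition \ref{def:proj}) yields a type-1 compound whose mass is the total mass of the type-2 compound, which by Proposition \ref{prop:012:mass} is a \BESQ[0]; this is precisely the marked-$k$-tree picture. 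The reinsertion step via $\Ast\Lambda_k$, which splits the marked leaf block $s$ into an independent unit-mass Brownian reduced 2-tree scaled by the block mass, must be checked to invert the projection in conditional law. This is exactly the $k=1$ resampling identity \eqref{eq:B_ktree_resamp} reflected through the pseudo-stationarity of type-2 evolutions in Proposition \ref{prop:012:pseudo} and the type-2 degeneration analysis of Proposition \ref{prop:pseudo:degen}\ref{item:pseudoD:swapred}--\ref{item:pseudoD:indep}: after a type-2 compound degenerates and is reduced, the surviving mass, when re-expanded into a new type-2 compound via the Brownian reduced 2-tree kernel, has the same conditional law it would have had with no degeneration.

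Putting the local analysis together with the global structure: I would first prove the intertwining for the processes \emph{killed at the first degeneration time} $D_1$ (visible or invisible) — here there is nothing to reconstitute, and the statement follows from the definition of $\Ast\Lambda_k$ plus the trivial fact that $\phi_2$ of the $(k+1)$-tree evolution started from $\Ast\Lambda_k((T,\ell),\cdot)$ is, before any degeneration, the marked-$k$-tree evolution with the right conditional law (this uses Remark \ref{rmk:decomp_ker} to produce the regular conditional distribution of the $(k+1)$-tree given the marked $k$-tree). Then I would invoke the strong Markov property of the resampling $(k+1)$-tree evolution at the degeneration times $D_n$ (Theorem \ref{thm:Markov}), and induct across \emph{invisible} degenerations: at an invisible $D_n$ the marked $k$-tree is continuous, and by the pseudo-stationarity/degeneration computation above, the post-jump $(k+1)$-tree still has conditional law $\Ast\Lambda_k$ given $\Ast\cT^{D_n}_k$. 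Since $D'_1$ is the first \emph{visible} degeneration, iterating over the invisible ones in $[0,D'_1)$ gives the claim for all $y<D'_1$; on $\{y\ge D'_1\}$ both processes are absorbed at $0$ and $\Ast\Lambda_k(0,\cdot)=\delta_0$, so the intertwining is trivial there. Finally, I would appeal to \cite[Theorem 2]{RogersPitman} (as formulated in the text following Definition \ref{def:intertwining}) to conclude that $\Ast\cT^{\circ,y}_k$ is itself Markov, which is the remaining assertion of the proposition.

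I expect the main obstacle to be the bookkeeping at invisible degeneration times: showing cleanly that reinsertion of label $k+1$ into the (image of the) former sibling's block restores the correct $\Ast\Lambda_k$-conditional distribution, rather than merely a distribution of the same general type. This requires combining three ingredients carefully — (i) the symmetry of type-2 evolutions (Lemma \ref{lem:type2:symm}), (ii) the fact that a degenerating type-2 compound leaves a surviving top mass with the ``right'' conditional Gamma/pseudo-stationary law (Proposition \ref{prop:pseudo:degen}, drawing on \cite[Propositions 38, 39]{Paper3}), and (iii) the 2-tree resampling identity \eqref{eq:B_ktree_resamp} — all while conditioning on the entire path of the marked $k$-tree, for which the existence of regular conditional distributions is guaranteed by the Lusin property (Proposition \ref{prop:Lusin}) as in Remark \ref{rmk:decomp_ker}. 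A secondary subtlety is ensuring the ``invisible degeneration'' induction terminates, i.e. that only finitely many invisible degenerations occur before $D'_1$; this follows because degeneration times do not accumulate before the total-mass process hits zero on the relevant compound, which is the fixed-$k$ content already available from Proposition \ref{prop:total_mass_0} and the \BESQ[-1] description, together with the fact that we have killed the process at $D'_1$ in any case.
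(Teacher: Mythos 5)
Your proposal follows essentially the same route as the paper: reduce to condition \ref{item:intertwining_v2}, dispose of the internal-mark case as deterministic, and for a leaf mark induct over the invisible degeneration times, using the definition of the resampling kernel (equation \eqref{eq:resamp_vs_mark}) at each jump and the pseudo-stationarity of the type-2 compound containing label $k+1$ (Proposition \ref{prop:012:pseudo}) between jumps, glued by the strong Markov property. One small slip: the total mass of a type-2 evolution is a \BESQ[-1], not a \BESQ[0] (Proposition \ref{prop:012:mass}), and your worry about finitely many invisible degenerations is unnecessary since the sum over $n$ covers all cases and states at or after $D_\infty$ are the zero tree by convention.
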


\begin{proof}
 As noted above, conditions \ref{item:intertwin:up} and \ref{item:intertwin:init} in Definition \ref{def:intertwining} are satisfied by our construction. Thus, it suffices to check condition \ref{item:intertwining_v2}, that $\cT^{\circ,y}_{k+1}$ has r.c.d.\ $\Ast\Lambda_k(\Ast\cT^{\circ,y}_k,\cdot\,)$ given $\Ast\cT^{\circ,y}_k$.
 
 By construction, we have the a.s.\ equality of events $\big\{\cT^{\circ,y}_{k+1} = 0\big\} = \big\{\Ast\cT^{\circ,y}_k = 0\big\} = \big\{D'_1 < y\big\}$. Indeed, $\Ast\Lambda_k(0,\cdot\,) = \delta_0$, so this gives the correct conditional distribution on this event. It remains to check the claimed r.c.d.\ over the domain $\Ast\cT^y_k = \Ast T\in \TMarkk\setminus\{0\}$.
 
 Recall that $(T,\ell)\in\TMarkk$ denotes the fixed initial state of $\Ast\cT^y_k = \big(\cT^y_k,\ell^y\big)$, $y\ge0$.
 
 Case 1: $\ell\notin [k]$. Then label $k\!+\!1$ belongs to a type-1 compound in $\cT^0_{k+1}$, so the first degeneration cannot be a case \ref{case:degen:type2} degeneration, and it must be visible, $D_1 = D'_1$. Thus, $\{\Ast\cT^y_k \neq 0\} = \{y<D_1\}$. On the event $\{y<D_1\}$, the tree shape of $\Ast\cT^y_k$ must equal that of $T$, and its marked block must lie in the same internal edge as the initial marked block $\ell$. For any marked $k$-tree $\Ast T$ with the latter of these properties, there is a unique $(k\!+\!1)$-tree $T'$ that satisfies $\Ast T = \phi_2(T')$, as in example (A) in Figure \ref{fig:mark_k_tree}, and $\Ast\Lambda_k\big(\Ast T,\cdot\,\big) = \delta_{T'}$. Thus, if $\ell\notin [k]$ then $\cT^{\circ,y}_{k+1}$ is in fact a deterministic function of $\Ast\cT^{\circ,y}_k$, and $\Ast\Lambda_k(\Ast\cT^{\circ,y}_k,\cdot\,)$ is a r.c.d.\ for $\cT^{\circ,y}_{k+1}$ given $\Ast\cT^{\circ,y}_k$ as claimed, albeit in a trivial sense.
 
 Case 2: $\ell = i\in [k]$. By the discussion above this proposition, for $n\ge1$, on the event $\{D_n < D'_1\}$, the time $D_n$ is an invisible degeneration time, so label $k\!+\!1$ has just been dropped from a type-2 compound $(m_i^{D_n-},m_{k+1}^{D_n-},\alpha_{\{i,k+1\}}^{D_n-})$ and it resamples into block $i$. Formally,
 \begin{equation}\label{eq:degen_filtration}
  \{D_n < D'_1\} = \{D_{n-1}< D'_1\} \cap\{ \ell^{D_n} = i \}.
 \end{equation}
 For $n\ge1$ let $\cF_{n-} := \sigma\big(\cT^{\circ,y}_{k+1},\,y \in [0,D_n)\big)$ and $\cF_n := \sigma\big(\cT^{\circ,y}_{k+1},\,y \in [0,D_n]\big)$. The first event on the right hand side in \eqref{eq:degen_filtration} is measurable in $\cF_{n-}$ while the second is measurable in $\cF_n$. Moreover, by definition of invisible degeneration times, on the event $\{D_n < D'_1\}$ we get $\Ast\cT^{\circ,D_n}_k = \Ast\cT^{\circ,D_n-}_k$, the latter of which is $\cF_{n-}$-measurable. We also get $\varrho\big(\cT^{\circ,D_n-}_{k+1}\big) = \cT^{\circ,D_n-}_k = \cT^{\circ,D_n}_k$. Thus, by \eqref{eq:resamp_vs_mark}, 
 $\Ast\Lambda_k(\Ast\cT^{\circ,D_n}_k,\cdot\,)$ is a r.c.d.\ for $\cT^{\circ,D_n}_{k+1}$ given $\cF_{n-}$.
 
 Let $N := \sup\{n\ge0\colon D_n<y\}$. Note that $\{n = N\} = \{D_n < y\}\cap \{y \le D_{n+1}\}$. 
 Thus, for bounded, measurable $f\colon \TInt_{k+1}\to\BR$ with $f(0)=0$,
 \begin{equation}\label{eq:inter:killed:calc}
 \begin{split}
  	\bE\big[f\big(\cT^{\circ,y}_{k+1}\big)\big] &= \sum_{n\ge 0}\int_{[0,y)\times\TMarkk}\bP\!\left\{D_n\in dz,\,\Ast\cT^{\circ,D_n}_k \in d\Ast T\right\}\int_{\TInt_{k+1}} \Ast\Lambda_k\!\left(\Ast T,dT'\right)\\ 
  		%\bP\big\{N = n\ \big|\ D_n=z,\,\cT^z_{k+1} = T'\big\}\
  		&\qquad\qquad \int_{\TInt_{k+1}}\bP(\cT^{\circ,y}_{k+1}\cf\{y<D_{n+1}\} \in dT''\ |\ \cT^{\circ,z}_{k+1} = T',\,D_n = z)f(T'').
  \end{split}
 \end{equation}
 By the strong Markov property of $\big(\cT^{\circ,y}_{k+1},y\ge0\big)$,
 \begin{equation*}
  \bP\left(\cT^{\circ,y}_{k+1}\cf\{y<D_{n+1}\} \in dT''\ \middle|\ \cT^{\circ,z}_{k+1} = T',\,D_n = z\right) = \bP\!\left(\widehat\cT^{y-z}_{k+1}\cf\{y-z < \widehat D_1\} \in dT''\ \middle|\ \widehat\cT^0_{k+1} = T'\right),
 \end{equation*}
 where $(\widehat\cT^y_{k+1},y\ge0)$ is a $(k\!+\!1)$-tree evolution and $\widehat D_1$ its first degeneration time. Now, suppose we start this evolution with initial distribution $\Ast\Lambda_k\big(\Ast T,\cdot\,\big)$, where the marked block in $\Ast T$ is a leaf block $i\in [k]$. Then in $\widehat\cT^0_{k+1}$, the type-2 compound $\big(\widehat m^y_i,\widehat m_{k+1}^y,\widehat\alpha_{\{i,k+1\}}^y\big)$ begins in a pseudo-stationary initial distribution. Conditioning the $(k\!+\!1)$-tree evolution not to degenerate prior to time $y-z$ means conditioning this type-2 compound not to degenerate, and likewise for the other, independently evolving compounds that comprise this $(k\!+\!1)$-tree evolution. Thus, by Proposition \ref{prop:012:pseudo}, $\big(\widehat m^{y-z}_i,\widehat m_{k+1}^{y-z},\widehat\alpha_{\{i,k+1\}}^{y-z}\big)$ is conditionally a Brownian reduced 2-tree scaled by an independent random mass, and this is independent of all other compounds. Equivalently, $\Ast\Lambda_k\big(\phi_2\big(\widehat\cT^{y-z}_{k+1}\cf\{y-z < \widehat D_1\}\big),\cdot\,\big)$ is a r.c.d.\ for $\widehat\cT^{y-z}_{k+1}\cf\{y-z < \widehat D_1\}$ given $\phi_2\big(\widehat\cT^{y-z}_{k+1}\cf\{y-z < \widehat D_1\}\big)$. Plugging this back into \eqref{eq:inter:killed:calc}, we get
 \begin{align*}
  \bE\big[f\big(\cT^{\circ,y}_{k+1}\big)\big] &= \sum_{n\ge 0}\int_{[0,y)\times\TMarkk}\bP\!\left\{D_n\in dz,\,\Ast\cT^{\circ,D_n}_k \in d\Ast T\right\}\\
  		&\qquad \int_{\TMarkk} \bP\left(\Ast\cT^{\circ,y}_{k}\cf\{y<D_{n+1}\} \in d\Ast T'\ \middle|\ D_n = z,\,\Ast\cT^{\circ,D_n}_k = \Ast T\right)\int_{\TInt_{k+1}}\Ast\Lambda_k\left(\Ast T',dT''\right)f(T'')\\
  		%&= \int_{\TMarkk} \bP\left\{\Ast\cT^{\circ,y}_{k} \in d\Ast T'\right\}\int_{\TInt_{k+1}}\Ast\Lambda_k\left(\Ast T',dT''\right)f(T'')\\
  		&= \bE\left[\int\Ast\Lambda_k\left(\Ast\cT^{\circ,y}_{k},dT''\right)f(T'')\right].
 \end{align*}
 Thus, $\Ast\Lambda\big(\Ast\cT^{\circ,y}_k,\cdot\,\big)$ is a r.c.d.\ for $\cT^{\circ,y}_{k+1}$ given $\Ast\cT^{\circ,y}_k$, as claimed.
\end{proof}

%\begin{corollary}\label{cor:inter:preswap}
% For $n\ge1$, $\Ast\Lambda_k\big(\Ast\cT_k^{D'_n-},\cdot\,\big)$ is a r.c.d.\ for $\cT_{k+1}^{D'_n-}$ given $\big(\Ast\cT_k^y,y\in [0,D'_n))$.
%\end{corollary}

%\begin{proof}
% By the strong Markov property and induction, it suffices to prove this for $n=1$. 
% Testing
%% It is easily seen via a coupling argument that $\Ast\Lambda_k(\,\cdot,\cdot\,)$ is weakly continuous in its first coordinate. Thus, the claim follows from Proposition \ref{prop:inter:killed}.
%\end{proof}
%Let $Q^{k+1}$ denote the transition kernel for the Markov chain $((\cT_{k+1}^{D'_n},D'_n),n\ge0)$, and for the purpose of the following, let $I$ denote the identity kernel on $\BR$.

\begin{proposition}\label{prop:inter:skel}
 $\Big(\!\Big(\Ast\cT^{D'_n}_{k},D'_n\Big),n\!\ge\!0\Big)$ is a Markov chain intertwined below $\Big(\!\Big(\cT^{D'_n}_{k+1},D'_n\Big),n\!\ge\!0\Big)$ with kernel $\Ast\Lambda_k\otimes I$, where $I$ denotes the identity kernel on $\BR$.
\end{proposition}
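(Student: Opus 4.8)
The plan is to check, for the pair of chains $\big((\Ast\cT^{D'_n}_k,D'_n),n\ge0\big)$ and $\big((\cT^{D'_n}_{k+1},D'_n),n\ge0\big)$, the three conditions of Definition \ref{def:intertwining} relative to the kernel $\Ast\Lambda_k\otimes I$ and the projection $(T,t)\mapsto(\phi_2(T),t)$. Condition \ref{item:intertwin:up} amounts to $\phi_2\big(T\oplus(\ell,k+1,U)\big)=(T,\ell)$ for every $U$, which I would verify directly from the definitions of $\oplus$, $\pi_k$ and $\phi_2$, splitting according to whether $\ell$ is a leaf block, an interval-partition block, or a degenerate site $(F,a,a)$; tensoring with the identity on times is harmless. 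Condition \ref{item:intertwin:init} holds by construction, since $D'_0=0$ and $\cT^0_{k+1}\sim\Ast\Lambda_k\big((T,\ell),\cdot\big)=\Ast\Lambda_k(\Ast\cT^0_k,\cdot)$. I would also note that the $D'_n$ are stopping times of $(\cT^y_{k+1})$ — whether a degeneration is visible is read off from the state just after the resampling jump — so by the strong Markov property from Theorem \ref{thm:Markov}, $\big((\cT^{D'_n}_{k+1},D'_n)\big)$ is a Markov chain (absorbed at $0$ once no further visible degeneration occurs, consistently with $\phi_2(0)=0$ and $\Ast\Lambda_k(0,\cdot)=\delta_0$). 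With \ref{item:intertwin:up} and \ref{item:intertwin:init} in hand, it remains only to verify condition \ref{item:intertwining}, equivalently \ref{item:intertwining_v2}, for a single step of the chains; it then propagates and in particular makes $\big((\Ast\cT^{D'_n}_k,D'_n)\big)$ Markov.

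So the heart of the argument is the one-step statement: starting a resampling $(k\!+\!1)$-tree evolution from $\Ast\Lambda_k(\Ast T,\cdot)$ for an arbitrary $\Ast T=(T,\ell)\in\TMarkk\setminus\{0\}$, I must show that at the first visible degeneration $D:=D'_1$ the tree $\cT^D_{k+1}$ has conditional law $\Ast\Lambda_k(\Ast\cT^D_k,\cdot)$ given $\sigma(\Ast\cT^z_k\colon z\le D)$, not depending on $\Ast T$. By predictability of type-$0/1/2$ evolutions (Proposition \ref{prop:012:pred}), the left limit $\cT^{D-}_{k+1}$, the degenerating label $I'=I(\cT^{D-}_{k+1})$, the dropped label $J=J(\cT^{D-}_{k+1})$ and the reduced tree $\varrho(\cT^{D-}_{k+1})$ are functions of $(\cT^z_{k+1}\colon z<D)$, and $\cT^D_{k+1}=\varrho(\cT^{D-}_{k+1})\oplus(B,J,U)$ where, conditionally on the strict past, the reinsertion block $B$ is chosen proportionally to block masses and $U\sim Q$ is an independent fresh Brownian reduced $2$-tree. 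Since $k\!+\!1$ is the maximal label, $J=k\!+\!1$ occurs exactly in cases \ref{case:degen:type2}--\ref{case:degen:nephew}, and $J\le k$ in every other degeneration, and I would split accordingly. When $J=k\!+\!1$: then $\pi_k(\cT^D_{k+1})=\varrho(\cT^{D-}_{k+1})$ and the mark of $\Ast\cT^D_k$ is precisely $B$, so $\Ast\cT^D_k$ determines $\big(\varrho(\cT^{D-}_{k+1}),B\big)$, and the only remaining randomness in $\cT^D_{k+1}=\varrho(\cT^{D-}_{k+1})\oplus(B,k\!+\!1,U)$ is the fresh $U$; hence by \eqref{eq:resamp_vs_mark} the conditional law is exactly $\Ast\Lambda_k(\Ast\cT^D_k,\cdot)$, independent of the past — this case is immediate.

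When $J\le k$: here $k\!+\!1$ is neither the degenerate leaf nor the sibling or uncle leaf, and ``drop $J$, reinsert $J$'' touches only the labels $[k+1]\setminus\{k+1\}$ together with a fresh $2$-tree. First I would upgrade Proposition \ref{prop:inter:killed}: by the Rogers--Pitman theorem \cite{RogersPitman}, for deterministic $z$ the conditional law of $\cT^z_{k+1}$ given $\sigma(\Ast\cT^w_k\colon w\le z)$ is $\Ast\Lambda_k(\Ast\cT^z_k,\cdot)$ on $\{z<D\}$, and letting $z\uparrow D$ along deterministic times and passing to the left limit — using \cadlag\ paths, predictability, and the Lusin property (Proposition \ref{prop:Lusin}, cf.\ Remark \ref{rmk:decomp_ker}) to handle the regular conditional distributions — gives that $\cT^{D-}_{k+1}$ has conditional law $\Ast\Lambda_k(\Ast\cT^{D-}_k,\cdot)$ given the strict past of the marked $k$-tree. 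In particular, given $\pi_k(\cT^{D-}_{k+1})$ and the mark $\ell^{D-}$, label $k\!+\!1$ sits at $\ell^{D-}$ either as a scaled Brownian reduced $2$-tree (if $\ell^{D-}$ is a leaf block) or with zero mass at an interval-partition block or a degenerate site. I would then check, from the definitions of $\pi_k$, $\phi_2$, $\varrho$ and $\oplus$, that applying ``drop $J$, reinsert $J$'' to $\cT^{D-}_{k+1}$ corresponds, after projecting by $\phi_2$, to applying the analogous jump to $\Ast\cT^{D-}_k$ while leaving this conditional description of the location of $k\!+\!1$ intact; the only genuine interaction — arising when $J$ is reinserted into the block currently hosting $k\!+\!1$ — is resolved by the exchangeability of the Brownian reduced $2$-tree together with the consistency relation \eqref{eq:B_ktree_resamp} (equivalently Proposition \ref{prop:B_ktree}). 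In both cases $\cT^D_{k+1}$ ends up with conditional law $\Ast\Lambda_k(\Ast\cT^D_k,\cdot)$ given $\Ast\cT^D_k$ and independent of $\Ast T$, which is condition \ref{item:intertwining_v2}.

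I expect the last case to be the main obstacle: one must bookkeep carefully how $\varrho$ and $\Lambda_{J,[k+1]\setminus\{J\}}$ relocate the marked block — distinguishing whether the mark sits in a leaf block, in an interval-partition block, or at a $(F,a,a)$-type degenerate site, and whether $J$ is reinserted near $k\!+\!1$ or away from it — matching it term by term against the definition of $\phi_2$, and reconciling the freshly sampled $2$-tree with the one already encoded by the $\Ast\Lambda_k$-conditional law via the consistency of Brownian reduced trees. The left-limit argument and everything else should be routine given Propositions \ref{prop:inter:killed} and \ref{prop:012:pred} and the Rogers--Pitman theorem.
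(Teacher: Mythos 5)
Your outline follows the paper's proof quite closely: conditions \ref{item:intertwin:up} and \ref{item:intertwin:init} are dispatched by construction, the problem is reduced to one step at $D'_1$ (and then propagated by the strong Markov property and induction), the case $J=k\!+\!1$ is immediate from \eqref{eq:resamp_vs_mark} since $\cT^{D'_1}_k=\varrho\big(\cT^{D'_1-}_{k+1}\big)$, and the case $J\in[k]$ is handled by first establishing the conditional law of the left limit and then tracking how ``drop $J$, reinsert $J$'' interacts with the location of label $k\!+\!1$, using exchangeability of Brownian reduced $2$-/$3$-trees and \eqref{eq:B_ktree_resamp}. Two points deserve flagging. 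First, your step of ``letting $z\uparrow D$ along deterministic times'' is not a valid operation, since $D=D'_1$ is random; you need either announcing stopping times $C_a\uparrow D$ with $C_a<D$ together with martingale convergence and the weak continuity of $\Ast\Lambda_k$ (this is exactly how the paper argues at the times $D''_n$ in Lemma \ref{lem:preswap}, using Lemma \ref{lem:Lstar_cont}), or the paper's alternative route for this proposition, which conditions on the number $N$ of invisible degenerations preceding $D'_1$ and invokes pseudo-stationarity of the compound containing $k\!+\!1$ at the independent degeneration time of the other compounds (Proposition \ref{prop:012:pseudo}). Second, what you correctly identify as ``the main obstacle'' is where the paper expends most of its effort: it writes $\cT^{D'_1}_{k+1}=\big(\varrho\big(\cT_k^{D'_1-}\big)\oplus(L,k\!+\!1,\mathcal{U})\big)\oplus(M,j,\mathcal{V})$ and partitions $\{J=j\}$ into five sub-events according to the positions of $L$ and $M$; the genuinely nontrivial one is $M$ landing in the edge partition of the type-2 compound $\{L,k\!+\!1\}$, which requires recognising the resulting three-leaf subtree as a Brownian reduced $3$-tree and using label exchangeability to see that conditioning $j$ into the type-1 compound leaves a Brownian reduced $2$-tree around $k\!+\!1$. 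Your sketch names the right tools for this but does not carry out the bookkeeping, so as written the proposal is an accurate roadmap of the paper's argument rather than a complete proof.
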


\begin{proof}
 Again, it suffices to verify condition \ref{item:intertwining_v2}, that for $n\ge1$, $\Ast\Lambda_k\otimes I\big(\big(\Ast\cT^{D'_n}_{k},D'_n\big),\cdot\,\big)$ is a r.c.d.\ for $\big(\cT^{D'_n}_{k+1},D'_n\big)$ given $\big(\Ast\cT^{D'_n}_{k},D'_n\big)$. Equivalently, we must show that $\Ast\Lambda_k\big(\Ast\cT^{D'_n}_{k},\cdot\,\big)$ is a r.c.d.\ for $\cT^{D'_n}_{k+1}$ given $\big(\Ast\cT^{D'_n}_{k},D'_n\big)$. In fact, if we can prove this claim for $n=1$, then by the strong Markov property of the resampling $(k\!+\!1)$-tree evolution at time $D'_1$, we will have that $\big(\big(\Ast\cT_{k}^{D'_{n+1}},D'_{n+1}-D'_1\big),n\ge0 \big)$ is another instance of the same Markov chain, but with a different initial distribution that also satisfies condition \ref{item:intertwin:init} of Definition \ref{def:intertwining}. Then, by induction, the full result will follow, for all $n\ge1$.
 
 To verify the claim, we first note that on the event that label $k\!+\!1$ resamples at time $D'_1$, as at any time label $k\!+\!1$ resamples, we have
$\cT^{D'_1}_k = \varrho\big(\cT^{D'_1-}_{k+1}\big)$. Thus, \eqref{eq:resamp_vs_mark} yields  
 \begin{equation*}
  \bE\left[\cf\left\{J\left(\cT^{D'_1-}_{k+1}\right)=k\!+\!1\right\}f\left(\cT^{D'_1}_{k+1}\right)\right]
  	=\bE\left[\cf\left\{J\left(\cT^{D'_1-}_{k+1}\right)=k\!+\!1\right\}\int_{\bT_{k+1}}\Ast\Lambda_k\left(\Ast\cT^{D'_1}_{k},dT\right)f(T)\right].
 \end{equation*} 
 Also, defining $N$ so that $D_{N+1} = D'_1$ and conditioning on this $N$ as in the proof of Proposition \ref{prop:inter:killed}, we can show that
 \begin{equation*}
  \bE\left[\cf\left\{J\left(\cT^{D'_1-}_{k+1}\right)\in[k]\right\}f\left(\cT^{D'_1-}_{k+1}\right)\right]
  	= \bE\left[\cf\left\{J\left(\cT^{D'_1-}_{k+1}\right)\in[k]\right\}\int_{\bT_{k+1}}\Ast\Lambda_k\left(\Ast\cT^{D'_1-}_{k},dT\right)f(T)\right].
 \end{equation*}
 Specifically, conditioning a resampling $(k\!+\!1)$-tree evolution (the post-$D_n$ evolution) to drop a label in $[k]$ at the first degeneration means conditioning the compound containing label $k\!+\!1$ to be non-degenerate at the independent first degeneration time of the other compounds, so any initial pseudo-stationarity of this compound yields pseudo-stationarity at this independent random time, by Proposition \ref{prop:012:pseudo}.
 
 To complete the proof, we want to deduce that we have
 \begin{equation}\label{eq:rcd_small_labels}
  \bE\left[\cf_{A} f\left(\cT^{D'_1}_{k+1}\right)\right]
  	=\bE\left[\cf_{A}\int_{\bT_{k+1}}\Ast\Lambda_k\left(\Ast\cT^{D'_1}_{k},dT\right)f(T)\right].
 \end{equation}
 for $A = A_j := \big\{J\big(\cT^{D'_1-}_{k+1}\big) = j\big\}$, $j\in[k]$. To this end, we recall that, by the definition of resampling 
$(k\!+\!1)$-tree evolutions, $\Lambda_{j,[k\!+\!1]\setminus\{j\}}\big(\varrho\big(\cT^{D'_1-}_{k+1}\big),\cdot\,\big)$, is a r.c.d. of $\cT^{D'_1}_{k+1}$ given $\cT^{D'_1-}_{k+1}$ with $J\big(\cT^{D'_1-}_{k+1}\big) = j$. This means that, on $A_j$, we may write $\cT_{k+1}^{D'_1}$ in terms of 
$\Ast\cT_k^{D'_1-} = \big(\cT_k^{D'_1-},L\big)$ as
 \begin{equation*}
  \cT_{k+1}^{D'_1}=\left(\varrho\left(\cT_k^{D'_1-}\right)\oplus(L,k\!+\!1,\mathcal{U})\right)\oplus(M,j,\mathcal{V}),
 \end{equation*}
 where $\mathcal{U},\mathcal{V}\sim Q$ are two independent Brownian reduced 2-trees and, given $\Ast\cT_k^{D'_1-}$, $\mathcal{U}$ and $\mathcal{V}$, the block $M$ of $\varrho\big(\cT_k^{D'_1-}\big)\oplus(L,k+1,\mathcal{U})$ is chosen at random according to the masses of blocks. We derive 
\eqref{eq:rcd_small_labels} separately for $A=A_{j,i}$, $1\!\le\! i\!\le\! 5$, where $\{A_{j,i}\colon 1\!\le\! i\!\le\! 5\}$ is a partition of $A_j$, which we define in the following.
 
 On $A_{j,1} := \big\{L\!\in\![k], M\!\in\!\block\big(\varrho\big(\cT_k^{D'_1-}\big)\big), M\!\neq\! L\big\}$, we have $\Ast\cT_k^{D'_1} = \big(\varrho\big(\cT_k^{D'_1-}\big)\!\oplus\!(M,j,\mathcal{V}),L\big)$ and $\cT_{k+1}^{D'_1} = \cT_k^{D'_1}\oplus(L,k+1,\mathcal{U})$, with $\mathcal{U}$ independent of $\Ast\cT_k^{D'_1}$. Hence, \eqref{eq:rcd_small_labels} holds for $A=A_{j,1}$.
 
 On $A_{j,2} := \{M=k+1\}$, we have $\cT^{D'_1}_{k+1} = \cT_k^{D'_1}\oplus(j,k+1,\widebar{\mathcal{V}})$, where $\widebar{\mathcal{V}}$ is formed by swapping leaf labels in $\mathcal{V}$. As the Brownian reduced 2-tree has exchangeable labels, $\widebar{\mathcal{V}}\sim Q$, so \eqref{eq:rcd_small_labels} holds for $A = A_{j,2}$.
 
 On $A_{j,3} := \big\{L\not\in[k],\, M\neq k\!+\!1\}$, the tree $\cT^{D'_1}_{k+1}$ is the unique $(k\!+\!1)$-tree with $\phi_2\big(\cT^{D'_1}_{k+1}\big) = \Ast\cT_k^{D'_1}$, as in example (A) in Figure \ref{fig:mark_k_tree}, and indeed $\Ast\Lambda_k\big(\Ast\cT^{D'_1}_k,\cdot\,\big)$ is a point mass at this $(k\!+\!1)$-tree. Then, as in Case 1 in the proof of Proposition \ref{prop:inter:killed}, we conclude trivially.%: \eqref{eq:rcd_small_labels} holds for $A=A_{j,3}$.
 
 Consider $A_{j,4} := \{L\in[k],\, M\in \{(\{L,k\!+\!1\},a,b)\colon a,b\in\bR\}\}$. We view this as a sub-event of $B := A_{j,4} \cup \{L\in [k],\,  M\in \{L,k\!+\!1\}\}$. Then $B$ is the event that label $k\!+\!1$ is inserted into a leaf block of $\varrho\big(\cT_k^{D'_1-}\big)$ forming a type-2 compound in $\varrho\big(\cT_{k+1}^{D'_1-}\big)$, and then label $j$ is inserted into a block in this type-2 compound. On this event, the aforementioned type-2 compound is a Brownian reduced 2-tree. Thus, by \eqref{eq:B_ktree_resamp}, the subtree of $\cT^{D'_1}_{k+1}$ comprising leaf blocks $L$, $j$, and $k\!+\!1$ and the partitions on their parent edges is a Brownian reduced 3-tree. The tree shape of this 3-tree has two leaves at distance 3 from the root, sitting in a type-2 compound, and one leaf at distance 2 from the root, in a type-1 compound. It follows from Proposition \ref{prop:B_ktree} that the type-2 compound is then a Brownian reduced 2-tree $\mathcal{W}$ scaled by an independent random mass. Conditioning on $A_{j,4}$ is equivalent to conditioning label $j$ to sit in the type-1 compound. By Proposition \ref{prop:B_ktree}, the labels in the Brownian reduced 3-tree are exchangeable, so $\mathcal{W}$ remains a Brownian reduced 2-tree under this conditioning. Hence, \eqref{eq:rcd_small_labels} holds for $A = A_{j,4}$.
 
 The event $A_{j,5}:=\{L\in[k],\, M=L\}$ is another sub-event of $B$ introduced in the preceding paragraph. This is the sub-event in which label $k\!+\!1$ sits in the type-1 compound in the Brownian reduced 3-tree. We argue as for $A_{j,3}$.
\end{proof}

As we have mentioned above, Lemma \ref{lem:intertwin_jump} is a novel, general lemma for proving intertwining. Propositions \ref{prop:inter:killed} and \ref{prop:inter:skel} provide the ingredients needed to apply this lemma, as well as Lemma \ref{lem:intertwin_strong}, to deduce the following conclusion. %180919

%In Appendix \ref{sec:intertwining_lem}, we prove a general lemma showing that in order to prove intertwining, it suffices to prove intertwining for a ``skeletal'' Markov chain embedded in the process along a sequence stopping times and for a version of the process killed at such stopping times. In particular, the two propositions above and Lemma \ref{lem:intertwin_jump} imply the following.

\begin{proposition}\label{prop:intertwined}
 $\big(\Ast\cT^y_k,y\ge 0\big)$ is a strong Markov process intertwined below $\big(\cT^y_{k+1},y\ge0\big)$ with kernel $\Ast\Lambda_k$. Moreover, the intertwining criteria \ref{item:intertwining} and \ref{item:intertwining_v2} hold not just at fixed times $y$, but at all stopping times for $\big(\Ast\cT^y_k,y\ge 0\big)$.
\end{proposition}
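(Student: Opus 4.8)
The plan is to assemble the two substantive ingredients already established --- intertwining of the processes \emph{killed at the first visible degeneration time} (Proposition~\ref{prop:inter:killed}) and intertwining of the \emph{embedded jump chains at visible degeneration times} (Proposition~\ref{prop:inter:skel}) --- by feeding them into the general jump-intertwining Lemma~\ref{lem:intertwin_jump}, and then to promote the resulting ordinary Markov intertwining to a strong Markov intertwining via Lemma~\ref{lem:intertwin_strong}. As a preliminary I would recall that conditions \ref{item:intertwin:up} and \ref{item:intertwin:init} of Definition~\ref{def:intertwining} have already been observed to hold for the pair $\phi=\phi_2$, $\Lambda=\Ast\Lambda_k$ and the initial law $\Ast\Lambda_k((T,\ell),\cdot\,)$ of $\cT^0_{k+1}$: on the one hand $\Ast\Lambda_k\Phi_2$ is the identity kernel on $\tdTMarkk$, since inserting label $k\!+\!1$ into the marked block with an independent unit-mass Brownian reduced $2$-tree via $\oplus$ and then projecting it away returns the marked $k$-tree one started from; on the other hand $\phi_2(\cT^0_{k+1})=(T,\ell)$ a.s., so $\cT^0_{k+1}$ trivially has regular conditional distribution $\Ast\Lambda_k(\phi_2(\cT^0_{k+1}),\cdot\,)$ given $\phi_2(\cT^0_{k+1})$. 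Thus only the intertwining relation \ref{item:intertwining} (equivalently \ref{item:intertwining_v2}) and its strong form remain.

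Next I would apply Lemma~\ref{lem:intertwin_jump} with $X=(\cT^y_{k+1},y\ge0)$, $Y=(\Ast\cT^y_k,y\ge0)=(\phi_2(\cT^y_{k+1}),y\ge0)$, map $\phi_2$, kernel $\Ast\Lambda_k$, and jump times the visible degeneration times $D'_1<D'_2<\cdots$. The first hypothesis of that lemma is exactly Proposition~\ref{prop:inter:killed}: the processes killed at $D'_1$, namely $\cT^{\circ,y}_{k+1}=\cT^y_{k+1}\cf\{y<D'_1\}$ and $\Ast\cT^{\circ,y}_{k}=\Ast\cT^y_{k}\cf\{y<D'_1\}$, are intertwined with kernel $\Ast\Lambda_k$. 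The second hypothesis is exactly Proposition~\ref{prop:inter:skel}: the space--time jump chains $((\cT^{D'_n}_{k+1},D'_n),n\ge0)$ and $((\Ast\cT^{D'_n}_k,D'_n),n\ge0)$ are intertwined with kernel $\Ast\Lambda_k\otimes I$. The remaining hypotheses are bookkeeping that I would check: $(\cT^y_{k+1},y\ge0)$ is Borel right by Theorem~\ref{thm:Markov}, hence strong Markov, and its degeneration times $D_n$ --- and so the subsequence $D'_n$ --- are stopping times; each $D'_n$ is also a stopping time for $(\Ast\cT^y_k,y\ge0)$, because $\Ast\cT^y_k$ is continuous on $[0,D'_1)$ and at every invisible degeneration time and genuinely jumps at each $D'_n$; and, being a subsequence of the increasing sequence $(D_n)$, the sequence $(D'_n)$ increases to $D_\infty:=\sup_nD_n$, beyond which both $\cT^y_{k+1}$ and $\Ast\cT^y_k$ sit in the absorbing state $0$ with $\Ast\Lambda_k(0,\cdot\,)=\delta_0$, so the intertwining relation is trivially maintained there. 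Importantly, this argument does not require knowing that $D_\infty$ is a.s.\ finite, which is Proposition~\ref{prop:non_accumulation}, whose proof in turn relies on the present result. Lemma~\ref{lem:intertwin_jump} then yields that $(\Ast\cT^y_k,y\ge0)$ is a Markov process intertwined below $(\cT^y_{k+1},y\ge0)$ with kernel $\Ast\Lambda_k$; in particular \ref{item:intertwining} and \ref{item:intertwining_v2} hold at every fixed time.

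Finally, to obtain the strong Markov property and \ref{item:intertwining}/\ref{item:intertwining_v2} at all stopping times of $(\Ast\cT^y_k,y\ge0)$, I would invoke Lemma~\ref{lem:intertwin_strong}, whose hypotheses are met here: $(\cT^y_{k+1},y\ge0)$ is strong Markov (Theorem~\ref{thm:Markov}); $\Ast\Lambda_k$ is weakly continuous in its first coordinate (Lemma~\ref{lem:Lstar_cont}); and $(\Ast\cT^y_k,y\ge0)$ has \cadlag\ paths, being the image under $\phi_2$ of the \cadlag\ process $(\cT^y_{k+1},y\ge0)$ --- $\phi_2$ being continuous on each slice of $(k\!+\!1)$-trees of a fixed tree shape and sending mass-zero trees to $0$. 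Its conclusion is precisely the assertion of the proposition. I do not expect any single step to be a genuine obstacle: the real content of the argument has been isolated into Propositions~\ref{prop:inter:killed} and \ref{prop:inter:skel} and into Lemma~\ref{lem:intertwin_jump} itself. The one place that needs care --- and where a careless argument could become circular --- is the handling of the degeneration times accumulating at $D_\infty$, which is why the subsequence observation together with the triviality of the intertwining past $D_\infty$ is used above, rather than the as-yet-unproved Proposition~\ref{prop:non_accumulation}.
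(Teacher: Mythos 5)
Your proposal matches the paper's own (very terse) proof, which simply asserts that Propositions \ref{prop:inter:killed} and \ref{prop:inter:skel} supply the hypotheses of Lemma \ref{lem:intertwin_jump}, whose conclusion is then upgraded via Lemma \ref{lem:intertwin_strong}. You fill in the bookkeeping the paper omits --- the verification of criteria \itemref{item:intertwin:up} and \itemref{item:intertwin:init}, the identification of the killed process and the space--time jump chain with the inputs of Lemma \ref{lem:intertwin_jump}, the Lipschitz continuity of $\phi_2$ coming from \eqref{eq:markk:dist_def} and hence the c\`adl\`ag property of $(\Ast\cT^y_k)$, and the appeal to Lemma \ref{lem:Lstar_cont} --- and you do so correctly. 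Your remark that circularity with Proposition \ref{prop:non_accumulation} must be avoided, and that it is avoided because the accumulation of the $D'_n$ only ever delivers the absorbing state $0$ on which the intertwining is trivial, is a genuine observation worth making explicit, and you have it right.
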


We call $\big(\big(\Ast\cT^y_k\big),y\ge 0\big)$ the \emph{(resampling) marked $k$-tree evolution from initial state $(T,\ell)$}.

\subsection{Projection from marked $k$-tree}\label{sec:const:Dynkin1}

We continue with the notation of the preceding subsection, but now we study $\cT^y_k = \phi_1\big(\Ast\cT^y_k\big) = \pi_{k}\big(\cT_{k+1}^y\big)$, $y\ge0$. For the moment, we know that this is a $\TInt_k$-valued stochastic process; in the course of this section, we will show that it is a resampling $k$-tree evolution up to time $D_\infty = \sup_n D_n$. Let $(D''_n,n\ge1)$ denote the subsequence of degeneration times of $(\cT_{k+1}^y,y\ge0)$ at which a label other than $k\!+\!1$ is killed. This is a further subsequence of $(D'_n,n\ge1)$. 

%Let $\widehat Q^k$ denote the Markov kernel on $\TMarkk$ that sends the initial state $(T,\ell)$ to the law of $\Ast\cT_k^{D''_1}$. Let $Q^k$ denote the Markov kernel on $\TInt_{k}$ that sends $T$ to the distribution of a resampling $k$-tree started at $T$, evaluated at its first degeneration time (i.e.\ after reduction and resampling).

\begin{proposition}\label{prop:Dynkin:killed}
 $\big(\cT^{y}_k\cf\{y<D''_1\},y\ge0\big)$ is a killed $k$-tree evolution and is Markovian in the filtration generated by $\big(\cT^y_{k+1},y\ge0\big)$.%180917
\end{proposition}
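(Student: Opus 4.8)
The plan is to show that, run until time $D''_1$, the projected process $\bigl(\cT^y_k = \pi_k(\cT^y_{k+1})\bigr)$ coincides in law with a killed $k$-tree evolution; Markovianity in the filtration of $\bigl(\cT^y_{k+1}\bigr)$ then follows from the explicit description of killed $k$-tree evolutions as tuples of independent type-$0/1/2$ evolutions. First I would note that on $[0,D''_1)$ no label in $[k]$ causes degeneration: the only degeneration times of $\bigl(\cT^y_{k+1}\bigr)$ that occur before $D''_1$ are the ones at which label $k\!+\!1$ is dropped, i.e.\ the case~\ref{case:degen:type2}/\ref{case:degen:self}/\ref{case:degen:nephew} degenerations studied in Section~\ref{sec:const:intertwin}, and in each of those the swap-and-reduce map $\varrho$ followed by the resampling kernel $\Lambda_{k+1,[k]}$ only rearranges the compound containing label $k\!+\!1$ without altering $\pi_k(\cT^y_{k+1})$. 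Concretely: in case \ref{case:degen:self} and \ref{case:degen:nephew} the degeneration is visible and $\cT^{D_n}_k = \varrho(\cT^{D_n-}_{k+1})$, which equals $\pi_k(\cT^{D_n-}_{k+1})$ by the definition of $\phi_2$ and $\varrho$; and the resampling reinserts label $k\!+\!1$ into the projected tree, so $\pi_k$ of the resampled $(k\!+\!1)$-tree again equals that same $k$-tree. In case \ref{case:degen:type2} (invisible degeneration), label $k\!+\!1$ is dropped from a type-$2$ compound $(m_i^{D_n-},m_{k+1}^{D_n-},\alpha_{\{i,k+1\}}^{D_n-})$ and reinserted into block $i$; the effect on $\pi_k(\cT^y_{k+1})$ is at most a momentary restart of the type-$1$ compound $(x_i,\beta_{\parent{\{i\}}})$ at the same value $x_i + \|\beta_{\parent{\{i\}}}\|$ it would have had under a single continuous type-$1$ evolution, so continuity of the projected process through $D_n$ reduces to the concatenation property of Proposition~\ref{prop:012:concat}\ref{item:012concat:2+1} together with Remark~\ref{rmk:decomp_ker}. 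This is the crux of the argument, and the role of the swap-and-reduce mechanism is exactly to make $\pi_k$ continuous through these invisible degenerations — this is the ``role of this mechanism in preserving consistency'' advertised after Definition~\ref{def:nonresamp_1}.

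Next I would organize the bookkeeping by decomposing $\pi_k(\cT^y_{k+1})$ into its compounds. For each internal edge $E'$ of the projected tree shape $\widetilde\tau = \pi_k(\tau)$ that does \emph{not} involve the former position of label $k\!+\!1$, the corresponding compound of $\cT^y_{k+1}$ is carried over verbatim by $\pi_k$ and, by the construction of killed $(k\!+\!1)$-tree evolutions, evolves as an independent type-$0/1/2$ evolution; it is untouched at every degeneration time before $D''_1$ (those only affect the compound containing $k\!+\!1$). For the one compound of $\pi_k(\cT^y_{k+1})$ that ``absorbs'' the contribution of label $k\!+\!1$ — this is either the type-$1$ compound along $\parent{\{i\}}$ when $k\!+\!1$ was in a type-$2$ compound $\{i,k+1\}$ (Definition~\ref{def:proj}\ref{item:proj:add}/\ref{item:proj:merge}), or, when $k\!+\!1$ was in a type-$1$ compound, the type-$1$ compound whose partition is $\beta_F \concat (0,x_{k+1}) \concat \beta_E$ (Definition~\ref{def:proj}\ref{item:proj:merge}) — I would appeal to the concatenation Propositions~\ref{prop:012:concat}\ref{item:012concat:1+1} and \ref{item:012concat:2+1}: the projection of a type-$2$ (resp.\ concatenated type-$1$) compound together with its sub-compound evolves as a genuine type-$1$ evolution, and the strong Markov property stated in the last line of Proposition~\ref{prop:012:concat} handles the behavior at the (invisible, from the point of view of $\cT_k$) restart times. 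Resampling at an invisible degeneration reinitializes the $k\!+\!1$-bearing sub-compound in a pseudo-stationary way — this is precisely what Proposition~\ref{prop:inter:killed}, Case~2, already establishes — but since we are projecting that sub-compound \emph{away}, the only thing that matters for $\cT_k$ is that the ambient type-$1$ evolution continues without a break, which the concatenation property guarantees.

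Having identified that, on $[0,D''_1)$, each compound of $\cT^y_k$ independently evolves exactly as the appropriate type-$0/1/2$ evolution started from the corresponding compound of $\cT^0_k = \pi_k(\cT^0_{k+1})$, I would conclude that $\bigl(\cT^y_k \cf\{y < D''_1\}\bigr)$ has the law of a killed $k$-tree evolution in the sense of Definition~\ref{def:killed_ktree}; note $D''_1$ is exactly the degeneration time of that killed $k$-tree evolution, since a label in $[k]$ causing degeneration in $\cT^y_{k+1}$ is the same as a type-$1$ or type-$2$ compound of $\cT^y_k$ being absorbed. For the Markov property in the filtration $\mathcal{F}^{k+1}_y := \sigma(\cT^z_{k+1}, z\le y)$: the killed $(k\!+\!1)$-tree evolution is Borel right Markov (Theorem~\ref{thm:Markov}), its compounds are independent type-$0/1/2$ evolutions which are themselves Markov in their own filtrations (Proposition~\ref{prop:012:pred}), and the concatenated constituents of Proposition~\ref{prop:012:concat} retain the strong Markov property in the larger filtration generated by their parts; stringing these together shows $\cT^y_k \cf\{y<D''_1\}$ is a deterministic-up-to-its-own-randomness functional whose future given $\mathcal{F}^{k+1}_y$ depends only on $\cT^y_k$, which is the assertion. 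The main obstacle is the careful verification that the restart of the $k\!+\!1$-compound at invisible degeneration times genuinely leaves the ambient type-$1$ evolution of $\cT_k$ continuous and Markov — i.e.\ checking that the pseudo-stationary reinsertion kernel $\Ast\Lambda_k$ composes correctly with Proposition~\ref{prop:012:concat}\ref{item:012concat:2+1} and Remark~\ref{rmk:decomp_ker} so that no information about the projected path leaks out. Everything else is bookkeeping with the projection maps of Definition~\ref{def:proj}.
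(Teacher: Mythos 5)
Your plan goes in the opposite direction from the paper's proof, and the forward-projection route you sketch leaves a genuine gap at the heart of the argument.

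The paper proves this proposition by a \emph{coupling} construction: it builds a killed $k$-tree evolution $\widetilde{\cT}^y_k$ first, with independent type-$0/1/2$ compounds, and then, on an enlarged probability space, builds a resampling $(k\!+\!1)$-tree evolution on top of it by replacing the marked compound with an appropriate conditional decomposition --- a pseudo-stationary type-$2$ evolution conditioned to have the prescribed total-mass path when the mark is a leaf block, or the kernel $\kappa$ from Remark~\ref{rmk:decomp_ker} when the mark is an internal block --- and then extends recursively across the $k\!+\!1$-resampling times via Ionescu Tulcea. That construction makes the Markov property and independence of the $k$-tree compounds automatic, because the killed $k$-tree evolution is literally a Definition~\ref{def:killed_ktree} object by fiat, and then the entire burden of the proof is the identity $\varrho\big(\widetilde\cT^{\widetilde D_n-}_{k+1}\big)=\widetilde\cT^{\widetilde D_n}_k$, which they verify case by case.

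Your approach starts from $(\cT^y_{k+1})$ and tries to argue directly that $\pi_k(\cT^y_{k+1})$ is a killed $k$-tree evolution. The step you call ``the crux'' --- that the compound of $\cT^y_k$ which absorbs label $k\!+\!1$ glues across the $k\!+\!1$-resampling times to form a \emph{single} type-$\ell$ Markov evolution --- is exactly where the difficulties are, and your outline does not resolve them. Three concrete issues. First, when $k\!+\!1$ sits in a type-$2$ compound $\{i,k\!+\!1\}$, the projection $\pi_k$ replaces that compound by its \emph{total mass}, placed as a new top mass for the compound on $\parent{\{i\}}$ (Definition~\ref{def:proj}\ref{item:proj:add}); this is not an interval-partition concatenation of the kind Proposition~\ref{prop:012:concat} treats, so items \ref{item:012concat:1+1}/\ref{item:012concat:2+1} simply do not apply. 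One needs Lemma~\ref{lem:type2:symm} and Proposition~\ref{prop:012:mass} here, and even those only give the decomposition up to a hitting time, not across restarts. Second, when $k\!+\!1$ drops and resamples into a \emph{different} compound of the $k$-tree than the one it inhabited, the compound it leaves behind must be shown to continue as the \emph{same} type-$\ell$ evolution even though, as a functional of $(\cT^y_{k+1})$, its future is now a pure single-compound projection rather than a concatenation. The strong Markov property quoted at the end of Proposition~\ref{prop:012:concat} is stated for a \emph{fixed} concatenation structure; it does not directly cover the repeatedly re-decomposed structure you have here. Third, even granting the previous points, the mutual independence of the $k$-tree's compounds across the resampling times of $k\!+\!1$ is not obvious and needs an argument, because the identification between $(k\!+\!1)$-tree compounds and $k$-tree compounds shifts every time $k\!+\!1$ moves. (A smaller slip: on $[0,D''_1)$ it is not true that ``no label in $[k]$ causes degeneration'' --- in cases \ref{case:degen:type2} with $i$ the cause and \ref{case:degen:nephew}, a label in $[k]$ does cause the degeneration; it is only that no label in $[k]$ is \emph{dropped}.)

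In short, you have correctly identified the relevant ingredients (Propositions~\ref{prop:012:concat} and \ref{prop:012:mass}, Lemma~\ref{lem:type2:symm}, Remark~\ref{rmk:decomp_ker}) but the forward-projection orchestration you propose does not close the argument; the paper's coupling construction is precisely the device that makes all three of the issues above evaporate at once. I do not see how to complete your version without in effect rediscovering that coupling.
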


This next proof is where we finally appreciate the usefulness of the swap-and-reduce map of Section \ref{sec:non_resamp_def} in preserving projective consistency at degeneration times.

\begin{proof}
 This will be done by a coupling argument. In particular, we will define a killed $k$-tree evolution $\big(\widetilde \cT^y_k, y\in [0,\wt D)\big)$ with which we will couple a resampling $(k\!+\!1)$-tree evolution $\big(\wt\cT^y_{k+1}, y\in [0,\wt D\wedge D_{\infty})\big)$ in such a way that $\big(\pi_k\big( \wt\cT^y_{k+1}), y\in [0,\wt D\wedge D_{\infty})\big) = \big(\wt \cT^y_{k}, y\in [0,\wt D\wedge D_{\infty}) \big)$ almost surely. 
 
 Let $(T,\ell)\in\TMarkk$ be as in the previous section. We denote the coordinates of $T$ by $(\ft,(x_i,i\in[k]),(\beta_E,E\in\ft))$. Let $(\Omega_{(0)},\cF_{(0)},\bP_{(0)})$ denote a probability space on which we have defined an independent type-$d$ evolution corresponding to each type-$d$ compound in $T$, with initial state equal to that compound, for $d=0,1,2$. We denote the top mass evolution corresponding to each leaf $i$ by $\big(\wt m_i^y,y\ge0\big)$, and we denote the interval-partition-valued process associated with each internal edge $E$ by $\big(\wt\alpha_E^y,y\ge0\big)$. Let $\widetilde D$ denote the minimum of the degeneration times of these type-$d$ evolutions. As in Definition \ref{def:killed_ktree}, $\widetilde\cT^y_k := \big(\ft,\big(\widetilde m_j^y,j\in[k]\big), \big(\widetilde\alpha_E^y,E\in\ft\big)\big)$, $y\in [0,\widetilde D)$, is a killed $k$-tree evolution.
 
 %We construct the desired $(k\!+\!1)$-tree evolution by recursively extending this probability space. Before explaining this construction, we set out our notation. For $n\ge 1$, we will denote by $A_n$ the event that label $k\!+\!1$ resamples at least $n$ times before a lower label would resample, should we extend $\big(\wt\cT^y_{k+1}\big)$ to an unstopped resampling $(k\!+\!1)$-tree evolution. We will set $\wt D_n$ equal to this $n^\text{th}$ degeneration time on $A_n$, or equal to $\widetilde D$ on $A_n^c$.
 
 %We extend each successive probability space $(\Omega_n,\cF_n,\bP_n)$ to $(\Omega_{n+1},\cF_{n+1},\bP_{n+1})$ to include the following.
% \begin{enumerate}[label = (\arabic*),ref=(\arabic*)]
%  \item A random block $L_{n+1}$. On $A_n^c$ we set $L_{n+1}$ equal to a cemetary state $\partial$. Conditionally given $A_n$ and sub-$\sigma$-algebra of $\cF_{n+1}$ corresponding to $\cF_n$, we have $L$
%  conditionally distributed as a size-biased random block in $\cT^{\wt D_n}$ given $A_n$ and the sub-$\sigma$-algebra of $\cF_{n+1}$ corresponding to $\cF_n$, or $L_{n+1} = \partial$ on $A_n^c$;
%  \item $(U_{(n+1)}^y,y\ge0)$ conditionally distributed as a pseudo-stationary type-2 evolution conditioned to have total mass process $\big\|U_{n+1}^y\big\| = m_{L_{n+1}}^{\wt D_n+y}$, $y\ge0$, given $\cF_n$ and the event $A_{n,1} := A_n\cap \{L_{n+1}\in [k]\}$;
%  \item a pair of 
% \end{enumerate}
 
 Case 1: the initial marked block $\ell$ is a leaf block in $T$. Then we extend our probability space to $(\Omega_{(1)},\cF_{(1)},\bP_{(1)})$ to include a process $\big(U_{(1)}^y,y\ge0\big)$ so that, given the sub-$\sigma$-algebra of $\cF_{(1)}$ corresponding to $\cF_{(0)}$, it is distributed as a pseudo-stationary type-2 evolution conditioned to have total mass evolution $\big\|U_{(1)}^y\big\| = m_\ell^y$ for $0\le y \le \inf\big\{z\ge 0\colon m_\ell^z = 0\big\}$. Such a conditional distribution exists as $(\cI,d_{\cI})$ is Lusin \cite[Theorem 2.7]{Paper1}, and type-2 evolutions are c\`adl\`ag. As noted in Lemma \ref{lem:type2:symm} and Proposition \ref{prop:012:concat}\ref{item:012concat:BESQ+0}, this total mass process is a \BESQ[-1], so after integrating out this conditioning, $\big(U_{(1)}^y,y\ge0\big)$ is a type-2 evolution by Proposition \ref{prop:012:mass}. We define $D_{(1)}$ to be the degeneration time of $\big(U_{(1)}^y,y\ge0\big)$ and set
 \begin{equation}\label{eq:Dynkin:D1}
  \widetilde D_1 := \widetilde D\wedge D_{(1)}.
 \end{equation}
 Recall the label insertion operator $\oplus$ defined in Section \ref{sec:resamp_def}. We define 
 \begin{equation}\label{eq:Dynkin:insert_ext}
  \wt\cT^y_{k+1} := \wt\cT^y_k\oplus \left(\ell,U_{(1)}^y/\left\|U_{(1)}^y\right\|\right), \quad y\in \left[0,\widetilde D_1\right).
 \end{equation}
 Then this is a killed $(k\!+\!1)$-tree evolution with initial law $\Ast\Lambda_k((T,\ell),\cdot\,)$, in which the type-2 compound containing label $k\!+\!1$ equals $\big(U_{(1)}^y,y\ge0\big)$, up to relabeling.
 
 Case 2: $\ell$ is an internal block $(E,a,b)$ in a type-$d$ edge in $T$, $d = 0,1,2$. We consider the case $d=2$, so $E = \{i,j\}$ for some $i,j\in [k]$; the other cases can be handled similarly. As noted in Remark \ref{rmk:decomp_ker}, there exists a stochastic kernel $\kappa$ that takes the path of a type-$2$ evolution and a block in the interval partition component at time zero and specifies the conditional law of a type-2 evolution and a type-1 evolution, stopped at the lesser of their two degeneration times, conditioned to concatenate to form the specified path split around the specified block. Moreover, after mixing over the law of the type-$2$ evolution the constituent evolutions are independent.  In this case, we are interested in such a pair with conditional law
 \begin{equation}
  \left( \Gamma_{(1)}^y,\; \left(m_{(1)}^y,\alpha_{(1)}^y\right),\; y\in [0,D_{(1)})\right) \sim
  	\kappa\left( \left( \left( \left(\wt m_i^y,\wt m_j^y,\wt\alpha_E^y \right),y\ge0\right),\,(a,b)\right),\,\cdot\,\right).
 \end{equation}
 To be clear, concatenating these two processes in the sense of \eqref{eq:012concat:2+1} would yield $\big( \big(\wt m_i^y,\wt m_j^y,\wt\alpha_E^y \big),y\in [0,D_{(1)})\big)$ prior to the first time $D_{(1)}$ that one of $\big(\Gamma_{(1)}^y\big)$ or $\big(\big(m_{(1)}^y,\alpha_{(1)}^y\big)\big)$ degenerates, and in this concatenation, the top mass $m_{(1)}^0$ corresponds to the block $(a,b)\in \wt\alpha_E^0$. We extend our probability space to $(\Omega_{(1)},\cF_{(1)},\bP_{(1)})$ to include a pair with this conditional law given the sub-$\sigma$-algebra of $\cF_{(1)}$ corresponding to $\cF_{(0)}$.
 
 We define $a_{(1)}^y$ to equal the mass of the interval partition component of $\Gamma_{(1)}^y$ and we set $b_{(1)}^y := a_{(1)}^y + m_{(1)}^y$ for $y\in [0,D_{(1)})$. We define $\wt D_1$ as in \eqref{eq:Dynkin:D1}. Then we set
 \begin{equation}\label{eq:Dynkin:insert_int}
  \wt\cT^y_{k+1} := \wt\cT^y_k\oplus \left(\left(E,a_{(1)}^y,b_{(1)}^y\right),\,k+1,\,U \right),\quad y\in \big[0,\widetilde D_1\big),
 \end{equation}
 where $U$ is an arbitrary 2-tree, say $(1/2,1/2,\emptyset)$, which, we recall from Section \ref{sec:resamp_def}, is redundant in the label insertion operator when inserting into an internal block. 
 \medskip
 
 In each case, the constructed process $\big(\widetilde\cT^y_{k+1},y\in [0,\widetilde D_1)\big)$ is a killed $(k\!+\!1)$-tree evolution with initial distribution $\Ast\Lambda_k((T,\ell),\cdot\,)$. Recall the three cases, \ref{case:degen:type2}, \ref{case:degen:self}, and \ref{case:degen:nephew}, in which label $k\!+\!1$ may resample in a resampling evolution. Case \ref{case:degen:type2} corresponds to Case 1 above, and the event that in that case, $D_{(1)} < \widetilde D$. Cases \ref{case:degen:self} and \ref{case:degen:nephew} correspond to Case 2 above, and the events that in that case, $\big(\big(m_{(1)}^y,\beta_{(1)}^y\big)\big)$ or $\big(\Gamma_{(1)}^y\big)$, respectively, are the first to degenerate among $\big(\big(m_{(1)}^y,\beta_{(1)}^y\big)\big)$, $\big(\Gamma_{(1)}^y\big)$, and $\big(\widetilde\cT^y_k\big)$. In other words, we have the equality of events
\begin{equation}
  A_1 := \{D_{(1)} < \widetilde D\} = \left\{J\left(\wt\cT_{k+1}^{\widetilde D_1-}\right) = k\!+\!1\right\}.
 \end{equation}
 
 We now confirm that on $A_1$,
 \begin{equation}\label{eq:swapred:k+1}
  \varrho\left(\wt\cT^{\widetilde D_1-}_{k+1}\right) = \wt\cT^{\wt D_1}_k.
 \end{equation}
 In Case 1 above, this is clear: we had split the block $\ell$ into a type-2 compound, and at time $\widetilde D_1$, regardless of whether label $k\!+\!1$ or label $\ell$ was the cause of degeneration in the compound, after applying the swap-and-reduce map $\varrho$, label $k\!+\!1$ get dropped and the single remaining mass bears label $\ell$. In Case 2, on the event that label $k\!+\!1$ was the cause of degeneration, then this is again clear: $\big(m_{(1)}^{\wt D_1-},\beta_{(1)}^{\wt D_1-}\big) = (0,\emptyset)$, so all that remains on the compound containing edge $E$ is $\Gamma_{(1)}^{D_1-}$.
 
 It remains to confirm \eqref{eq:swapred:k+1} in Case 2, on the event that one of the nephews of label $k\!+\!1$, corresponding to one of the labels in $\big(\Gamma_{(1)}^y\big)$, is the cause of degeneration. We will address the case where the edge $E$ containing the marked block is a type-2 edge, $E = \{u,v\}$, and say label $u$ causes degeneration; the type-1 case is similar. Then, in $\wt\cT_{k+1}^{\wt D_1-}$, block $u$ and edge $\{u,v\}$ both have mass zero; but in $\varrho\big(\cT_{k+1}^{\wt D_1-}\big)$, label $u$ displaces label $k\!+\!1$, and the edge that was formerly $\longparent{\{k\!+\!1\}} = \{u,v,k\!+\!1\}$ gets relabeled as $\{u,v\}$, so that the newly labeled block $u$ has mass $m_{(1)}^{\wt D_1}$ while edge $\{u,v\}$ bears the partition $\alpha_{(1)}^{\wt D_1}$. This is consistent with the second line of the formula in Proposition \ref{prop:012:concat}\ref{item:012concat:2+1}, so that $\big(m_{(1)}^{\wt D_1},\alpha_{(1)}^{\wt D_1}\big) = \big(\wt m_u^{\wt D_1},\wt\alpha_E^{\wt D_1}\big)$. Thus, again, $\varrho\big(\wt\cT^{\widetilde D_1-}_{k+1}\big) = \wt\cT^{\wt D_1}_k$.
 
 We now extend this construction recursively. Suppose that we have defined $\big(\wt\cT^y_{k+1},y\in [0,\wt D_n)\big)$ on some extension $(\Omega_{(n)},\cF_{(n)},\bP_{(n)})$ of $(\Omega_{(0)},\cF_{(0)},\bP_{(0)})$ so that this is distributed as a resampling $(k\!+\!1)$-tree evolution stopped either just before the first time that a label in $[k]$ resamples or the $n^{\text{th}}$ time that label $k\!+\!1$ resamples, whichever comes first. Suppose also that $\wt\cT^y_k = \pi_k\big(\wt\cT^y_{k+1}\big)$ for $y\in [0,\wt D_n)$ and that, on the event $A_n$ that the $n^{\text{th}}$ resampling time for label $k\!+\!1$ precedes the first time that a lower label would resample, also 
$\varrho\big(\wt\cT^{\widetilde D_n-}_{k+1}\big) = \wt\cT^{\wt D_n}_k$.
%\eqref{eq:swapred:k+1} holds also at time $\wt D_n$.
 
 We further extend our probability space to $(\Omega_{(n+1)},\cF_{(n+1)},\bP_{(n+1)})$ to include random objects with the following conditional distributions given the sub-$\sigma$-algebra of $\cF_{(n+1)}$ that corresponds to $\cF_{(n)}$.
 \begin{itemize}
  \item A random block $L_{n}$ conditionally distributed as a size-biased pick from $\block\big(\wt\cT_k^{\wt D_n}\big)$ given $A_n$, and that equals a cemetery state $\partial$ on $A_n^c$.
  \item A process $\big(U_{(n+1)}^y,y\in [0,D_{(n+1)}\big)$ that, if we additionally condition on $A_{n,1} := \{L_n\in [k]\}$, is conditionally distributed as a pseudo-stationary type-2 evolution stopped at degeneration, conditioned to have total mass process $\big\|U_{(n+1)}^y\| = m_{L_n}^{\wt D_n + y}$, $y\in [0,D_{(n+1)})$. On $A_{n,1}^c$ we instead define this to be the constant process at $\partial$.
  \item A pair of processes that, if we additionally condition on $L_n = (E,a,b)\in \block\big(\wt\cT^{\wt D_n}_k\big)\setminus [k]$, have conditional law 
   \begin{equation}
    \left( \Gamma_{(n+1)}^y,\; \left(m_{(n+1)}^y,\alpha_{(n+1)}^y\right),\; y\in [0,D_{(n+1)})\right) \sim
    	\kappa\left( \left( \left( \wt \Gamma_E^{\wt D_n + y},y\ge0\right),\,(a,b)\right),\,\cdot\,\right),
   \end{equation}
   where $\big(\wt\Gamma_E^y,y\ge0\big)$ denotes the evolution on the type-0/1/2 compound in $\big(\wt\cT^y_k\big)$ containing edge $E$. We define these to be constant $\partial$ processes on the event $A_n^c\cup A_{n,1}$.
 \end{itemize}
 
 We define $\wt D_{n+1} := \wt D\wedge (\wt D_n + D_{(n+1)})$. On $A_{n,1}$, we define $\big(\wt\cT^y_{k+1},y\in [\wt D_n,\wt D_{n+1})\big)$ as in Case 1 above, with the obvious modifications. On $A_{n,2} := A_n\setminus A_{n,1}$, we define this process as in Case 2 with the obvious modifications. On $A_n^c$, we get $\wt D_{n+1} = \wt D_n = \wt D$, and we do not define $\wt\cT^y_{k+1}$ for $y\ge \wt D$.
 
 As required for our induction, $\big(\cT^y_{k+1},y\in [0,\wt D_{n+1})\big)$ is distributed as a resampling $(k\!+\!1)$-tree evolution stopped at either the first time a label in $[k]$ would resample or the $n\!+\!1^{\text{st}}$ time that label $k\!+\!1$ would resample, and $\wt\cT^y_k = \pi_k\big(\wt\cT^y_{k+1}\big)$ for $y\in [0,\wt D_{n+1})$. By the same arguments as before, \eqref{eq:swapred:k+1} holds at $\wt D_{n+1}$ on the event $A_{n+1}$ that label $k\!+\!1$ resamples an $n\!+\!1^{\text{st}}$ time before the first time that a lower label would resample. 
 
 By the Ionescu Tulcea theorem \cite[Theorem 6.17]{Kallenberg}, there is a probability space $(\Omega_\infty,\cF_{\infty},\bP_{\infty})$ on which we can define a resampling $(k\!+\!1)$-tree evolution $\big(\wt\cT^y_{k+1}\big)$ run until either the times at which label $k\!+\!1$ resamples have an accumulation point $D_{(\infty)}$ or a lower label would resample for the first time, with $\pi_k\big(\wt\cT^y_{k+1}\big) = \wt\cT^y_k$ in this time interval $[0,\wt D_\infty)$ and  $\wt\cT^0_{k+1}\sim \Ast\Lambda_k((T,\ell),\cdot\,)$. The claim that $\big(\wt\cT^y_{k},y\in [0,\wt D)\big)$ is Markovian in the filtration generated by itself and $\big(\wt\cT^y_{k+1},y\in [0,\wt D_\infty)\big)$ follows from our construction and the assertions concerning filtrations at the ends of Propositions \ref{prop:012:concat} and \ref{prop:012:mass}.%180917
\end{proof}

We now prove a pair of results, one intertwining-like and the other Dynkin-like, regarding the behavior of $\cT_{k+1}^y$, $\Ast\cT_k^y$, and $\cT_k^y$ at the degeneration times $D''_n$. Note that at such times, $\cT_k^{D''_n-} = \pi_{-(k+1)}\big(\cT_{k+1}^{D''_n-}\big)$ is degenerate, with
$$\left(I\!\left(\cT_k^{D''_n-}\right),J\!\left(\cT_k^{D''_n-}\right)\right) = \left(I\!\left(\cT_{k+1}^{D''_n-}\right),J\!\left(\cT_{k+1}^{D''_n-}\right)\right) 
 \quad\text{and}\quad 
 \varrho\!\left(\cT_k^{D''_n-}\right) = \pi_{-(k+1)}\circ\varrho\!\left(\cT_{k+1}^{D''_n-}\right)\!.$$

\begin{lemma}\label{lem:preswap}
 Fix $n\ge1$. Given $\big(\Ast\cT_k^y,y\in [0,D''_n)\big)$ and the event $\{D_n''<D_\infty\}$, with $\Ast\cT_k^{D''_n-} = (U,L)$ and $J\big(\cT_{k}^{D''_n-}\big) = j\in [k]$,
 \begin{enumerate}
  \item $\cT_{k+1}^{D''_n-}$ has conditional law $\Ast\Lambda_k((U,L),\cdot\,)$ and\label{item:preswap:inter}
  \item $\cT_{k}^{D''_n}$ has conditional law $\Lambda_{j,[k]\setminus\{j\}}(\varrho(U),\cdot\,)$.\label{item:preswap:Dynkin}
 \end{enumerate}
\end{lemma}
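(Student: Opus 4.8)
The plan is to deduce Lemma~\ref{lem:preswap} from the intertwining of Proposition~\ref{prop:intertwined}, applied at the random time $D''_n$, together with two deterministic facts: the relation $\pi_{-(k+1)}\circ\varrho=\varrho\circ\pi_{-(k+1)}$ on degenerate trees whose degenerate label is not $k\!+\!1$ (a special case of which is recorded just above the lemma), and a commutation of stochastic kernels. Throughout I would work on $\{D''_n<D_\infty\}$, on which, by the definition of the subsequence $(D''_n)$, the dropped label $J\big(\cT_{k+1}^{D''_n-}\big)$ lies in $[k]$ and equals $J\big(\cT_k^{D''_n-}\big)=J(U)=j$ by the identities above the lemma; so conditioning on $\big\{J\big(\cT_k^{D''_n-}\big)=j\big\}$ merely names this common value.

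For assertion~\ref{item:preswap:inter} the point is to realise $D''_n$ as a stopping time for the filtration of the marked $k$-tree $\big(\Ast\cT_k^y\big)$, announced from the left by stopping times of that filtration, and then to transport criterion~\ref{item:intertwining_v2} of Proposition~\ref{prop:intertwined} across the limit. First I would observe that $\cT_k^y=\phi_1\big(\Ast\cT_k^y\big)$ is continuous at every degeneration time of $\big(\cT_{k+1}^y\big)$ at which the dropped label is $k\!+\!1$ (there $\varrho$ projects away $k\!+\!1$ and $\pi_{-(k+1)}$ undoes the subsequent reinsertion of $k\!+\!1$), and that at the times $D''_n$ a label of $[k]$ is dropped and $\cT_k$ jumps; hence $\cT_k$ is continuous off $\{D''_n\colon n\ge1\}$, so each coordinate $y\mapsto\widetilde m_i^y+\big\|\widetilde\alpha^y_{\parent{\{i\}}}\big\|$ is a path-continuous squared Bessel diffusion on $[D''_{n-1},D''_n)$ (Propositions~\ref{prop:012:pred}, \ref{prop:012:mass}, Lemma~\ref{lem:type2:symm}) and $D''_n$ is the $n$-th zero of $y\mapsto\min_{i\in[k]}\big(\widetilde m_i^y+\big\|\widetilde\alpha^y_{\parent{\{i\}}}\big\|\big)$. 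As in the proof of Theorem~\ref{thm:Markov}, this makes $D''_n$ an increasing limit, eventually strict, of the $\big(\Ast\cT_k^y\big)$-stopping times $\sigma_\ell:=\inf\big\{y>D''_{n-1}\colon\min_{i\in[k]}\big(\widetilde m_i^y+\big\|\widetilde\alpha^y_{\parent{\{i\}}}\big\|\big)<1/\ell\big\}\wedge D''_n$ (with $D''_0:=0$; $D''_{n-1}$ is an $\big(\Ast\cT_k^y\big)$-stopping time by induction). By Proposition~\ref{prop:intertwined} and \cite[Theorem~2]{RogersPitman}, for each $\ell$ the regular conditional distribution of $\cT_{k+1}^{\sigma_\ell}$ given $\sigma\big(\Ast\cT_k^y,y\le\sigma_\ell\big)$ is $\Ast\Lambda_k\big(\Ast\cT_k^{\sigma_\ell},\cdot\,\big)$. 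Letting $\ell\to\infty$, one has $\cT_{k+1}^{\sigma_\ell}\to\cT_{k+1}^{D''_n-}$ and $\Ast\cT_k^{\sigma_\ell}\to\Ast\cT_k^{D''_n-}=(U,L)$ (left limits being left-continuous), $\Ast\Lambda_k\big(\Ast\cT_k^{\sigma_\ell},\cdot\,\big)\to\Ast\Lambda_k\big((U,L),\cdot\,\big)$ weakly by Lemma~\ref{lem:Lstar_cont}, and $\bigvee_\ell\sigma\big(\Ast\cT_k^y,y\le\sigma_\ell\big)=\sigma\big(\Ast\cT_k^y,y\in[0,D''_n)\big)$; Lévy's upward theorem applied to bounded continuous functions of $\cT_{k+1}^{D''_n-}$ then yields assertion~\ref{item:preswap:inter}.

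For assertion~\ref{item:preswap:Dynkin}, I would combine assertion~\ref{item:preswap:inter} with the strong Markov property of the resampling $(k\!+\!1)$-tree evolution at its degeneration time $D''_n$ and with the definition of that evolution: conditionally on $\big(\Ast\cT_k^y,y\in[0,D''_n)\big)$ and $\{D''_n<D_\infty\}$, the tree $\cT_{k+1}^{D''_n}$ has r.c.d.\ obtained by sampling $\cT_{k+1}^{D''_n-}\sim\Ast\Lambda_k\big((U,L),\cdot\,\big)$ and then applying $\Lambda_{j,[k+1]\setminus\{j\}}\big(\varrho(\,\cdot\,),\cdot\,\big)$. Pushing forward by $\pi_{-(k+1)}$ (which produces $\cT_k^{D''_n}$) and using $\pi_k\big(T\oplus(\ell,k\!+\!1,U')\big)=T$, the relation $\pi_{-(k+1)}\circ\varrho=\varrho\circ\pi_{-(k+1)}$ above the lemma — so that $\pi_{-(k+1)}\big(\varrho\big(\cT_{k+1}^{D''_n-}\big)\big)=\varrho(U)$ for every realisation — and the stochastic-kernel commutation
\[
 \pi_{-(k+1)}\circ\Lambda_{j,B}=\Lambda_{j,B\setminus\{k+1\}}\circ\pi_{-(k+1)}\qquad\text{for finite }B\subset\bN,\ k\!+\!1\in B,\ j\notin B,
\]
the r.c.d.\ of $\cT_k^{D''_n}$ collapses to $\Lambda_{j,[k]\setminus\{j\}}\big(\varrho(U),\cdot\,\big)$, which is assertion~\ref{item:preswap:Dynkin}. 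The displayed identity expresses the consistency of the mass-biased reinsertion rule and of the Brownian reduced $2$-tree $Q$ with the projection $\pi_{-(k+1)}$; I would obtain it from the development in \cite[Section~3.3]{PW13} underlying~\eqref{eq:B_ktree_resamp}, or verify it directly by distinguishing whether $\Lambda_{j,B}$ reinserts $j$ into the leaf block $k\!+\!1$, into a block on the edge $\parent{\{k+1\}}$, or into a block disjoint from the part of the tree altered by $\pi_{-(k+1)}$.

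I expect the main obstacle to be the left-limit passage in assertion~\ref{item:preswap:inter}: Proposition~\ref{prop:intertwined} controls the coupled pair only at stopping times, whereas $D''_n$ is a jump time and the statement concerns the left limit $\cT_{k+1}^{D''_n-}$. Bridging this is exactly the role of the announcing stopping times $\sigma_\ell$, and it leans on the weak continuity of $\Ast\Lambda_k$ (Lemma~\ref{lem:Lstar_cont}) together with the ``announced from the left'' structure of degeneration times already used in Theorem~\ref{thm:Markov}. The kernel commutation needed for assertion~\ref{item:preswap:Dynkin} is, by contrast, routine.
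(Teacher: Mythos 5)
Your treatment of assertion \ref{item:preswap:inter} is essentially the paper's own argument: announce $D''_n$ from the left by stopping times of the marked-$k$-tree filtration, invoke the stopping-time form of the intertwining from Proposition \ref{prop:intertwined}, and pass to the left limit using martingale convergence together with the weak continuity of $\Ast\Lambda_k$ from Lemma \ref{lem:Lstar_cont}. That part is fine.

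Assertion \ref{item:preswap:Dynkin} has a genuine gap: the displayed kernel commutation $\pi_{-(k+1)}\circ\Lambda_{j,B}=\Lambda_{j,B\setminus\{k+1\}}\circ\pi_{-(k+1)}$ is false as a pointwise identity of kernels, and establishing the correct (averaged) version is precisely the non-routine content of the lemma. Concretely, take $k=2$, $j=1$, $B=\{2,3\}$ and $V\in\TInt_{\{2,3\}}$ with $x_2=x_3=1$ and $\beta_{\{2,3\}}=\emptyset$. On the left side, with probability $\tfrac12$ the kernel $\Lambda_{1,\{2,3\}}$ inserts label $1$ into leaf block $3$, and $\pi_{-3}$ then collapses the resulting type-2 compound $\{1,3\}$ back into a single leaf of mass $1$, returning the deterministic tree $(1,1,\emptyset)$; so the left side has an atom of mass at least $\tfrac12$ at $(1,1,\emptyset)$. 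On the right side, $\pi_{-3}(V)$ is a single leaf of mass $2$ and $\Lambda_{1,\{2\}}$ returns $2U$ with $U\sim Q$, which is diffuse. The identity only becomes true after integrating the input tree against $\Ast\Lambda_k((U,L),\cdot\,)$ with $L=i\in[k]$: one must use assertion \ref{item:preswap:inter} to know that the type-2 compound containing $k\!+\!1$ is \emph{conditionally a Brownian reduced $2$-tree}, and then the consistency \eqref{eq:B_ktree_resamp} and the label exchangeability of Proposition \ref{prop:B_ktree} to see that inserting $j$ into that compound and projecting away $k\!+\!1$ reproduces the Brownian $2$-tree split demanded by $\Lambda_{j,[k]\setminus\{j\}}$. (Your proposed ``direct verification'' also omits the problematic sub-case in which $j$ is reinserted into the sibling leaf block of $k\!+\!1$.) This averaged identity, localized to the event that $j$ lands in the compound of $k\!+\!1$, is exactly the event-$H$ argument in the paper's proof, so the step you call routine is in fact where assertion \ref{item:preswap:inter} and the Brownian reduced $3$-tree structure must be brought to bear.
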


These assertions are illustrated in Figure \ref{fig:preswap}.

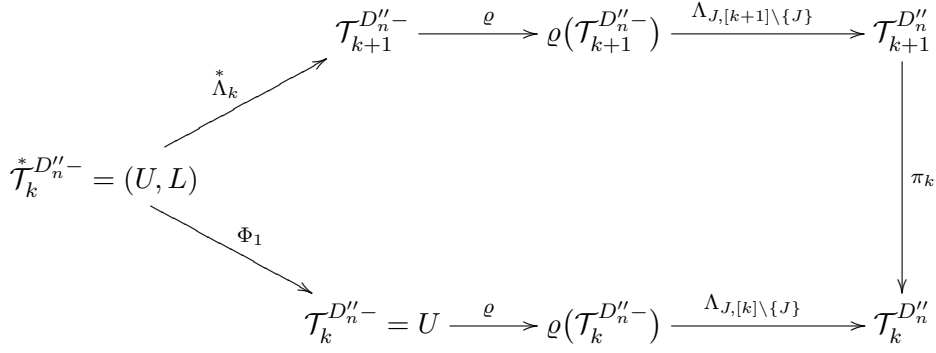
\begin{figure}[t]
 \centerline{
 \xymatrix@=3em{
  & \cT^{D''_n-}_{k+1} \ar[r]^{\varrho} &  \varrho\big(\cT^{D''_n-}_{k+1}\big) \ar[rr]^{\Lambda_{J,[k+1]\setminus\{J\}}} && \cT^{D''_n}_{k+1} \ar[dd]^{\pi_k}\\%\ar@/^/[d]^{\phi_2} \\
  \Ast\cT^{D''_n-}_k = (U,L) \ar[ur]^{\Ast\Lambda_k}\ar[dr]^{\Phi_1} &&& \\%\Ast\cT^{D''_n}_k\ar@/^/[u]^{\Ast\Lambda_k}\ar[d]^{\phi_1}\\
  & \cT^{D''_n-}_k = U  \ar[r]^{\varrho} & \varrho\big(\cT^{D''_n-}_{k}\big) \ar[rr]^{\Lambda_{J,[k]\setminus\{J\}}} && \cT^{D''_n}_k
 }
 } 
 \caption{Lemma \ref{lem:preswap}\ref{item:preswap:inter} asserts that the conditional law of $\cT^{D''_n-}_{k+1}$ given $\Ast{\cT}^{D''_n-}_k$ is as in the upper left arrow in this diagram. By construction, the conditional law of $\cT^{D''_n}_k$ given $\cT^{D''_n-}_{k+1}$ is as in the upper path in this diagram; Lemma \ref{lem:preswap}\ref{item:preswap:Dynkin} claims that this diagram commutes.\label{fig:preswap}}
\end{figure}

\begin{proof}
 \ref{item:preswap:inter} Observe that $D_n''$ is previsible for $\Ast\cT^y_{k}$ because, if we let $C_a''$ denote the first time after $D_{n-1}''$ that for some label $i$ other than $k\!+\!1$ or its sibling or nephews, $m_i^y+\|\beta_{\parent{\{i\}}}^y\|$ is less than $1/a$, for $a\ge 1$, then 
 $C_a''<D_n''$ and $\lim_{a\to\infty}C_a'' = D_n''$.  From Proposition \ref{prop:inter:killed},  
 \[ \bP\left(\cT^{C_a''}_{k+1} \in \,\cdot\, \middle|\, \big(\Ast\cT_k^y,y\in [0,C''_a]\big)\right) = \Ast\Lambda_k(\Ast\cT_k^{C''_a},\cdot\,). \]
 Since $\cT^{C_a''}_{k+1} \rightarrow \cT^{D_n''-}_{k+1}$ almost surely, and $\sigma \big(\Ast\cT_k^y,y\in [0,C''_a]\big) \uparrow \sigma \big(\Ast\cT_k^y,y\in [0,D''_n)\big)$ by \cite[Lemma VI.17.9]{RogersWilliams2}, it follows from \cite[Theorem 5.5.9]{Durrett} and Lemma \ref{lem:Lstar_cont} that
 \[\bE \left[ f\left(\cT^{D_n''-}_{k+1}\right)\, \middle|\, \big(\Ast\cT_k^y,y\in [0,D''_n-)\big)\right] = \int f(T)  \Ast\Lambda_k(\Ast\cT_k^{D''_n-},dT\,)\]
 for every bounded continuous function $f$, as desired.
 
 \ref{item:preswap:Dynkin}  To compute $\bE(F(\Ast\cT_k^y,y\in[0,D_n''))G(\cT_k^{D_n''}))$, we apply \ref{item:preswap:inter} to write this expectation as
  $$\bE\left(F\!\left(\Ast\cT_k^y,y\!\in\![0,D_n'')\right)\!\int_{W_1\in\bT_{[k+1]}}\int_{W_2\in\bT_{[k+1]}}\!\!G(\pi_k(W_2))\Lambda_{j,[k+1]\setminus\{j\}}(\varrho(W_1),dW_2)\Ast\Lambda_k(\Ast\cT_k^{D_n''-},dW_1)\!\right)\!,$$
for functionals $F(\Ast\cT_k^y,y\in[0,D_n''))$ that vanish outside $\{J(\cT_k^{D_n''-})=j\}$, for some $j\in[k]$. Thus,   
it suffices to show 
 $$\int_{\tdTInt_{k+1}}\Ast\Lambda_k((U,L),dT')\int_{\TInt_{k+1}}\Lambda_{j,[k+1]\setminus\{j\}}(\varrho(T'),dT'')f\circ\pi_k(T'') = \int_{\tdTInt_{k}}\Lambda_{j,[k]\setminus\{j\}}(\varrho(U),dT''')G(T''').$$
 
 This is trivial in the case that $L$ marks an internal block in $U$: in that case, $\cT_{k+1}^{D''_n-}$ is a deterministic function of $\Ast\cT_k^{D''_n-} = (U,L)$, as in Figure \ref{fig:mark_k_tree}(A), and there is a natural weight- and tree-structure-preserving bijection between the blocks of the former and those of the latter, allowing us to couple $\Lambda_{j,[k+1]\setminus\{j\}}\big(\varrho\big(\cT^{D''_n-}_{k+1}\big),\cdot\,\big)$ with $\Lambda_{j,[k]\setminus\{j\}}\big(\varrho\big(\Ast\cT^{D''_n-}_{k}\big),\cdot\,\big)$.
 
 Henceforth, we assume that $L = i\in [k]$ with block mass $x_i$ in $U$. Let $H$ denote the event that, after resampling, leaf $j$ is the sibling, uncle, or nephew of leaf $k\!+\!1$ in $\cT_{k+1}^{D''_n}$. Equivalently, $H$ is the event that in the marked $k$-tree, label $j$ resamples into the marked block $i$ of $\varrho\big(\cT_k^{D''_n-}\big)$ so that after resampling, the marking sits somewhere in a type-2 compound in $\Ast\cT_k^{D''_n}$ containing label $j$. Again, the assertion is trivial on the event $H^c$, as then there is a weight- and tree-structure-preserving bijection between the \emph{remaining} blocks of the trees, i.e.\ the unmarked blocks in $\cT_k^{D''_n-}$ and the blocks outside of the type-2 compound containing $k\!+\!1$ in $\cT_{k+1}^{D''_n-}$. The only remaining event is when $L = i\in [k]$ and $H$ holds.
 
 Given $\Ast\cT_k^{D''_n-} = (U,L)$, the event $H$ has conditional probability $x_i / \|U\|$; in particular, it is conditionally independent of the normalized ``internal structure'' of the type-2 compound containing label $k\!+\!1$ in $\cT_{k+1}^{D''_n-}$,
 $$x_i^{-1}\left(m_i^{D''_n-},m_{k+1}^{D''_n-},\alpha_{\{i,k+1\}}^{D''_n-}\right) \quad \text{where }m_i^{D''_n-} + m_{k+1}^{D''_n-} + \left\|\alpha_{\{i,k+1\}}^{D''_n-}\right\| = x_i.$$
 By assertion \ref{item:preswap:inter}, given $\Ast\cT^{D''_n-}_{k} = (U,L)$ and $L=i$, this internal structure is conditionally a Brownian reduced 2-tree. By \eqref{eq:B_ktree_resamp} and the exchangeability of labels in Brownian reduced $k$-trees noted in Proposition \ref{prop:B_ktree}, this means that after inserting label $j$, blocks $i$, $j$, and $k\!+\!1$, along with the partitions marking their parent edges, comprise an independently scaled Brownian reduced 3-tree. Thus, the $\pi_{-(k+1)}$-projection of this 3-tree is an independently scaled Brownian reduced 2-tree, as required for $\Lambda_{j,[k]\setminus\{j\}}$ on the event $H$.
 %the 2-tree projection of this tree that projects away label $k\!+\!1$ 
\end{proof}

\begin{proof}[Proof of Proposition \ref{prop:consistency_0}]
 By Proposition \ref{prop:Dynkin:killed} and Lemma \ref{lem:preswap}\ref{item:preswap:Dynkin}, $\cT^{y}_k = \pi_k\big(\cT^y_{k+1}\big)$ evolves as a resampling $k$-tree evolution up to time $D''_1$. This holds up to time $D_\infty$ by induction and the strong Markov property applied at the degeneration times $D''_n$, $n\ge1$. %The full assertion of Proposition \ref{prop:consistency_0}, that we can project away multiple labels and, given a suitable initial distribution, still arrive back at a resampling $j$-tree evolution, follows by another induction argument.
\end{proof}

In fact, we have shown that the map $\phi_1$ on $\big(\Ast\cT^y_k,y\ge0\big)$ satisfies Dynkin's criterion.

\section{Accumulation of degeneration times as mass hits zero}\label{sec:non_acc}

We now possess all major ingredients needed to prove Proposition \ref{prop:non_accumulation}, that $D_\infty := \sup_nD_n$ equals $\inf\{y\ge0\colon \|\cT^{y-}\|=0\}$ for a resampling $k$-tree evolution. This proposition immediately completes the proofs of Theorems \ref{thm:total_mass} and \ref{thm:consistency}\ref{item:cnst:resamp}, that total mass of a resampling $k$-tree evolves as a \BESQ[-1] and the projective consistency of resampling $k$-tree evolutions, from the partial results in Propositions \ref{prop:total_mass_0} and \ref{prop:consistency_0}, respectively.

We require the following lemma.

\begin{lemma}\label{lem:degen_diff}
 Fix $k\ge 3$ and $\epsilon>0$. Let $T\in\TInt_{k-1}$ with $\|T\|>\epsilon$ and let $(\cT^y,y\ge0)$ be a resampling $k$-tree evolution with $\cT^0\sim\Lambda_{k,[k-1]}(T,\cdot\,)$. Let $(D^*_n,n\ge1)$ denote the subsequence of degeneration times at which label $k$ is dropped and resamples. Assume that with probability one we get $D_\infty > D^*_2$. Then there is some $\delta = \delta(k,\epsilon)>0$ that does not depend on $T$ such that $\bP(D^*_2>\delta)>\delta$.
\end{lemma}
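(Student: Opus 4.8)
The plan is to show that with probability bounded away from zero (uniformly in the initial tree $T$ with $\|T\|>\epsilon$), the resampling $k$-tree evolution $(\cT^y)$ runs for at least time $\delta$ before label $k$ has been dropped and resampled twice. Since $\cT^0\sim\Lambda_{k,[k-1]}(T,\cdot\,)$, label $k$ sits in a freshly inserted block: either a type-2 compound (a rescaled Brownian reduced 2-tree $x_iU$, $U\sim Q$, where $x_i$ was the chosen leaf mass of $T$), or inside an interval partition block of $T$. The first time $D^*_1$ that label $k$ is dropped is the degeneration time of the compound that currently contains $k$; after resampling, $D^*_2-D^*_1$ is again governed by the compound newly containing $k$. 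So the key is a lower bound, uniform over the relevant class of initial states of that compound, for the probability that its degeneration time exceeds $\delta$, together with control on the mass available for the second round.

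First I would reduce to a statement about type-1 and type-2 evolutions. By the construction in Definition \ref{def:resamp_1} and the independence of the type-0/1/2 compounds comprising a killed $k$-tree evolution (Definition \ref{def:killed_ktree}), the event $\{D^*_1>\delta_1\}$ contains the event that the single compound containing label $k$ has not degenerated by time $\delta_1$. For a type-$i$ evolution ($i=1,2$) started from a pseudo-stationary law with total mass $m$, Propositions \ref{prop:012:pseudo} and \ref{prop:012:pred} give $\bP(\text{no degeneration before }\delta_1)$ as an explicit function of $m$ and $\delta_1$; using Proposition \ref{prop:012:mass}, the total mass is a \BESQ[1-i], so by the explicit \BESQ hitting-time/non-extinction bounds (as in \cite[Equation (6.3)]{Paper1} and \cite[Proposition 38]{Paper3}) this probability is at least some $c_1(m,\delta_1)>0$, decreasing in $\delta_1$ and increasing in $m$. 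The subtlety is that at $\cT^0$ the compound containing $k$ need not be in pseudo-stationarity when $k$ landed in an interval-partition block of $T$: in that case the compound is a general type-0/1/2 evolution whose state is $T$'s compound with an extra marked block. Here I would instead argue directly: the mass of that compound is at least the mass $b-a$ of the marked block plus the masses on either side, which is positive, and more simply I can bound below the probability that $D^*_1$ exceeds $\delta_1$ by the probability that the whole killed $k$-tree evolution has not degenerated at all by $\delta_1$ — but that probability is not bounded below uniformly in $T$ since some blocks of $T$ may be arbitrarily small. So the right move is: condition on the resampling being into a leaf block (a type-2 compound forms), which happens with probability $\sum_{i\in[k-1]}x_i/\|T\|$; this is bounded below only if $T$ has a macroscopic leaf mass, which again need not hold.

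The robust route, which I expect to be the main obstacle and which is why the hypothesis $D_\infty>D^*_2$ is imposed, is to use the total-mass process. By Proposition \ref{prop:total_mass_0}, $(\|\cT^y\|,y\ge0)$ is a \BESQ[-1] killed at $D_\infty$, and $\|\cT^0\|=\|T\|>\epsilon$. A \BESQ[-1] started above $\epsilon$ stays above $\epsilon/2$, say, for time at least $\delta_0$ with probability at least $p_0=p_0(\epsilon,\delta_0)>0$, uniformly. On this event the tree has not died, so in particular $D_\infty>\delta_0$; combined with the hypothesis $D_\infty>D^*_2$ a.s., we get $D^*_2\le D_\infty$, but this alone does not force $D^*_2>\delta_0$. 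To get the quantitative bound I would combine two facts. (a) Given the history up to $D^*_1$ (if $D^*_1<\delta_0$), Proposition \ref{prop:pseudo:degen}\ref{item:pseudoD:resamp}/\ref{item:pseudoD:swapred} together with \eqref{eq:B_ktree_resamp} shows that after the resampling at $D^*_1$ the compound newly containing $k$ is (conditionally) a scaled Brownian reduced $2$-tree independent of the rest, and its scale is a constant fraction — in distribution a $\BetaDist$-type variable independent of the total mass — of $\|\cT^{D^*_1}\|$, which is $\ge\epsilon/2$ on our event. (b) A scaled Brownian reduced 2-tree of mass $\ge m_0$ is the initial state of a type-2 evolution whose degeneration time exceeds $\delta_1$ with probability $\ge c_1(m_0,\delta_1)>0$, by the argument of the previous paragraph. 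Choosing $\delta_1$ small and $\delta:=\min(\delta_0,\delta_1)/2$, and intersecting the events ``total mass stays $\ge\epsilon/2$ on $[0,\delta_0]$'', ``the Beta scale at the first $k$-resampling is $\ge$ some fixed fraction'' (positive probability, uniform), and ``the new type-2 compound survives past $\delta_1$'', I obtain $\bP(D^*_2>\delta)\ge p_0\cdot c'\cdot c_1>0$, a constant depending only on $k$ and $\epsilon$. The one genuine gap to fill carefully is the first $k$-resampling step when $k$ started in an interval-partition block: there $D^*_1$ is the degeneration time of a type-0/1/2 compound of $T$, which could be tiny; I would handle this by noting that even if $D^*_1$ is small, after it the compound containing $k$ is resampled with mass a positive fraction of $\|\cT^{D^*_1}\|\ge\epsilon/2$ (using that the resampling kernel $\Lambda$ picks a block size-biased from the whole tree and splits it with an independent $Q$-tree, so with probability $\ge$ some $c_2(k)>0$ it picks a leaf and attaches a macroscopic 2-tree — here ``macroscopic'' is relative to the current total mass, which is controlled), reducing again to fact (b). The main obstacle, then, is bookkeeping the conditioning at $D^*_1$ so that the mass fed into the second type-2 compound is a controlled fraction of $\|\cT^{D^*_1}\|$ uniformly in $T$; Propositions \ref{prop:pseudo:degen} and \ref{prop:pseudo:resamp} supply exactly the pseudo-stationarity needed to make this conditioning tractable.
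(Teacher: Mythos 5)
Your proposal correctly identifies the central obstacle, namely that when label $k$ is inserted into an interval-partition block rather than a leaf block (as may happen both at time $0$ under $\Lambda_{k,[k-1]}$ and at each subsequent resampling), the compound containing $k$ is not pseudo-stationary and its degeneration time admits no a priori lower bound. You then propose to handle this by arguing that with probability at least some $c_2(k)>0$, the resampling kernel picks a leaf and ``attaches a macroscopic 2-tree.'' This step is wrong, and it is precisely the hard part of the lemma. The resampling kernel $\Lambda_{k,A}$ selects a block size-biased from $\block(\varrho(\cT^{D^*_n-}))$, and the probability of selecting a leaf block is $\sum_i x_i/\|\cdot\|$, which is \emph{not} bounded below uniformly over $T$: the initial tree $T$, and hence the trees at later degeneration times, could carry essentially all their mass in internal edge partitions consisting entirely of tiny blocks. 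Similarly, even conditioned on selecting a leaf, its mass $x_i$ need not be a macroscopic fraction of the total mass, and there is no $\BetaDist$-type independence you can invoke, because $T$ is arbitrary: Propositions~\ref{prop:pseudo:degen} and \ref{prop:pseudo:resamp}, which you cite, both require a scaled Brownian reduced $k$-tree as initial state, and that hypothesis fails here.

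The paper handles exactly this by a three-way case split on the geometry of $T$: (1) $T$ has a leaf block of mass at least $\|T\|/2k$; (2) $T$ has a large edge partition $\beta$ containing a block of mass at least $\|\beta\|/2k^2$; (3) $T$ has a large edge partition with no such block. Your argument roughly covers Cases~1 and~2 (where a macroscopic block does get hit with uniformly positive probability, and the freshly inserted compound is a scaled Brownian reduced 2-tree evolving in pseudo-stationarity). But Case~3 requires a new ingredient that your proposal lacks: Lemma~\ref{lem:type2:smallblocks}, which gives a uniform lower bound $\delta(x,c)$ on the non-degeneration probability of a type-2 evolution whose initial partition has mass $\ge x$ but no blocks of mass exceeding a $c$-fraction. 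Its proof in turn uses the concatenation identities of Proposition~\ref{prop:012:concat} to split the partition and reduce to \BESQ hitting-time estimates. The paper's Case~3 then cuts the large partition into $2k-1$ comparable sub-partitions and tracks, via repeated resampling into distinct sub-partitions, a subtree coupled to a controlled killed $k$-tree evolution. Without something playing the role of Lemma~\ref{lem:type2:smallblocks}, the proposal cannot produce a $\delta$ that is uniform over initial states $T$ concentrated on dust-like partitions, so there is a genuine gap.
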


The proof of this lemma is somewhat technical, so we postpone it until Appendix \ref{sec:non_acc_2}.

\begin{proof}[Proof of Proposition \ref{prop:non_accumulation} using Lemma \ref{lem:degen_diff}]
 By Proposition \ref{prop:total_mass_0}, the total mass $\|\cT^y\|$ of a resampling $k$-tree evolution evolves as a \BESQ[-1] stopped at a random stopping time $D_\infty$ that is not necessarily measurable in the filtration of the total mass process. By the independence of the type-$i$ evolutions in the compounds of the $k$-tree in Definition \ref{def:killed_ktree} and continuity of the distributions of their degeneration times, there is a.s.\ no time at which two compounds degenerate simultaneously. Thus, since a \BESQ[-1] a.s.\ hits zero in finite time, there are a.s.\ infinitely many degenerations in finite time: $D_\infty < \infty$. In fact, the lifetime of a $\besq_x(-1)$ before absorption has law \InvGammaDist[3/2,x/2] by \cite[equation (13)]{GoinYor03}. Thus, $\bE[D_\infty] < \infty$.
 
 We will prove this by showing that for every $\epsilon\in (0,\|\cT^0\|)$ the time $H_\epsilon := \inf\{y\ge0\colon 0<\|\cT^y\|\le\epsilon\}$ is a.s.\ finite. This implies that these times have a limit in $[0,D_\infty]$ at which time $\|\cT^y\|$ converges to zero, by continuity. Thus, by the argument of the previous paragraph, this limit equals $D_\infty$, which completes the proof. We prove $H_\epsilon<\infty$ by showing that 
 \begin{equation}\label{eq:degen_diff}
  \bP\big(D_{j+2}-D_{j} > \delta\ \big|\ H_{\epsilon}>D_{j}\big) > \delta \quad \text{for }j\ge n
 \end{equation}
 for some sufficiently small $\delta>0$ and sufficiently large $n$. 
 This implies %\sum_{j\ge0}\delta \bP(D_{2(j+1)}-D_{2j} > \delta\ |\ H_{\delta}>D_{2j})\bP\{H_{\delta} > D_{2j}\}\\
 \begin{equation*}
  \infty > \bE[D_\infty] > \sum_{j \ge n}\delta^2 \bP\{H_{\epsilon} > D_{2j}\}.
 \end{equation*}
 From this, it follows by the Borel--Cantelli Lemma that $H_\epsilon$ is a.s.\ finite, as desired. We proceed to verify \eqref{eq:degen_diff}.
 
 Fix $\epsilon\in (0,\|\cT^0\|)$. We proceed by induction on the number of labels. Consider a resampling 2-tree evolution. There is some $\delta>0$ such that a pseudo-stationary 2-tree evolution with initial mass $\epsilon$ will \emph{not} degenerate prior to time $\delta$ with probability at least $\delta$. By the scaling property, Lemma \ref{lem:scaling}, the same holds for any larger initial mass with the same $\delta$, proving \eqref{eq:degen_diff} in this case.
 
 %\texttt{NOTE: Editing in progress. Resume here.} 
 
 %Suppose for a contradiction that $\lim_{y\to D_\infty}\|\cT^y\| \neq0$. Then by Proposition \ref{prop:total_mass_0}, $(\|\cT^y\|,\,y\in[0,D_\infty))$ is a \BESQ[-1] that doesn't hit or approach zero; thus, it must be bounded away from zero, with some infimum $\delta>0$. There is some $\epsilon>0$ such a 2-tree evolution with pseudo-stationary initial state with mass $\delta$ will \emph{not} degenerate prior to time $\epsilon$ with probability at least $\epsilon$. By the scaling property noted in Lemma \ref{lem:scaling}, the same inequality holds for a pseudo-stationary initial state with greater mass. Thus, there must be infinitely many $n$ for which $D_{n+1}-D_n > \epsilon$. This is a contradiction, as the \BESQ[-1] total mass a.s.\ hits zero in finite time.
 
 Now, suppose that the proposition holds for $k$-tree evolutions and consider a resampling $(k\!+\!1)$-tree evolution $(\cT^y,y\ge0)$. By Proposition \ref{prop:consistency_0}, $(\pi_k(\cT^y),y\ge0)$ is a resampling $k$-tree evolution up to the accumulation time $D_\infty$ of degenerations of the $(k\!+\!1)$-tree evolution. The degeneration times of $(\pi_k(\cT^y))$ are the times at which a label less than or equal to $k$ resamples in $(\cT^y)$. By the inductive hypothesis, these degeneration times do not have an accumulation point prior to the extinction time of the \BESQ[-1] total mass. Thus, $D_\infty$ must equal the accumulation point of degeneration times $(D^*_j,j\ge1)$ at which label $k\!+\!1$ resamples. Lemma \ref{lem:degen_diff} now yields \eqref{eq:degen_diff} with $(D_m,m\ge1)$ replaced by $(D_m^*,m\ge1)$.
\end{proof}

\section{Proofs of remaining consistency results}\label{sec:const:other}

The remaining assertions of Theorem \ref{thm:consistency} largely follow from our arguments in the proof of Proposition \ref{prop:Dynkin:killed}. %and the work already done. 
We summarize the proofs of these results below.

\begin{proof}[Proof of Theorem \ref{thm:consistency}\ref{item:cnst:nonresamp}]%180917
 Suppose $\big(\cT_{k+1}^y,y\ge0\big)$ is a non-resampling $(k\!+\!1)$-tree evolution and let $\cT_{k}^y = \pi_{k}\big(\cT_{k+1}^y\big)$, $y\ge0$. For the purpose of this argument, let $D^*$ denote the time at which label $k\!+\!1$ is dropped in degeneration and let $(D'_n,n\in [k])$ denote the sequence of times at which labels in $[k]$ are dropped. For $y\ge D_*$, $\cT_{k}^y = \cT_{k+1}^y$, and both evolve from time $D^*$ onwards as non-resampling $k$-tree evolutions. From a slight extension of the argument of Proposition \ref{prop:Dynkin:killed}, allowing $\cT_{k+1}^0$ to be any $(k\!+\!1)$-tree satisfying $\pi_k\big(\cT_{k+1}^0\big) = \cT_k^0$ in Case 1 in that proof, we find that $\cT_{k}^y$ evolves as a $k$-tree evolution up to time $D'_1$. At this degeneration time,
 $$\cT_k^{D'_1} = \pi_{k}\circ\varrho\big(\cT_{k+1}^{D'_1-}\big) = \varrho\circ \pi_{k}\big(\cT_{k+1}^{D'_1-}\big) = \varrho\big(\cT_k^{D'_1-}\big),$$
 as in Definition \ref{def:nonresamp_1} of non-resampling $k$-tree evolutions. After this time, given that label $k\!+\!1$ has not yet been dropped, then $\cT_{k+1}^y$ continues as a non-resampling $([k\!+\!1]\!\setminus\!\{j\})$-tree evolution, where $j$ was the first label dropped. Then by the same argument as above, $\cT_k^y$ evolves until the next degeneration time as a $([k]\!\setminus\!\{j\})$-tree evolution. By an induction applying the strong Markov property of $\big(\cT_{k+1}^y\big)$ at the times $(D'_n,n\in [k])$, the process $(\cT_k^y,y\ge0)$ is a non-resampling $k$-tree evolution. A second induction argument allows one to project away multiple labels, rather than just one. 
\end{proof}

\begin{proof}[Proof of Theorem \ref{thm:consistency}\ref{item:cnst:dePoi}]
 Fix $1\le j<k$. Suppose $\fT_k := (\cT_k^y,y\ge0)$ is a resampling $k$-tree evolution with initial distribution as in \eqref{eq:cnst:init}, so that $\fT_j = (\cT_j^y,y\ge0) := (\pi_j(\cT_k^y),y\ge0)$ is a resampling $j$-tree evolution. Then because these evolutions have the same total mass process, they require the same time change for de-Poissonization: $(\rho_u(\fT_k),u\!\ge\!0) = (\rho_u(\fT_j),u\!\ge\!0)$. \linebreak Thus, the associated de-Poissonized processes are also projectively consistent. The same argument holds in the non-resampling case.
\end{proof}

We can now also prove Proposition \ref{prop:resamp_to_non}.

\begin{proof}[Proof of Proposition \ref{prop:resamp_to_non}]
 Suppose $\big(\cT_{k,+}^y,y\ge0\big)$ is a resampling $k$-tree evolution. Let $(D_n,n\ge1)$ denote its sequence of degeneration times, and set $D_0 := 0$. Recall that we consider each edge in a tree shape to be labeled by the set of all labels of leaves in the subtree above that edge.  Recall the definition of $\varrho$ in Section \ref{sec:non_resamp_def}: when a label $i_n := I\big(\cT_{k,+}^{D_n-}\big)$ causes degeneration, it swaps places with label $j_n := J\big(\cT_{k,+}^{D_n-}\big) = \max\{i_n,a_n,b_n\}$, where $a_n$ and $b_n$ are respectively the least labels on the sibling and uncle of leaf edge $\{i_n\}$ in the tree shape of $\cT_{k,+}^{D_n-}$. In the resampling evolution, label $j_n$ is resampled.
 
 We extend this notation slightly. Let $E_{n}^{(a)}$ and $E_n^{(b)}$ denote the sets of labels on the sibling and uncle of edge $\{i_n\}$, so that $a_n = \min\big(E_n^{(a)}\big)$ and $b_n = \min\big(E_n^{(b)}\big)$. Let $\tau_n$ denote the transposition permutation that swaps $i_n$ with $j_n$.
 
 Set $A_0 := B_0 := [k]$ and let $\sigma_0$ denote the identity map on $[k]$. Now suppose for a recursive construction that we have defined $(A_{n-1},B_{n-1},\sigma_{n-1})$. We consider four cases.\smallskip
 
 \begin{enumerate}[label = Case \arabic*:, ref = \arabic*, topsep=5pt, itemsep=5pt, itemindent=1.82cm, leftmargin=0pt]
  \item  $i_n\notin A_{n-1}$ and $j_n\notin A_{n-1}$. In this case, the degeneration, swap-and-reduce map, and resampling in $\cT_k^{D_n}$ are invisible under $\sigma_{n-1}\circ \pi_{A_{n-1}}$, since the projection erases both labels involved. We set $(A_n,B_n,\sigma_n) := (A_{n-1},B_{n-1},\sigma_{n-1})$.
  
  \item $i_n\notin A_{n-1}$ and $j_n\in A_{n-1}$. In this case, the label $i_n$ that has caused degeneration is invisible under $\pi_{A_{n-1}}$, so there is no degeneration in the projected process, but $i_n$ displaces a label that \emph{is} visible. To maintain continuity in the projected process at this time, $i_n$ takes the place of $j_n$ in such a way that $\sigma_n(i_n) = \sigma_{n-1}(j_n)$. In particular, $A_n:= (A_{n-1}\setminus\{j_n\})\cup \{i_n\}$, $B_n := B_{n-1}$, and $\sigma_n := \sigma_{n-1}\circ\tau_n|_{A_n}$.
  
  \item \label{case:r2n:degen} $i_n\in A_{n-1}$ and $E_n^{(a)}$ and $E_n^{(b)}$ both intersect $A_{n-1}$ non-trivially. Let $\wi_n := \sigma_{n-1}(i_n)$. In this case, the degeneration caused by $i_n$ in $\cT_{k,+}$ corresponds to a degeneration caused by $\wi_n$ in $\cT_{k,-}$.
  
  Let $\tilde a_n := \min\!\big(\sigma_{n-1}\big(E_n^{(a)}\cap A_{n-1}\big)\!\big)$ and $\tilde b_n = \min\!\big(\sigma_{n-1}\big(E_n^{(b)}\cap A_{n-1}\big)\!\big)$. Let $\wj_n := \max\{\wi_n,\tilde a_n,\tilde b_n\}$ and let $\tilde\tau_n$ denote the transposition permutation that swaps $\wi_n$ with $\wj_n$. If $j_n\in A_{n-1}$ then we set $A_n := A_{n-1}\setminus\{j_n\}$; otherwise, we set $A_n := A_{n-1}\setminus\{i_n\}$. In either case, we define $B_n := B_{n-1}\setminus\{\wj_n\}$ and $\sigma_n := \tilde\tau_n\circ\sigma_{n-1}\circ\tau_n|_{A_n}$.
  
  \item \label{case:r2n:shrink} $i_n\in A_{n-1}$ and $E_n^{(a)}$ is disjoint from $A_{n-1}$. Then leaf block $i_n$ and the subtree that contains label set $E_n^{(a)}$ in $\cT_{k,+}^y$ project down to a single leaf block, $\sigma_{n-1}(i_n)$, in $\cT_{k,-}^y$ as $y$ approaches $D_n$. By leaving open the possibility that $E_n^{(b)}$ may be disjoint from $A_{n-1}$ as well, we include in this case the possibility that the subtree of $\cT_{k,+}^y$ with label set $E_n^{(b)}$ projects to this same leaf block as well. Regardless, this degeneration is ``invisible'' in $\cT_{k,-}$. In order to keep label $\sigma_{n-1}(i_n)$ in place in the projected process, if label $i_n$ resamples or swaps with a label in $E_n^{(b)}$, then we choose a label in $E_n^{(a)}$ to map to $\sigma_{n-1}(i_n)$ under $\sigma_n$.
  
  \begin{enumerate}[label = Case \theenumi.\arabic*: , ref=\theenumi.\arabic*, topsep=0pt, itemsep=0pt, itemindent=2cm, leftmargin=0pt]
   \item $j_n = a_n$. Then we define $(A_n,B_n,\sigma_n) := (A_{n-1},B_{n-1},\sigma_{n-1})$.\label{case:r2n:shrink:null}
   
   \item $j_n = i_n$ or $j_n = b_n$. Then let $\hat\tau_n$ denote the transposition that swaps $i_n$ with $a_n$. If $j_n\in A_{n-1}$, as is always the case when $j_n = i_n$, then we set $A_n := (A_{n-1}\setminus\{j_n\})\cup\{a_n\}$. Otherwise, if $j_n\notin A_{n-1}$ then we set $A_n := (A_{n-1}\setminus\{i_n\})\cup\{a_n\}$. In either case, we define $B_n := B_{n-1}$ and $\sigma_n := \sigma_{n-1}\circ\hat\tau_n\circ\tau_n|_{A_n}$.\label{case:r2n:shrink:swap}
  \end{enumerate}
  
  \item $i_n\in A_{n-1}$ while $E_n^{(a)}$ intersects $A_{n-1}$ non-trivially but $E_n^{(b)}$ does not. This degeneration time in $\cT_{k,+}^y$ corresponds to a time at which labeled leaf block $\sigma_{n-1}(i_n)$ in $\cT_{k,-}^y$ has mass approaching zero (really, it is a.s.\ an accumulation point of times at which this mass equals zero) while the interval partition on its parent edge has a leftmost block. The subtree of $\cT^{D_n-}_{k,+}$ that contains the leaf labels $E_n^{(b)}$ maps to a single internal block, the aforementioned leftmost block, in $\cT_{k,-}^{D_n-}$. Therefore, we define $\sigma_n$ in such a way that some label that sits in the subtree corresponding to that block gets mapped to $\sigma_{n-1}(i_n)$, so that this latter label ``moves into'' the leftmost block in the projected process, as in a type-1 or type-2 evolution; see Proposition \ref{prop:012:pred}. In fact, we can accomplish this with the same definitions of $(A_n,B_n,\sigma_n)$ as in Cases \ref{case:r2n:shrink:null} and \ref{case:r2n:shrink:swap}, but with roles of $a_n$ and $b_n$ reversed.
%  
%  \begin{enumerate}[label = \theenumi\alph*: , ref=\theenumi\alph*, topsep=0pt, itemsep=0pt, itemindent=2cm, leftmargin=0pt]
%   \item $j_n\in E_n^{(b)}$. Then we define $(A_n,B_n,\sigma_n) := (A_{n-1},B_{n-1},\sigma_{n-1})$.
%   
%   \item $j_n\notin E_n^{(b)}$. Then let $\hat\tau_n$ denote a transposition that swaps $i_n$ with some member $\hat c_n$ of $E_n^{(b)}$. If $j_n\in A_{n-1}$ then we set $A_n := (A_{n-1}\setminus\{j_n\})\cup\{\hat c_n\}$. Otherwise, if $j_n\notin A_{n-1}$ then we set $A_n := (A_{n-1}\setminus\{i_n\})\cup\{\hat c_n\}$. In either case, we define $B_n := B_{n-1}$ and $\sigma_n := \sigma_{n-1}\circ\hat\tau_n\circ\tau_n|_{A_n}$.
%  \end{enumerate}
 \end{enumerate}
 
 It follows from the consistency result of Theorem \ref{thm:consistency}\ref{item:cnst:nonresamp} that for each $n$, the projected process evolves as a stopped non-resampling $k$-tree evolution (or $B_n$-tree evolution) during the interval $[D_n,D_{n+1})$. By our construction, we have $\cT_{k,-}^{D_n} = \varrho\big(\cT_{k,-}^{D_n-}\big)$ in Case \ref{case:r2n:degen}, as in Definition \ref{def:nonresamp_1} of non-resampling evolutions. In the other cases, it follows from the arguments in the proof of Proposition \ref{prop:Dynkin:killed} that each type-0/1/2 compound in $\cT_{k,-}^{D_n}$ attains the value required by the type-0/1/2 evolution in that compound, given its left limit in $\cT_{k,-}^{D_n-}$; see Proposition \ref{prop:012:pred}. Thus, $\big(\cT_{k,-}^y,y\ge0\big)$ is a non-resampling $k$-tree evolution. 
\end{proof}

\appendix

\section{Intertwining for processes that jump from branch states}\label{sec:intertwining_lem}

In this appendix we state and prove a general lemma that can be used to prove intertwining for a function of a Markov process that attains ``forbidden states'' as left limits in its path from which it jumps away, in the manner of the resampling and non-resampling $k$-tree evolutions at degeneration times. More precisely, we consider a strong Markov process $(X^\bullet(t),t\ge0)$ constructed as follows.

%\texttt{NOTE: The following is copied from Soumik's intertwining file; may need to do something to bring notation in line.}

\newcommand{\wLambda}{\widetilde{\Lambda}}

Let $(\widebar{\bX},d_\bX)$ be a metric space, $\bX\subset\widebar{\bX}$ a Borel subset, $\partial\not\in\widebar{\bX}$ a cemetery state and
set $\bX_\partial:=\bX\cup\{\partial\}$, extending the topology of $\bX$ so that $\partial$ is isolated in $\bX$. We denote the Borel sigma algebra
on $\bX$ by $\cX$. Consider an $\bX_\partial$-valued Borel right Markov process $(X^\circ(t),t\ge 0)$ with transition kernels $(P_t^\circ,t\ge 0)$. 
Suppose that $(X^\circ(t),t\ge 0)$ has left limits in $\widebar{\bX}$ and is absorbed in $\partial$ the first time a left limit is in 
$\widebar{\bX}\setminus\bX$, or by an earlier jump to $\partial$ from within $\bX$. We denote this absorption time by $\zeta$ and refer to 
$(X^\circ(t),t\ge 0)$ as the \em killed Markov process\em. We use the standard setup where our basic probability space supports a family 
$(\bP_x,x\in\bX)$ of probability measures under which $X^\circ$ has initial state $X^\circ(0)=x$. Suppose for simplicity that $\bP_x(\zeta<\infty)=1$
for all $x\in\bX$.

Let $\kappa\colon\widebar{\bX}\times\cX\rightarrow[0,1]$ be a stochastic kernel. We use $\kappa$ as a \em regeneration kernel \em by sampling
$X^\bullet(\zeta)$ from $\kappa(X^\circ(\zeta-),\,\cdot\,)$ and continuing according to the killed Markov process starting from $X^\bullet(\zeta)$. 
More formally, let $X^\bullet(0)=x\in\bX$ and $S_0=0$. Inductively, given $(X^\bullet(t),0\le t\le S_n)$ for any $n\ge 0$, let $(X^\circ_n(t),t\ge 0)$
be a killed Markov process starting from $X^\bullet(S_n)$ with absorption time $\zeta_n$, set $X^\bullet(S_n+t)=X^\circ_n(t)$, $0\le t<\zeta_n$,
and $S_{n+1}=S_n+\zeta_n$, and then sample $X^\bullet(S_{n+1})$ from the regeneration kernel $\kappa(X^\bullet(S_{n+1}-),\,\cdot\,)$. Finally, set
$X^\bullet(t)=\partial$ for $t\ge S_\infty:=\lim_{n\rightarrow\infty}S_n$. 

Then $\widetilde{P}((x,s),\,\cdot\,)=\bP_x((X^\bullet(\zeta),s+\zeta)\in\,\cdot\,)$ is clearly a Markov transition kernel. Meyer \cite{Mey75} showed
that $(X^\bullet(t),t\ge 0)$ is a (Borel right) Markov process, and we denote its transition kernels by $(P_t^\bullet,t\ge 0)$.  

We recall Definition \ref{def:intertwining} of intertwining. Consider a measurable map $\phi\colon\widebar{\bX}\to\widebar{\bY}$ to another metric
space $(\widebar{\bY},d_\bY)$ with $\partial\notin\widebar{\bY}$. We extend this map, defining $\phi(\partial) = \partial$ and set 
$\bY_\partial:=\phi(\bX)\cup\{\partial\}$. Let $\Phi(x,\cdot)=\delta_{\phi(x)}$ denote the trivial kernel associated with $\phi$. Let $\Lambda$ denote a stochastic kernel from $\bY$ to $\bX$ such that $\Lambda \Phi$ is the identity.

We define $Y^\bullet(t) := \phi(X^\bullet(t))$ and $Y^\circ(t) := \phi(X^\circ(t))$, $t\ge0$. We set $Q^\circ_t := \Lambda P^\circ_t \Phi$, $t\ge0$, $Q^\bullet_t:=\Lambda P^\bullet_t\Phi$ and $\widetilde Q := \widetilde\Lambda\widetilde P\widetilde\Phi$, where
\[
 \wLambda((y,s), dxdt)= \Lambda(y, dx) \delta_s(dt) \quad \text{and}\quad \widetilde{\Phi}((x,t), dyds) = \delta_{\phi(x),t}(dyds).
\]

Following \cite{RogersPitman}, the criteria for a discrete-time process to be intertwined below a Markov \emph{chain} are the same as in Definition \ref{def:intertwining}, but with single-step transition kernels in place of $P_t$ and $Q_t$ in \ref{item:intertwining}. This definition is sufficient for the same conclusion as in the continuous setting: if the processes additionally satisfy criterion \ref{item:intertwin:init} noted after Definition \ref{def:intertwining}, then the image process is also Markovian.

\begin{lemma}\label{lem:intertwin_jump}
 Suppose the pair of triplets of stochastic kernels $(P_t^\circ,Q_t^\circ,\Lambda)$ and $(\widetilde{P},\widetilde{Q},\widetilde{\Lambda})$
  satisfy the intertwining conditions
 \begin{equation}\label{eq:assumed_intertwin}
  \Lambda P_t^\circ = Q_t^\circ\Lambda,\ t\ge0,\quad \text{and}\quad \widetilde\Lambda \widetilde P = \widetilde Q\widetilde\Lambda
 \end{equation}
 for all $t\!>\!0$. Then $(P^\bullet_t,Q^\bullet_t,\Lambda)$ also satisfies the intertwining condition $\Lambda P_t^\bullet=Q_t^\bullet\Lambda$ for all
 $t\!>\!0$.
%   $(Y^\circ(t),t\ge0)$ and $((Y^\bullet(S_n),S_n),n\ge0)$ are respectively intertwined below $(X^\circ(t),t\ge0)$ and $((X^\bullet(S_n),S_n),n\ge0)$ via respective kernels $\Lambda$ and $\widetilde\Lambda$; i.e.Then $(Y^\bullet(t),t\ge0)$ is intertwined below $(X^\bullet(t),t\ge0)$ via $\Lambda$ and, in particular, if $X^\bullet(0)$ has conditional law $\Lambda(Y^\bullet(0),\cdot\,)$ given $Y^\bullet(0)$, then $(Y^\bullet(t),t\ge0)$ is a Markov process.
\end{lemma}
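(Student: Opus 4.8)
The plan is to decompose the transition kernel $P_t^\bullet$ of the regenerated process according to the successive regeneration times $S_n$ and then push $\Lambda$ through each piece using the two intertwining hypotheses \eqref{eq:assumed_intertwin}. First I would record the basic identity that, for any bounded measurable $f$ on $\bX_\partial$ and any $x\in\bX$,
\begin{equation}\label{eq:jumplem:decomp}
 P_t^\bullet f(x) = \sum_{n\ge0} \bE_x\!\left[ \cf\{S_n\le t < S_{n+1}\}\, f(X^\bullet(t))\right]
 	= \sum_{n\ge0}\bE_x\!\left[\cf\{S_n\le t\}\, \big(P^\circ_{t-S_n}f\big)\big(X^\bullet(S_n)\big)\right],
\end{equation}
where the inner expectation uses the strong Markov property at $S_n$ together with the fact that on $\{S_n\le t<S_{n+1}\}$ the process after $S_n$ is the killed process started from $X^\bullet(S_n)$. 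The joint law of $\big(X^\bullet(S_n),S_n\big)$ is exactly $\widetilde P^{\,n}\big((x,0),\cdot\,\big)$, the $n$-fold iterate of the regeneration-plus-clock kernel $\widetilde P$. So \eqref{eq:jumplem:decomp} can be rewritten as an integral against $\sum_n \widetilde P^{\,n}$ of the function $(x',s)\mapsto \cf\{s\le t\}P^\circ_{t-s}f(x')$.

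The second step is to compute $\Lambda P_t^\bullet f(y)$ by integrating \eqref{eq:jumplem:decomp} against $\Lambda(y,dx)$, i.e.\ against $\wLambda\big((y,0),dx\,ds\big)$. The second hypothesis in \eqref{eq:assumed_intertwin}, $\widetilde\Lambda\widetilde P = \widetilde Q\widetilde\Lambda$, iterates to $\widetilde\Lambda\widetilde P^{\,n} = \widetilde Q^{\,n}\widetilde\Lambda$ for every $n$, so pushing $\wLambda$ past all $n$ regeneration steps turns the sum into an integral of $(x',s)\mapsto\cf\{s\le t\}P^\circ_{t-s}f(x')$ against $\sum_n\widetilde Q^{\,n}\big((\phi(y),0),\cdot\,\big)\wLambda$. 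After the last regeneration, the remaining factor to absorb is the killed-process transition $P^\circ_{t-s}$, and here the first hypothesis $\Lambda P^\circ_u = Q^\circ_u\Lambda$ applies: it lets me replace $\Lambda P^\circ_{t-s}f$ by $Q^\circ_{t-s}(\Lambda f)$, at which point everything assembles into the corresponding decomposition for $Q^\bullet_t(\Lambda f)(y)$, run with the image killed kernel $Q^\circ$ and the image regeneration kernel $\widetilde Q$. Since $Q^\bullet_t := \Lambda P^\bullet_t\Phi$ is by definition built from $P^\bullet_t$ the same way $P^\bullet_t$ is built from $P^\circ_t$ and $\widetilde P$ — one must check that the ``image'' regeneration structure obtained by projecting really is governed by $\widetilde Q$, which is where $\Lambda\Phi=\mathrm{id}$ and the definition $\widetilde Q=\widetilde\Lambda\widetilde P\widetilde\Phi$ get used — this yields $\Lambda P^\bullet_t = Q^\bullet_t\Lambda$.

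The main obstacle I anticipate is bookkeeping around the absorption/cemetery state and the convergence of the series $\sum_n$: one must justify interchanging $\Lambda$ with the infinite sum (dominated convergence, using $\bP_x(S_\infty<\infty)$ or at least $\bP_x(S_n\le t)\to0$, which follows from $\bP_x(\zeta<\infty)=1$ and the strong Markov structure), and one must check that $\Lambda$ and the kernels all treat $\partial$ consistently — e.g.\ that $\phi(\partial)=\partial$, that $f(\partial)$ contributes the same on both sides, and that on the event $\{t\ge S_\infty\}$ both $P^\bullet_t f(x)$ and $Q^\bullet_t(\Lambda f)(y)$ reduce to the $\partial$-value. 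A subtler point is that $\widetilde P$ acts on the augmented space $\bX\times[0,\infty)$ and its $n$-fold iterate must be identified with the genuine joint law of $(X^\bullet(S_n),S_n)$; this requires the strong Markov property of $X^\bullet$ at each $S_n$, which is part of Meyer's theorem \cite{Mey75}, and the fact that $\wLambda$, $\widetilde\Phi$ factor through the time coordinate trivially so that the time-coordinate identity $\delta_s(dt)$ is preserved under all the manipulations. Once these measure-theoretic checks are in place, the algebraic heart of the argument — iterate the two given intertwinings and glue — is short.
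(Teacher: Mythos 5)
Your approach is essentially the paper's: decompose $P_t^\bullet$ over the regeneration times $S_n$, identify the law of $(X^\bullet(S_n),S_n)$ with $\widetilde P^{\,n}$, iterate $\widetilde\Lambda\widetilde P=\widetilde Q\widetilde\Lambda$ through the chain of regenerations, and absorb the final killed-process factor using $\Lambda P_{t-s}^\circ=Q_{t-s}^\circ\Lambda$, arriving at
\[
 \Lambda P_t^\bullet f(y) \;=\; \sum_{n\ge0}\int \widetilde Q^{\,n}\big((y,0),dw\,ds\big)\,\cf\{s<t\}\int Q^\circ_{t-s}(w,dv)\int\Lambda(v,dr)\,f(r).
\]
The one place you are slightly optimistic is the last line, ``everything assembles into the corresponding decomposition for $Q_t^\bullet(\Lambda f)(y)$.'' Since $Q_t^\bullet$ is \emph{defined} as $\Lambda P_t^\bullet\Phi$ and is not a priori given as a regeneration process built from $Q^\circ$ and $\widetilde Q$, the identification of the display above with $Q_t^\bullet\Lambda f(y)$ is not automatic. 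The paper resolves exactly this by a second, parallel computation: set $F:=\Phi\Lambda f$, compute $Q_t^\bullet\Lambda f(y)=\Lambda P_t^\bullet F(y)$ by the same decomposition-plus-intertwining machinery, and observe at the end that $\int\Lambda(w,dz)\int P^\circ_{t-s}(z,dr)F(r)=\int Q^\circ_{t-s}(w,dv)\int\Lambda(v,dr)f(r)$, so both calculations land on the same target expression. You correctly flag that $\Lambda\Phi=\mathrm{id}$ and $\widetilde Q=\widetilde\Lambda\widetilde P\widetilde\Phi$ are the ingredients to use, but the ``check'' is a full second pass of the argument rather than a cosmetic verification, and your sketch would need to carry that out to close the proof. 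The measure-theoretic caveats you raise (treatment of $\partial$, interchange of $\Lambda$ with the infinite sum, strong Markov property at the $S_n$ via Meyer's theorem) are the right ones and are indeed what the paper relies on implicitly.
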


\begin{proof}
 Let $f\colon \bX_\partial\rightarrow [0, \infty)$ be bounded and measurable such that $f(\partial)=0$ and fix $t>0$. Now, for any $y \in \bY$,
 \begin{align*}
  \int_{\bX} \Lambda(y,dx) & \int_{\bX_\partial}  P_t^\bullet (x,dz) f(z)= \int_{\bX} \Lambda(y, dx) \bE_x\left[f\left( X_t^\bullet\right)\right]\\
 	&= \sum_{k=0}^\infty \int_{\bX} \Lambda(y, dx) \bE_x \left[ f\left( X_t^\bullet\right) \cf\{ S_k \le t < S_{k+1}\} \right]\\
 	&=  \sum_{k=0}^\infty \int_{\bX} \Lambda(y, dx) \bE_x \left[ \cf\{ S_k < t\} \bE_{X^\bullet(S_k)}\left( f\left( X_{t-S_k}^\bullet\right) \cf\{ t - S_k < \zeta\} \right) \right]. 
 \end{align*}
 In the last line, we have used the strong Markov property for $X^\bullet$ at the stopping times $(S_k)$. 
 Now, we write the above expectations in terms of the Markov chain $\big( \big(X^\bullet(S_n), S_n\big) \big)$, on a probability space where, under
 $\bP_{x,s}$ this Markov chain starts from $(x,s)$. Define a function $h\colon\bX\times [0, \infty) \rightarrow \bR$ by 
 \[
 \begin{split}
  h(x,s)&= \cf\{ s < t\} \bE_{x} \left( f\left( X_{t-s}^\bullet\right) \cf\{ t - s < S_1\} \right)
  	= \cf\{ s < t\} \bE_{x} \left( f\left( X_{t-s}^\circ\right)\right).
 \end{split}
 \]
 Applying \eqref{eq:assumed_intertwin} twice, first for the chain and then for the killed process, we get 
 \begin{align*}
  \int_{\bX} & \Lambda(y,dx) \int_{\bX_\partial} P_t^\bullet (x,dz) f(z)
  	= \sum_{k=0}^\infty \int_{\bX\times [0, \infty)} \wLambda((y,0), dx ds) \bE_{x,s}\left[ h(X^\bullet(S_k), S_k) \right]%\displaybreak
  	\\
  	&= \sum_{k=0}^\infty \int_{\bX\times [0, \infty)} \wLambda((y,0), dx ds) \int_{\bX_\partial\times [0, \infty)}\widetilde{P}^k \left( (x,s), dz du\right) h(z,u)\\
  	&= \sum_{k=0}^\infty \int_{\bY\times [0, \infty)} \widetilde{Q}^k((y,0),dw ds) \int_{\bX_\partial \times [0, \infty)} \wLambda\left( (w,s), dzdu \right) h(z,u)\\   
  	&= \sum_{k=0}^\infty \int_{\bY\times [0, \infty)} \widetilde{Q}^k((y,0),dw ds) \cf\{s < t \} \int_{\bX_\partial} \Lambda\left( w, dz \right) \bE_{z} \left[ f\left( X_{t-s}^\circ\right) \right]  \\
  	&=\sum_{k=0}^\infty \int_{\bY\times [0, \infty)} \widetilde{Q}^k((y,0),dw ds) \cf\{s < t \} \int_{\bX_\partial} \Lambda\left( w, dz \right) \int_{\bX_\partial} P^\circ_{t-s} (z, dr) f(r)  \\
  	&=  \sum_{k=0}^\infty \int_{\bY \times [0, \infty)} \widetilde{Q}^k((y,0),dw ds) \cf\{s < t \}  \int_{\bY_\partial} Q_{t-s}^\circ(w,dv) \int_{\bX_\partial} \Lambda(v,dr) f(r).
 \end{align*}
 %by \eqref{eq:assumedintertwin} again. 
 
 We now claim that the last line in the above display is exactly 
 \[
  \int_{\bY_\partial} Q_t^\bullet(y,dw)\int_{\bX_\partial} \Lambda(w,dx) f(x),
 \]
 which will prove the statement of the lemma. To see this let $F:\bX_\partial\rightarrow [0, \infty)$ be given by
 \[
  F(x)= \int_{\bX_\partial} \Lambda(\phi(x), dz) f(z).
 \]
 Then, by definition of $Q_t^\bullet$ and a very similar calculation as before, but replacing $f$ by $F$, we get 
 \begin{align*}
  	\int_{\bY_\partial} &Q_t^\bullet(y,w)\int_\bX \Lambda(w,x) f(x)= \int_{\bX_\partial} \Lambda(y,dx) \int_{\bX_\partial} P^\bullet_t(x,dz) F(z)\\
  	&= \int_\bX \Lambda(y,dx) \sum_{k=0}^\infty \bE_x\left[ F\left( X^\bullet_t \right) \cf\{ S_k \le t < S_{k+1}\}\right]\\
  	&= \sum_{k=0}^\infty \int_\bX \wLambda\left((y,0), dxdu \right) \int_{{\bX_\partial}\times[0, \infty)}\widetilde{P}^k((x,u), dzds) 
  		\cf\{s<t\}\int_{\bX_\partial}P^\circ_{t-s}(z,dr)F(r)\\
  		%H(z,u), \quad \text{for some }H(z,u),\\
  	&=\sum_{k=0}^\infty  \int_{\bY_\partial\times [0, \infty)} \widetilde{Q}^k\left( (y,0), dwds\right)  \cf\{ s < t \} \int_{\bX_\partial} \Lambda(w,dz) \int_{\bX_\partial} P_{t-s}^\circ(z,dr) F(r)\\
  	&= \sum_{k=0}^\infty  \int_{\bY_\partial\times [0, \infty)} \widetilde{Q}^k\left( (y,0), dwds\right)  \cf\{ s < t \} \int_{\bX_\partial} \Lambda(w,dz) \int_{\bX_\partial} P_{t-s}^\circ(z,dr) \int_{\bX_\partial} \Lambda(\phi(r), du) f(u)\\
  	&=\sum_{k=0}^\infty  \int_{\bY_\partial\times [0, \infty)} \widetilde{Q}^k\left( (y,0), dwds\right)  \cf\{ s < t \} \int_{\bY_\partial}  Q_{t-s}^\circ(w,dv) \int_{\bX_\partial} \Lambda(v,du) f(u),
 \end{align*}
 where the final line is applying the definition of $Q_t^\circ$. This proves the result.
\end{proof}

In fact, this proof does not require that $X^\circ$ have c\`adl\`ag paths, but only that we can make some sense of taking a left limit at the absorption time $\zeta$, so that we can carry out the construction.

We require one additional property related to intertwining. For the following, suppose that $(X(t),t\ge0)$ and $(Y(t),t\ge0)$ are strong Markov processes on respective metric spaces $(\bX,d_\bX)$ and $(\bY,d_\bY)$, and that they are intertwined via the measurable map $\phi\colon \bX\to\bY$ and the kernel $\Lambda\colon \bY\times\cX\to [0,1]$, where $\cX$ is the Borel $\sigma$-algebra associated with $d_X$. Let $(\cF^Y(t),t\ge0)$ denote the filtration generated by $(Y(t),t\ge0)$. Rogers and Pitman \cite{RogersPitman} note that, if two processes satisfy criteria (i)-(iii) of Definition \ref{def:intertwining} of intertwining, then they additionally satisfy the following strong form of criterion \ref{item:intertwining_v2}:
\begin{equation}\label{eq:inter:strongish}
 \bP\big\{X(t)\in A\ \big|\ \cF^Y(t)\big\} = \Lambda(Y(t),A) \quad \text{for }t\ge0,\ A\in\cX.
\end{equation}

\begin{lemma}\label{lem:intertwin_strong}
 Suppose that $\phi$ is continous, $\Lambda(y,\cdot\,)$ is weakly continuous in $y\in\bY$, and $X$ and $Y$ have c\`adl\`ag sample paths. Then for any stopping time $\tau$ in $(\cF^Y(t),t\ge0)$, equation \eqref{eq:inter:strongish} is satisfied with $\tau$ in place of $t$.
\end{lemma}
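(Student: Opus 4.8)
The plan is to reduce the stopping-time statement to the fixed-time statement \eqref{eq:inter:strongish} via a discrete approximation and a continuity argument, which is the standard route for pushing an identity of conditional laws from deterministic times to stopping times for càdlàg Markov processes.

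First I would recall that \eqref{eq:inter:strongish} holds at every fixed time $t$; equivalently, for every bounded continuous $f\colon\bX\to\bR$,
$$
 \bE\big[f(X(t))\ \big|\ \cF^Y(t)\big] = \int_{\bX} f(x)\,\Lambda(Y(t),dx)\qquad\text{a.s.}
$$
Next, for a stopping time $\tau$ in $(\cF^Y(t),t\ge0)$, I would handle first the case of a stopping time taking values in a fixed countable set $\{t_1,t_2,\ldots\}\cup\{\infty\}$ (including the dyadic approximations below): on the event $\{\tau = t_m\}$, which is $\cF^Y(t_m)$-measurable, one restricts the fixed-time identity and sums over $m$, using that $\big(\cF^Y(\tau)\cap\{\tau = t_m\}\big) = \big(\cF^Y(t_m)\cap\{\tau=t_m\}\big)$. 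This gives \eqref{eq:inter:strongish} with $\tau$ in place of $t$ for countably-valued stopping times. For a general stopping time $\tau$, I would approximate it from above by $\tau_n := \lceil 2^n\tau\rceil / 2^n$ (set to $\infty$ if $\tau = \infty$), which are countably-valued stopping times in $(\cF^Y(t),t\ge0)$ with $\tau_n \downarrow \tau$.

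The convergence step is where the hypotheses of the lemma come in. Fix a bounded continuous $f$ on $\bX$. On one hand, right-continuity of $X$ gives $f(X(\tau_n))\to f(X(\tau))$ a.s.\ (on $\{\tau<\infty\}$; on $\{\tau = \infty\}$ everything is trivially handled by the convention $f(\partial)=0$), and by bounded convergence the same holds after conditioning: $\bE[f(X(\tau_n))\mid\cF^Y(\tau)]\to\bE[f(X(\tau))\mid\cF^Y(\tau)]$ in $L^1$, using that $\cF^Y(\tau)\subseteq\cF^Y(\tau_n)$ and the backward/forward martingale convergence along with $\bigcap_n\cF^Y(\tau_n)$ being well-controlled — more simply, $\bE[f(X(\tau_n))\mid\cF^Y(\tau)] = \bE\big[\bE[f(X(\tau_n))\mid\cF^Y(\tau_n)]\mid\cF^Y(\tau)\big]$ and we apply the countably-valued case inside. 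On the other hand, right-continuity of $Y$ gives $Y(\tau_n)\to Y(\tau)$ a.s., and weak continuity of $y\mapsto\Lambda(y,\cdot\,)$ together with continuity and boundedness of $f$ gives $\int f\,d\Lambda(Y(\tau_n),\cdot\,)\to\int f\,d\Lambda(Y(\tau),\cdot\,)$ a.s., hence in $L^1$ by bounded convergence. Since the two sides agree for each $n$ by the countably-valued case, they agree in the limit:
$$
 \bE\big[f(X(\tau))\ \big|\ \cF^Y(\tau)\big] = \int_{\bX} f(x)\,\Lambda(Y(\tau),dx)\qquad\text{a.s.}
$$
Because $(\bX,d_\bX)$ is a metric space, bounded continuous functions are measure-determining, so this identity of conditional expectations for all such $f$ upgrades to the identity of conditional laws $\bP\{X(\tau)\in A\mid\cF^Y(\tau)\} = \Lambda(Y(\tau),A)$ for all $A\in\cX$, which is \eqref{eq:inter:strongish} at $\tau$.

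The main obstacle is the $L^1$-convergence of the conditional expectations $\bE[f(X(\tau_n))\mid\cF^Y(\tau)]$: one must be careful that $\cF^Y(\tau)$ is the $\sigma$-algebra being conditioned on for every $n$ (it does not move), so there is no martingale-convergence subtlety — the only input needed is the a.s.\ convergence $f(X(\tau_n))\to f(X(\tau))$ from right-continuity of the path and dominated convergence for conditional expectations. In our application, $\phi = \phi_1$ or the relevant projection is continuous, the kernel $\Ast\Lambda_k$ is weakly continuous in its first coordinate by Lemma \ref{lem:Lstar_cont}, and the $k$-tree evolutions are càdlàg, so all hypotheses of Lemma \ref{lem:intertwin_strong} are met, yielding the final assertion of Proposition \ref{prop:intertwined}.
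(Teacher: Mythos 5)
Your proposal is correct and follows essentially the same route as the paper: first establish the identity for countably-valued stopping times by decomposing over $\{\tau=t_i\}$, then approximate a general $\tau$ from above by discrete stopping times and pass to the limit using right-continuity of $X$ and $Y$ together with the weak continuity of $\Lambda(y,\cdot\,)$, testing against events $G\in\cF^Y(\tau)\subseteq\cF^Y(\tau_n)$. The brief detour about martingale convergence is unnecessary (as you yourself note, the conditioning $\sigma$-algebra is fixed), but the argument you settle on matches the paper's.
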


\begin{proof}
 First, we prove the result for a discrete stopping time. Suppose that $\cT:=\{ t_1, t_2, \ldots,\}$ is a countable discrete set such that $\tau\in \cT$ a.s.. Fix an arbitrary $G \in \cF^Y(\tau)$. Then 
 \[
  \bP\left( \{ X(\tau) \in A\} \cap G \right)= \sum_{i=1}^\infty \bP\left( \{ X(t_i) \in A\} \cap G \cap \{ \tau =t_i\} \right). 
 \]
 By definition, $G \cap \{ \tau =t_i\} \in \cF^Y(t_i)$. Hence, by applying \eqref{eq:inter:strongish} at each $t_i$, we get 
 \[
 \begin{split}
  \bP\left( \{ X(\tau) \in A\} \cap G \right)&= \sum_{i=1}^\infty \bE\left[ \cf_{G \cap \{ \tau =t_i\}} \bP\left(X(t_i)\in A\mid \cF^Y(t_i)\right) \right]\\
  	&= \sum_{i=1}^\infty \bE\left[ \cf_{G \cap \{ \tau =t_i\}} \Lambda\left(Y(t_i), A\right) \right]= \bE\left[ \cf_G \Lambda\left(Y(\tau), A \right)\right].
  \end{split}
 \]
 Since $\Lambda\left(Y(\tau), \cdot\, \right)$ is measurable with respect to $\cF^Y(\tau)$, the claim follows in this case. 
 
 For an arbitrary stopping time $\tau$, there exists a sequence of discrete stopping times $\tau_k$, $k \in \bN$, such that $\tau_k \downarrow \tau$, almost surely. In particular $\cF^Y(\tau) \subseteq \cF^Y(\tau_k)$, for all $k \in \bN$. Hence, if $G \in \cF^Y(\tau)$, then by the above paragraph, for each $k\in \bN$, and for a continuous function $f\colon \bX\to [0,\infty)$,
 \[
  \bE\left[ f(X(\tau_k))\cf_G \right] = \bE\left[ \cf_G\int_\bX \Lambda\left(Y(\tau_k),dx\right)  f(x)\right].
 \]
 We take the limit as $k\rightarrow\infty$ above and appeal to the right-continuity of $X$ and $Y$ and continuity of $\Lambda(y, \cdot\,)$ in $y$ to conclude that $\bE\left[ f(X(\tau))\cf_G \right] = \bE\big[ \cf_G\int_\bX \Lambda\left(Y(\tau),dx\right) f(x)\big]$. 
\end{proof}

\section{Proof of Lemma \ref{lem:degen_diff}}\label{sec:non_acc_2}

As in the statement of the lemma, fix $k\ge 3$, $\epsilon>0$, $T\in \TInt_{k-1}$ with $\|T\|>\epsilon$, and let $(\cT^y,y\ge0)$ denote a resampling $k$-tree evolution with initial distribution $\cT^0\sim\Lambda_{k,[k-1]}(T,\cdot\,)$. Let $(D_n,n\ge1)$ denote the sequence of all degeneration times of this evolution and $(D^*_n,n\ge1)$ the subsequence of degeneration times at which label $k$ drops and resamples. We prove this lemma in three cases.
\begin{enumerate}[label = Case \arabic*:, ref = \arabic*, itemindent=1.82cm, leftmargin=0pt]
 \item\label{case:degdif:leaf} $T$ contains a leaf block of mass $x_i > \|T\|/2k$.
 \item\label{case:degdif:IP_big} $T$ contains an edge partition $\beta$ of mass at least $\|T\|/2k$, and $\beta$ contains a block of mass at least $\|\beta\|/2k^2$.
 \item\label{case:degdif:IP_small} $T$ contains an edge partition $\beta$ of mass at least $\|T\|/2k$, and each block in $\beta$ has mass less than $\|\beta\|/2k^2$.
\end{enumerate}

\begin{proof}[Proof of Lemma \ref{lem:degen_diff}, Case \ref{case:degdif:leaf}]
 With probability at least $1/2k$, the kernel $\Lambda_{k,[k-1]}(T,\cdot\,)$ inserts label $k$ into the large leaf block $i$, splitting it into a Brownian reduced 2-tree $(x_i,x_k,\beta_{\{i,k\}})$.  This type-2 compound $(\cU^y,y\in [0,D_1))$ will then evolve in pseudo-stationarity, as in Proposition \ref{prop:012:pseudo}, until the first degeneration time $D_1$ of $(\cT^y,y\ge0)$.
 
 Let $A_1$ denote the event that $\cU^{D_1-}$ is not degenerate, i.e.\ some other compound degenerates at time $D_1$. On $A_1$, some outside label may swap places with $i$ and cause $i$ to resample. However, as noted in the discussion of cases \ref{case:degen:type2}, \ref{case:degen:self}, and \ref{case:degen:nephew} in Section \ref{sec:const:intertwin}, no label will swap places with label $k$ at time $D_1$ on the event $A_1$. Let $\mathcal{R}_1$ denote the subtree of $\varrho(\cT^{D_1-})$ corresponding to $\cU^{D_1-}$. This equals $\cU^{D_1-}$ if no label swaps with $i$. By Proposition \ref{prop:012:pseudo} and exchangeability of labels in Brownian reduced 2-trees, on the event $A_1$ the tree $\cR_1$ is a Brownian reduced 2-tree.
 
 Let $B_1$ denote the event that the label dropped at $D_1$ resamples into a block of $\cR_1$.  Let $\cU^{D_1}$ denote the resulting subtree after resampling. On the event $A_1\cap B_1^c$, $\cU^{D_1} = \cR_1$ is again a Brownian reduced 2-tree, by definition of the resampling $k$-tree evolution. On the event $A_1\cap B_1$, the tree $\cU^{D_1}$ is a Brownian reduced 3-tree, by \eqref{eq:B_ktree_resamp} and the exchangeability of labels.
 
 We extend this construction inductively. Suppose that on the event $\bigcap_{m=1}^n A_m$, the tree $\cU^{D_n}$ is a Brownian reduced $M$-tree, for some (random) $M$. We define $(\cU^y,y\in [D_n,D_{n+1}))$ to be the $M$-tree evolution in this subtree during this time interval. Let $A_{n+1}$ denote the event that $\cU^{D_{n+1}-}$ is non-degenerate, $\cR_{n+1}$ the corresponding subtree in $\varrho(\cT^{D_{n+1}-})$, $B_{n+1}$ the event that the dropped label resamples into a block in $\cR_{n+1}$, and $\cU^{D_{n+1}}$ the corresponding subtree in $\cT^{D_{n+1}}$. Then on $\bigcap_{m=1}^{n+1} A_m$ the tree $\cR_{n+1}$ is again a Brownian reduced $M$-tree, by the same arguments as above, with Proposition \ref{prop:pseudo:pre_D} in place of Proposition \ref{prop:012:pseudo}. On $B_{n+1}^c\cap \bigcap_{m=1}^{n+1} A_m$, the tree $\cU^{D_{n+1}} = \cR_{n+1}$ is a Brownian reduced $M$-tree, and on $B_{n+1}\cap \bigcap_{m=1}^{n+1} A_m$ the tree $\cU^{D_{n+1}}$ is a Brownian reduced $(M\!+\!1)$-tree.
 
 In this manner, we define $(\cU^y,y\in [0,D_N))$ where $D_N$ is the first time that $\cU^{y-}$ attains a degenerate state as a left limit. Let $A^y$ denote the label set of $\cU^y$ for $y\in [0,D_N)$; by the preceding argument and Proposition \ref{prop:pseudo:pre_D}, $\cU^y$ is conditionally a Brownian reduced $(\# A^y)$-tree given $\{y < D_N\}$. Let $(\sigma^y,y\in [0,D_N))$ denote the evolving permutation that composes all label swaps due to the swap-and-reduce map, $\sigma^y = \tau_n\circ\tau_{n-1}\circ\cdots\circ\tau_1$ for $y\in [D_n,D_{n+1})$, where $\tau_m$ is the label swap permutation that occurs at time $D_m$. 
 
 We can simplify this account by considering a pseudo-stationary killed $k$-tree evolution $(\cV^y,y\in [0,D''))$ coupled so that $\pi_{A^y}\circ\sigma^y(\cV^y) = \cU^y$, $y\in [0,D'')$, where $D''$ is the degeneration time of $(\cV^y)$. Such a coupling is possible due to the consistency result of Proposition \ref{prop:consistency_0} and the exchangeability of labels evident in Definition \ref{def:killed_ktree} of killed $k$-tree evolutions. Note that, in particular, $D''$ precedes the first time at which a label in $(\cU^y)$ degenerates. Moreover, following the discussion of cases \ref{case:degen:type2}, \ref{case:degen:self}, and \ref{case:degen:nephew} in Section \ref{sec:const:intertwin}, label $k$ cannot be dropped in degeneration until a label within $(\cU^y)$ degenerates.
 
 There is some $\delta>0$ sufficiently small so that a pseudo-stationary $k$-tree evolution with initial mass $\epsilon/2k$ will avoid degenerating prior to time $\delta$ with probability at least $2k\delta$. By the scaling property of Lemma \ref{lem:scaling}, this same $\delta$ bound holds for pseudo-stationary $k$-tree evolutions with greater initial mass. Applying this bound to $(\cV^y,y\in [0,D''))$ proves the lemma in this case.
\end{proof}

\begin{proof}[Proof of Lemma \ref{lem:degen_diff}, Case \ref{case:degdif:IP_big}]
 In this case, with probability at least $1/4k^3$, label $k$ is inserted into a ``large'' block in $\beta$ of mass at least $\|\beta\|/2k^2$. If another label resamples into this same block prior to time $D^*_1$, then we are in the regime of Case \ref{case:degdif:leaf}, and the same argument applies, albeit with smaller initial mass proportion. However, if no other label resamples into this block then, although it is unlikely for this block to vanish quickly, it is possible for label $k$ to be dropped in degeneration if a label that is a nephew of $k$ causes degeneration (case \ref{case:degen:nephew} in Section \ref{sec:const:intertwin}). In this latter case, however, that label swaps into the block in which label $k$ was sitting. Then, label $k$ resamples and may jump back into this large block with probability bounded away from zero. This, again, puts us in the regime of Case \ref{case:degdif:leaf}. In this case, $D^*_1$ may be small with high probability, but not $D^*_2$.
 
 More formally, a version of the argument for Case \ref{case:degdif:leaf} yields $\delta>0$ for which, with probability at least $\delta$: (i) the kernel $\Lambda_{k,[k-1]}(T,\cdot\,)$ inserts label $k$ into a block in $\beta$ with mass at least $\|\beta\|/2k^2$; (ii) this block, or a subtree created within this block survives to time $\delta$ with its mass staying above $\|\beta\|/3k^2$; (iii) the total mass stays below $2\|T\|$ and either (iv) $D^*_1 > \delta$; or (v) $D^*_{1} \le \delta$ but at time $D^*_{1}$, label $k$ resamples back into this same block, which only holds a single other label at that time; and then (vi) $D^*_{2} - D^*_{1} > \delta$.
\end{proof}

To prove Case \ref{case:degdif:IP_small}, we require two lemmas, one of which recalls additional properties of type-0/1/2 evolutions from \cite{Paper1,Paper3}.

\begin{lemma}\label{lem:012:clade_ish}
 Fix $(x_1,x_2,\beta)\in [0,\infty)^2\times\cI$ with $x_1+x_2>0$. There exist a type-0 evolution $(\alpha_0^y,y\ge0)$, a type-1 evolution $((m^y,\alpha_1^y),y\ge0)$, and a type-2 evolution $((m_1^y,m_2^y,\alpha_2^y),y\ge0)$ with respective initial states $\beta$, $(x_1,\beta)$, and $(x_1,x_2,\beta)$, coupled in such a way that for every $y$, there exists an injective, left-to-right order-preserving and mass-preserving map sending the blocks of $\alpha_2^y$ to blocks of $\alpha_1^y$, and a map with these same properties sending the blocks of $\alpha_1^y$ to blocks of $\alpha_0^y$.
\end{lemma}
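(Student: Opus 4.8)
The plan is to produce the triple in two stages. First I will couple a type-2 evolution from $(x_1,x_2,\beta)$ with a type-1 evolution from $(x_1,\beta)$ so that, at every time $y$, $\alpha_2^y$ is precisely the rightmost block-set of $\alpha_1^y$; then, taking that type-1 evolution as given, I will build a type-0 evolution from $\beta$ on top of it so that $\alpha_1^y$ is the rightmost block-set of $\alpha_0^y$ for every $y$. Since the only auxiliary randomness used in the second stage is independent of the first, the two couplings combine to give the lemma, with the required injections being the obvious mass- and order-preserving shifts.

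\emph{Stage 1 (type-2 inside type-1).} Let $((m^y,\alpha_1^y),y\ge0)$ be a type-1 evolution from $(x_1,\beta)$ and let $B$ be an independent $\besq_{x_2}(-1)$; put $Y_1=\inf\{y\ge0\colon B(y)=0\}$. On $[0,Y_1]$ set $(m_1^y,m_2^y,\alpha_2^y):=(m^y,B(y),\alpha_1^y)$; by Lemma \ref{lem:type2:symm} together with Definition \ref{def:type2} this is the restriction to $[0,Y_1]$ of a type-2 evolution from $(x_1,x_2,\beta)$, and on this interval $\alpha_2^y=\alpha_1^y$. The type-2 evolution then regenerates at times $Y_1<Y_2<\cdots$ accumulating at its finite lifetime $Y_\infty$ (finite because the total mass is a $\besq(-1)$, Proposition \ref{prop:012:mass}). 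At a regeneration time $Y_n$ it is restarted, as in Definition \ref{def:type2}, by an independent type-1 evolution $(\fm^{(n)},\gamma^{(n)})$ from $(0,\alpha_2^{Y_n})$ — whose partition $\gamma^{(n)}$ governs the future of $\alpha_2$ — together with an independent $\besq(-1)$ started from the type-2's surviving top mass. Maintaining the invariant $\alpha_1^{Y_n}=\xi^{Y_n}\concat\alpha_2^{Y_n}$ for some $\xi^{Y_n}\in\cI$ (with $\xi^0=\emptyset$), I continue $\alpha_1$ past $Y_n$ by invoking the strong Markov property of type-1 evolutions at $Y_n$ and realizing the continuation from $(m^{Y_n},\alpha_1^{Y_n})$ via Proposition \ref{prop:012:concat}\ref{item:012concat:1+1}: with $((\widetilde m^y,\widetilde\alpha^y),y\ge0)$ an independent type-1 evolution from $(m^{Y_n},\xi^{Y_n})$ of degeneration time $\widetilde D$, the process equal to $(\widetilde m^y,\,\widetilde\alpha^y\concat(0,\fm^{(n)}(y))\concat\gamma^{(n)}(y))$ for $y\in[0,\widetilde D)$ and to $(\fm^{(n)}(y),\gamma^{(n)}(y))$ for $y\ge\widetilde D$ is a type-1 evolution from $(m^{Y_n},\xi^{Y_n}\concat\alpha_2^{Y_n})=(m^{Y_n},\alpha_1^{Y_n})$, and $\gamma^{(n)}(y)=\alpha_2^{Y_n+y}$ sits in it as its rightmost blocks; so the invariant and the injection persist until $Y_{n+1}$. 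Iterating over $n$, then continuing $((m^y,\alpha_1^y))$ arbitrarily as a type-1 evolution past $Y_\infty$ — beyond which $\alpha_2^y=\emptyset$ and the injection is trivial — yields the coupled pair.

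\emph{Stage 2 (type-1 inside type-0).} Given the type-1 evolution $((m^y,\alpha_1^y))$ from $(x_1,\beta)$ of Stage 1, let $\zeta=\inf\{y\ge0\colon m^y=0\}$ and set $\alpha_0^y:=\alpha_1^y$ for $y\le\zeta$; by Proposition \ref{prop:012:concat}\ref{item:012concat:BESQ+0} this is a type-0 evolution from $\beta$ stopped at $\zeta$, and the injection is the identity. Let $(\widetilde\alpha^y,y\ge0)$ be an independent type-0 evolution from $\emptyset$ and, for $y>\zeta$, set $\alpha_0^y:=\widetilde\alpha^{\,y-\zeta}\concat(0,m^y)\concat\alpha_1^y$. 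Applying Proposition \ref{prop:012:concat}\ref{item:012concat:0+1} with local time $y-\zeta$ to the post-$\zeta$ type-1 evolution, which starts from $(0,\alpha_1^\zeta)$, and to $\widetilde\alpha$, shows that $(\alpha_0^y,y\ge\zeta)$ is a type-0 evolution from $\alpha_1^\zeta=\alpha_0^\zeta$, conditionally independent of $\{\alpha_0^z\colon z\le\zeta\}$ given $\alpha_0^\zeta$; a standard splicing argument (using the strong Markov property of type-0 evolutions, Proposition \ref{prop:012:pred}\ref{item:pred:Markov}) then gives that $(\alpha_0^y,y\ge0)$ is a type-0 evolution from $\beta$, and $\alpha_1^y$ sits in $\alpha_0^y$ as its rightmost blocks for every $y$. (When $x_1=0$, $\zeta=0$ and only the second formula is used.)

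\emph{Main obstacle.} The delicate part is the assembly in Stage 1: the type-2 evolution regenerates infinitely often, with regeneration times accumulating at the finite lifetime, so one must verify that the sequence of strong-Markov continuations and Proposition \ref{prop:012:concat}\ref{item:012concat:1+1} splicings genuinely glues into a single type-2 evolution and a single type-1 evolution on all of $[0,\infty)$. This is exactly the bookkeeping carried out in the proof of Proposition \ref{prop:Dynkin:killed}: I would build the coupling on an explicit chain of probability-space extensions, one per regeneration time, using the strong Markov property of type-0/1/2 evolutions (Proposition \ref{prop:012:pred}) and the ``Moreover'' clause of Proposition \ref{prop:012:concat} at each step, and then pass to the limit with the Ionescu--Tulcea theorem \cite[Theorem 6.17]{Kallenberg}. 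Once this is in place, composing with the type-0 layer of Stage 2 gives the lemma immediately.
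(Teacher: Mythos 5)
Your proof is correct. The paper itself offers no argument for this lemma: it declares the assertions ``immediate from the pathwise constructions'' of type-0/1/2 evolutions in the earlier papers (where all three processes are built from a common Poissonian scaffolding, so the nesting of the interval partitions is visible by construction), and only remarks in passing that the statement can alternatively be derived from Definitions \ref{def:type01} and \ref{def:type2} and the transition kernels. What you have written is essentially a complete execution of that alternative route using only the in-paper characterizations: Lemma \ref{lem:type2:symm} and Definition \ref{def:type2} to peel the type-2 evolution into alternating type-1 pieces, Proposition \ref{prop:012:concat}\ref{item:012concat:1+1} to re-realize the ambient type-1 evolution around each such piece without changing its law, and Proposition \ref{prop:012:concat}\ref{item:012concat:BESQ+0}--\ref{item:012concat:0+1} for the type-0 layer. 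The trade-off is the expected one: the paper's route is a one-line appeal to external constructions, while yours is self-contained at the cost of the regeneration/splicing bookkeeping you describe, which is legitimate and of exactly the kind the paper carries out in the proof of Proposition \ref{prop:Dynkin:killed}.

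Two minor points. First, Definition \ref{def:type2} stipulates that the type-2 evolution reaches $m_1^{Y_n}+m_2^{Y_n}=0$ at an a.s.\ finite random index and is then absorbed, so the regeneration times do not accumulate strictly below the lifetime as your ``main obstacle'' paragraph suggests; the induction runs over an unbounded but a.s.\ finite number of steps, which still calls for Ionescu--Tulcea but removes the accumulation worry. Second, the heart of the coupling deserves one explicit sentence: at each $Y_n$ the fresh type-1 evolution $(\fm^{(n)},\gamma^{(n)})$, the fresh \BESQ[-1] $\ff^{(n)}$, and the fresh type-1 evolution $(\widetilde m,\widetilde\alpha)$ must be drawn conditionally independent of one another given the history --- the first two are what Definition \ref{def:type2} requires for the type-2 marginal, the first and third are what Proposition \ref{prop:012:concat}\ref{item:012concat:1+1} requires for the type-1 marginal, and the shared ingredient $(\fm^{(n)},\gamma^{(n)})$ is precisely what keeps $\alpha_2^y$ sitting as the rightmost segment of $\alpha_1^y$.
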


These assertions are immediate from the pathwise constructions of type-0/1/2 evolutions in \cite[Definitions 3.21, 5.14]{Paper1} and \cite[Definition 17]{Paper3}. They can alternatively be derived as consequences of Definitions \ref{def:type01} and \ref{def:type2} and the transition kernels described in Proposition \ref{prop:012:transn}.

\begin{lemma}\label{lem:type2:smallblocks}
 Fix $c\in (0,1/2)$ and $x>0$. Consider $u_1,u_2\ge0$ with $u_1+u_2>0$ and $\beta\in\cI$ with $\|\beta\|>x$ and none of its blocks having mass greater than $c\|\beta\|$. For every $\epsilon>0$ there exists some $\delta = \delta(x,c) > 0$ that does not depend on $(u_1,u_2,\beta)$ such that with probability at least $1-\epsilon$, a type-2 evolution with initial state $(u_1,u_2,\beta)$ avoids degenerating prior to time $\delta$.
\end{lemma}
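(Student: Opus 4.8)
Recall from Proposition~\ref{prop:012:pred}\ref{item:pred:2} that a type-2 evolution reaches a degenerate state precisely at the first time $D$ with $\alpha^D=\emptyset$ and $m_1^D\wedge m_2^D=0$; in particular $D\ge\inf\{y\colon\alpha^y=\emptyset\}$, so it suffices to bound the probability that $\alpha^y=\emptyset$ for some $y\le\delta$. Using the scaling property of type-2 evolutions (the type-2 analogue of Lemma~\ref{lem:scaling}, cf.\ \cite[Lemma~35]{Paper3}), I replace $(u_1,u_2,\beta)$ by $(u_1/\delta,u_2/\delta,\beta/\delta)$ and $\delta$ by the horizon $1$; since $\|\beta/\delta\|>x/\delta$ and no block of $\beta/\delta$ exceeds $c\|\beta/\delta\|$, the claim reduces to producing $R=R(x,c,\epsilon)$ with
\[
 \bP\big(\alpha^y=\emptyset\text{ for some }y\le 1\big)\le\epsilon
 \qquad\text{whenever }\|\beta\|>R\text{ and all blocks of }\beta\text{ are at most }c\|\beta\|,
\]
uniformly in $u_1,u_2\ge0$ with $u_1+u_2>0$; then $\delta:=x/R$ serves in the original statement.

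The plan for the reduced claim is to enumerate the \emph{pull times} $0\le\sigma_1<\sigma_2<\cdots$ at which a block is removed from the interval-partition component to become a new top block (Definition~\ref{def:type2} and Proposition~\ref{prop:012:concat}\ref{item:012concat:BESQ+0}), with $w_j$ the mass of the block pulled at $\sigma_j$. Between consecutive pulls $\alpha$ is a stopped type-0 evolution whose mass is a \BESQ[1] (Proposition~\ref{prop:012:mass}), so its continuous part is a nonnegative drift plus a martingale $N$ with $d\langle N\rangle=4\|\alpha^y\|\,dy$, and $\|\alpha^y\|$ has only downward jumps, by $w_j$ at $\sigma_j$. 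Three observations drive the estimate.
\begin{enumerate}
 \item[(a)] By Lemma~\ref{lem:012:clade_ish}, $(\alpha^y)$ admits a mass-preserving block-injection into a type-0 evolution started from $\beta$, whose mass is a \BESQ[1] from $\|\beta\|>R$; hence $\bP(\sup_{y\le1}\|\alpha^y\|\le2\|\beta\|)\to1$ as $R\to\infty$, and on that event $\langle N\rangle_1\le8\|\beta\|$, so $\bP(\inf_{y\le1}N^y<-\tfrac12\|\beta\|)\to0$ by Doob's inequality.
 \item[(b)] If $\alpha^\sigma=\emptyset$ for some $\sigma\le1$, the mass balance $\|\alpha^{\sigma-}\|=\|\beta\|+(\text{drift})+N^{\sigma-}-\sum_{\sigma_j\le\sigma}w_j$ together with $\|\alpha^{\sigma-}\|\ge0$ and (a) gives $\sum_{\sigma_j\le\sigma}w_j\ge\tfrac12\|\beta\|$ off an event of probability $o_R(1)$.
 \item[(c)] A block of mass $w_j$, once pulled, runs as an absorbed $\besq_{w_j}(-1)$, and the next pull cannot precede its absorption, so $\sigma_{j+1}\ge\sigma_j+\ell_j$, where $\ell_j=w_j\xi_j$ is its lifetime and, by scaling of the $\besq(-1)$ lifetime law used in the proof of Proposition~\ref{prop:non_accumulation}, $\xi_j\sim\InvGammaDist[\tfrac32,\tfrac12]$. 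By the predictability of type-0/1/2 evolutions at stopping times (Proposition~\ref{prop:012:pred}), the $\xi_j$ are i.i.d.\ and $\xi_j$ is independent of $w_1,\dots,w_j$. Consequently, on $\{\alpha^y=\emptyset\text{ for some }y\le1\}$ we have $\sum_j w_j\xi_j\le1$.
\end{enumerate}

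It therefore remains to show $\bP\big(\sum_j w_j\xi_j\le1\ \text{and}\ \sum_j w_j\ge\tfrac12\|\beta\|\big)$ is small for $\|\beta\|$ large, and this is the main obstacle. I handle it by a dichotomy. Fix a small $\epsilon_0\in(0,\tfrac12)$. If some pulled block has $w_j\ge\epsilon_0\|\beta\|$, then that block alone gives $\sum_i w_i\xi_i\ge\epsilon_0\|\beta\|\,\xi_j$, and $\bP(\xi_j\le1/(\epsilon_0\|\beta\|))\to0$ as $\|\beta\|\to\infty$. Otherwise all $w_j<\epsilon_0\|\beta\|$; with $p:=\bP(\xi_1\ge\tfrac12)\in(0,1)$, the sequence $M_n:=\sum_{j\le n}w_j(\mathbf 1\{\xi_j\ge\tfrac12\}-p)$ is a martingale (each $w_j$ predictable, each centered indicator independent of the past), with $\langle M\rangle_\infty=p(1-p)\sum_jw_j^2\le p(1-p)(\max_j w_j)\sum_j w_j$; a second-moment estimate then yields $\bP\big(|M_\infty|\ge\tfrac{p}{2}\sum_jw_j\big)\le\tfrac{4(1-p)}{p}\,\tfrac{\max_jw_j}{\sum_jw_j}\le\tfrac{4(1-p)\epsilon_0}{p(1-\epsilon_0)}$ on the event in (b), using $\sum_j w_j\ge(1-\epsilon_0)\|\beta\|$ there. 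Off this event, $\sum_j w_j\xi_j\ge\tfrac12\sum_j w_j\mathbf 1\{\xi_j\ge\tfrac12\}\ge\tfrac12\big(p\sum_j w_j-|M_\infty|\big)\ge\tfrac{p}{4}\sum_j w_j\ge\tfrac{p}{8}\|\beta\|>1$ once $\|\beta\|>8/p$. Choosing first $\epsilon_0$ small and then $R$ large makes each error term at most $\epsilon/3$, giving the reduced claim. The step requiring the most care is extracting the bookkeeping of $(\sigma_j,w_j,\xi_j)$ and, in particular, the claimed independence structure of the $(\xi_j)$, from the pathwise constructions of type-0/1/2 evolutions in \cite{Paper1,Paper3}.
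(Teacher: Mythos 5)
Your approach is entirely different from the paper's, which is a short soft argument: using the hypothesis that no block exceeds $c\|\beta\|$, one finds a block $(a,b)$ with $a\in[c\|\beta\|,2c\|\beta\|]$, writes $\beta=\beta_0\concat(0,b-a)\concat\beta_1$, and realizes the type-2 evolution as the concatenation (Proposition \ref{prop:012:concat}\ref{item:012concat:2+1}) of a type-2 evolution from $(u_1,u_2,\beta_0)$ and a type-1 evolution from $(b-a,\beta_1)$; degeneration of the whole then forces either the \BESQ[-1] total mass of the first piece (started from $\ge cx$) or the \BESQ[0] total mass of the second (started from $\ge(1-2c)x$) to hit zero, both uniformly unlikely before a small $\delta(x,c)$.

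Your quantitative mass-accounting argument has a genuine gap in step (c). First, the claim $\sigma_{j+1}\ge\sigma_j+\ell_j$ is false: in Definition \ref{def:type2}, when $\ff^{(n)}$ hits zero at $Y_{n+1}$, the new type-1 evolution starts from $(0,\alpha^{Y_{n+1}})$ and immediately pulls a fresh leftmost block, while the previously pulled block (the old $\fm^{(n)}$, still alive) is relabelled as $\ff^{(n+1)}$ and keeps running — so two pulled blocks routinely coexist as the two top masses. Second, and fatally, the implication ``$\alpha^y=\emptyset$ for some $y\le1$ $\Rightarrow\sum_jw_j\xi_j\le1$'' is wrong even for sequential pulls: emptying $\alpha$ only requires every block to have been \emph{pulled} by time $\sigma$, not to have \emph{died} by then; a pulled block lives on as a top mass after $\alpha$ is empty, and its lifetime $w_j\xi_j$ places no constraint on $\sigma$. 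This is exactly where your large-block branch of the dichotomy collapses: if a single pulled block carries mass $\approx\|\beta\|$ (e.g.\ $\beta$ a single block with $u_1,u_2$ tiny — the configuration the block-size hypothesis exists to exclude), then $\sum_jw_j\xi_j$ is huge, yet $\alpha$ empties, and the process degenerates, almost immediately. Relatedly, your main argument never actually uses the hypothesis that no block of $\beta$ exceeds $c\|\beta\|$ (only the scaling reduction mentions it), and the lemma is false without that hypothesis, so something essential is missing. A repair would need to (i) replace the disjointness of the intervals $[\sigma_j,\sigma_j+\ell_j)$ by the weaker ``at most two alive at once,'' (ii) discount the lifetimes of the blocks still alive at the emptying time, and (iii) use the block-size hypothesis to show that with high probability no block attains macroscopic mass by the time it is pulled — at which point the paper's one-paragraph decomposition is considerably more economical.
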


\begin{proof}
 Fix a block $(a,b)\in\beta$ with $a\in [c\|\beta\|,2c\|\beta\|]$ and let
 \begin{equation*}
  \beta_0 := \{(a',b')\in\beta\colon a'<a\},\quad \beta_1 := \{(a'-b,b'-b)\colon (a',b')\in\beta, a'\ge b\}
 \end{equation*}
 so that $\beta = \beta_0\concat (0,b-a)\concat\beta_1$. We follow Proposition \ref{prop:012:concat}\ref{item:012concat:2+1}, in which a type-2 evolution is formed by concatenating a type-2 with a type-1. In particular, let $\wh\Gamma^y := \big( \wh m_1^y, \wh m_2^y,\wh \alpha^y\big)$ and $\widetilde\Gamma^y := (\widetilde m^y,\widetilde\alpha^y)$, $y\ge0$, denote a type-2 and a type-1 evolution with respective initial states $(u_1,u_2,\beta_0)$ and $(b-a,\beta_1)$. Let $\wh D$ denote the degeneration time of $(\wh\Gamma^y,y\ge0)$ and let $\wh Z$ denote the time at which $\|\wh\Gamma^y\|$ hits zero. Let $I$ equal 1 if $\wh m_1^{\wh D}>0$ or 2 if $\wh m_2^{\wh D}>0$, and set $(X_I,X_{3-I}) := \big(\wh m_I^{\wh D},\widetilde m^{\wh D}\big)$. Finally, let $\big(\widebar m_1^y,\widebar m_2^y,\widebar\alpha^y\big)$, $y\ge 0$ denote a type-2 evolution with initial state $(X_1,X_2,\widetilde\alpha^{\widehat D})$, conditionally independent of $((\wh\Gamma^y,\widetilde\Gamma^y)),y\in [0,\widehat D]))$ given this initial state, but coupled to have $\widebar m_I^y = \wh m_I^{\widehat D + y}$ for $y\in [0,\widehat Z-\widehat D]$. By Proposition \ref{prop:012:concat}\ref{item:012concat:2+1}, the following is a type-2 evolution:
 \begin{equation}\label{eq:SB:concat_1}
  \left\{\begin{array}{ll}
   (\wh m_1^y,\;\wh m_2^y,\;\wh\alpha^y\concat(0,\widetilde m^y)\concat\widetilde\alpha^y)	& \text{for }y\in [0,\wh D),\\
   (\widebar m_1^{y-\wh D},\;\widebar m_2^{y-\wh D},\;\widebar\alpha^{y-\wh D})	& \text{for }y\ge \wh D.
  \end{array}\right.
 \end{equation}
 Moreover, by the Markov property of type-1 evolutions, Definition \ref{def:type2} of type-2 evolutions, and the symmetry noted in Lemma \ref{lem:type2:symm}, the following is a stopped type-1 evolution:
 \begin{equation}\label{eq:SB:concat_2}
  \left\{\begin{array}{ll}
   (\widetilde m^y,\widetilde\alpha^y)	& \text{for }y\in [0,\wh D),\\
   (\widebar m_{3-I}^{y-\wh D},\widebar\alpha^{y-\wh D})	& \text{for }y\in [\wh D,\wh Z].
  \end{array}\right.
 \end{equation}
 
 Let $\delta>0$ be sufficiently small so that, with probability at least $\sqrt{1-\epsilon}$, a $\besq_{cx}(-1)$ avoids hitting zero prior to time $\delta$, and likewise for a $\besq_{(1-2c)x}(0)$. Then with probability at least $1 - \epsilon = (\sqrt{1-\epsilon})^2$, both $(\|\wh\Gamma^y\|,y\ge0)$ and the $\besq(0)$ total mass of the type-1 evolution of \eqref{eq:SB:concat_2} avoid hitting zero prior to time $\delta$. On this event, the type-2 evolution of \eqref{eq:SB:concat_1} does not degenerate prior to time $\delta$.
\end{proof}

\begin{proof}[Proof of Lemma \ref{lem:degen_diff}, Case \ref{case:degdif:IP_small}]
  Informally, we proved that degenerations of $k$ may take a long time in Cases \ref{case:degdif:leaf} and \ref{case:degdif:IP_big} by controlling 
  the degeneration times of pseudo-stationary structures inserted into large blocks in repeated resampling events. In Case 
  \ref{case:degdif:IP_small}, there are no large blocks, and indeed large blocks may never form. Instead, there must be a large interval partition,
  which we can cut rather evenly into $2k-1$ sub-partitions. We will control the degeneration of evolving sub-partitions and the probability
  that insertions are into distinct non-adjacent internal sub-partitions.
  
  Specifically, let us follow the notation introduced at the start of this appendix. We can decompose $\beta=\beta_1\star\cdots\star\beta_{2k-1}$
  into sub-partitions $\beta_i$ with $\|\beta_1\|\ge \|\beta\|/k$ and $\|\beta\|/4k\le\|\beta_i\|\le\|\beta\|/2k$ for $2\le i\le 2k-1$, since no 
  block exceeds mass $\|\beta\|/2k^2\le\|\beta\|/4k$. 
  
  With probability at least $1/8k^2$, the kernel $\Lambda_{k,[k-1]}(T,\cdot\,)$ inserts label $k$ into $\beta_2$, splitting 
  $\beta_2=\beta_2^-\star(0,x_k)\star\beta_2^+$. Then label $k$ is in the type-1 compound 
  $\cU^0=(x_k,\beta_2^+\star\beta_3\star\ldots\star\beta_{2k-1})$, while $\beta_1\star\beta_2^-$ is the interval partition of the 
  compound associated with the sibling edge of $k$. Consider the concatenation $\cV^y:=\cV^y_3\star\cV^y_4\star\cdots\star\cV^y_{2k-1}$
  of type-1 evolutions $(\cV^y_i,y\ge 0)$, $3\le i\le 2k-1$, starting respectively from $\beta_2^+\star\beta_3,\beta_4,\ldots,\beta_{2k-1}$, for
  times $y$ up to the first time $D_\cV$ that one of them reaches half or double its initial mass. We denote by $D_\cW$ the degeneration time of
  the sibling edge of $k$ as part of this resampling $k$-tree evolution. If this sibling edge of $k$ is a type-2 edge, 
  denote by $u_1$ and $u_2$ its top masses and consider a type-2 evolution $(\cW^y,y\ge 0)$ starting from $(u_1,u_2,\beta_1\star\beta_2^-)$, with
  degeneration time $D_\cW$. 
  Otherwise, this edge has three or more labels, so one or both children of this edge have more than one label. For each of these children, we 
  choose as $u_1$ or $u_2$, respectively, the top mass of its smallest label. Then the degeneration time of a type-2 evolution 
  $(\widetilde{\cW}^y,y\ge 0)$ starting from $(u_1,u_2,\beta_1\star\beta_2^-)$ is stochastically dominated by the time $D_\cW$ at which the sibling edge of $k$ degenerates. We denote by $D_\cT$ the first time that $\|\cT^y\|$ reaches half or double its initial mass.
  
  Since $\|\beta_1\star\beta_2^-\|\ge\|\beta_1\|\ge\|\beta\|/k\ge\|T\|/2k^2\ge\epsilon/2k^2$, Lemma \ref{lem:type2:smallblocks} yields 
  $\delta_\cW=\delta(\epsilon/2k^2,1/2k)>0$ such that $\bP(D_\cW>\delta_1)\ge 1-1/(32k^2)^k$. 
  Let $\delta_\cT>0$ be such that a $\besq_{\epsilon}(-1)$ stays in $(\epsilon/2,2\epsilon)$ up to time $\delta_\cT$ with probability at least
  $1-1/(32k^2)^k$. Then $\bP(D_\cT>\delta_\cT)\ge 1-1/(32k^2)^k$. Finally, let $\delta_\cV>0$ be such that the probability that 
  $\besq_{\epsilon/8k^2}(0)$ does not exit $(\epsilon/16k^2,\epsilon/4k^2)$ before time $\delta_\cV$ exceeds $(1/2)^{1/2k}$. Since the $2k-3$ 
  independent type-1 evolutions are starting from greater initial mass, we obtain from Proposition \ref{prop:012:mass} and the scaling property of 
  Lemma \ref{lem:scaling} that $\bP(D_\cV>\delta_\cV)>1/2$. 
  
  We proceed in a way similar to Case 1 and inductively construct a subtree evolution $(\cU^y,y\in[0,D_\cU))$ coupled to $(\cV^y,y\in[0,D_\cV))$ on 
  events $A_{n+1}$, $n\ge 0$, on which $D_{n+1}<\min\{D_\cV,D_\cW,D_\cT\}$ and any resampling of a label at time $D_{n+1}$ when $\cU^{D_{n+1}-}$ 
  has $j-1$ labels occurs into a block of $\cV^{D_{n+1}}_{2j}$, $j=2,\ldots,k-1$. Given that $D_{n+1}<\min\{D_\cV,D_\cW,D_\cT\}$, such a block is 
  chosen by the resampling kernel with (conditional) probability exceeding $\|\beta_{2j}\|/(4\|T\|)\ge 1/32k^2$. 
  
  Thus, with probability at least $\delta:=\min\{\delta_\cV,\delta_\cW,\delta_\cT,1/(32k^2)^{k-2}\}$, we have $D_1^*>\delta$ and so $D_2^*>\delta$.
\end{proof}

\bibliographystyle{abbrv}
\bibliography{AldousDiffusion4}

\end{document}

\begin{proposition}[Propositions 4.30, 5.4, 5.16 of \cite{Paper1}]\label{prop:012:transn}
 Fix a time $y>0$. Fix $(x,\beta)\in [0,\infty)\times\cI$, with the constraint that either $x>0$ or $\beta$ has no leftmost block (i.e.\ 0 is an accumulation point of small blocks in $\beta$).  For each block $(a,b)\in\beta$,
 \begin{itemize}
  \item let $\chi_{(a,b)}\sim \distribfont{Bernoulli}(1-e^{-w/2y})$, where $w = b-a$,
  \item let $L_{(a,b)}$ be a $(0,\infty)$-valued random variable with probability density function
  \begin{equation}
   \bP\{L_{(a,b)}\in du\} = \frac{1}{\sqrt{2\pi}}\frac{\sqrt{y}}{u^{3/2}}\frac{e^{-u/2y}}{e^{w/2y}-1}\left( 1 - \cosh\!\left(\frac{\sqrt{wu}}{y}\right) + \frac{\sqrt{wu}}{y} \sinh\!\left(\frac{\sqrt{wu}}{y}\right) \right)du,
  \end{equation}
  \item let $S_{(a,b)}\sim\ExpDist[(2y)^{-1/2}]$, and
  \item let $R_{(a,b)}$ be a subordinator with Laplace exponent 
  %\begin{equation}
   $\lambda\mapsto \left(\lambda+\frac{1}{2y}\right)^{1/2}-\left(\frac{1}{2y}\right)^{1/2}$.
  %\end{equation}
 \end{itemize}
 We define $(\chi_x,L_x,S_x,R_x)$ correspondingly, with $x$ in place of $w = b-a$. We take these objects to all be jointly independent. For the purpose of the following, we take
 $$\textsc{ip}(R,S) := \big\{(R(t-),R(t))\colon t < S,\ R(t-)<R(t)\big\}.$$
 
 The map that sends an initial state $\beta$ and time $y$ to the law of 
 %If $(\alpha^z,z\ge0)$ is a type-0 evolution with initial state $\beta$, then
 \begin{equation}\label{eq:type0:kernel}
  \textsc{ip}(R_x,S_x)\concat \Concat_{U\in \alpha\colon \chi_U=1}\big((0,L_U)\concat \textsc{ip}(R_U,S_U)\big)
 \end{equation}
 is a Markov transition kernel on $\cI$. The same holds for the map sending $(x,\beta)$ and $y$ to the law of 
 %If $((m^z,\alpha^z),z\ge0)$ is instead a type-1 evolution with initial state $(x,\beta)$, then
 \begin{equation}\label{eq:type1:kernel}
  \Concat_{U\in \{x\}\cup\alpha\colon \chi_U=1}\big((0,L_U)\concat \textsc{ip}(R_U,S_U)\big).
 \end{equation}
\end{proposition}